\newcommand{\pdfgraphics}{\ifpdf\DeclareGraphicsExtensions{.pdf,.jpg}\else\fi}
\definecolor{citegreen}{rgb}{0,0.6,0}
\definecolor{refred}{rgb}{0.8,0,0}
\numberwithin{equation}{section}
\theoremstyle{plain}
\newtheorem{definition}{Definition}[section]
\newtheorem{notation}[definition]{Notations}
\newtheorem{remark}[definition]{Remark}
\newtheorem{ass}[definition]{Assumptions}
\newtheorem{corollary}[definition]{Corollary}
\newtheorem{proposition}[definition]{Proposition}
\newtheorem{theorem}[definition]{Theorem}
\newtheorem{lemma}[definition]{Lemma}
\let\olddefinition\remark
\RenewDocumentCommand{\remark}{o}{%
  \IfNoValueTF{#1}
    {\olddefinition}
    {\olddefinition[#1]}%
  \normalfont
}
\numberwithin{equation}{section}
\newcommand{\defl}{\mathrel{=\!\!\mathop:}}
\newcommand{\defr}{\mathrel{\mathop:\!\!=}}
\newcommand{\Div}{\mathrm{div}}
\newcommand{\Id}{\mathrm{Id}}
\def\E{\mathbb{E}}
\def\N{\mathbb{N}}
\def\R{\mathbb{R}}
\renewcommand{\setminus}{\mathbin{\backslash}}
\begin{document}
\pdfgraphics 

\title{Short Time Existence for Coupling of \\Scaled Mean Curvature Flow and Diffusion}

\author{Helmut Abels
\footnote{Fakult\"at f\"ur Mathematik, Universit\"at Regensburg, Universit\"atsstrasse 31, 
93053 Regensburg, Germany \newline
\hspace*{1.6em} helmut.abels@ur.de (corresponding author), felicitas.buerger@ur.de, harald.garcke@ur.de}
 \and Felicitas Bürger
\footnotemark[1]
\and Harald Garcke
\footnotemark[1]
}

\maketitle

\begin{abstract}
We prove a short time existence result for a system consisting of a geometric evolution equation for a hypersurface and a parabolic equation on this evolving hypersurface. More precisely, we discuss a mean curvature flow scaled with a term that depends on a quantity defined on the surface coupled to a diffusion equation for that quantity. The proof is based on a splitting ansatz, solving both equations separately using linearization and a contraction argument. Our result is formulated for the case of immersed hypersurfaces and yields a uniform lower bound on the existence time that allows for small changes in the initial value of the height function.
\end{abstract}

\textbf{Mathematics Subject Classification (2010)}: 53E10 (primary); 35K55, 53C44 (secondary). \\
\textbf{Keywords}: mean curvature flow, diffusion equation on surfaces, geometric evolution equation.


\section{Introduction}
We prove a short time existence result for the coupling of a scaled mean curvature flow describing the evolution of a surface and a diffusion equation for a quantity on this surface. More precisely, we investigate the system 
\begin{subequations}\label{eq_Intro_GLS}
\begin{align}
V &= \big(G(c) - G'(c)c\big) H, \label{eq_Intro_GLS1} \\
\partial^\square c &= \Delta_\Gamma \big( G'(c) \big) + cHV \label{eq_Intro_GLS2} 
\end{align}
\end{subequations}
defined on an evolving closed hypersurface $\Gamma$, whose normal velocity and mean curvature are given by $V$ and $H$, respectively. The function $c: \Gamma \rightarrow \R_{\geq 0}$ describes a quantity defined on this surface and $\partial^\square c$ is its normal time derivative (see Remark \ref{IMM_evolvHF_rho_bem}). Finally, $\Delta_\Gamma$ denotes the Laplace-Beltrami operator on $\Gamma$. We will often use the short notation 
\begin{align*}
g(c) \defr G(c)-G'(c)c,
\end{align*}
which appears in the right hand side of \eqref{eq_Intro_GLS1}. The function $G: \R_{\geq 0} \rightarrow \R$ can be interpreted as a (Gibbs) energy density, as a solution $(\Gamma,c)$ to the system \eqref{eq_Intro_GLS} reduces the energy
\begin{align}\label{eq_Intro_Energy}
\mathrm{E}\big(\Gamma(t),c(t)\big) \defr \int_{\Gamma(t)} G\big(c(t)\big) \, \mathrm{d}\mathcal{A}.
\end{align}
This follows easily using the transport theorem and the Gauß theorem for closed surfaces (see \cite[Proposition 2.58 and Proposition 2.48]{Buerger}):
\begin{align*}
\frac{\mathrm{d}}{\mathrm{d}t} \mathrm{E}(\Gamma,c)
&= \frac{\mathrm{d}}{\mathrm{d}t}  \int_\Gamma G(c) \, \mathrm{d}\mathcal{A} \\
&= \int_{\Gamma(t)} G'(c) \partial^\square c - G(c)HV \, \mathrm{d}\mathcal{A} \\
&= \int_{\Gamma(t)} G'(c) \Delta_\Gamma \big(G'(c)\big) + \big(G'(c)c - G(c)\big) HV \, \mathrm{d}\mathcal{A} \\
&= - \int_{\Gamma(t)} \big| \nabla_\Gamma G'(c) \big|^2 + V^2 \, \mathrm{d}\mathcal{A}
\leq 0.
\end{align*}
Hence, a solution $(\Gamma,c)$ of \eqref{eq_Intro_GLS} can never increase the energy functional $\mathrm{E}$. As long as the geometry of the system changes, i.e. $V \neq 0$, the energy will actually decrease. Also, assuming $G''$ to be positive, a non-uniform distribution of the quantity described by $c$ in general also leads to an actual decrease of the energy. Later on, we impose certain conditions on the function $G$ that guarantee parabolicity for our system (see Assumptions \ref{Vor}(i)). Also, they imply an actual decrease of the energy: Because a closed hypersurface $\Gamma(t)$ can not have vanishing mean curvature $H$ everywhere, equation \eqref{eq_Intro_GLS1} together with the parabolicity conditions yields that $V$ cannot be identically zero. \\
Furthermore, a solution $(\Gamma,c)$ to the system \eqref{eq_Intro_GLS} conserves the mass of the quantity described by the function $c$: Using again the transport theorem and the Gauß theorem for closed surfaces, we obtain
\begin{align*}
\frac{\mathrm{d}}{\mathrm{d}t} \, \int_{\Gamma(t)} c \, \mathrm{d}\mathcal{A}
= \int_{\Gamma(t)} \partial^\square c - cHV \, \mathrm{d}\mathcal{A}
= \int_{\Gamma(t)} \Delta_\Gamma \big(G'(c)\big) \, \mathrm{d}\mathcal{A}
= 0.
\end{align*}
So, from a physical point of view, we are interested in a system consisting of an evolving closed hypersurface $\Gamma$ and a concentration $c: \Gamma \rightarrow \R_{\geq 0}$ that can vary in space and time but fulfills mass conservation on $\Gamma$ and the development of this system, tending to increase the energy \eqref{eq_Intro_Energy}. \\
Mathematically, we discuss a parabolic PDE on an evolving hypersurface, where the evolution of the geometry is not given but part of the problem. To our knowledge, there is not yet much literature on this interesting coupling. For the one-dimensional curve case there exist some first results: Pozzi and Stinner investigate the numerical approximation of such a coupled problem and develop (semi-discrete) finite element schemes for the curve shortening flow (in \cite{PozziStinner1}) and the elastic flow (in \cite{PozziStinner2}) coupled with a diffusion equation on the curve. Barrett, Deckelnick and Styles consider a slightly more general version of the problem from \cite{PozziStinner1}, enhance the numerical analysis and end up with a fully discrete scheme (see \cite{BarrettDeckelnickStyles}). 
For higher dimensions, studying finite element methods for coupled problems is a difficult task. A first error analysis for the case of two-dimensional, closed surfaces has been achieved by Kovács, Li, Lubich and Power Guerra in \cite{KovacsLiLubichPowerGuerra}, leading to a FEM semi-discretization for regularized versions of geometric evolution equations. Kovács and Lubich extend these ideas and obtain a full-discretization, again for regularized versions of geometric evolution equations (see \cite{KovacsLubich_LinearlyImplicitFDOfSurfaceEvolution}). Both results apply to the coupling of a regularized mean curvature flow and a diffusion equation. In the later work \cite{KovacsLiLubich_AConvergentAlgorithmForForcedMCF}, Kovács, Li and Lubich finally prove a result without regularization and present a fully discrete FE algorithm for the coupled problem of mean curvature flow and a diffusion equation for two-dimensional closed surfaces. For the case of two-dimensional surfaces that can be represented as the graph of some function, Deckelnick and Styles investigate the problem from \cite{BarrettDeckelnickStyles} and derive a fully discrete finite element scheme (see \cite{DeckelnickStyles_FEEA}). \\
The considerations in \cite{PozziStinner1}, \cite{BarrettDeckelnickStyles}, \cite{DeckelnickStyles_FEEA} and \cite{KovacsLiLubich_AConvergentAlgorithmForForcedMCF} are of special interest to us because the problem statements therein are very similar to ours, discussing the coupling of a mean curvature flow-type equation and a diffusion equation. In contrast to all these previous contributions that concern modifications $V=H+f(c)$ of the mean curvature flow resulting from an additive term $f(c)$ (with $f(c)=c$ in the case of \cite{KovacsLiLubich_AConvergentAlgorithmForForcedMCF}), we deal with a multiplicatively scaled version $V=g(c)H$, $g(c) \defr G(c)-G'(c)c$ of the mean curvature flow. This seems more natural to us, as it arises from the physical situation explained above. Also, while the diffusion equations in the previous literature all are semilinear, i.e., $\partial^\square c = \alpha \Delta_\Gamma c + \text{l.o.t.}$ with a constant $\alpha>0$, our second equation $\partial^\square c = G''(c) \Delta_\Gamma c + \text{l.o.t.}$ is quasilinear. Be reminded that \cite{PozziStinner1} and \cite{BarrettDeckelnickStyles} only consider the one-dimensional case of closed curves and \cite{DeckelnickStyles_FEEA} and \cite{KovacsLiLubich_AConvergentAlgorithmForForcedMCF} restrict to the case of two-dimensional surfaces, represented as graph of a function or being closed, respectively. Our results however apply to closed hypersurfaces of arbitrary dimension. Finally, all four of the mentioned contributions address numerical analysis exclusively whereas this work is purely analytic and yields a short-time existence result. We also refer to the recent contribution of Elliott, Garcke and Kovács in \cite{ElliottGarckeKovacs} who analyze a finite element approximation of \eqref{eq_Intro_GLS} relying on the existence result presented in this work. In a forthcoming paper, we will discuss several properties of solutions to \eqref{eq_Intro_GLS}, placing emphasis on to what extent the hypersurface in our setting qualitatively evolves as for the usual mean curvature flow. For properties of the mean curvature flow, we point out the famous result of Huisken that convex, closed surfaces shrink to round points (see \cite{Huisken}) and recommend Mantegazza (\cite{Mantegazza}) for further literature and details. \\
Our system of equations \eqref{eq_Intro_GLS} is defined on an evolving hypersurface so that usual analytic methods can not be applied directly. But as we only consider the case of codimension $1$, the evolving hypersurface can be parameterized over a fixed reference surface via a real valued parameterization called height function $\rho$. Then, transforming the system \eqref{eq_Intro_GLS} onto the fixed reference surface yields a system consisting of an equation for the height function $\rho$ and another one for the transformed concentration $\tilde{c}$ (cf. Section \ref{ChapReformFixDom}). \\
Both equations are of second order, as the mean curvature and the Laplace-Beltrami operator are (quasilinear and linear, respectively) differential operators of second order. Due to $HV \sim H^2$, second order derivatives of the height function occur quadratically such that the system is fully non-linear. But as these derivatives of the height function appear in the equation for the concentration only, both equations remain quasilinear when considered separately. Suitable assumptions on the energy density function $G$ ensure parabolicity of the system (see Assumptions \ref{Vor}(i)). Hence, we consider a system of two parabolic, quasilinear differential equations of second order that are, of course, coupled. \\
From a mathematical point of view, this coupling makes the problem interesting and challenging. The proof of short-time existence uses a spitting ansatz: As a first step, we solve the first equation for $\rho$ with an arbitrary concentration $\tilde{c}$ in Section \ref{ChaplokExh} and then, we solve the second equation for $\tilde{c}$ where we insert the solution function $\rho_{\tilde{c}}$ from the first equation in Section \ref{ChaplokExc}. The approach for solving both equations has the same structure, relying, as usual for parabolic, not fully linear equations, on a linearization and a contraction argument. Nevertheless, the second order derivatives of the height function occuring in the equation for the concentration necessitate handling the second equation more carefully than the first, where the concentration only appears in lower order terms. Also, the quadratic occurence of these derivatives makes it clear that we have to use solution spaces that form an algebra with pointwise multiplication. Sobolev spaces do not have this property in general. Instead, we will work with little Hölder spaces, which in particular implies that we solve the transformation of the system \eqref{eq_Intro_GLS} in a classical sense.  The combined result is given in Section \ref{ChaplokExAnalyt}. It is formulated for the case of immersed hypersurfaces and yields a uniform lower bound on the existence time that allows for small changes in the initial value of the height function. \\

The present paper is based on the dissertation \cite{Buerger} of the second author.

\section{Preliminaries}

\subsection{Function Spaces}\label{HölderSpaces_Def}

For Banach spaces $X$ and $Y$, an open subset $U \subset Y$ and a natural number $k \in \N_{> 0}$, we use 
\begin{align*}
C^0(U,X) \phantom{xx} \text{and} \phantom{xx} C^k(U,X)
\end{align*}
to describe the continuous and the $k$-times continuously Fréchet-differentiable functions ${f: U \rightarrow X}$. An index $b$ means that the function itself and all its Fréchet-derivatives up to order $k$ are bounded as functions on $U$. For an open subset $W \subset \R^d$, $d \in \N_{>0}$,
\begin{align*}
C^0(\overline{W},X) \phantom{xx} \text{and} \phantom{xx} C^k(\overline{W},X)
\end{align*}
denote the functions that themselves and all their Fréchet-derivatives up to order $k$ are continuously extendable onto $\overline{W}$. Again, an index $b$ indicates boundedness and then, these spaces form Banach spaces with the usual norms. If $X=\R$, we omit the image space and for an arbitrary subset $V \subset X$, we define
\begin{align*}
C^k(\overline{W},V) \defr \big\{ f \in C^k(\overline{W},X) \, \big| \, f(x) \in V \text{ for all } x \in \overline{W} \big\}.
\end{align*}

\begin{definition}[Hölder Spaces on the Closure of Open Sets]\label{definition_Hölder}$\phantom{x}$ \\
For $\alpha \in (0,1)$ and $R \in (0,\infty]$, we define the seminorm
\begin{align*}
[f]_{h^\alpha(\overline{W},X)}^R \defr \sup_{\substack{x, y \in \overline{W}\\0<|x-y|<R}} \frac{\|f(x)-f(y)\|_X}{|x-y|^\alpha}
\end{align*}
as well as the little Hölder space
\begin{align*}
h^\alpha(\overline{W},X) \defr \left\{ f \in C^0_b(\overline{W},X) \, \Big| \, [f]_{h^\alpha(\overline{W},X)}^\infty < \infty \text{ and } \lim_{R \rightarrow 0} [f]_{h^\alpha(\overline{W},X)}^R = 0 \right\}.
\end{align*}
\end{definition}

Together with the norm
\begin{align*}
\|f\|_{h^\alpha(\overline{W},X)} \defr \|f\|_{C^0(\overline{W},X)} + [f]_{h^\alpha(\overline{W},X)}^\infty,
\end{align*}
it forms a Banach space. For $k \in \N_{>0}$, the little Hölder spaces of higher order
\begin{align*}
h^{k+\alpha}(\overline{W},X)
\end{align*}
are the functions in $C^k_b(\overline{W},X)$ whose highest order derivatives lie in $h^\alpha(\overline{W},X)$. They are endowed with the natural norm to form Banach spaces. For short notation, we use 
\begin{align*}
h^s_b(\overline{W},X)
\end{align*}
for $s \in \R_{\geq 0}$, meaning $C^s_b(\overline{W},X)$ if $s \in \N_{\geq 0}$ and $h^s(\overline{W},X)$ else. Note, that we assume a Hölder regular function to fulfill not only a local, but a uniform Hölder condition! \\
On $C^1 \cap h^{k+\alpha}$-embedded hypersurfaces $M \subset \R^{d+1}$ as introduced in Definition \ref{hypersurface_Def} below, we define Hölder functions 
\begin{align*}
h^{k+\alpha}(M,X) &\defr \left\{ f: M \rightarrow X \, \Big| \, \forall p \in M{:} \, \exists \text{ loc. param.} \, (\gamma,W){:} \, p \in \gamma(W), f \circ \gamma \in h^{k+\alpha}\big(\overline{W},X\big) \right\}
\end{align*}
with the help of local parameterizations $(\gamma,W)$. If $M$ is closed, we obtain the characterization 
\begin{align*}
h^{k+\alpha}(M,X) = \left\{ f: M \rightarrow X \, \Big| \, f \circ \gamma \in h^{k+\alpha}\big(\overline{W},X\big) \text{ for all loc. param.} \, (\gamma,W) \right\}
\end{align*}
as for the case of continous or continously differentiable functions. Also, if $M$ is closed, we can restrict to a finite set $(\gamma_l,W_l)_{l=1,...,L}$ of local parameterizations with
\begin{align*}
h^{k+\alpha}(M,X) = \left\{ f: M \rightarrow X \, \Big| \, f \circ \gamma_l \in h^{k+\alpha}\big(\overline{W_l},X\big) \text{ for all } l=1,...,L \right\}
\end{align*}
and then $h^{k+\alpha}(M,X)$ forms a Banach space with the norm
\begin{align*}
\|f\|_{h^{k+\alpha}(M,X)} &\defr \sum_{l=1}^L \|f \circ \gamma_l\|_{h^{k+\alpha}(\overline{W_l},X)}.
\end{align*}

\subsection{Generators of Semigroups}

\begin{definition}
Let $A: \mathcal{D}(A) \subset X \rightarrow X$ generate an analytic $C^0$-semigroup $\big(T(t)\big)_{t \geq 0}$ in a Banach space $X$. For $\beta \in (0,1)$, we define
\begin{align*}
\mathcal{D}_A(\beta) &\defr \left\{ x \in X \, \Big| \, \sup_{0<s \leq 1} s^{1-\beta} \big\| A T(s) x \big\|_X < \infty \text{ and } \lim_{s \searrow 0} s^{1-\beta} A T(s) x = 0 \right\}
\end{align*}
with 
\begin{align*}
\|x\|_{\mathcal{D}_A(\beta)} \defr \|x\|_X + \sup_{0<s \leq 1} s^{1-\beta} \big\| A T(s) x \big\|_X
\end{align*}
and for $T>0$, we set
\begin{align*}
\big( h^\beta([0,T],X) \times \mathcal{D}(A) \big)_+ &\defr \big\{ (f,x) \in h^\beta([0,T],X) \times \mathcal{D}(A) \, \big| \, Ax + f(0) \in \mathcal{D}_A(\beta) \big\}
\end{align*} 
with 
\begin{align*}
\| (f,x) \|_{(h^\beta([0,T],X) \times \mathcal{D}(A))_+} \defr \|f\|_{h^\beta([0,T],X)} + \|x\|_{\mathcal{D}(A)} + \|Ax+f(0)\|_{\mathcal{D}_A(\beta)}.
\end{align*}
\end{definition}

The space $\mathcal{D}_A(\beta)$ is given as real interpolation space by
\begin{align*}
\mathcal{D}_A(\beta) = \big(X,\mathcal{D}(A)\big)_\beta.
\end{align*}
As any operator generating an analytic $C^0$-semigroup is sectorial in the sense of \cite[Definition 2.0.1]{LunardiASG}, a proof of this representation can be found in \cite[Proposition 2.2.2]{LunardiASG}.

In the appendix (see Proposition \ref{L_bij_allg}), we state that $Au_0 + f(0) \in \mathcal{D}_A(\beta)$ is the suitable compability condition such that $h^\beta([0,T],X)$ is of maximal regularity for the initial value problem
\begin{align*}
\partial_t u - Au &= f \quad \text{ in } (0,T), \\
u(0) &= u_0.
\end{align*}
Moreover, we discuss differential operators $A$ acting on little Hölder spaces and formulate a condition that guarantees them to generate  analytic $C^0$-semigroups (see Proposition \ref{HG_elliptic_semigroup_M}).

\subsection{Geometric Setting}\label{GeometricSetting}

In the following, let $d,n \in \N_{>0}$ and $r,s \in \R_{\geq 0}$. As in Section \ref{HölderSpaces_Def}, we set $h^s \defr C^s_b$ if $s \in \N_{\geq 0}$.

\begin{definition}[Hölder-continuous local Parameterization]\label{Def_parameterization}$\phantom{x}$ \\
Let $M \subset \R^n$. A pair $(\gamma,W)$ is called a ($d$-dimensional) $h^{1+s}$-local parameterization of $M$ if $W \subset \R^d$ is an open, bounded and convex subset and $\gamma \in h^{1+s}(\overline{W},\R^n)$ is an embedding such that $\gamma(W) \subset M$ is an open subset with $\gamma(\overline{W}) \subset M$. Choosing the local parameterization $(\gamma,W)$ sufficiently small means that $\gamma(W) \subset M$ is sufficiently small.
\end{definition}

In contrast to the usual literature on submanifolds, we restrict to bounded and convex sets  and assume the corresponding local parameterizations to be well-defined on the closure of these sets. This is possible w.l.o.g., because we can always achieve these properties by choosing the sets smaller.

\begin{definition}[Embedded Hypersurface]\label{hypersurface_Def}$\phantom{x}$ \\
A subset $M \subset \R^{d+1}$ is called an $h^{1+s}$-embedded (closed) hypersurface if 
\begin{enumerate}
\item[(i)] $M$ is a $d$-dimensional $h^{1+s}$-embedded submanifold,
i.e., if for every point $p \in M$ there exists a $d$-dimensional $h^{1+s}$-local parameterization $(\gamma_p,W_p)$ of $M$ with $p \in \gamma_p(W_p)$,
\item[(ii)] $M$ is orientable such that there exists a continuous unit normal $\nu_M$, i.e., a continuous vector field $\nu_M: M \rightarrow \R^{d+1}$ with $|\nu_M(p)| = 1$ and $\nu_M(p) \perp T_pM$ for all $p \in M$ and
\item[(iii)] $M$ is connected (and compact) as subset of $\R^{d+1}$.
\end{enumerate}
\end{definition}

In this work, a hypersurface never contains a boundary. A unit normal automatically fulfills $\nu_M \in h^{s}(M,\R^{d+1})$. 

\begin{remark}\label{hypersurface_remark}
If $M$ is a closed hypersurface, i.e., compact as a subset of $\R^{d+1}$, it suffices to use finitely many local parameterizations $(\gamma_l,W_l)_{l=1,...,L}$ to cover it. W.l.o.g., we can assume the existence of further open subsets $U_l \subset M$ with $\overline{U_l} \subset \gamma_l(W_l)$ and
\begin{align*}
M \subset \bigcup_{l=1}^L U_l.
\end{align*}
\end{remark} 

\begin{definition}[Immersed Hypersurface]$\phantom{x}$ \\
Let $M \subset \R^{d+1}$ be an $h^{1+s}$-embedded (closed) hypersurface and let $\theta: M \rightarrow \R^{d+1}$ be an $h^{1+s}$-immersion, i.e., $\theta \in h^{1+s}(M,\R^{d+1})$ such that its differential $\mathrm{d}_p\theta: T_pM \rightarrow \R^{d+1}$ is injective for all $p \in M$. Then, $\Sigma \defr \theta(M) \subset \R^{d+1}$ is called an $h^{1+s}$-immersed (closed) hypersurface with reference surface $M$ and global parameterization $\theta$. 
\end{definition}

Just as for the embedded case, an immersed hypersurface never contains a boundary. Moreover, we remark that we do not use any topological structure on the immersed hypersurface $\Sigma$ itself but only consider the topology on the (embedded) reference surface $M$. \\
As locally any immersion is an embedding, for every point $p \in M$ there exists an open neighborhood $U \subset M$ such that $\theta(U) \subset \Sigma$ is an \textit{embedded patch}, i.e., an embedded hypersurface. 
Every locally defined term for embedded hypersurfaces thus can easily be defined also for immersed hypersurfaces, simply defining it on the embedded patches. To avoid confusion in points of self-intersection, we always use the pullback onto the reference surface $M$. \\ 
As for every embedded patch $\theta(U)$ the restriction $\theta_{|U}$ is an embedding and thus the differential $\mathrm{d}_p\theta: T_pM=T_pU \rightarrow T_{\theta(p)}\theta(U)$ is a linear isomorphism, the tangent space of $\Sigma$ at $\theta(p)$ is given by $T_p\Sigma \defr \mathrm{d}_p\theta(T_pM)$ for every $p \in M$. Furthermore one can show that orientability transfers from the embedded hypersurface $M$ to the immersed hypersurface $\Sigma = \theta(M)$, meaning that there exists a unit normal $\nu \in h^{s}(M,\R^{d+1})$ with $|\nu(p)|=1$ and $\nu(p) \perp T_p\Sigma$ for all $p \in M$ (see \cite[Proposition 2.27]{Buerger}). 

\begin{definition}[Evolving Hypersurface]\label{evolvHF_rho} $\phantom{x}$ \\
Let $\Sigma = \bar{\theta}(M) \subset \R^{d+1}$ be an $h^{3+s}$-embedded / immersed closed hypersurface with unit normal $\nu_\Sigma$ and let $T \in (0,\infty)$. Furthermore, let 
$\rho \in h^{1+r}\big([0,T],h^{s}(M)\big) \cap h^{r}\big([0,T],h^{2+s}(M)\big)$ with $\|\rho\|_{C^0([0,T] \times M)}$ sufficiently small. We define 
\begin{align*}
\theta_\rho: [0,T] \times M \rightarrow \R^{d+1}, \phantom{x} \theta_\rho(t,p) \defr \bar{\theta}(p) + \rho(t,p)\nu_\Sigma(p).
\end{align*}
Then, with $\Gamma_\rho(t) \defr \theta_\rho(t,M)$,
\begin{align*}
\Gamma_\rho \defr \big\{ \{t\} \times \Gamma_\rho(t) \, \big| \, t \in [0,T] \big\}
\end{align*}
is called the $h^{1+r}$-$\phantom{.}h^{2+s}$-evolving embedded / immersed hypersurface parameterized via the height function $\rho$ with reference surface $M$ and global parameterization $\theta_\rho$. 
\end{definition}

We have $\theta_\rho \in h^{1+r}\big([0,T],h^{s}(M,\R^{d+1})\big) \cap h^{r}\big([0,T],h^{2+s}(M,\R^{d+1})\big)$ by construction. Also, $\theta_\rho(t,\cdot): M \rightarrow \R^{d+1}$ is an embedding / immersion for all $t \in [0,T]$: This follows with the usual arguments concerning tubular neighborhoods of hypersurfaces for the embedded case (cf. \cite[Section 2.3]{PS} or \cite[Section III.3.2]{BoyerFabrie}) and is proven in the appendix (see Lemma \ref{theta_imm}) for the immersed case. \\
In the following remark, we introduce some basic notation for evolving closed hypersurfaces parameterized via height functions and list some important regularity properties.

\begin{remark}\label{IMM_evolvHF_rho_bem}
Let $\Gamma_\rho$ be an $h^{1+r}$-$\phantom{.}h^{2+s}$-evolving immersed closed hypersurface parameterized via a height function $\rho$ as in Definition \ref{evolvHF_rho} with reference surface $M \subset \R^{d+1}$ and global parameterization $\theta_\rho: [0,T] \times M \rightarrow \R^{d+1}$. Moreover, let $\Sigma = \bar{\theta}(M)$ be the corresponding immersed reference surface with unit normal $\nu_\Sigma$. We use the notation $\theta_{\rho(t)} \defr \theta_\rho(t,\cdot)$ and $\Gamma_\rho(t) \defr \Gamma_{\rho(t)} \defr \theta_{\rho(t)}(M)$ for all $t \in [0,T]$. Given a sufficiently small local parameterization $(\gamma,W)$ of $M$, we define 
\begin{align*}
\gamma_{\rho(t)} \defr \theta_{\rho(t)} \circ \gamma
\end{align*}
such that 
\begin{align*}
\gamma_\rho &\in h^{1+r}\big( [0,T], h^{s}(\overline{W},\R^{d+1})\big) \cap h^{r}\big( [0,T], h^{2+s}(\overline{W},\R^{d+1})\big)
\end{align*}
holds. We use $g^{\rho(t)}_{ij} \defr \partial_i \gamma_{\rho(t)} \cdot \partial_j \gamma_{\rho(t)}$ for the first fundamental form and $g_{\rho(t)}^{ij}$ for its inverse. 
There exists a vector field 
\begin{align*}
\nu_\rho \in h^{r}\big( [0,T], h^{1+s}(M,\R^{d+1})\big)
\end{align*} such that $\nu_\rho(t,\cdot)$ is a continuous unit normal to $\Gamma_\rho(t)$ for all $t \in [0,T]$ (see \cite[Proposition 2.51]{Buerger}). 
As spatial derivatives are defined locally, we employ the usual definitions on the embedded patches of $\Gamma_\rho$ and then use a pullback onto the reference surface $M$ via the para\-meterization $\theta_\rho$: 
For functions $f \in C^0\big([0,T],C^1(M,\R)\big)$ and $F \in C^0\big([0,T],C^1(M,\R^{d+1})\big)$, we define the surface gradient and surface divergence by
\begin{align*}
\nabla_\rho f \defr \big(\nabla_{\Gamma_\rho}(f \circ \theta_\rho^{-1}) \big) \circ \theta_\rho 
\phantom{xx} \text{ and } \phantom{xx}
\Div_\rho F \defr \big(\Div_{\Gamma_\rho}(F \circ \theta_\rho^{-1} \big) \circ \theta_\rho,
\end{align*}
respectively, and for $f \in C^0\big([0,T],C^2(M,\R)\big)$ we use the Laplace-Beltrami operator
\begin{align*}
\Delta_\rho f &\defr \big( \Delta_{\Gamma_\rho} (f \circ \theta_\rho^{-1} \big) \circ \theta_\rho.
\end{align*}
Their representations with respect to a sufficiently small local parameterization $(\gamma,W)$ of $M$ are given by
\begin{align*}
\nabla_\rho f \circ \gamma
&= \sum_{i,j=1}^d g_\rho^{ij} \, \partial_i (f \circ \gamma) \, \partial_j \gamma_\rho, \phantom{xxx}
\Div_\rho F \circ \gamma
= \sum_{i,j=1}^{d} g_\rho^{ij} \, \partial_i (F \circ \gamma) \cdot \partial_j \gamma_\rho \phantom{xx} \text{ and} \\
\Delta_\rho f \circ \gamma 
&= \sum_{i,j=1}^{d} g_\rho^{ij} \, \partial_i \partial_j (f \circ \gamma) + \sum_{k,l=1}^d g_\rho^{ij} \, \partial_i\big( g_\rho^{kl} \partial_l \gamma_\rho\big) \cdot \partial_j \gamma_\rho \, \partial_k(f \circ \gamma).
\end{align*}
From these formulas it is clear that $f \in h^\tau\big([0,T],h^\sigma(M,\R)\big)$ and $F \in h^\tau\big([0,T],h^\sigma(M,\R^{d+1})\big)$ for $\tau, \sigma \in \R_{\geq 0}$ with $\tau \leq r$, $\sigma \leq 2+s$ and $\sigma \geq 1$ (or even $\sigma \geq 2$ if necessary), fulfill 
\begin{align*}
\nabla_\rho f &\in h^\tau\big([0,T],h^{\sigma-1}(M,\R^{d+1})\big), \\
\Div_\rho F &\in h^\tau\big([0,T],h^{\sigma-1}(M,\R)\big) \text{ and } \\
\Delta_\rho f &\in h^\tau\big([0,T],h^{\sigma-2}(M,\R)\big).
\end{align*}
We use a similar notation to express the dependence on the height function for the mean curvature 
\begin{align*}
H(\rho) \defr H_\rho \defr -\Div_\rho \nu_\rho \in h^{r}\big([0,T],h^{s}(M)\big)
\end{align*}
and the total and normal velocity of the hypersurface
\begin{align*}
V^{\text{tot}}_\rho \defr \partial_t \theta_\rho &\in h^{r}\big([0,T],h^{s}(M,\R^{d+1})\big) \text{ and } \\
V_\rho \defr V^{\text{tot}}_\rho \cdot \nu_\rho &\in h^{r}\big([0,T],h^{s}(M)\big),
\end{align*}
respectively. 
Finally, for $f \in h^{1+\tau}\big([0,T],h^{\sigma}(M)\big) \cap h^{\tau}\big([0,T],h^{1+\sigma}(M)\big)$ with $\tau, \sigma \in \R_{\geq 0}$, $\tau \leq r$ and $\sigma \leq s$, the normal time derivative is given by
\begin{align*}
\partial^\square f = \partial_t f - V^{\text{tot}}_\rho \cdot \nabla_\rho f
\end{align*}
and thus $\partial^\square f \in h^\tau\big([0,T],h^{\sigma}(M,\R)\big)$ holds. 
\end{remark}

\subsubsection*{Reformulation onto a Fixed Domain}\label{ChapReformFixDom}

We wish to reformulate the system \eqref{eq_Intro_GLS} in a way that enables us to prove the existence of short-time solutions. For this, we assume that $\Gamma_\rho$ is an evolving immersed closed hypersurface parameterized via a height function $\rho$ as in Definition \ref{evolvHF_rho} with reference surface $M$ and global parameterization $\theta_\rho: [0,T] \times M \rightarrow \R^{d+1}$, $\theta_\rho(t,z) \defr \bar{\theta}(z) + \rho(t,z)\nu_\Sigma(z)$, where $\Sigma = \bar{\theta}(M)$ is the immersed reference surface with unit normal $\nu_\Sigma$. Our considerations here are restricted to the embedded case, but they transform easily to the immersed case using the embedded patches. We introduce the function 
\begin{align*}
u \defr c \circ \theta_\rho: [0,T] \times M \rightarrow \R
\end{align*}
to describe the pullback of the concentration. Assuming $\rho$ to be sufficiently small in an appropriate sense yields that
\begin{align*}
a(\rho) \defr \frac{1}{\nu_\Sigma \cdot \nu_\rho}
\end{align*}
is well-defined with $\frac{1}{2} \leq a(\rho) \leq C$ (see Remark \ref{a_pos}). Using the definitions and notation from Remark \ref{IMM_evolvHF_rho_bem}, the total velocity of the surface is given by $V^{\text{tot}}_\rho  = \partial_t \theta_\rho = \partial_t \rho \, \nu_\Sigma$ and we obtain 
\begin{align*}
V \circ \theta_\rho &= V_\rho = V^{\text{tot}}_\rho \cdot \nu_\rho = \partial_t \rho \, \nu_\Sigma \cdot \nu_\rho = \frac{\partial_t \rho}{a(\rho)} \phantom{x} \text{ and } \\
\big( \partial^\square c \big) \circ \theta_\rho &= \partial^\square u = \partial_t u - V^{\text{tot}}_\rho \cdot \nabla_\rho u = \partial_t u - \partial_t \rho \, \nu_\Sigma \cdot \nabla_\rho u
\end{align*}
for the normal velocity of the surface and the normal time derivative of the concentration. So, finally, the formulation of the system \eqref{eq_Intro_GLS} on the fixed domain $[0,T] \times M$ is given by
\begin{subequations}\label{eq_lokEx_gls}
  \begin{align}
    \partial_t \rho &= g(u) a(\rho) H(\rho), \label{eq_lokEx_gls1} \\
    \partial_t u  &= \Delta_\rho G'(u) + \partial_t \rho \, \nu_\Sigma \cdot \nabla_\rho u + u H(\rho) V_\rho \nonumber \\ 
    &= \Delta_\rho G'(u) + g(u) a(\rho) H(\rho) \, \nu_\Sigma \cdot \nabla_\rho u + g(u)H(\rho)^2 u. \label{eq_lokEx_gls2}
  \end{align}
\end{subequations}

\section{Short-Time Existence}

The topic of this section is the existence of short-time solutions to our system of equations \eqref{eq_lokEx_gls}.
As a start, several regularity properties of functionals are stated which will be useful throughout the whole chapter. Then, we list the conditions under which our short-time existence result holds (see Assumptions \ref{Vor}) and introduce the notations that will be used (see Notations \ref{Not2}). With this preparatory work, we can move on to the actual proof of short-time existence. As explained in the introduction, a splitting ansatz is applied: In Section \ref{ChaplokExh}, the first equation \eqref{eq_lokEx_gls1} for the height function $\rho$ is discussed. For an arbitrary concentration $u$, we obtain a unique short-time solution $\rho_u$ of this equation, which is then inserted into the second equation \eqref{eq_lokEx_gls2} for the concentration $u$. Section \ref{ChaplokExc} deals with the existence of short-time solutions to this reduced system, i.e., the second equation with inserted $\rho_u$. The combined result on short-time existence can be found in Section \ref{ChaplokExAnalyt}.

\begin{notation}\label{Not1}
Let $s \in \R_{>0} \setminus \N$ and let $\Sigma = \bar{\theta}(M) \subset \R^{d+1}$ be an $h^{2+s}$-immersed closed hypersurface. We define $X_s \defr h^s(M)$, $Y_s \defr h^{1+s}(M)$, $Z_s \defr h^{2+s}(M)$ and for constants $R^\Sigma>0$ and $R^c>0$ 
\begin{align*}
\begin{alignedat}{3}
U^h_{s,1} &\defr \big\{ \rho \in Y_s \, \big| \, \|\rho\|_{C^1(M)} < 2R^\Sigma \big\}, \phantom{xxx} 
&&U^c_s \defr \big\{ u \in Y_s \, \big| \, \|u\|_{Y_s} < 2R^c \big\}.
\end{alignedat}
\end{align*}
\end{notation}

We recall the notation and some properties for surfaces parameterized via height functions in the following lemma. 

\begin{lemma}\label{theta_h_wohldef}
Let $s \in \R_{>0} \setminus \N$ and let $\Sigma = \bar{\theta}(M) \subset \R^{d+1}$ be an $h^{2+s}$-immersed closed hypersurface with unit normal $\nu_\Sigma$. We use Notations \ref{Not1}. There exists a sufficiently small $R^\Sigma>0$ such that for all $\rho \in U^h_{s,1}$
\begin{align*}
\theta_\rho: M \rightarrow \R^{d+1}, \, \theta_\rho(z) \defr \bar{\theta}(z) + \rho(z)\nu_\Sigma(z)
\end{align*}
is an $h^{1+s}$-immersion and $\Gamma_\rho \defr \theta_\rho(M)$ is an $h^{1+s}$-immersed closed hypersurface. In particular, for any sufficiently small local parameterization $(\gamma,W)$ of $M$ and  
\begin{align*}
\gamma_\rho \defr \theta_\rho \circ \gamma,
\end{align*}
$(\gamma_\rho,W)$ is a local parameterization of an embedded patch of $\Gamma_\rho$. \\
Moreover, $\big( \partial_1 \gamma_{\rho \, |x}, ..., \partial_d \gamma_{\rho \, |x}, \nu_\Sigma \circ \gamma_{|x} \big) \subset \R^{d+1}$ are linearly independent for every $x \in \overline{W}$, where 
\begin{align*}
\partial_i \gamma_\rho = \partial_i (\bar{\theta} \circ \gamma) + \partial_i (\rho \circ \gamma) (\nu_\Sigma \circ \gamma) + (\rho \circ \gamma) \partial_i (\nu_\Sigma \circ \gamma)
\end{align*}
holds. 
\end{lemma}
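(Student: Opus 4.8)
The plan is to reduce everything to the corresponding statement for the embedded reference surface and then establish the immersion/patch claims by a perturbation argument in the little Hölder spaces, exactly as in Definition \ref{evolvHF_rho}. Since $\Sigma = \bar\theta(M)$ is $h^{2+s}$-immersed, for every $p \in M$ there is an open neighbourhood $U \subset M$ on which $\bar\theta_{|U}$ is an embedding, so $\bar\theta(U)$ is an embedded patch; by Remark \ref{hypersurface_remark} finitely many such patches cover $M$, and it suffices to work on a single fixed sufficiently small local parameterization $(\gamma,W)$ of $M$ contained in one such $U$. On $\overline W$ the map $\bar\theta \circ \gamma \in h^{2+s}(\overline W,\R^{d+1})$ is an embedding with $(\partial_1(\bar\theta\circ\gamma)_{|x},\dots,\partial_d(\bar\theta\circ\gamma)_{|x},\nu_\Sigma\circ\gamma_{|x})$ a basis of $\R^{d+1}$ for each $x\in\overline W$, with the determinant of the associated matrix bounded away from zero uniformly on the compact set $\overline W$ (and uniformly over the finite atlas).

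First I would write out $\partial_i \gamma_\rho$ by the product rule, obtaining the displayed formula
\[
\partial_i \gamma_\rho = \partial_i (\bar\theta \circ \gamma) + \partial_i(\rho\circ\gamma)\,(\nu_\Sigma\circ\gamma) + (\rho\circ\gamma)\,\partial_i(\nu_\Sigma\circ\gamma),
\]
which is legitimate because $\rho \in Y_s = h^{1+s}(M)$ and $\nu_\Sigma \in h^{1+s}(M,\R^{d+1})$ (recall $\nu_\Sigma$ is a unit normal to an $h^{2+s}$-surface, hence in $h^{1+s}$), so $\gamma_\rho = \theta_\rho\circ\gamma \in h^{1+s}(\overline W,\R^{d+1})$. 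Next I would estimate: the terms $\partial_i(\rho\circ\gamma)(\nu_\Sigma\circ\gamma)$ and $(\rho\circ\gamma)\partial_i(\nu_\Sigma\circ\gamma)$ have $C^0(\overline W)$-norm bounded by $C\,\|\rho\|_{C^1(M)} \le C\cdot 2R^\Sigma$ (the constant $C$ depending only on $\bar\theta$, $\nu_\Sigma$ and the fixed atlas). Hence for $R^\Sigma$ small enough the matrix $N_\rho(x)$ with columns $\partial_1\gamma_{\rho|x},\dots,\partial_d\gamma_{\rho|x},\nu_\Sigma\circ\gamma_{|x}$ is a uniformly small perturbation of the invertible matrix $N_0(x)$, so $\det N_\rho(x)$ stays bounded away from $0$ uniformly in $x\in\overline W$ and $\rho\in U^h_{s,1}$; this gives linear independence of those $d+1$ vectors. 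In particular $\partial_1\gamma_{\rho|x},\dots,\partial_d\gamma_{\rho|x}$ are linearly independent, so $\mathrm d_x(\gamma_\rho)$ is injective, i.e. $\theta_\rho$ restricted to $\gamma(W)$ is an immersion; patching over the finite atlas yields that $\theta_\rho \in h^{1+s}(M,\R^{d+1})$ is an $h^{1+s}$-immersion on all of $M$.

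It then remains to upgrade "immersion" to "$(\gamma_\rho,W)$ is a local parameterization of an embedded patch" and "$\Gamma_\rho = \theta_\rho(M)$ is an $h^{1+s}$-immersed closed hypersurface". For the former: on the fixed small patch, $\bar\theta\circ\gamma$ is an embedding of $\overline W$, and $\theta_\rho\circ\gamma$ is $C^1$-close to it (difference bounded by $C\|\rho\|_{C^1}$ in $C^1(\overline W)$), so after possibly shrinking $W$ once and for all, $\gamma_\rho$ remains an injective immersion on $\overline W$ with $\gamma_\rho(\overline W)$ an embedded $h^{1+s}$-submanifold — this is the standard stability of embeddings of a compact set under $C^1$-small perturbations, and is exactly the tubular-neighbourhood-type argument referenced after Definition \ref{evolvHF_rho} (cf. \cite[Section 2.3]{PS}, and Lemma \ref{theta_imm} for the immersed case); one checks the definition of $h^{1+s}$-local parameterization in Definition \ref{Def_parameterization} ($W$ open, bounded, convex; $\gamma_\rho$ an embedding; $\gamma_\rho(\overline W)\subset\Gamma_\rho$ with $\gamma_\rho(W)$ open in the patch). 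For the latter, one verifies the three conditions in Definition \ref{hypersurface_Def} for the immersed setting: (i) the $\gamma_\rho$ form an $h^{1+s}$-atlas of embedded patches by the previous step; (ii) orientability transfers from $M$ to $\Gamma_\rho$ — indeed $\nu_\rho \defr \frac{N_\rho e_{d+1}^\perp}{|\cdot|}$ (the normalized vector orthogonal to all $\partial_i\gamma_\rho$, chosen continuously, which is possible since $\det N_\rho \neq 0$ makes the construction continuous) is a unit normal, in $h^s$, cf. \cite[Proposition 2.51]{Buerger}; (iii) $M$ is connected and compact, hence so is its continuous image.

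The main obstacle is purely the uniform choice of $R^\Sigma$: one must make the smallness threshold on $\|\rho\|_{C^1(M)}$ independent of the point $p$ and of the particular patch, which is why it is essential that $M$ is compact and covered by finitely many patches $(\gamma_l,W_l)$ (Remark \ref{hypersurface_remark}) so that the finitely many lower bounds $\inf_{\overline{W_l}}|\det N_0|>0$ and the finitely many operator-norm bounds $\sup_{\overline{W_l}}\|N_0^{-1}\|$ can be combined; after that, the linear-independence estimate and the stability-of-embeddings argument are routine. Everything else — the product-rule formula for $\partial_i\gamma_\rho$, the regularity bookkeeping $\theta_\rho\in h^{1+s}$, and the verification of the definitions — is direct.
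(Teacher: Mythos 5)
Your proposal is correct, but its core estimates run along a different line than the paper's. The paper does not re-prove the immersion property inside this lemma at all: it invokes the appendix result (Lemma \ref{theta_imm}, built on Lemma \ref{I>0}), whose proof splits $\mathrm{d}_p\theta_\rho[v]=\mathrm{d}_p\bar{\theta}[v]+\rho(p)\,\mathrm{d}_p\nu_\Sigma[v]+\mathrm{d}_p\rho[v]\,\nu_\Sigma(p)$ into tangential and normal parts and uses their orthogonality, and then the lemma's own proof establishes only the linear independence, again by the same orthogonal splitting: a vanishing linear combination decouples into the tangential equation \eqref{eq_theta_imm_1}, which forces $\alpha_1=\dots=\alpha_d=0$ once $\|\rho\|_{C^0(M)}$ is small, and the scalar equation \eqref{eq_theta_imm_2}, which then gives $\alpha_{d+1}=0$. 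The payoff of that route is that the derivative of $\rho$ never enters the smallness condition — only $\|\rho\|_{C^0}$ must be small, which is exactly why Lemma \ref{theta_imm} can be stated under a $C^0$-bound. You instead prove linear independence (and hence the immersion property simultaneously) by a chartwise determinant/perturbation argument, viewing the matrix with columns $\partial_1\gamma_\rho,\dots,\partial_d\gamma_\rho,\nu_\Sigma\circ\gamma$ as a uniformly small perturbation of the unperturbed one, and you handle the embedded-patch claim by $C^1$-stability of embeddings of the compact set $\overline{W}$ (where the paper simply uses that an immersion is locally an embedding). Your argument is more elementary and self-contained, and it is legitimate here because membership in $U^h_{s,1}$ does provide the $C^1$-bound $\|\rho\|_{C^1(M)}<2R^\Sigma$ that your perturbation estimate consumes; the price is that it uses this stronger bound where the paper's orthogonality trick needs only $C^0$-smallness, and it duplicates work that the paper delegates to Lemma \ref{theta_imm}. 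Your attention to uniformity of the threshold over the finite atlas, and the observation that $\Gamma_\rho$ being an $h^{1+s}$-immersed closed hypersurface is then essentially definitional, match the paper's intent.
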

\begin{proof}
On account of Proposition \ref{theta_imm}, it remains to show that 
\begin{align*}
\big( \partial_1 \gamma_{\rho \, |x}, ..., \partial_d \gamma_{\rho \, |x}, \nu_{\Sigma \, |\gamma(x)} \big) \subset \R^{d+1}
\end{align*}
are linearly independent for every $x \in \overline{W}$. For this, fix $x \in \overline{W}$ and let $\alpha_1,...,\alpha_{d+1} \in \R$ with 
\begin{align*}
0 &= \sum_{i=1}^d \alpha_i \partial_i \gamma_{\rho \, |x} + \alpha_{d+1} \nu_{\Sigma \, | \gamma(x)} \\
&= \sum_{i=1}^d \alpha_i \mathrm{d}_{\gamma(x)}\bar{\theta}[\partial_i \gamma_{|x}] + \rho_{|\gamma(x)} \sum_{i=1}^d \alpha_i \mathrm{d}_{\gamma(x)}\nu_\Sigma[\partial_i \gamma_{|x}] + \left( \sum_{i=1}^d \alpha_i \partial_i(\rho \circ \gamma)_{|x} + \alpha_{d+1} \right) \nu_{\Sigma \, |\gamma(x)}.
\end{align*}
With the statement in \eqref{eq_theta_imm_perp},
\begin{align}\label{eq_theta_imm_1}
0 = \sum_{i=1}^d \alpha_i \mathrm{d}_{\gamma(x)}\bar{\theta}[\partial_i \gamma_{|x}] + \rho_{|\gamma(x)} \sum_{i=1}^d \alpha_i \mathrm{d}_{\gamma(x)}\nu_\Sigma[\partial_i \gamma_{|x}]
\end{align}
and 
\begin{align}\label{eq_theta_imm_2}
0 = \sum_{i=1}^d \alpha_i \partial_i(\rho \circ \gamma)_{|x} + \alpha_{d+1}
\end{align}
hold independently. For $\|\rho\|_{C^0(M)}$ sufficiently small, Equation \eqref{eq_theta_imm_1} yields $\alpha_1,...,\alpha_d=0$ and then $\alpha_{d+1}=0$ follows with Equation \eqref{eq_theta_imm_2}. So, the claimed linear independency does indeed hold. 
\end{proof}

Now, we turn to the promised regularity statements. 

\begin{lemma}\label{P,Q}
Let $s \in \R_{>0} \setminus \N$ and let $\Sigma = \bar{\theta}(M) \subset \R^{d+1}$ be an $h^{3+s}$-immersed closed hypersurface. We use Notations \ref{Not1}. For $R^\Sigma > 0$ sufficiently small, there exist functions 
\begin{align*}
P \in C^\infty\Big(U^h_{s,1},\mathcal{L}\big(Z_s, X_s\big)\Big) 
\text{ and } 
Q \in C^\infty\big(U^h_{s,1},X_s\big)
\end{align*}
such that the mean curvature $H(\rho)$ of the $h^{2+s}$-immersed closed hypersurface $\Gamma_\rho = \theta_\rho(M)$ from Lemma \ref{theta_h_wohldef} is given by
\begin{align*}
H(\rho) = P(\rho)[\rho] + Q(\rho) \text{ in } X_s
\end{align*}
for all $\rho \in U^h_{s,1} \cap Z_s$.
\end{lemma}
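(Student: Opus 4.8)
The plan is to compute the mean curvature $H(\rho)$ explicitly in a local parameterization and read off the decomposition into a part that is linear in the second derivatives of $\rho$ (the operator $P(\rho)$) and a remainder $Q(\rho)$ that depends only on $\rho$ and its first derivatives. Recall the classical formula $H = -\Div_{\Gamma_\rho} \nu_\rho$, and that in a local chart $(\gamma,W)$ the Laplace–Beltrami-type expression from Remark \ref{IMM_evolvHF_rho_bem} gives $H(\rho)\circ\gamma = -\sum_{i,j} g_\rho^{ij}\,\partial_i\partial_j\gamma_\rho\cdot\nu_\rho + (\text{terms with at most first derivatives of }\gamma_\rho)$; equivalently one uses $H = g^{ij} h_{ij}$ with the second fundamental form $h_{ij} = \partial_i\partial_j\gamma_\rho\cdot\nu_\rho$ (up to sign conventions fixed earlier). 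Since $\gamma_\rho = \bar\theta\circ\gamma + (\rho\circ\gamma)(\nu_\Sigma\circ\gamma)$, the only place where second derivatives of $\rho$ enter is the term $\partial_i\partial_j(\rho\circ\gamma)\,(\nu_\Sigma\circ\gamma)$ inside $\partial_i\partial_j\gamma_\rho$; everything else involves $\rho$ and $\partial_k(\rho\circ\gamma)$ only. This immediately suggests
\[
P(\rho)[\rho]\circ\gamma = -\sum_{i,j=1}^d g_\rho^{ij}\,\partial_i\partial_j(\rho\circ\gamma)\,\big(\nu_\Sigma\circ\gamma\cdot\nu_\rho\big) \cdot (\text{correction factors})
\]
up to the precise sign and the factor $a(\rho) = 1/(\nu_\Sigma\cdot\nu_\rho)$, with $Q(\rho)$ absorbing all first-order contributions.

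Concretely I would proceed as follows. First, fix the finite atlas $(\gamma_l,W_l)_{l=1,\dots,L}$ of $M$ from Remark \ref{hypersurface_remark} (and the subordinate $U_l$), so that the $h^{k+\alpha}(M)$ norms are computed through these finitely many charts. Second, on each chart write out $g^{\rho}_{ij} = \partial_i\gamma_\rho\cdot\partial_j\gamma_\rho$ using the expansion of $\partial_i\gamma_\rho$ given in Lemma \ref{theta_h_wohldef}; this is a polynomial expression in $\rho\circ\gamma$, $\partial_i(\rho\circ\gamma)$ and the fixed data $\bar\theta\circ\gamma, \nu_\Sigma\circ\gamma$ and their first derivatives, all of which lie in $h^{1+s}(\overline{W_l})$ since $\Sigma$ is $h^{3+s}$. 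Third, invoke Lemma \ref{theta_h_wohldef} to guarantee that $(\partial_1\gamma_\rho,\dots,\partial_d\gamma_\rho,\nu_\Sigma\circ\gamma)$ is a basis of $\R^{d+1}$ pointwise, hence $\det(g^\rho_{ij})$ is bounded away from zero on $\overline{W_l}$ uniformly for $\rho\in U^h_{s,1}$ (shrinking $R^\Sigma$ if needed) — this is where the smallness of $R^\Sigma$ is used — so that $g_\rho^{ij}$ exists and, being obtained from $g^\rho_{ij}$ by Cramer's rule, depends smoothly (in fact rationally, hence $C^\infty$) on $\rho$ in the $h^s$-topology. Likewise $\nu_\rho$ can be written as a normalized cross product of the $\partial_i\gamma_\rho$, hence $\nu_\rho\circ\gamma$ and $a(\rho) = 1/(\nu_\Sigma\cdot\nu_\rho)$ depend $C^\infty$-smoothly on $\rho\in U^h_{s,1}$ with values in $h^s$; this step also relies on Remark \ref{a_pos} (or rather reproves the bound $\tfrac12\le a(\rho)\le C$). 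Fourth, substitute into the chart formula for $H(\rho)\circ\gamma$, collect all terms carrying $\partial_i\partial_j(\rho\circ\gamma)$ into
\[
(P(\rho)[\eta])\circ\gamma \defr -\sum_{i,j=1}^d g_\rho^{ij}\,\big(\nu_\rho\circ\ \cdot\ \nu_\Sigma\circ\gamma\big)^{-1}\,\partial_i\partial_j(\eta\circ\gamma)
\]
(with the sign and factors matching the actual computation), and define $Q(\rho)\circ\gamma$ as the rest. One must check that these chart-wise definitions glue to well-defined elements $P(\rho)\in\mathcal{L}(Z_s,X_s)$ and $Q(\rho)\in X_s$: this holds because $H(\rho)$ itself is globally defined and $P(\rho)[\rho]$ is, so $Q(\rho) = H(\rho) - P(\rho)[\rho]$ is too, provided $P(\rho)$ is first shown to be chart-independent — which follows since the principal (second-order) part of a differential operator transforms tensorially, or more simply because the difference $H(\rho) - P(\rho)[\rho]$, computed in overlapping charts, contains no second derivatives of $\rho$ and hence the two candidate $P$'s must agree. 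Fifth, verify the mapping properties: $P(\rho)$ maps $Z_s = h^{2+s}(M)$ into $X_s = h^s(M)$ because the coefficients $g_\rho^{ij}(\nu_\rho\cdot\nu_\Sigma)^{-1}$ lie in $h^s$ and $h^s(M)$ is a Banach algebra (this is exactly why little Hölder spaces are used), and $Q(\rho)\in h^s(M) = X_s$ for the same algebra reason since it involves only $\rho$, its first derivatives and the fixed $h^{1+s}$-data; finally smoothness of $\rho\mapsto P(\rho)$ and $\rho\mapsto Q(\rho)$ follows from the fact that both are built by composition, multiplication, and inversion (bounded away from zero) of $C^\infty$ maps into Banach algebras, using that multiplication $h^s\times h^s\to h^s$ is bilinear continuous hence $C^\infty$ and that $x\mapsto 1/x$ is $C^\infty$ on the open set where $x$ is invertible.

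The main obstacle is not any single estimate but the bookkeeping of the decomposition together with the two smallness-dependent facts: (a) that $R^\Sigma$ can be chosen so that $\det(g^\rho_{ij})$ stays uniformly positive and $\nu_\Sigma\cdot\nu_\rho$ stays $\ge\tfrac12$ on every chart for all $\rho\in U^h_{s,1}$ — i.e. that the immersion $\theta_\rho$ remains non-degenerate, which is precisely the content of Lemma \ref{theta_h_wohldef}; and (b) the $C^\infty$-dependence on $\rho$, for which the key point is that every coefficient appearing is a rational function of $\rho$ and $\nabla_\Gamma$-type first derivatives with nowhere-vanishing denominator, hence analytic, so smoothness is automatic once one knows the algebra structure of $h^s(M)$ and the continuity of pointwise multiplication and of inversion. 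In short, the proof is a careful but routine unwinding of the local formula for $H$ plus an appeal to the Banach-algebra property of little Hölder spaces and to the non-degeneracy established in Lemma \ref{theta_h_wohldef}; I would present the chart computation for $H(\rho)\circ\gamma$, isolate the $\partial_i\partial_j\rho$-terms to define $P$, set $Q \defr H(\rho) - P(\rho)[\rho]$, and then verify regularity and smoothness using the algebra property.
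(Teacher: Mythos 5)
Your proposal is correct in substance, but it takes a more self-contained route than the paper: the paper does not redo the local computation at all, it simply quotes \cite[Lemma 3.1]{ES} (Escher--Simonett), which already provides the explicit chart-wise coefficients $p_{ij}(\rho), p_k(\rho), q(\rho)$ together with their $C^\infty$-dependence on $\rho$ in the $h^s$-topology, and then obtains the global statement by a partition of unity, adding only the remark that the ES/PS proofs are purely local and therefore carry over from the sphere/embedded setting to the immersed setting once the charts are chosen so small that $\theta_\rho(\gamma(\overline{W}))$ lies in an embedded patch. What you do instead is essentially reprove the content of that cited lemma: write $H$ via the second fundamental form in a chart, observe that second derivatives of $\rho$ enter only through $\partial_i\partial_j(\rho\circ\gamma)\,(\nu_\Sigma\circ\gamma)$, isolate those terms as $P(\rho)$, set $Q(\rho)\defr H(\rho)-P(\rho)[\rho]$, and establish the $C^\infty$-dependence of the coefficients via the Banach-algebra property of $h^s$, smoothness of matrix inversion and of the normalized cross product, and the non-degeneracy from Lemma \ref{theta_h_wohldef} for $R^\Sigma$ small. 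This buys self-containedness (and incidentally a $P$ whose principal coefficients $g_\rho^{ij}(\nu_\Sigma\cdot\nu_\rho)$ are manifestly positive definite, which would also serve the later ellipticity Lemma \ref{Vorfaktoren_elliptisch_P}), at the cost of redoing bookkeeping that the paper outsources; the paper's route is shorter but tied to the specific ES coefficient formulas, which it then reuses verbatim in the ellipticity proof. Your gluing argument and the use of the composition-operator results from the appendix are exactly the right tools. One small caution: in your displayed candidate for $P(\rho)$ the factor should be $\nu_\Sigma\cdot\nu_\rho$ (equivalently $1/a(\rho)$), not its inverse, since $h_{ij}=\partial_i\partial_j\gamma_\rho\cdot\nu_\rho$ produces the factor $\nu_\Sigma\cdot\nu_\rho$ multiplying $\partial_i\partial_j(\rho\circ\gamma)$; you hedged on signs and factors, so this is a slip in the written formula rather than a gap in the argument, but it should be fixed in a final write-up.
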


\begin{proof}
By \cite[Lemma 3.1]{ES}, for any sufficiently small local parameterization $(\gamma,W)$ of $M$, we have
\begin{gather*}
H(\rho) \circ \gamma = P(\rho)[\rho] \circ \gamma + Q(\rho) \circ \gamma \text{ with } \\
P(\rho)[u] \circ \gamma = \frac{1}{d} \left( \sum_{i,j=1}^d p_{ij}(\rho) \partial_i \partial_j (u \circ \gamma) + \sum_{k=1}^d p_k(\rho) \partial_k (u \circ \gamma) \right) \text{ and } 
Q(\rho) \circ \gamma = \frac{1}{d} q(\rho),
\end{gather*}
where $p_{ij}, p_i, q \in C^\infty\big(U^h_{s,1}, h^s(\overline{W}) \big)$ hold for $R^\Sigma>0$ sufficiently small. Hence,
\begin{align*}
P \in C^\infty\big( U^h_{s,1}, \mathcal{L}(Z_s,X_s) \big) 
\phantom{xx} \text{ and } \phantom{xx}
Q \in C^\infty( U^h_{s,1}, X_s )
\end{align*}
follow with the help of a partition of unity. 
Note that \cite{ES} assumes $\Sigma$ to be a sphere. In \cite[Section 2.2.5]{PS}, the same statement is shown for an arbitrary embedded closed hypersurface $\Sigma$ but as the proof therein is less clearly arranged, we chose to cite \cite{ES}. Both proofs reduce the statement to local coordinates and therefore neither the shape of a sphere nor the embeddedness property are necessary. Instead, the proofs can be transferred w.l.o.g. to our setting of an immersed closed hypersurface $\Sigma$, when choosing the local parameterization $(\gamma,W)$ so small that $\theta_\rho \big(\gamma(\overline{W})\big)$ is a subset of an embedded patch of $\Sigma$ and thus $(\gamma_\rho,W)$ is a local parameterization of an embedded patch of $\Sigma$.
\end{proof}

The fact that the mean curvature $H$ has a quasilinear structure is the key argument to ensure that the PDE for the height function \eqref{eq_lokEx_gls1} is also quasilinear. Even more, its main part $P(\rho)$ is elliptic, as we will see in the upcoming lemma. 

\begin{lemma}\label{Vorfaktoren_elliptisch_P}
Let $s \in \R_{>0} \setminus \N$ and let $\Sigma = \bar{\theta}(M) \subset \R^{d+1}$ be an $h^{3+s}$-immersed closed hypersurface. We use Notations \ref{Not1} and choose $P$ as in Lemma \ref{P,Q}. For $R^\Sigma>0$ sufficiently small and $\rho \in U^h_{s,1}$, $P(\rho) \in \mathcal{L}(Z_s,X_s)$ is a symmetric and elliptic differential operator of second order, i.e., given a sufficiently small local parameterization $(\gamma,W)$ of $M$,
\begin{align*}
P(\rho)[\cdot] \circ \gamma = \sum_{i,j} a^{ij} \partial_i \partial_j(\cdot \circ \gamma) + \text{ lower order terms }
\end{align*}
holds with a symmetric and positive definite coefficient matrix $[a^{ij}]_{i,j} \in h^s(\overline{W},\R^{d \times d})$.
\end{lemma}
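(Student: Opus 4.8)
The plan is to use the explicit local formula for the mean curvature of a graph-type hypersurface $\Gamma_\rho = \theta_\rho(M)$ over the reference surface $\Sigma$. First I would recall the known expression for the principal part of $H(\rho)$ in local coordinates. The mean curvature of a hypersurface $\{\bar\theta(z) + \rho(z)\nu_\Sigma(z)\}$ is, up to lower order terms, a divergence-form operator applied to $\rho$, whose second-order coefficients are built from the inverse of the first fundamental form $g^{ij}_\rho$ of $\Gamma_\rho$ together with the factor $a(\rho) = 1/(\nu_\Sigma \cdot \nu_\rho)$. Concretely, following the reduction in \cite{ES} or \cite[Section 2.2.5]{PS}, one obtains $P(\rho)[\cdot]\circ\gamma = \tfrac{1}{d}\sum_{i,j} p_{ij}(\rho)\,\partial_i\partial_j(\cdot\circ\gamma) + \text{l.o.t.}$ with $p_{ij}(\rho) = a(\rho)\, g_\rho^{ij}\circ\gamma$ (after symmetrization in $i,j$, which is harmless since $\partial_i\partial_j$ is already symmetric). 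So the first step is to pin down $a^{ij} = \tfrac{1}{d}\, a(\rho)\, g_\rho^{ij}\circ\gamma$ as the coefficient matrix, and to note $a^{ij} = a^{ji}$ is immediate because the inverse of a symmetric matrix is symmetric.

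Second, I would establish positive definiteness. Here the key facts are: (a) $[g^{\rho}_{ij}]$ is the Gram matrix $\partial_i\gamma_\rho\cdot\partial_j\gamma_\rho$ of the vectors $\partial_1\gamma_\rho,\dots,\partial_d\gamma_\rho$, which by Lemma \ref{theta_h_wohldef} are linearly independent on $\overline{W}$ (being part of a basis of $\R^{d+1}$), hence $[g^{\rho}_{ij}]$ is symmetric positive definite at every point of $\overline W$; consequently its inverse $[g_\rho^{ij}]$ is symmetric positive definite as well; (b) $a(\rho) = 1/(\nu_\Sigma\cdot\nu_\rho) \geq \tfrac12 > 0$ by Remark \ref{a_pos}, once $R^\Sigma$ is chosen small enough. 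Multiplying a symmetric positive definite matrix by the positive scalar $\tfrac{1}{d}a(\rho)$ preserves symmetry and positive definiteness, so $[a^{ij}]$ is symmetric and positive definite pointwise on $\overline W$. The ellipticity in the sense required (positive definiteness with a uniform lower bound on the smallest eigenvalue, after possibly shrinking $W$ and using compactness of $M$ together with the finitely many parameterizations of Remark \ref{hypersurface_remark}) then follows. Symmetry of the operator $P(\rho)$ as an element of $\mathcal L(Z_s,X_s)$ in the sense of the lemma is really just the symmetry of the coefficient matrix of the principal part, which we have already noted.

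Third, I would address the regularity of the coefficients: $a^{ij} = \tfrac1d a(\rho)\, g_\rho^{ij}\circ\gamma \in h^s(\overline W,\R^{d\times d})$. This follows because $\gamma_\rho = \theta_\rho\circ\gamma$ depends on $\rho$ as $\bar\theta\circ\gamma + (\rho\circ\gamma)(\nu_\Sigma\circ\gamma)$, so $\partial_i\gamma_\rho \in h^s(\overline W,\R^{d+1})$ depends smoothly (even affinely, hence in $C^\infty$) on $\rho \in U^h_{s,1}$ — using $\bar\theta\in h^{3+s}$, $\nu_\Sigma\in h^{2+s}$; thus $g^\rho_{ij}\in h^s(\overline W)$, and since $h^s(\overline W)$ is a Banach algebra and matrix inversion is smooth on the open set of invertible matrices, $g_\rho^{ij}\in h^s(\overline W)$ depends smoothly on $\rho$; finally $\nu_\rho$ is obtained from $\partial_i\gamma_\rho$ by the usual normalized cross-product formula, so $a(\rho) = 1/(\nu_\Sigma\cdot\nu_\rho)\in h^s(\overline W)$ as well, the denominator being bounded below. (In fact this is already contained in the assertion $p_{ij}\in C^\infty(U^h_{s,1},h^s(\overline W))$ from Lemma \ref{P,Q}, which I would simply invoke.)

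The main obstacle is purely bookkeeping rather than conceptual: one must make sure the lower-order terms genuinely do not interfere with the identification of the principal symbol, and that the local statements patch to a global statement on $M$ with uniform constants — this uses the finite atlas from Remark \ref{hypersurface_remark}, the compactness of $\overline W_l$, and continuity of the eigenvalues of $[a^{ij}]$ to get a uniform ellipticity constant. The only genuinely geometric input is the pointwise linear independence of $\partial_1\gamma_\rho,\dots,\partial_d\gamma_\rho$ on the closure $\overline W$, which is exactly what Lemma \ref{theta_h_wohldef} was set up to deliver, together with the positivity of $a(\rho)$ from Remark \ref{a_pos}; both are guaranteed by taking $R^\Sigma$ small.
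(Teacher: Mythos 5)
Your overall strategy is sound, and in the key positivity step it genuinely differs from the paper's: the paper quotes the explicit coefficients $p_{ij}(\rho)$ from \cite[Lemma 3.1]{ES}, written in terms of the inverse $[w^{kl}(\rho)]$ of the shifted metric $w_{kl}(\rho)=g_{kl}^{\bar{\theta}}+\dots$, and then gets symmetry and positive definiteness of $[a^{ij}_\rho]=\frac1d[p_{ij}(\rho)]$ by a perturbation argument: for $\|\rho\|_{C^1(M)}<2R^\Sigma$ with $R^\Sigma$ small, these coefficients are close to $\frac1d g^{ij}_{\bar{\theta}}$, the positive definite inverse first fundamental form of the reference surface. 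You instead argue intrinsically: the principal coefficients are a positive scalar multiple of $g_\rho^{ij}$, whose positive definiteness comes from the Gram-matrix structure of $[g^\rho_{ij}]$ via the linear independence in Lemma \ref{theta_h_wohldef}, with the scalar controlled by Remark \ref{a_pos}. That route is legitimate and buys you positivity of the perturbed metric itself rather than closeness to the reference metric; smallness of $R^\Sigma$ then enters only through Lemma \ref{theta_h_wohldef} and Remark \ref{a_pos}. Your regularity and patching remarks (finite atlas, compactness, Lemma \ref{P,Q} for $p_{ij}\in C^\infty(U^h_{s,1},h^s(\overline{W}))$) match the paper.

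There is, however, a concrete error in your identification of the principal part: the scalar factor is inverted. Starting from the formula in \cite[Lemma 3.1]{ES} and using $g^\rho_{kl}=w_{kl}(\rho)+\partial_k(\rho\circ\gamma)\,\partial_l(\rho\circ\gamma)$ together with a Sherman--Morrison type inversion, one finds $p_{ij}(\rho)=g_\rho^{ij}\big(1+\sum_{k,l}w^{kl}(\rho)\partial_k(\rho\circ\gamma)\partial_l(\rho\circ\gamma)\big)^{-1/2}=(\nu_\Sigma\cdot\nu_\rho)\,g_\rho^{ij}=g_\rho^{ij}/a(\rho)$, not $a(\rho)\,g_\rho^{ij}$; equivalently, from $H=\sum_{i,j}g_\rho^{ij}\,\nu_\rho\cdot\partial_i\partial_j\gamma_\rho+\text{l.o.t.}$ the coefficient of $\partial_i\partial_j(\rho\circ\gamma)$ is $g_\rho^{ij}\,\nu_\rho\cdot\nu_\Sigma$. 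The slip is harmless for the conclusion, since Remark \ref{a_pos} provides both $a\ge\frac12$ and $a\le C$, so $1/a(\rho)$ is also bounded below by a positive constant and your scalar-times-positive-definite argument goes through verbatim. But as written the identity $p_{ij}=a(\rho)\,g_\rho^{ij}$ is neither what \cite{ES} states nor verified in your text; either carry out the short computation above, or do as the paper does and read off symmetry and positive definiteness directly from the quoted $p_{ij}$ formula by perturbation from $[g^{ij}_{\bar{\theta}}]$ for $R^\Sigma$ small.
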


\begin{proof}
Let $\rho \in U^h_{s,1}$. With our sign convention, \cite[Lemma 3.1]{ES} yields
\begin{align*}
P(\rho) \circ \gamma = \frac{1}{d} \sum_{i,j} p_{ij}(\rho) \partial_i \partial_j(\cdot \circ \gamma) + \text{ lower order terms}
\end{align*}
with
\begin{align*}
p_{ij}(\rho) = \frac{w^{ij}(\rho) \left( 1 + \sum_{k,l} w^{kl}(\rho) \partial_k (\rho \circ \gamma) \partial_l (\rho \circ \gamma) \right) - \sum_{k,l} w^{ik}(\rho) w^{jl}(\rho) \partial_k (\rho \circ \gamma) \partial_l (\rho \circ \gamma)}{\left( 1 + \sum_{k,l} w^{kl}(\rho) \partial_k (\rho \circ \gamma) \partial_l (\rho \circ \gamma) \right)^{3/2}}
\end{align*}
and $w_{kl}(\rho) = g_{kl}^{\bar{\theta}} + (\rho \circ \gamma) \big( \partial_k (\nu_\Sigma \circ \gamma) \cdot \partial_l \gamma_{\bar{\theta}} + \partial_l (\nu_\Sigma \circ \gamma) \cdot \partial_k \gamma_{\bar{\theta}} \big) + (\rho \circ \gamma)^2 \big( \partial_k (\nu_\Sigma \circ \gamma) \cdot \partial_l (\nu_\Sigma \circ \gamma) \big)$ as well as $[w^{kl}(\rho)]_{k,l} = \big( [w_{kl}(\rho)]_{k,l} \big)^{-1}$. In particular, $a^{ij}_\rho \defr \frac{1}{d} p_{ij}(\rho) \in h^s(\overline{W})$ holds for all $i,j=1,...,d$. 
On account of $\rho \in U^h_{s,1}$, we have $\|\rho\|_{C^1(\Sigma)} < 2R^\Sigma$. Thus, choosing $R^\Sigma>0$ sufficiently small,
symmetry and positive definiteness of the first fundamental form $[g_{ij}^{\bar{\theta}}]_{i,j}$ and its inverse $[g^{ij}_{\bar{\theta}}]_{i,j}$ ensures the same for $[a^{ij}_\rho]_{i,j}$.
\end{proof}

We gather some further regularity statements in the following lemma. 

\begin{lemma}\label{nabla,div,a}
Let $s \in \R_{>0} \setminus \N$ and let $\Sigma = \bar{\theta}(M) \subset \R^{d+1}$ be an $h^{3+s}$-immersed closed hypersurface with unit normal $\nu_\Sigma$. We use the notation $\nabla_\rho, \Div_\rho, \Delta_\rho$ and $\nu_\rho$ as in Remark \ref{IMM_evolvHF_rho_bem} as well as Notations \ref{Not1}. For $R^\Sigma>0$ sufficiently small,
\begin{enumerate}
\item[(i)]
$\rho \mapsto \big( \nabla_\rho: f \mapsto \nabla_\rho f \big) \in C^\infty\big(U^h_{s,1},\mathcal{L}(Y_s,X_s^{d+1})\big)$ and \\ $\rho \mapsto \big( \Div_\rho: F \mapsto \Div_\rho F \big) \in C^\infty\big(U^h_{s,1},\mathcal{L}(Y_s^{d+1},X_s)\big)$ hold,
\item[(ii)]
$\rho \mapsto a(\rho) \defr \frac{1}{\nu_\rho \cdot \nu_\Sigma} \in C^\infty(U^h_{s,1},X_s)$ holds and
\item[(iii)]
there exist functions $D \in C^\infty\big(U^h_{s,1}, \mathcal{L}(Z_s,X_s)\big)$ and $J \in C^\infty\big(U^h_{1+s,1}, \mathcal{L}(Y_s,X_s)\big)$ with $J \in C^\infty_b\big( U^h_{1+s,1} \cap \mathcal{B},\mathcal{L}(Y_s, X_s)\big)$ for any bounded subset $\mathcal{B} \subset Z_s$ such that we have $\Delta_\rho u = D(\rho)[u] + J(\rho)[u]$ for all $\rho \in U^h_{1+s,1}$ and $u \in Z_s$. \\
In particular, $\rho \mapsto \big(\Delta_\rho: f \mapsto \Delta_\rho f \big) \in C^\infty\big(U^h_{1+s,1},\mathcal{L}(Z_s,X_s)\big)$ follows. 
\end{enumerate}
\end{lemma}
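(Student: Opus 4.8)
The plan is to prove each of the three statements by reducing to local coordinates via a sufficiently small local parameterization $(\gamma,W)$ of $M$, using the explicit formulas from Remark \ref{IMM_evolvHF_rho_bem}, and then checking that all coefficient functions depend smoothly (in the Fréchet sense) on $\rho$. The central observation is that each coefficient appearing in those formulas is built from $\bar\theta\circ\gamma$, $\nu_\Sigma\circ\gamma$, $\rho\circ\gamma$ and finitely many spatial derivatives of these (up to one derivative of $\rho$), combined by multiplication, and by inversion of the first fundamental form matrix. Since $\Sigma$ is $h^{3+s}$-immersed, the fixed data $\bar\theta\circ\gamma$ lies in $h^{3+s}(\overline W,\R^{d+1})$ and $\nu_\Sigma\circ\gamma$ in $h^{2+s}(\overline W,\R^{d+1})$, so after taking the derivatives occurring in the formulas everything still lives in $h^s(\overline W)$ (or better). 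The little Hölder spaces $h^s(\overline W)$ form a Banach algebra under pointwise multiplication, and matrix inversion is a smooth map on the open set of invertible matrices; hence the coefficients are $C^\infty$ functions of $\rho\in U^h_{s,1}$ into $h^s(\overline W)$, exactly as was already used for $p_{ij},p_i,q$ in the proof of Lemma \ref{P,Q}. A partition of unity subordinate to a finite cover $(\gamma_l,W_l)$ then upgrades the local statements to global ones on $M$.

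For part (i), I would write $\nabla_\rho f\circ\gamma=\sum_{i,j}g_\rho^{ij}\,\partial_i(f\circ\gamma)\,\partial_j\gamma_\rho$ and note that this is \emph{linear} in $f$ with coefficients $g_\rho^{ij}\,\partial_j\gamma_\rho$ that involve only $\rho\circ\gamma$ and its first derivatives; since $f\mapsto\partial_i(f\circ\gamma)$ maps $h^{1+s}=Y_s$ boundedly into $h^s=X_s$ and multiplication by an $h^s$ coefficient preserves $h^s$, we get $\nabla_\rho\in\mathcal L(Y_s,X_s^{d+1})$, and the dependence $\rho\mapsto\nabla_\rho$ is $C^\infty$ because the coefficients are (building on Lemma \ref{Vorfaktoren_elliptisch_P}'s computation of $w_{kl}(\rho)$, equivalently $g_{ij}^{\rho}$, and smoothness of inversion). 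The argument for $\Div_\rho$ is identical. For part (ii), $a(\rho)=1/(\nu_\rho\cdot\nu_\Sigma)$: one needs $\nu_\rho$ as a smooth function of $\rho$, which follows from the usual formula for the unit normal to a graph-type surface — $\nu_\rho\circ\gamma$ is a normalization of the cross-product-type expression in $\partial_1\gamma_\rho,\dots,\partial_d\gamma_\rho$, hence a smooth function of $\rho\in U^h_{s,1}$ valued in $h^s$ — together with $\nu_\rho\cdot\nu_\Sigma=1/a(\rho)\geq 1/2>0$ on $U^h_{s,1}$ (Remark \ref{a_pos}), so that pointwise reciprocal, again a smooth map away from zero, gives $a(\rho)\in C^\infty(U^h_{s,1},X_s)$.

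Part (iii) is the most delicate and I expect it to be the main obstacle, because the Laplace–Beltrami formula
\begin{align*}
\Delta_\rho f\circ\gamma=\sum_{i,j}g_\rho^{ij}\,\partial_i\partial_j(f\circ\gamma)+\sum_{i,j,k,l}g_\rho^{ij}\,\partial_i\big(g_\rho^{kl}\partial_l\gamma_\rho\big)\cdot\partial_j\gamma_\rho\,\partial_k(f\circ\gamma)
\end{align*}
contains, in the lower-order term, a factor $\partial_i\big(g_\rho^{kl}\partial_l\gamma_\rho\big)$ which differentiates the metric and therefore involves \emph{second} spatial derivatives of $\rho\circ\gamma$. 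So the first-order coefficient of $\Delta_\rho$ does not lie in $h^s$ for general $\rho\in U^h_{s,1}$; this is exactly why the statement splits $\Delta_\rho=D(\rho)+J(\rho)$ with $D(\rho)$ the principal part having $h^s$-coefficients $g_\rho^{ij}$ (smooth in $\rho\in U^h_{s,1}\subset Y_s$, as in part (i)), and $J(\rho)$ the remainder, whose coefficients involve one extra derivative of $\rho$ and hence need $\rho\in Z_s$ for the coefficients to be merely $C^{s}\subset h^{s'}$-regular, forcing the argument set $U^h_{1+s,1}$ (where $\rho\in h^{1+s}$, so $\partial^2(\rho\circ\gamma)\in h^s$ after one more derivative is available — here one uses $h^{1+s}$ so that second derivatives are $h^{s-1}$... actually one uses that on $U^h_{1+s,1}$ the coefficients of $J$ lie in $h^{s}$). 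I would verify: $D(\rho)[u]\circ\gamma=\sum g_\rho^{ij}\partial_i\partial_j(u\circ\gamma)$ gives $D\in C^\infty(U^h_{s,1},\mathcal L(Z_s,X_s))$ since $u\mapsto\partial_i\partial_j(u\circ\gamma)$ is bounded $Z_s\to X_s$ and the coefficients are smooth in $\rho$; and $J(\rho)[u]$ collects the first-order part, linear in $u$ with coefficient a polynomial expression in $g_\rho^{ij}$, $\partial\gamma_\rho$, $\partial^2\gamma_\rho$ — which for $\rho\in h^{1+s}$ lies in $h^s(\overline W)$ — so $J\in C^\infty(U^h_{1+s,1},\mathcal L(Y_s,X_s))$, with uniform bounds on $J$ and its differentials over $U^h_{1+s,1}\cap\mathcal B$ for bounded $\mathcal B\subset Z_s$ because on such a set $\|\rho\|_{Z_s}$ controls $\|\partial^2(\rho\circ\gamma)\|_{h^s}$ and hence all the coefficients. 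The final claim $\rho\mapsto\Delta_\rho\in C^\infty(U^h_{1+s,1},\mathcal L(Z_s,X_s))$ is then immediate from $\Delta_\rho=D(\rho)+J(\rho)$ together with the continuous embedding $Z_s\hookrightarrow Y_s$ and the obvious $U^h_{1+s,1}\subset U^h_{s,1}$, so $D(\rho)\in\mathcal L(Z_s,X_s)$ and $J(\rho)\in\mathcal L(Y_s,X_s)\hookrightarrow\mathcal L(Z_s,X_s)$, with sums of $C^\infty$ maps being $C^\infty$. Throughout, the only genuinely new analytic inputs beyond bookkeeping are the Banach-algebra property of little Hölder spaces, smoothness of matrix inversion, and the mapping properties of $u\mapsto\partial^\alpha(u\circ\gamma)$ between the spaces $X_s,Y_s,Z_s$; all of these are standard and were already invoked implicitly in the proofs of Lemmas \ref{P,Q} and \ref{Vorfaktoren_elliptisch_P}.
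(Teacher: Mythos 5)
Your treatment of (i) and (iii) follows the paper's proof in all essentials: reduction to a sufficiently small local parameterization, the explicit formulas for $\nabla_\rho$, $\Div_\rho$, $\Delta_\rho$, smoothness of $\rho\mapsto\partial_j\gamma_\rho$ and $\rho\mapsto g^{ij}_\rho$ via the algebra property of little Hölder spaces and smoothness of matrix inversion on the invertible matrices, linearity in $f$ through $f\mapsto\partial^\alpha(f\circ\gamma)$, the splitting $\Delta_\rho=D(\rho)+J(\rho)$ with the principal part keeping $h^s$-coefficients for $\rho\in U^h_{s,1}$ while the first-order remainder needs $\rho\in U^h_{1+s,1}$, boundedness of $J$ on bounded subsets of $Z_s$, and a partition of unity to globalize. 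One small slip there: membership $\rho\in U^h_{1+s,1}$ means $\rho\in Z_s=h^{2+s}(M)$ (Notation \ref{Not1} with $s$ replaced by $1+s$), not $\rho\in h^{1+s}(M)$; you correct this yourself mid-sentence, and with $\rho\in h^{2+s}$ the coefficients of $J(\rho)$, which contain $\partial_i g^{kl}_\rho$ and $\partial_i\partial_l\gamma_\rho$, do lie in $h^s(\overline W)$ with norms controlled by $\|\rho\|_{Z_s}$, exactly as in the paper.

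The genuine flaw is in (ii): your justification that the denominator does not vanish is circular. You invoke Remark \ref{a_pos} to get $\nu_\rho\cdot\nu_\Sigma=1/a(\rho)\geq 1/2$ on the relevant set, but Remark \ref{a_pos} is itself a consequence of part (ii) of the present lemma — it uses the smoothness (hence continuity) of $\rho\mapsto a(\rho)$ that you are in the middle of proving. The non-circular input, and the one the paper uses, is Lemma \ref{theta_h_wohldef}: for $\rho\in U^h_{s,1}$ with $R^\Sigma$ sufficiently small, the vectors $\big(\partial_1\gamma_{\rho\,|x},\dots,\partial_d\gamma_{\rho\,|x},\nu_\Sigma\circ\gamma_{|x}\big)$ are linearly independent at every $x\in\overline W$. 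Since the generalized cross product $\mathcal{K}(\partial_1\gamma_\rho,\dots,\partial_d\gamma_\rho)$ is a nonzero vector spanning the orthogonal complement of $\mathrm{span}(\partial_1\gamma_\rho,\dots,\partial_d\gamma_\rho)$, vanishing of $\mathcal{K}(\partial_1\gamma_\rho,\dots,\partial_d\gamma_\rho)\cdot(\nu_\Sigma\circ\gamma)$ at a point would force $\nu_\Sigma\circ\gamma$ into that span, contradicting the linear independence; so the denominator is pointwise nonzero. The paper then sidesteps any quantitative lower bound altogether by composing $G(\rho)=(\partial_1\gamma_\rho,\dots,\partial_d\gamma_\rho,\nu_\Sigma\circ\gamma)$, which takes values in the open set of linearly independent frames, with the smooth map $f(v_1,\dots,v_{d+1})=|\mathcal{K}(v_1,\dots,v_d)|\,/\,\big(\mathcal{K}(v_1,\dots,v_d)\cdot v_{d+1}\big)$ via Corollary \ref{compositionOP_finitedim}(ii); this yields well-definedness and $C^\infty$-dependence of $a(\rho)\circ\gamma$ on $\rho$ in one stroke. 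With this replacement for your appeal to Remark \ref{a_pos}, your argument for (ii) becomes correct and essentially coincides with the paper's.
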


\begin{proof}$\phantom{.}$
\begin{enumerate}
\item[Ad (i)]
Let $f: \R^{d+1} \times \R^{d+1} \times \R^{d+1} \times \R \times \R \rightarrow \R^{d+1}, \, f(v_1,v_2,v_3,u_1,u_2) \defr v_1 + u_2 v_2 + u_1 v_3$. As $\Sigma=\bar{\theta}(M)$ is an $h^{2+s}$-immersed hypersurface, we have $\partial_j \gamma, \nu_\Sigma \circ \gamma, \partial_j(\nu_\Sigma \circ \gamma) \in h^s(\overline{W},\R^{d+1})$ for any sufficiently small local parameterization $(\gamma,W)$ of $M$. Thus, smoothness of $f$ and Corollary \ref{compositionOP_finitedim}(ii) yield
\begin{align*}
F \in C^\infty\big( h^s(\overline{W}) \times h^s(\overline{W}), h^s(\overline{W},\R^{d+1})\big) \cap C^\infty_b\big( B,h^s(\overline{W},\R^{d+1}) \big)
\end{align*}
for $F: (u_1,u_2) \mapsto \partial_j \gamma + u_2(\nu_\Sigma \circ \gamma) + u_1 \partial_j(\nu_\Sigma \circ \gamma)$ and arbitrary bounded subsets $B \subset h^s(\overline{W}) \times h^s(\overline{W})$. Additionally, $G: u \mapsto \big( u \circ \gamma, \partial_j(u \circ \gamma) \big) \in \mathcal{L}\big( Y_s,h^s(\overline{W}) \times h^s(\overline{W}) \big)$ holds and therefore we have
\begin{align*}
\rho \mapsto \partial_j \gamma_\rho = F \circ G(\rho) \in C^\infty\big( Y_s, h^s(\overline{W},\R^{d+1})\big) \cap C^\infty_b\big( \mathcal{B},h^s(\overline{W},\R^{d+1}) \big)
\end{align*}
for bounded subsets $\mathcal{B} \subset Y_s$. In particular, $\rho \mapsto g^\rho_{ij} = \partial_i \gamma_\rho \cdot \partial_j \gamma_\rho \in C^\infty\big( Y_s, h^s(\overline{W})\big)$ and $\rho \mapsto g^\rho_{ij} \in C^\infty_b\big( \mathcal{B},h^s(\overline{W})\big)$ follow. 
According to Lemma \ref{theta_h_wohldef}, for $\rho \in U^h_{s,1}$ with $R^\Sigma>0$ sufficiently small, $[g_{ij}^\rho]_{1 \leq i,j \leq d}$ is invertible on $\overline{W}$ and thus $\min_{\overline{W}} |\det [g_{ij}^\rho]| > 0$ holds. So, with the open subset $U \defr \{ A \in \R^{d \times d} \, | \, \det A \neq 0 \}$, we have $[g_{ij}^\rho] \in h^s(\overline{W},U)$ for all $\rho \in U^h_{s,1}$. Even more, as $\rho \mapsto \min_{\overline{W}} |\det [g_{ij}^\rho]|$ is continuous as mapping on $C^1(M)$, there exists $\varepsilon>0$ with $\min_{\overline{W}} |\det [g_{ij}^\rho]| \geq \varepsilon$ for all $\rho \in U^h_{s,1} \subset \{ \rho \in C^1(M) \, | \, \|\rho\|_{C^1(M)} < 2R^\Sigma \}$ with $R^\Sigma>0$ sufficiently small. For the closed subset $\mathcal{A} \defr \{ A \in \R^{d \times d} \, | \, |\det A| \geq \varepsilon \} \subset U$, we thus have $[g_{ij}^\rho] \in h^s(\overline{W},\mathcal{A})$ for all $\rho \in U^h_{s,1}$. In particular, $\rho \mapsto [g_{ij}^\rho] \in C^\infty\big( U^h_{s,1}, h^s(\overline{W},U)\big) \cap C^\infty_b\big( U^h_{s,1} \cap \mathcal{B}, h^s(\overline{W},\mathcal{A})\big)$ follows.
By Remark \ref{h_Matrixinverse}, $(\cdot)^{-1} \in C^\infty\big( h^s(\overline{W},U), h^s(\overline{W},\R^{d \times d}) \big) \cap C^\infty_b\big( \mathscr{B},h^s(\overline{W},\R^{d \times d}) \big)$ holds for the inversion $(\cdot)^{-1}$ of matrices and any bounded subset $\mathscr{B} \subset h^s(\overline{W},\mathcal{A})$.
Hence, combination implies
\begin{align*}
\rho \mapsto g^{ij}_\rho \in C^\infty\big( U^h_{s,1}, h^s(\overline{W})\big) \cap C^\infty_b\big( U^h_{s,1} \cap \mathcal{B}, h^s(\overline{W}) \big).
\end{align*}
Due to $f \mapsto \partial_i (f \circ \gamma) \in \mathcal{L}\big( Y_s, h^s(\overline{W})\big)$ and $F \mapsto \partial_i (F \circ \gamma) \in \mathcal{L}\big( Y_s^{d+1}, h^s(\overline{W},\R^{d+1})\big)$, we finally have
\begin{align*}
(\rho,f) &\mapsto \nabla_\rho f \circ \gamma = \sum_{i,j} g_\rho^{ij} \partial_i(f  \circ \gamma) \partial_j \gamma_\rho \in C^\infty\big(U^h_{s,1}, \mathcal{L}(Y_s,h^s(\overline{W},\R^{d+1}))\big) \text{ and } \\
(\rho,F) & \mapsto \Div_\rho F \circ \gamma = \sum_{i,j} g_\rho^{ij} \partial_i(F  \circ \gamma) \cdot  \partial_j \gamma_\rho \in C^\infty\big(U^h_{s,1}, \mathcal{L}(Y_s^{d+1},h^s(\overline{W}))\big).
\end{align*} 
\item[Ad (iii)]
For any sufficiently small local parameterization $(\gamma,W)$ of $M$, we have 
\begin{align*}
\Delta_\rho f \circ \gamma = \sum_{i,j} g_\rho^{ij} \, \partial_i \partial_j (f \circ \gamma) + \sum_{i,j,k,l} g_\rho^{ij} \partial_i \big( g_\rho^{kl} \partial_l \gamma_\rho \big) \cdot \partial_j \gamma_\rho \, \partial_k (f \circ \gamma)
\end{align*}
by Remark \ref{IMM_evolvHF_rho_bem}. We choose $D$ as the principal part of $\Delta$ and define $J \defr \Delta - D$ such that   
\begin{align*}
D(\rho)[f] \circ \gamma &= \sum_{i,j} g_\rho^{ij} \, \partial_i \partial_j (f \circ \gamma) \phantom{xxx} \text{ and } \\
J(\rho)[f] \circ \gamma &= \sum_{i,j,k,l} g_\rho^{ij} \partial_i \big( g_\rho^{kl} \partial_l \gamma_\rho \big) \cdot \partial_j \gamma_\rho \, \partial_k (f \circ \gamma)
\end{align*}
hold on $\overline{W}$. With the help of a partition of unity, 
$D(\rho)[f]$ and $J(\rho)[f]$ are well-defined on the whole hypersurface $M$. As in (i), we have $\rho \mapsto g_\rho^{ij} \in C^\infty\big(U^h_{s,1}, h^s(\overline{W})\big)$ and on account of $f \mapsto \partial_i \partial_j (f \circ \gamma) \in \mathcal{L}\big(Z_s,h^s(\overline{W})\big)$ 
\begin{align*}
(\rho, f) \mapsto D(\rho)[f] \circ \gamma \in C^\infty\big( U^h_{s,1}, \mathcal{L}(Z_s,h^s(\overline{W}))\big)
\end{align*}
follows. 
But we only needed $\Sigma$ to be an $h^{2+s}$-immersed hypersurface for the proof of (i). Thus, also
\begin{align*}
\rho \mapsto \partial_j \gamma_\rho &\in C^\infty\big(Z_s, h^{1+s}(\overline{W},\R^{d+1})\big) \cap C^\infty_b\big( \mathcal{B}, h^{1+s}(\overline{W},\R^{d+1})\big) \text{ and } \\
\rho \mapsto g^{ij}_\rho &\in C^\infty\big(U^h_{1+s,1},h^{1+s}(\overline{W})\big) \cap C^\infty_b\big( U^h_{1+s,1} \cap \mathcal{B},h^{1+s}(\overline{W})\big)
\end{align*}
hold for bounded subsets $\mathcal{B} \subset Z_s$ with $h^{1+s}(\overline{W},\R^m) \hookrightarrow h^s(\overline{W},\R^m)$ for $m \in \{1,d+1\}$ due to Lemma \ref{HS_Einbettung}. Differentiating once 
yields
\begin{align*}
\rho \mapsto \partial_i \partial_l \gamma_\rho &\in C^\infty\big(Z_s, h^{s}(\overline{W},\R^{d+1})\big) \cap C^\infty_b\big( \mathcal{B},h^s(\overline{W},\R^{d+1})\big) \text{ and } \\
\rho \mapsto \partial_i g^{kl}_\rho &\in C^\infty\big(U^h_{1+s,1},h^{s}(\overline{W})\big) \cap C^\infty_b\big( U^h_{1+s,1} \cap \mathcal{B},h^s(\overline{W})\big).
\end{align*}
Due to $f \mapsto \partial_k(f \circ \gamma) \in \mathcal{L}\big(Y_s,h^s(\overline{W})\big)$, we hence have
\begin{align*}
(\rho, f) \mapsto J(\rho)[f] \circ \gamma &\in C^\infty\big( U^h_{1+s,1}, \mathcal{L}(Y_s, h^s(\overline{W}))\big) \cap C^\infty_b\big( U^h_{1+s,1} \cap \mathcal{B},\mathcal{L}(Y_s, h^s(\overline{W}))\big).
\end{align*}
\item[Ad (ii)]
Let $\mathcal{K}: (\R^{d+1})^d \rightarrow \R^{d+1}$ be a generalized cross product; 
in particular $\mathcal{K}$ is smooth. For the open subset
\begin{align*}
U \defr \big\{ (v_1,...,v_{d+1}) \in (\R^{d+1})^{d+1} \, \big| \, (v_1,...,v_{d+1}) \subset \R^{d+1} \text{ linearly independent} \big\},
\end{align*}
the map $f: U \rightarrow \R$ with
\begin{align*}
f(v_1,...,v_{d+1}) \defr \frac{|\mathcal{K}(v_1,...,v_d)|}{\mathcal{K}(v_1,...,v_d) \cdot v_{d+1}}
\end{align*}
is well-defined and also smooth. So, by Corollary \ref{compositionOP_finitedim}(ii), $F \in C^\infty\big( h^s(\overline{W},U),h^s(\overline{W})\big)$ holds with $\big(F(u)\big)(x) \defr f\big(u(x)\big)$ for $u: \overline{W} \rightarrow U$. 
As in the proof of (i), we have $\rho \mapsto \partial_j \gamma_\rho \in C^\infty\big(Y_s,h^s(\overline{W},\R^{d+1})\big)$ for any sufficiently small local parameterization $(\gamma,W)$ of $M$ and thus $G: \rho \mapsto (\partial_1 \gamma_\rho,...,\partial_d \gamma_\rho,\nu_\Sigma \circ \gamma) \in C^\infty\big(Y_s,h^s(\overline{W},(\R^{d+1})^{d+1}) \big)$. Due to Lemma \ref{theta_h_wohldef}, $\big( \partial_1 \gamma_{\rho \, |x}, ..., \partial_d \gamma_{\rho \, |x}, \nu_\Sigma \circ \gamma_{|x} \big) \subset \R^{d+1}$ are linearly independent for every $x \in \overline{W}$ if $\rho \in U^h_{s,1}$ with $R^\Sigma>0$ sufficiently small. Therefore, $G \in C^\infty\big(U^h_{s,1},h^s(\overline{W},U) \big)$ follows. Composition yields 
\begin{align*}
\big( F \circ G \big)(\rho) = \frac{|K(\partial_1 \gamma_\rho, ..., \partial_d \gamma_\rho)|}{K(\partial_1 \gamma_\rho, ..., \partial_d \gamma_\rho) \cdot (\nu_\Sigma \circ \gamma)} = \frac{1}{(\nu_\rho \circ \gamma) \cdot (\nu_\Sigma \circ \gamma)} = a(\rho) \circ \gamma
\end{align*}
and hence $\rho \mapsto a(\rho) \circ \gamma \in C^\infty\big( U^h_{s,1}, h^s(\overline{W}) \big)$. 
\qedhere
\end{enumerate}
\end{proof}

\begin{remark}\label{a_pos}
Let $s \in \R_{>0} \setminus \N$, let $\Sigma = \bar{\theta}(M)$ be an $h^{3+s}$-immersed closed hypersurface and let $\alpha \in (0,1)$ with $\alpha \leq s$. We use Notations \ref{Not1}. Due to the smoothness of $a: U^h_{\alpha,1} \rightarrow X_{\alpha}$, $a(\rho) \defr \frac{1}{\nu_\rho \cdot \nu_\Sigma}$ by Lemma \ref{nabla,div,a}(ii) and $a(0) = \frac{1}{|\nu_\Sigma|^2}=1$ for $0 \in U^h_{\alpha,1}$, we can choose $R^\Sigma>0$ sufficiently small such that $a \geq \frac{1}{2}$ holds on $\big\{ \rho \in Y_{\alpha} \, \big| \, \|\rho\|_{Y_{\alpha}} < 2R^\Sigma \big\}$. In particular, we thus have $a \geq \frac{1}{2}$ on the set $U^h_s$ defined in Notations \ref{Not2}. Analogously, there exists a constant $C>0$ such that $\|a(\rho)\|_{X_\alpha} \leq C$ holds for all $\rho \in U^h_s$. 
\end{remark}

The pullback $\Delta_\rho$ of the Laplace-Betrami operator obviously is a linear operator, so that the PDE for the concentration \eqref{eq_lokEx_gls2} is quasilinear. Its parabolicity relies mainly on the fact that $\Delta_\rho$ is an elliptic operator, as we state in the next remark.  

\begin{remark}\label{Vorfaktoren_elliptisch_Laplace}
Let $s \in \R_{>0} \setminus \N$ and let $\Sigma = \bar{\theta}(M) \subset \R^{d+1}$ be an $h^{3+s}$-immersed closed hypersurface. We use the notation $\Delta_\rho$ as in Remark \ref{IMM_evolvHF_rho_bem} as well as Notations \ref{Not1}. For $R^\Sigma>0$ sufficiently small and $\rho \in U^h_{1+s,1}$, the pullback $\Delta_\rho \in \mathcal{L}(Z_s,X_s)$ of the Laplace-Beltrami operator is a symmetric and elliptic differential operator of second order, as for a sufficiently small local parameterization $(\gamma,W)$ of $M$, 
\begin{align*}
\Delta_\rho[\cdot] \circ \gamma = \sum_{i,j} g_\rho^{ij} \partial_i \partial_j(\cdot \circ \gamma) + \text{ lower order terms }
\end{align*}
holds with the symmetric and positive definite matrix $[g_\rho^{ij}]_{i,j} \in h^s(\overline{W},\R^{d \times d})$.
\end{remark}

We end the collection of regularity statements by a simple consequence of Section \ref{Sec_CompOp} on the regularity of composition operators that will be applied to the functions $G$ and $g$ later on. 

\begin{lemma}\label{g}
Let $s \in \R_{>0} \setminus \N$ and let $\Sigma = \bar{\theta}(M) \subset \R^{d+1}$ be an $h^{1+s}$-immersed closed hypersurface. We use Notations \ref{Not1}. If $F \in C^{k+\lfloor s \rfloor+2}(\R)$, we have
\begin{align*}
u \mapsto F(u) \, \in \, C^k(X_s,X_s)
\end{align*}
 and in particular, $u \mapsto F(u) \in C^k(U^c_s,X_s)$. 
\end{lemma}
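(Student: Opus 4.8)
The plan is to reduce the statement to the known regularity of Nemytskii (composition) operators on little Hölder spaces, which the paper has collected in Section \ref{Sec_CompOp}. The key point is that $X_s = h^s(M)$ is, for $s \in \R_{>0} \setminus \N$ and $M$ a closed $h^{1+s}$-hypersurface, a Banach algebra under pointwise multiplication, and that substitution into a sufficiently smooth scalar function $F$ defines a map of class $C^k$ on $X_s$, with the loss of $\lfloor s \rfloor + 2$ derivatives accounting for the two ``extra'' derivatives needed to control the Hölder seminorm of the top-order term plus the $\lfloor s \rfloor$ integer derivatives already present in the norm of $h^s$.

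First I would pass to local coordinates: fix a finite atlas $(\gamma_l, W_l)_{l=1,\dots,L}$ of $M$ as in Remark \ref{hypersurface_remark}, so that by definition of $h^s(M)$ the norm $\|u\|_{X_s}$ is equivalent to $\sum_l \|u \circ \gamma_l\|_{h^s(\overline{W_l})}$, and the statement $u \mapsto F(u) \in C^k(X_s, X_s)$ is equivalent to $v \mapsto F \circ v \in C^k\big(h^s(\overline{W_l}), h^s(\overline{W_l})\big)$ for each $l$ on the relevant (bounded, open) subsets, since composition with $\gamma_l$ commutes with the pointwise application of $F$. Thus everything is reduced to the Euclidean model statement on $\overline{W}$ for a bounded convex open $W \subset \R^d$.

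Next I would invoke the Euclidean composition-operator result from Section \ref{Sec_CompOp} (this is exactly the type of statement referenced as Corollary \ref{compositionOP_finitedim}, now in the scalar one-variable setting): for $W \subset \R^d$ bounded convex open, $s \in \R_{>0}\setminus\N$ and $F \in C^{k + \lfloor s\rfloor + 2}(\R)$, the map $v \mapsto F \circ v$ is of class $C^k$ from $h^s(\overline{W})$ into itself. The mechanism is standard: $h^s(\overline{W})$ is a Banach algebra, so products and chain-rule expressions of Hölder functions stay Hölder; one writes the candidate $j$-th Fréchet derivative of $v \mapsto F\circ v$ via the Faà di Bruno formula as a finite sum of terms $F^{(m)}(v)\cdot \prod (\text{multilinear factors in the increments})$ with $m \le j + \lfloor s\rfloor + 1$, checks that each such term lies in the appropriate space of multilinear maps using the algebra property together with the elementary fact that $w \mapsto F^{(m)}(w)$ maps $h^s(\overline{W})$ continuously into $h^s(\overline{W})$ when $F^{(m)} \in C^{\lfloor s\rfloor + 2}$ (one more derivative than $\lceil s\rceil$ to get the little-Hölder condition on the top derivative), and finally verifies the remainder estimate showing these are genuinely the Fréchet derivatives and that they depend continuously on $v$. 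Finally, restricting to the open set $U^c_s = \{u \in Y_s \mid \|u\|_{Y_s} < 2R^c\}$ and using the continuous embedding $Y_s = h^{1+s}(M) \hookrightarrow X_s = h^s(M)$ gives $u \mapsto F(u) \in C^k(U^c_s, X_s)$ as a composition of a bounded linear inclusion with a $C^k$ map.

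The main obstacle is purely bookkeeping rather than conceptual: making sure the derivative count is exactly right, i.e. that $k + \lfloor s\rfloor + 2$ derivatives of $F$ suffice (and are needed) — $k$ for the Fréchet differentiability order, one for differentiating the already-present $\lfloor s\rfloor$ integer derivatives in $h^s$, and one more to turn a uniform Hölder bound into the genuine \emph{little}-Hölder decay $\lim_{R\to 0}[\cdot]^R = 0$ required by Definition \ref{definition_Hölder}. I expect the little-Hölder (as opposed to ordinary Hölder) part of the argument to be the only place demanding a bit of care: one shows that if $F^{(m)}$ is $C^1$ and $v \in h^s(\overline{W})$, then $F^{(m)}\circ v$ again satisfies the vanishing-seminorm condition, using the mean value theorem $F^{(m)}(v(x)) - F^{(m)}(v(y)) = F^{(m+1)}(\xi)\,(v(x)-v(y))$ and the fact that $v$ itself is in the little Hölder class. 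All of this is subsumed in the cited composition-operator section, so the proof in the paper should be short.
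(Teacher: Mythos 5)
Your proposal is correct and follows essentially the same route as the paper: localize via charts using the linear map $u \mapsto u \circ \gamma_l$ and then invoke the composition-operator results of Section \ref{Sec_CompOp} (the paper applies Proposition \ref{compositionOP_C0}(iii) on $h^s(\overline{W},(-R,R))$ with $R>0$ arbitrary, which is the same mechanism as the finite-dimensional Corollary \ref{compositionOP_finitedim} you cite). The Faà di Bruno/Banach-algebra details you sketch are already subsumed in those cited results, so no further work is needed.
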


\begin{proof}
Let $(\gamma,W)$ be any sufficiently small local parameterization of $M$ and let $R>0$. Due to $F \in C^{k+\lfloor s \rfloor+2}_b\big((-R,R)\big)$, Proposition \ref{compositionOP_C0}(iii) yields $F \in C^k\big( h^s(\overline{W},(-R,R)), h^s(\overline{W})\big)$. As $R>0$ was arbitrary, $F \in C^k\big( h^s(\overline{W}),h^s(\overline{W}) \big)$ holds. With $u \mapsto u \circ \gamma \in \mathcal{L}\big(X_s,h^s(\overline{W}) \big)$ 
the claim follows. 
\end{proof}

Having gathered these general regularity statements, we proceed to the more specific setting in which we will prove the existence of short-time solutions. First, we list the assumptions needed for our proof. 

\begin{ass}\label{Vor}$\phantom{.}$
\begin{enumerate}
\item[(i)]
Let $\alpha \in (0,1)$ and $\beta \in (0,\frac{1}{2})$ with $2\beta + \alpha \notin \N$. 
Furthermore, let $G \in C^7(\R)$ with $G''>0$ and $g \defr G - G' \cdot \Id > 0$. 
\item[(ii)]
Let $\Sigma = \bar{\theta}(M) \subset \R^{d+1}$ be an $h^{4+\alpha}$-immersed closed hypersurface with unit normal $\nu_\Sigma$ and let $R^\Sigma>0$ be sufficiently small. 
\item[(iii)]
Let $u_0 \in h^{2+2\beta+\alpha}(M)$ and let $\delta_1>R^\Sigma$ be arbitrary. 
\item[(iv)]
Let $R^c,R^h>0$ be sufficiently large such that $2\|u_0\|_{h^{2+\alpha}(M)} \leq R^c$ and $2\delta_1 \leq R^h$ holds. Let $\delta_0 \in (0,R^\Sigma)$. Then, let $T \in (0,1)$ be sufficiently small such that 
\begin{align}\label{Absch_T}
R^h T^\beta + \delta_0 < R^\Sigma
\end{align}
is valid. 
\end{enumerate}
\end{ass}

We give a few comments on these assumptions and explain why they are postulated by refering to later statements. So, these comments will not be understandable in detail for the reader yet, but serve as a later look-up. 
Choosing $\beta < \frac{1}{2}$ ensures that the embedding $Z_\alpha \hookrightarrow Y_{2\beta+\alpha}$ is compact and thus $K^c_{2\beta+\alpha}$, $K^h_{2\beta+\alpha}$ as in Definition \ref{Not2}(i) are compact sets in $Y_{2\beta+\alpha}$. 
Assuming the immersed hypersurface $\Sigma$ to be of $h^{4+\alpha}$-regularity guarantees that we can apply Lemmas \ref{P,Q} and \ref{nabla,div,a} for $s \defr 2\beta+\alpha$. Together with the $C^7$-regularity of $G$, this is used in Corollary \ref{Vorfaktoren_C2} to gain regularity properties for our operators. The conditions $G''>0$ and $g>0$ ensure that our PDEs are parabolic. 
The $h^{2+2\beta+\alpha}$-regularity, which we assume for the initial value $u_0$ of the concentration, as well as for the initial value $\rho_0$ of the height function later on, makes sure that by applying our second order operators, we still end up with an $h^{2\beta+\alpha}$-regularity. This turns out to be the necessary compability condition and is used in Lemmas \ref{G_wohldef_h} and \ref{G_wohldef_c}. \\
We will obtain a short-time existence result for any initial height function $\rho_0 \in h^{2+2\beta+\alpha}(M)$ with $\|\rho_0\|_{h^{2+2\beta+\alpha}(M)} < \delta_1$ and $\|\rho_0\|_{h^{1+\alpha}(M)} < \delta_0$. Particularly, $2\|\rho_0\|_{h^{2+\alpha}(M)} < 2\delta_1 \leq R^h$ follows. As $\delta_1>0$ can be chosen arbitrarily large, $\|\rho_0\|_{h^{2+2\beta+\alpha}(M)} < \delta_1$ is not an actual restriction on $\rho_0$. To yield a suitable height function as in Lemma \ref{theta_h_wohldef}, the initial value $\rho_0$ only needs to be small in the $C^1$-norm. But to achieve $a(\rho_0)>0$ with Remark \ref{a_pos}, and also later on in the proofs of Theorem \ref{lokEx_h} and Proposition \ref{Kontraktion_c}, smallness of $\rho_0$ in the $h^{1+\alpha}$-norm is necessary. This is why we set the condition $\|\rho_0\|_{h^{1+\alpha}(M)} < \delta_0$. \\
Assuming $R^\Sigma>0$ sufficiently small means that Lemmas \ref{theta_h_wohldef}, \ref{P,Q}, \ref{Vorfaktoren_elliptisch_P} and \ref{nabla,div,a} as well as Remarks \ref{a_pos} and \ref{Vorfaktoren_elliptisch_Laplace} hold. In particular, this implies that any function $\|\rho_t\| < R^\Sigma$ is a well-defined height function as in Lemma \ref{theta_h_wohldef} and the regularity statements in terms of $\rho_t$ hold for all the geometric quantities from Lemmas \ref{P,Q} and \ref{nabla,div,a}. \\
In the following, we will choose $R^c$ and $R^h$ even larger and $\delta_0>0$ and $T>0$ even smaller, where $T$ always has to be so small that estimate \eqref{Absch_T} holds. Enlarging $R^c$ and $R^h$ increases the set of possible solutions to our system of PDEs, which we seek in balls with radii $R^c$ and $R^h$. Then, estimate \eqref{Absch_T} together with the Hölder-regularity of the solution guarantees that every $\|\rho\| \leq R^h$ with initial value $\|\rho(0)\| < \delta_0$ fulfills $\|\rho(t)\| < R^\Sigma$. Particularly, $\rho(t)$ remains small in the $h^{1+\alpha}$-norm for all times $t \in [0,T]$ such that all the properties mentioned above hold; most importantly, $\rho(t)$ is a well-defined height function as in Lemma \ref{theta_h_wohldef}  for every $t \in [0,T]$.\\

Now, we give a summary of the notation used in the following sections. It relies on Notations \ref{Not1}, but is reduced to our more specific setting.

\begin{notation}\label{Not2}$\phantom{.}$
Suppose Assumptions \ref{Vor} are valid and let $s \in \{ \alpha, 2\beta+\alpha \}$. This guarantees that $\Sigma = \bar{\theta}(M)$ is an $h^{3+s}$-immersed closed hypersurface and thus permits to use Notations \ref{Not1}: $X_s \defr h^s(M)$, $Y_s \defr h^{1+s}(M)$ and $Z_s \defr h^{2+s}(M)$. We also recall
\begin{align*}
\begin{alignedat}{3}
U^h_{s,1} &\defr \big\{ \rho \in Y_s \, \big| \, \|\rho\|_{C^1(M)} < 2R^\Sigma \big\}, \phantom{x}
&&\phantom{xx}U^c_s &&\defr \big\{ u \in Y_s \, \big| \, \|u\|_{Y_s} < 2R^c \big\}, \\
U^h_{1+s,1} &\defr \big\{ \rho \in Z_s \, \big| \, \|\rho\|_{C^1(M)} < 2R^\Sigma \big\} && &&
\end{alignedat}
\end{align*}
and define
\begin{align*}
U^h_s \defr \big\{ \rho \in Y_s \, \big| \, \|\rho\|_{Y_s} < 2R^h, \, \|\rho\|_{Y_\alpha} < 2R^\Sigma \big\}.
\end{align*}
\begin{enumerate}
\item[(i)]
Furthermore, we define 
\begin{align*}
K^h_s \defr \overline{ \big\{ \rho \in Z_\alpha \, \big| \, \|\rho\|_{Z_\alpha} \leq R^h, \, \|\rho\|_{Y_\alpha} \leq R^\Sigma \big\} }^{\|\cdot\|_{Y_s}}, \phantom{xxx}
K^c_s \defr \overline{ \big\{ u \in Z_\alpha \, \big| \, \|u\|_{Z_\alpha} \leq R^c \big\}}^{\|\cdot\|_{Y_s}}.
\end{align*}
\item[(ii)]
We use the following notation for spaces and sets with time-dependence
\begin{align*}
\E_{0,T} &\defr h^\beta([0,T],X_\alpha), \\
\E_{1,T} &\defr h^{1,\beta}([0,T],X_\alpha) \cap h^\beta([0,T],Z_\alpha), \\
M_T^c &\defr \{ u \in \E_{1,T} \, | \, \|u\|_{\E_{1,T}} \leq R^c \text{ and } u(0)=u_0 \text{ in } Z_\alpha\}, \\
M_T^h &\defr \big\{ \rho \in \E_{1,T} \, \big| \, \|\rho\|_{\E_{1,T}} \leq R^h \text{ and } \|\rho(t)\|_{Y_\alpha} \leq R^\Sigma \text{ for all } t \in [0,T] \big\} \text{ and } \\
M_{T,\rho_0}^h &\defr \{ \rho \in \E_{1,T} \, | \, \|\rho\|_{\E_{1,T}} \leq R^h \text{ and } \rho(0)=\rho_0 \text{ in } Z_\alpha\} \\
&\phantom{xxx} \text{ for any } \rho_0 \in Z_{2\beta+\alpha} \text{ with } \|\rho_0\|_{Z_\alpha} \leq R^h \text{ and } \|\rho_0\|_{Y_\alpha} < \delta_0. 
\end{align*}
\item[(iii)]
For the sake of completeness, we also define the operators used in the following sections. For $u,\rho \in \E_{1,T}$ and $u_1, \rho_1 \in Z_\alpha$, we set
\begin{align*}
\begin{alignedat}{2}
A^h_{u_1,\rho_1}[\rho] &\defr g(u_1)a(\rho_1)P(\rho_1)[\rho], \\
A^h[\rho] &\defr A_{u_0,0}^h[\rho] = g(u_0) a(0) P(0)[\rho], \\
G_u^h(\rho) &\defr g(u) a(\rho) H(\rho) - A^h[\rho], \\
L^h[\rho] &\defr \binom{\partial_t \rho - A^h[\rho]}{\rho(0)}, \\
A^c_{u_1,\rho_1}[u] &\defr G''(u_1) \Delta_{\rho_1} u + g(u_1) a(\rho_1) H(\rho_1) \nu_\Sigma {\cdot} \nabla_{\rho_1} u + g(u_1) H(\rho_1)^2 u, \\
A^c[u] &\defr A_{u_0,0}^c[u] = G''(u_0) \Delta_\Sigma u + g(u_0)H_\Sigma^2u, \\
G^c_{\rho_0}(u) &\defr \Delta_{\rho_{u,\rho_0}} G'(u) + g(u) a(\rho_{u,\rho_0}) H(\rho_{u,\rho_0}) \nu_\Sigma {\cdot} \nabla_{\rho_{u,\rho_0}} u + g(u) H(\rho_{u,\rho_0})^2 u - A^c[u], \\
L^c[u] &\defr \binom{\partial_t u - A^c[u]}{u(0)}.
\end{alignedat} 
\end{align*}
Here, $H,P,Q$ are the functionals from Lemma \ref{P,Q}. Moreover, we have $a(\rho) \defr \frac{1}{\nu_\rho \cdot \nu_\Sigma}$ as in Lemma \ref{nabla,div,a}, where $\nu_\rho$ as well as the differential operators  $\nabla_\rho$, $\Delta_\rho$ were introduced in Remark \ref{IMM_evolvHF_rho_bem}; in particular, $\nu_0=\nu_\Sigma$, $\nabla_0=\nabla_\Sigma$, $\Delta_0=\Delta_\Sigma$ and $H(0)=H_\Sigma$ hold in the case of $\rho=0$. Finally, $\rho_{u,\rho_0} \in M_{T,\rho_0}^h$ is the solution from Theorem \ref{lokEx_h} associated with the concentration $u \in M_T^c$ and the initial value $\rho_0 \in Z_{2\beta+\alpha}$ with $\|\rho_0\|_{Z_{2\beta+\alpha}} < \delta_1$ and $\|\rho_0\|_{Y_\alpha} < \delta_0$.
\end{enumerate}
\end{notation}

Both our PDEs \eqref{eq_lokEx_gls} are parabolic, quasilinear equations of second order (see Lemmas \ref{P,Q}, \ref{Vorfaktoren_elliptisch_P} and \ref{nabla,div,a} as well as Remarks \ref{a_pos} and \ref{Vorfaktoren_elliptisch_Laplace}) and will be solved by similar approaches. To underline this parallel structure, we use the same notation for all corresponding sets and operators and mark the association to the respective equation with a superscript, using the letter $h$ for the first equation \eqref{eq_lokEx_gls1} concerning the height function and the letter $c$ for the second equation  \eqref{eq_lokEx_gls2} concerning the concentration function. Dependences of sets or operators will never be denoted by superscripts, but only by indices. To clarify this even more, we use the letters $h$ and $c$ only to denote the association to the equation; while height functions and concentrations will always be called $\rho$ and $u$, respectively. \\
Whereas the initial value $u_0$ for the concentration is chosen fixed in Assumptions \ref{Vor}, our short-time existence result allows for small variations in the initial value $\rho_0$ of the height function. More precisely, for any initial value $\rho_0 \in h^{2+2\beta+\alpha}(M)$ with $\|\rho_0\|_{h^{2+2\beta+\alpha}(M)} < \delta_1$ and $\|\rho_0\|_{h^{1+\alpha}(M)} < \delta_0$, we will obtain a solution to \eqref{eq_lokEx_gls} on a time interval $[0,T]$ with $T$ independent of $\rho_0$. This is crucial to prove the formation of self-intersections, which will be done in an upcoming publication. 
However, we thus can not linearize the system \eqref{eq_lokEx_gls} around the initial value for the height function, as we do for the concentration. Instead, we linearize around the fixed value $0$. This is possible, as due to $\|\rho_0\|_{h^{1+\alpha}(M)} < \delta_0$ all eligible initial values $\rho_0$ are close to the zero-function in a suitable sense. \\
We will solve our PDEs in the space $\E_{0,T}$ and therefore the solution functions lie in $\E_{1,T}$. To be precise, we seek the solution functions in $M_T^c$ and $M_{T}^h$, which are the balls with radii $R^c$ and $R^h$ mentioned earlier. As forecasted, estimate \eqref{Absch_T} guarantees that any $\rho \in M_{T,\rho_0}^h$ fulfills
\begin{align*}
\|\rho(t)\|_{Y_\alpha}
&\leq \|\rho(t)-\rho(0)\|_{Y_\alpha} + \|\rho(0)\|_{Y_\alpha} \\
&\leq \|\rho\|_{h^\beta([0,T],Y_\alpha)}T^\beta + \|\rho_0\|_{Y_\alpha} 
< R^hT^\beta + \delta_0 
< R^\Sigma,
\end{align*}
i.e., $M_{T,\rho_0}^h \subset M_T^h$ holds. In particular, $\rho(t)$ is a well-defined height function as in Lemma \ref{theta_h_wohldef} for every $\rho \in M_T^h$ and $t \in [0,T]$. \\
Now, we give a few crucial comments on embeddings of the sets defined above.  Here, the superscripts $c$ and $h$ are omitted whenever the corresponding statement holds for both of them. As always, we set $s \in \{\alpha, 2\beta+\alpha\}$. 
\begin{enumerate}
\item[(i)]
We have $K_s \subset U_s$ due to $Z_\alpha \hookrightarrow Y_s$. Moreover, $K_s \subset Y_s$ is compact and convex, because $Z_\alpha \hookrightarrow Y_s$ is a compact embedding due to Lemma \ref{kompHölder} with $\beta < \frac{1}{2}$. Obviously, $U_s \subset Y_s$ is open. 
\item[(ii)]
As $u \in M_T^c$, $\rho \in M_T^h$ fulfill $\|u(t)\|_{Z_\alpha} \leq R^c$, $\|\rho(t)\|_{Z_\alpha} \leq R^h$ and $\|\rho(t)\|_{Y_\alpha} \leq R^\Sigma$ for every $t \in [0,T]$, the inclusions
\begin{align*}
M_T^h &\subset h^\beta\big([0,T],K^h_s\big) \cap h^\beta\big([0,T],U^h_{1+\alpha,1} \big) \subset h^\beta\big([0,T],U^h_s\big), \\
M_T^c &\subset h^\beta\big([0,T],K^c_s\big) \subset h^\beta\big([0,T],U^c_s\big)
\end{align*}
follow. Furthermore, for any $u \in M_T^c$ and any $\rho \in M_T^h$,
\begin{align*}
\|\rho\|_{h^\beta([0,T],Y_s)} &\leq \|\rho\|_{h^\beta([0,T],Z_\alpha)} \leq \|\rho\|_{\E_{1,T}} \leq R^h, \\
\|u\|_{h^\beta([0,T],Y_s)} &\leq \|u\|_{h^\beta([0,T],Z_\alpha)} \leq \|u\|_{\E_{1,T}} \leq R^c
\end{align*}
hold on account of $Z_\alpha \hookrightarrow Y_s$.
\end{enumerate}

In the next corollary, we state some regularity properties for the components of the operators from Notations \ref{Not2}(iii). This corollary, and even more so the subsequent remark, are crucial for the proof of short-time existence as many of the following statements are based on this regularity.

\begin{corollary}\label{Vorfaktoren_C2}
We suppose Assumptions \ref{Vor} are valid and use Notations \ref{Not2}. For $s \in \{\alpha,2\beta+\alpha\}$, 
\begin{align*}
g a P &\in C^2\big(U^c_s \times U^h_s,\mathcal{L}(Z_s,X_s)\big), \\
g a Q &\in C^2(U^c_s \times U^h_s,X_s), \\
G'' D &\in C^2\big( U^c_s \times U^h_s, \mathcal{L}(Z_s,X_s) \big), \\
G''J &\in C^2\big( U^c_s \times U^h_{1+s,1}, \mathcal{L}(Y_s,X_s) \big), \\
g a P \nu_\Sigma \cdot \nabla &\in C^2\big( U^c_s \times U^h_s, \mathcal{L}(Z_s,\mathcal{L}(Y_s,X_s))\big), \\
g a Q \nu_\Sigma \cdot \nabla &\in C^2\big( U^c_s \times U^h_s, \mathcal{L}(Y_s,X_s)\big), \\
g P^2 \Id^{\,c} &\in C^2\big( U^c_s \times U^h_s, \mathcal{L}(Z_s, \mathcal{L}(Z_s, \mathcal{L}(X_s,X_s)))\big), \\
g P Q \Id^{\,c} &\in C^2\big( U^c_s \times U^h_s, \mathcal{L}\big(Z_s, \mathcal{L}(X_s,X_s))\big) \text{ and} \\
g Q^2 \Id^{\,c} &\in C^2\big( U^c_s \times U^h_s, \mathcal{L}(X_s,X_s)\big),
\end{align*}
hold. Furthermore, for $G''' \nabla_{(\cdot)} \cdot \nabla_{(\cdot)}: (u,\rho) \mapsto G'''(u) \nabla_\rho \cdot \nabla_\rho$, we have 
\begin{align*}
G''' \, \nabla_{(\cdot)} \cdot \nabla_{(\cdot)} &\in C^2 \big( U^c_\alpha \times U^h_\alpha, \mathcal{L}(Y_\alpha,\mathcal{L}(Y_\alpha,X_\alpha))\big),\\
G''' \, \nabla_{(\cdot)} \cdot \nabla_{(\cdot)} &\in C^1 \big( U^c_{2\beta+\alpha} \times U^h_{2\beta+\alpha}, \mathcal{L}(Y_{2\beta+\alpha},\mathcal{L}(Y_{2\beta+\alpha},X_{2\beta+\alpha}))\big).
\end{align*}
\end{corollary}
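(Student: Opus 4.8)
The strategy is to build each of the listed operators as a composition of finitely many maps whose smoothness has already been established, and then to invoke the chain rule together with the fact that continuous bilinear (or multilinear) maps are $C^\infty$. The three ingredients are: (a) the decomposition $H(\rho)=P(\rho)[\rho]+Q(\rho)$ with $P\in C^\infty(U^h_{s,1},\mathcal{L}(Z_s,X_s))$ and $Q\in C^\infty(U^h_{s,1},X_s)$ from Lemma \ref{P,Q}; (b) the smoothness of $\rho\mapsto a(\rho)$, of $\rho\mapsto\nabla_\rho$, of $\rho\mapsto\Delta_\rho$, and the splitting $\Delta_\rho=D(\rho)+J(\rho)$ from Lemma \ref{nabla,div,a}; and (c) the composition-operator result Lemma \ref{g}, which gives $u\mapsto G(u),G'(u),G''(u),G'''(u),g(u)\in C^k(X_s,X_s)$ as soon as the corresponding scalar function is in $C^{k+\lfloor s\rfloor+2}(\R)$. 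Since $G\in C^7(\R)$ by Assumptions \ref{Vor}(i) and $s\in\{\alpha,2\beta+\alpha\}$ with $\alpha,2\beta+\alpha<2$, hence $\lfloor s\rfloor=0$, Lemma \ref{g} yields $g(u),G''(u)\in C^2(X_s,X_s)$ from $g,G''\in C^4(\R)\subset C^7(\R)$, and $G'''(u)\in C^1(X_\alpha,X_\alpha)$ from $G'''\in C^3(\R)\subset C^7(\R)$ (needing only $C^{1+0+2}=C^3$ for the $X_\alpha$-statement). One also uses that $U^c_s$ and $U^h_s$ are the relevant open sets on which the basic regularity statements apply; in particular $U^h_s\subset U^h_{s,1}$ by definition of the norms, and likewise $\|\cdot\|_{Y_s}$-balls sit inside the $C^1$-balls, so all of Lemmas \ref{P,Q}, \ref{Vorfaktoren_elliptisch_P}, \ref{nabla,div,a} apply on these sets.

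I would then treat the entries one at a time, each time exhibiting the map as a product of the ingredients above composed with the projections $(u,\rho)\mapsto u$, $(u,\rho)\mapsto\rho$. For instance, $gaP$ is the composition of $(u,\rho)\mapsto(g(u),a(\rho),P(\rho))\in C^2(U^c_s\times U^h_s,X_s\times X_s\times\mathcal{L}(Z_s,X_s))$ with the continuous trilinear map $X_s\times X_s\times\mathcal{L}(Z_s,X_s)\to\mathcal{L}(Z_s,X_s)$, $(f_1,f_2,L)\mapsto f_1f_2L$; here one uses that pointwise multiplication $X_s\times X_s\to X_s$ is continuous bilinear because $h^s(M)$ is a Banach algebra (little Hölder spaces of positive order form an algebra under pointwise multiplication — this is the reason these spaces were chosen, cf. the introduction). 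The same scheme handles $gaQ$, $G''D$, $G''J$ (note $J$ only requires $\rho\in U^h_{1+s,1}$, matching the stated domain), $gaP\nu_\Sigma\cdot\nabla$ (compose additionally with $\rho\mapsto\nabla_\rho\in C^\infty(U^h_{s,1},\mathcal{L}(Y_s,X_s^{d+1}))$ and with the fixed contraction against $\nu_\Sigma\in h^s(M,\R^{d+1})$, which is a continuous bilinear operation into $\mathcal{L}(Y_s,X_s)$), $gaQ\nu_\Sigma\cdot\nabla$, and the three terms $gP^2\Id^{\,c}$, $gPQ\Id^{\,c}$, $gQ^2\Id^{\,c}$ coming from the expansion $H(\rho)^2=(P(\rho)[\rho]+Q(\rho))^2$ multiplied by $u$ (the $\Id^{\,c}$ factor). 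For the last two displays, $G'''\nabla_{(\cdot)}\cdot\nabla_{(\cdot)}$ is $(u,\rho)\mapsto G'''(u)\,\nabla_\rho(\cdot)\cdot\nabla_\rho(\cdot)$, the composition of $u\mapsto G'''(u)$ with $\rho\mapsto(\nabla_\rho,\nabla_\rho)$ and the continuous multilinear map $(f,L_1,L_2)\mapsto\big(v,w)\mapsto f\,(L_1 v\cdot L_2 w)\big)$; here the $C^2$-versus-$C^1$ split is exactly governed by whether $G'''(u)\in C^2$ or only $C^1$ as a composition operator on $X_s$, which in turn depends on $s$ through Lemma \ref{g}: for $s=\alpha<1$ one has $\lfloor s\rfloor=0$ and $G'''\in C^3$ suffices for $C^1$, but the $C^2$-statement is only claimed for $s=\alpha$ because there one still has room, whereas for $s=2\beta+\alpha$ one only gets $C^1$.

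The only genuine points to watch — the ``main obstacle'', such as it is — are bookkeeping ones: first, making sure in each case that the domain is the correct one (the $J$-term and the $\Delta_\rho$-terms need the higher-regularity domain $U^h_{1+s,1}$, reflecting that $\Delta_\rho$ is genuinely second order in $\rho$ only through its lower-order part, while $D(\rho)$ has constant-in-$\rho$ principal symbol structure but still $g^{ij}_\rho$-coefficients of $Y_s$-regularity); second, checking at each stage that the target spaces compose correctly, i.e. that an $\mathcal{L}(Z_s,X_s)$-valued factor multiplied by an $X_s$-valued factor again lands in $\mathcal{L}(Z_s,X_s)$, and more delicately that a product like $P\nu_\Sigma\cdot\nabla$ is correctly read as an element of $\mathcal{L}(Z_s,\mathcal{L}(Y_s,X_s))$ — the two second-order factors $P$ and $\nabla$ act on different arguments, so no loss of derivatives occurs. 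Once these are organized, every statement is a one-line application of ``smooth $\circ$ smooth $=$ smooth'' and ``$C^2\circ C^2\supseteq$ composition is $C^2$'', together with the algebra property of $h^s(M)$ and the continuity of the relevant finite-dimensional multilinear contractions, so I would present the proof as a single schematic argument with the nontrivial domain-matching remarks spelled out, rather than writing out all nine (plus two) verifications in full.
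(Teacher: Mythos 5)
Your overall strategy is exactly the paper's: the proof there is just the compressed remark that one applies Lemmas \ref{P,Q}, \ref{nabla,div,a} and \ref{g} with $s\in\{\alpha,2\beta+\alpha\}$ and (mostly) $k=2$, uses $U^h_s\subset U^h_{s,1}$, treats factors independent of one variable as constant in it, and concludes by products and the embeddings $Z_s\hookrightarrow Y_s\hookrightarrow X_s$. Your itemized composition scheme, the use of the algebra property of the little H\"older spaces, and the domain-matching remarks for $J$ and $\Delta_\rho$ all line up with that.

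However, there is a concrete error in the only place where the regularity counting actually matters. You assert that $\alpha,2\beta+\alpha<2$ implies $\lfloor s\rfloor=0$ for both choices of $s$; this is false when $2\beta+\alpha\in(1,2)$, where $\lfloor 2\beta+\alpha\rfloor=1$ and Lemma \ref{g} requires $F\in C^{k+3}(\R)$. This is precisely what governs the asymmetry in the statement: for $s=2\beta+\alpha>1$ and $k=2$ one would need $G'''\in C^5$, i.e.\ $G\in C^8$, which is not assumed, so only $k=1$ is available for $G'''$ — whereas with your (incorrect) $\lfloor s\rfloor=0$ the $C^2$-statement would hold for $s=2\beta+\alpha$ as well and the split you are asked to prove would be unexplained; your appeal to ``there one still has room'' does not identify this mechanism. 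Relatedly, your claim that $G'''\in C^3(\R)$ gives $G'''(u)\in C^1(X_\alpha,X_\alpha)$ is true but insufficient: the first of the two final assertions needs $G'''$ to induce a $C^2$ composition operator on $X_\alpha$, which by Lemma \ref{g} requires $G'''\in C^{2+0+2}=C^4(\R)$ — available exactly because $G\in C^7$, and this is the reason the assumption is $C^7$ rather than something weaker. Likewise, for $s=2\beta+\alpha>1$ the $C^2$-statements for $g$ and $G''$ need $C^{2+1+2}=C^5(\R)$, not $C^4$ as you state; the conclusion survives since $g\in C^6$ and $G''\in C^5$, but $G''$ is the tight case and your count would not reveal it. Once these floor-function corrections are made, the rest of your argument goes through as in the paper.
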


\begin{proof}
We have $2\beta + \alpha \in (0,2) \setminus \{1\}$ with $2\beta < 1$. Because $\Sigma=\bar{\theta}(M)$ is an $h^{4+\alpha}$-immersed closed hypersurface and $G \in C^7(\R)$, we can choose $s \in \{\alpha, 2\beta + \alpha\}$ and mostly $k \defr 2$ in Lemmas \ref{P,Q}, \ref{nabla,div,a} and \ref{g}. (If $s>1$, we have to restrict to $k \defr 1$ in Lemma \ref{g} for $G'''$ and thus can only conclude $C^1$-differentiability for $G'''$. The $C^7$-regularity of $G$ is used to obtain $G''' \in C^2(U^c_\alpha, X_\alpha)$ with Lemma \ref{g}.) Moreover, the inclusion $U^h_s \subset U^h_{s,1}$ holds. By considering functions independent of $c$ or $h$ as constant in these variables, several multiplications 
and $Z_s \hookrightarrow Y_s \hookrightarrow X_s$ prove the claims. 
\end{proof}

\begin{remark}\label{compositionOP_Zus}
As $K^c_s \times K^h_s \subset U^c_s \times U^h_s$ is compact and convex, we can apply Corollary \ref{compositionOP_compact} in the following way: For any Banach space $W_s$ and any functional $F \in C^2\big( U^c_s \times U^h_s, W_s \big)$, there exists a constant $C=C(R^\Sigma, R^c, R^h)$ such that $F(u_1,\rho_1) \in h^\beta\big([0,T],W_s\big)$ holds with 
\begin{align*}
\big\| F(u_1,\rho_1) \big\|_{h^\beta([0,T],W_s)} &\leq C \phantom{xxx} \text{ and} \\
\big\| F(u_1,\rho_2)-F(u_2,\rho_2) \big\|_{h^\beta([0,T],W_s)} &\leq C \big( \|u_1-u_2\|_{h^\beta([0,T],Y_s)} + \|\rho_1-\rho_2\|_{h^\beta([0,T],Y_s)} \big)
\end{align*}
for $u_i \in h^\beta\big([0,T],K^c_s\big)$, $\rho_i \in h^\beta\big([0,T],K^h_s\big)$ with $\|u_i\|_{h^\beta([0,T],Y_s)} \leq R^c$, $\|\rho_i\|_{h^\beta([0,T],Y_s)} \leq R^h$. In particular, these conditions are fulfilled for $u_i \in M_T^c$ and $\rho_i \in M_T^h$. \\
Except for $G''J$, all of the functionals listed in Corollary \ref{Vorfaktoren_C2} can be estimated in this way. Because $G''' \nabla_{(\cdot)} \cdot \nabla_{(\cdot)}$ is a $C^1$-function only for $s=2\beta+\alpha$, Corollary \ref{compositionOP_compact} only yields the first of the two estimates stated above in that case. But if we restrict to $u_1 = u_2$, the second estimate also holds: As in Corollary \ref{Vorfaktoren_C2}, we have $G''' \in C^1(U^c_s,X_s)$ and $\nabla_{(\cdot)} \cdot \nabla_{(\cdot)}: \rho \mapsto \nabla_\rho \cdot \nabla_\rho \in C^2(U^h_s,W_s)$ with $W_s \defr \mathcal{L}(Y_s, \mathcal{L}(Y_s,X_s))$. Thus, Corollary \ref{compositionOP_compact} yields the existence of a constant $C=C(R^\Sigma,R^c,R^h)$ with
\begin{align*}
&\big\| G'''(u) \nabla_{\rho_1} \cdot \nabla_{\rho_1} - G'''(u) \nabla_{\rho_2} \cdot \nabla_{\rho_2} \big\|_{h^\beta([0,T],W_s)} \\
&\leq \| G'''(u) \|_{h^\beta([0,T],X_s)} \big\| \nabla_{\rho_1} \cdot \nabla_{\rho_1} - \nabla_{\rho_2} \cdot \nabla_{\rho_2} \big\|_{h^\beta([0,T],W_s)} \\
&\leq C \|\rho_1-\rho_2\|_{h^\beta([0,T],Y_s)} 
\end{align*}
for all $u \in M_T^c$ and $\rho_i \in M_T^h$. \\
Due to $U^h_{1+s,1} \subset Z_s$ and $M_T^h \subset \E_{1,T} = h^{1+\beta}([0,T],X_\alpha) \cap h^\beta([0,T],Z_\alpha)$, we can not find a compact set $K \subset U^h_{1+s,1}$ with $M_T^h \subset h^\beta\big([0,T],K\big)$. Therefore, the functional $G''J$, which is defined on $U^c_s \times U^h_{1+s,1}$, has to be handled differently. But as $J: \rho \mapsto J(\rho)$ is bounded on bounded sets by Lemma \ref{nabla,div,a}(iii), we have $J: \rho \mapsto J(\rho) \in C^2_b\big( U^h_{1+s,1} \cap \overline{B_R^{Z_s}(0)}, \mathcal{L}(Y_s,X_s)\big)$ for any $R>0$. As $U^h_{1+s,1} \cap \overline{B_R^{Z_s}(0)} \subset Z_s$ is convex, we can apply Proposition \ref{compositionOP_C0} instead of Corollary \ref{compositionOP_compact} to $G''J$. 
With $W_s \defr \mathcal{L}(Y_s,X_s)$, this means that there exists a constant $C=C(R^\Sigma,R^c,R)$ such that $G''J(u_1,\rho_1) \in h^\beta\big([0,T],W_s\big)$ holds with
\begin{align*}
\big\| G''J(u_1,\rho_1) \big\|_{h^\beta([0,T],W_s)} &\leq C \phantom{xxx} \text{ and} \\
\big\| G''J(u_1,\rho_2)-G''J(u_2,\rho_2) \big\|_{h^\beta([0,T],W_s)} &\leq C \big( \|u_1-u_2\|_{h^\beta([0,T],Y_s)} + \|\rho_1-\rho_2\|_{h^\beta([0,T],Z_s)} \big)
\end{align*}
for $u_i \in h^\beta\big([0,T],K^c_s\big)$, $\rho_i \in h^\beta\big([0,T],U^h_{1+s,1}\big)$ with $\|u_i\|_{h^\beta([0,T],Y_s)} \leq R^c$, $\|\rho_i\|_{h^\beta([0,T],Z_s)} \leq R$. For $s=\alpha$, this is again fulfilled for $u_i \in M_T^c$ and $\rho_i \in M_T^h$ with $R=R^h$.
\end{remark}

As preparation for the following two sections, we deduce a technical auxiliary corollary from Remark \ref{compositionOP_Zus}.

\begin{corollary}\label{Produkt_Wohldef_h}
We suppose Assumptions \ref{Vor} are valid and use Notations \ref{Not2}. For $u \in M_T^c$ and $\rho \in M_{T}^h$, we have $A^h[\rho] \in \E_{0,T}$ and $G^h_u(\rho) \in \E_{0,T}$ and  
\begin{align*}
\big(g(u)a(\rho)Q(\rho)\big)(t) \in X_{2\beta+\alpha}
\phantom{xxx} \text{as well as} \phantom{xxx} 
\left( G'''(u) \big|\nabla_\rho u\big|^2\right)(t) \in X_{2\beta+\alpha}
\end{align*}
holds for all $t \in [0,T]$. 
\end{corollary}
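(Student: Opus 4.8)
The plan is to assemble the four assertions from the regularity statements already collected in Corollary~\ref{Vorfaktoren_C2} and their quantitative reformulation in Remark~\ref{compositionOP_Zus}. First I would recall that for $\rho \in M_T^h$ and $u \in M_T^c$ one has, by the embedding comments preceding Corollary~\ref{Vorfaktoren_C2}, the inclusions $\rho \in h^\beta([0,T],K^h_\alpha) \cap h^\beta([0,T],U^h_{1+\alpha,1})$ and $u \in h^\beta([0,T],K^c_\alpha)$, together with the norm bounds $\|\rho\|_{h^\beta([0,T],Y_\alpha)} \le R^h$ and $\|u\|_{h^\beta([0,T],Y_\alpha)} \le R^c$; these are exactly the hypotheses needed to invoke Remark~\ref{compositionOP_Zus} with $s = \alpha$.

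For $A^h[\rho] = g(u_0)a(0)P(0)[\rho]$: since $g(u_0)a(0)P(0) \in \mathcal{L}(Z_\alpha,X_\alpha)$ is a fixed bounded operator and $\rho \in \E_{1,T} \hookrightarrow h^\beta([0,T],Z_\alpha)$, applying it pointwise in time gives $A^h[\rho] \in h^\beta([0,T],X_\alpha) = \E_{0,T}$. For $G^h_u(\rho) = g(u)a(\rho)H(\rho) - A^h[\rho]$: write $g(u)a(\rho)H(\rho) = (gaP)(u,\rho)[\rho] + (gaQ)(u,\rho)$ using $H(\rho) = P(\rho)[\rho] + Q(\rho)$ from Lemma~\ref{P,Q}. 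By Corollary~\ref{Vorfaktoren_C2}, $gaP \in C^2(U^c_\alpha \times U^h_\alpha, \mathcal{L}(Z_\alpha,X_\alpha))$ and $gaQ \in C^2(U^c_\alpha \times U^h_\alpha, X_\alpha)$, so Remark~\ref{compositionOP_Zus} yields $(gaP)(u,\rho) \in h^\beta([0,T],\mathcal{L}(Z_\alpha,X_\alpha))$ and $(gaQ)(u,\rho) \in h^\beta([0,T],X_\alpha)$; multiplying the first by $\rho \in h^\beta([0,T],Z_\alpha)$ (using that $h^\beta([0,T],\cdot)$ is closed under the bilinear evaluation map, which holds because little Hölder spaces in time form an algebra-type structure under such products) and adding $-A^h[\rho] \in \E_{0,T}$ gives $G^h_u(\rho) \in \E_{0,T}$.

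For the two pointwise-in-$X_{2\beta+\alpha}$ claims, the point is to work at the \emph{higher} regularity level $s = 2\beta+\alpha$ but only pointwise in time. Fix $t \in [0,T]$. Then $\rho(t) \in Z_\alpha \cap K^h_{2\beta+\alpha}$, so in particular $\rho(t) \in U^h_{2\beta+\alpha}$ after noting $\|\rho(t)\|_{Y_\alpha} \le R^\Sigma$, and similarly $u(t) \in U^c_{2\beta+\alpha}$; more carefully, by the definition of $M_T^c$ and $M_T^h$ we only know $\rho(t) \in Z_\alpha$, $u(t) \in Z_\alpha$, but $Z_\alpha \hookrightarrow Z_\alpha$ and the evaluation $\E_{1,T} \hookrightarrow C([0,T],Z_\alpha)$ forces $\rho(t),u(t) \in Z_\alpha$, which embeds into $Y_{2\beta+\alpha} \cap X_{2\beta+\alpha}$ but \emph{not} into $Z_{2\beta+\alpha}$. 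So for $gaQ$ I use that $gaQ \in C^2(U^c_{2\beta+\alpha} \times U^h_{2\beta+\alpha}, X_{2\beta+\alpha})$ by Corollary~\ref{Vorfaktoren_C2} and that $(u(t),\rho(t))$ lies in $U^c_{2\beta+\alpha} \times U^h_{2\beta+\alpha}$, hence $(g(u)a(\rho)Q(\rho))(t) \in X_{2\beta+\alpha}$. For $G'''(u)|\nabla_\rho u|^2$ I note $G'''(u(t)) \in X_{2\beta+\alpha}$ by Lemma~\ref{g} applied to $F = G'''$ (here $G \in C^7$ gives $G''' \in C^{4} = C^{1 + \lfloor 2\beta+\alpha\rfloor + 2}$ since $\lfloor 2\beta+\alpha\rfloor \le 1$), and $\nabla_{\rho(t)} u(t) \in X_{2\beta+\alpha}^{d+1}$ by Lemma~\ref{nabla,div,a}(i) since $u(t) \in Z_\alpha \hookrightarrow Y_{2\beta+\alpha}$ and $\rho(t) \in U^h_{2\beta+\alpha,1}$; then the product $G'''(u(t))\,|\nabla_{\rho(t)}u(t)|^2$ lies in $X_{2\beta+\alpha}$ because $X_{2\beta+\alpha} = h^{2\beta+\alpha}(M)$ is a Banach algebra under pointwise multiplication (little Hölder spaces of positive order on a compact manifold have this property).

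The main obstacle is purely bookkeeping: one must be careful that $\rho(t)$ and $u(t)$ only have $Z_\alpha$-regularity in space — not $Z_{2\beta+\alpha}$ — so the higher-order ($s = 2\beta+\alpha$) statements from Corollary~\ref{Vorfaktoren_C2} can only be used for the \emph{zeroth- and first-order} operators $Q$, $a$, $\nabla$, never for the second-order operators $P$, $D$. This is exactly why the statement only asserts $X_{2\beta+\alpha}$-membership for $g(u)a(\rho)Q(\rho)$ and $G'''(u)|\nabla_\rho u|^2$, and not for the full mean curvature $H(\rho)$ or the full Laplacian term. The algebra property of $h^{2\beta+\alpha}(M)$ — which is precisely why little Hölder spaces rather than Sobolev spaces are used throughout, as emphasized in the introduction — is what makes the final products land back in $X_{2\beta+\alpha}$.
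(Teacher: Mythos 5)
Your proposal is correct and follows essentially the same route as the paper: the $\E_{0,T}$-claims come from the $s=\alpha$ regularity statements (Remark \ref{compositionOP_Zus} together with the linear-composition/product lemma applied in time), and the pointwise claims come from evaluating at fixed $t$ with $u(t)\in U^c_{2\beta+\alpha}$, $\rho(t)\in U^h_{2\beta+\alpha}$ and using the $s=2\beta+\alpha$ regularity. The only cosmetic difference is that for $G'''(u)\big|\nabla_\rho u\big|^2$ the paper simply cites the corresponding entry of Corollary \ref{Vorfaktoren_C2} with $s=2\beta+\alpha$, whereas you re-derive it from Lemmas \ref{g} and \ref{nabla,div,a}(i) plus the Hölder product property, which is the same underlying content.
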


\begin{proof}
We have $u,u_0 \in M_T^c$ and $\rho,0 \in M_T^h \subset h^\beta([0,T],Z_\alpha)$. Thus, Remark \ref{compositionOP_Zus} together with Lemma \ref{compositionOP_lin} yields $A^h[\rho],G^h_u(\rho) \in \E_{0,T}$. Furthermore, for every $t \in [0,T]$, we have $u(t) \in U^c_{2\beta+\alpha}$ and $\rho(t) \in U^h_{2\beta+\alpha} \subset Y_{2\beta+\alpha}$. Thus, Corollary \ref{Vorfaktoren_C2} with $s \defr 2\beta+\alpha$ yields the remaining claims.
\end{proof}

\subsection{Short-Time Existence for $\rho$}\label{ChaplokExh}

This section deals with the first equation \eqref{eq_lokEx_gls1}
\begin{align*}
\partial_t \rho = g(u)a(\rho)H(\rho)
\end{align*}
for height functions $\rho$ with initial value $\rho(0)=\rho_0$. We use the standard approach for parabolic, quasilinear partial differential equations of second order relying on linearization and a contraction argument, as explicated e.g. in \cite[Chapter 7]{LunardiASG}. For this, we first show that the linearization of the (elliptic) operator on the right hand side of the equation generates an analytic $C^0$-semigroup (see Proposition \ref{P_erzeugtHG_h}). In particular, the linearization of the initial value problem then yields an invertible operator (see Proposition \ref{L_bij_h}).

\begin{proposition}\label{P_erzeugtHG_h}
We suppose Assumptions \ref{Vor} are valid and use Notations \ref{Not2}. Then, 
\begin{align*}
A^h = A^h_{u_0,0}: Z_\alpha \rightarrow X_\alpha
\end{align*}
generates an analytic $C^0$-semigroup with $\mathcal{D}_{A^h}(\beta) = X_{2\beta+\alpha}$. If $u \in M_T^c$ and $\rho \in M_{T}^h$, also
\begin{align*}
A^h_{u(t),\rho(t)}: Z_s \rightarrow X_s 
\end{align*}
generates an analytic $C^0$-semigroup for $s \in \{\alpha, 2\beta+\alpha\}$ and $t \in [0,T]$. 
\end{proposition}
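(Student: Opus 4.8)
The plan is to reduce the statement to the abstract generation theorem for elliptic operators on little Hölder spaces quoted earlier (Proposition \ref{HG_elliptic_semigroup_M}), by checking that each operator in question is, in a sufficiently small local parameterization, a second-order differential operator with little-Hölder-continuous coefficients whose principal symbol is uniformly elliptic. For $A^h = A^h_{u_0,0} = g(u_0)a(0)P(0)[\,\cdot\,]$, I would first note that $a(0)=1$ and $g(u_0)$ is a fixed positive function in $X_\alpha = h^\alpha(M)$ (positivity from Assumptions \ref{Vor}(i), regularity from Lemma \ref{g}); multiplying the elliptic operator $P(0)$ from Lemma \ref{Vorfaktoren_elliptisch_P} by this positive zeroth-order factor preserves symmetry and uniform ellipticity, only rescaling the coefficient matrix $[a^{ij}]$ by $g(u_0)>0$. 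Hence $A^h$ is a symmetric, uniformly elliptic second-order operator with $h^\alpha$-coefficients on the closed hypersurface $M$, so Proposition \ref{HG_elliptic_semigroup_M} applies and gives generation of an analytic $C^0$-semigroup on $X_\alpha$ with domain $Z_\alpha = h^{2+\alpha}(M)$. The identification $\mathcal{D}_{A^h}(\beta) = (X_\alpha, Z_\alpha)_\beta = h^{2\beta+\alpha}(M) = X_{2\beta+\alpha}$ then follows from the real-interpolation characterization $\mathcal{D}_A(\beta) = (X,\mathcal{D}(A))_\beta$ recalled after the semigroup definition, together with the known interpolation result for little Hölder spaces, $(h^\alpha, h^{2+\alpha})_\beta = h^{2\beta+\alpha}$ — here the assumption $2\beta+\alpha \notin \N$ is exactly what guarantees this space is again a (little) Hölder space.

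For the second assertion, fix $u \in M_T^c$, $\rho \in M_T^h$, $t \in [0,T]$ and $s \in \{\alpha, 2\beta+\alpha\}$; write $u_1 \defr u(t)$, $\rho_1 \defr \rho(t)$. Since $M_T^h \subset h^\beta([0,T],U^h_s)$ and $M_T^c \subset h^\beta([0,T],U^c_s)$ (the embedding remarks following Notations \ref{Not2}), we have $u_1 \in U^c_s$ and $\rho_1 \in U^h_s \subset U^h_{s,1}$, so all the geometric quantities are well-defined and of the stated regularity by Lemmas \ref{P,Q}, \ref{Vorfaktoren_elliptisch_P} and \ref{nabla,div,a}. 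The operator is $A^h_{u_1,\rho_1}[\,\cdot\,] = g(u_1)a(\rho_1)P(\rho_1)[\,\cdot\,]$. Again $P(\rho_1)$ is symmetric and uniformly elliptic with $h^s$-coefficient matrix (Lemma \ref{Vorfaktoren_elliptisch_P}, valid because $\Sigma$ is $h^{3+s}$, granted by Assumptions \ref{Vor}(ii) and $s \leq 2\beta+\alpha < 2+\alpha$), and the scalar prefactor $g(u_1)a(\rho_1)$ lies in $X_s = h^s(M)$ and is strictly positive: $g(u_1)>0$ by Assumptions \ref{Vor}(i) and Lemma \ref{g}, while $a(\rho_1) \geq \tfrac12$ by Remark \ref{a_pos} since $\rho_1 \in U^h_s$ has $\|\rho_1\|_{Y_\alpha} < 2R^\Sigma$. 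Multiplying by this positive $h^s$-factor again preserves symmetry and uniform ellipticity (the new coefficient matrix is $g(u_1)a(\rho_1)[a^{ij}_{\rho_1}]$, still symmetric positive definite with a uniform lower bound on $M$, using compactness of $M$ and continuity of the coefficients). So Proposition \ref{HG_elliptic_semigroup_M} applies on $X_s$ with domain $Z_s$ and yields generation of an analytic $C^0$-semigroup.

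The main obstacle — really the only nontrivial point beyond bookkeeping — is to make sure the ellipticity is \emph{uniform} (a genuine positive lower bound on the principal symbol, not merely pointwise positivity), since the abstract generation theorem needs uniform ellipticity together with uniformly $h^s$-bounded coefficients, and the multiplicative factor $g(u_1)a(\rho_1)$ could in principle degenerate. This is handled by the a priori bounds built into the setup: $\rho_1 \in U^h_s$ keeps $\|\rho_1\|_{C^1}$ below $2R^\Sigma$, so $[a^{ij}_{\rho_1}]$ stays uniformly elliptic by the proof of Lemma \ref{Vorfaktoren_elliptisch_P}; $a(\rho_1) \geq \tfrac12$ uniformly by Remark \ref{a_pos}; and $u_1 \in U^c_s$ lies in a bounded set, so by compactness of $M$ and Lemma \ref{g} the continuous function $g(u_1)$ attains a positive minimum — indeed a uniform one could be extracted if needed, but for the statement at hand we only need, for each fixed $t$, that $g(u_1)a(\rho_1)$ is a strictly positive element of $h^s(M)$, which is immediate. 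The remaining steps — that multiplying an elliptic operator by a positive scalar coefficient field preserves the hypotheses of Proposition \ref{HG_elliptic_semigroup_M}, and that $(X_\alpha,Z_\alpha)_\beta = X_{2\beta+\alpha}$ — are routine and can be cited or checked in a line.
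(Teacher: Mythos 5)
Your proposal is correct and follows essentially the same route as the paper: both reduce to Proposition \ref{HG_elliptic_semigroup_M} by observing that $A^h_{u(t),\rho(t)} = g(u(t))a(\rho(t))P(\rho(t))$ is a symmetric elliptic second-order operator with $h^s$-coefficients (Lemma \ref{Vorfaktoren_elliptisch_P} for $P$, Assumptions \ref{Vor}(i) and Remark \ref{a_pos} for positivity of the scalar factor), and then identify $\mathcal{D}_{A^h}(\beta)=X_{2\beta+\alpha}$ via the interpolation characterization of $\mathcal{D}_A(\beta)$ together with the Hölder-space interpolation result (Lemma \ref{Interpol_ComO} and reiteration in the paper). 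Your extra discussion of uniform ellipticity is fine but not an additional idea, since Definition \ref{def_elliptic} already notes that ellipticity and uniform ellipticity coincide for continuous coefficients on the compact surface $M$.
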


\begin{proof}
Let $u \in M_T^c$, $\rho \in M_T^h$ and fix $s \in \{\alpha, 2\beta+\alpha\}$, $t \in [0,T]$. 
Then, 
$g\big(u(t)\big), a\big(\rho(t)\big) \in X_s$ holds with Lemmas \ref{g} and \ref{nabla,div,a}(ii). Also, Lemma \ref{Vorfaktoren_elliptisch_P} yields that $P(\rho(t)) \in \mathcal{L}(Z_s,X_s)$ is a symmetric and elliptic differential operator of second order. Because we have 
\begin{align*}
A^h_{u(t),\rho(t)} = g\big(u(t)\big)a\big(\rho(t)\big)P\big(\rho(t)\big)
\end{align*}
with $g>0$ and $a>0$ by Assumption \ref{Vor}(i) and Remark \ref{a_pos}, $A^h_{u(t),\rho(t)} \in \mathcal{L}(Z_s,X_s)$ is a symmetric and elliptic differential operator of second order, too.
Due to Proposition \ref{HG_elliptic_semigroup_M},  $A^h_{u(t),\rho(t)}: \mathcal{D}\big(A^h_{u(t),\rho(t)}\big) \subset X_s \rightarrow X_s$ therefore generates an analytic $C^0$-semigroup with $\mathcal{D}\big(A^h_{u(t),\rho(t)}\big) = Z_s$.
Lemma \ref{Interpol_ComO} and the reiteration theorem finally imply
\begin{align*}
\mathcal{D}_{A^h_{u_0,0}}(\beta)
= \big( X_\alpha, \mathcal{D}\big(A^h_{u_0,0}\big) \big)_{\beta}
= \big( h^\alpha(\Sigma), h^{2+\alpha}(\Sigma) \big)_{\beta}
= h^{2\beta + \alpha}(\Sigma)
= X_{2\beta+\alpha}. &\qedhere
\end{align*}
\end{proof}

\begin{proposition}\label{L_bij_h}
We suppose Assumptions \ref{Vor} are valid and use Notations \ref{Not2}. Then, 
\begin{align*}
L^h: \E_{1,T} \rightarrow (\E_{0,T} \times Z_\alpha)^h_+
\end{align*}
is bijective with 
\begin{align*}
\Lambda^h \defr \sup_{0<T\leq 1} \| \big(L^h\big)^{-1} \|_{\mathcal{L}((\E_{0,T} \times Z_\alpha)^h_+,\E_{1,T})} < \infty,
\end{align*}
where
\begin{align*}
(\E_{0,T} \times Z_\alpha)^h_+ &\defr \big\{ (f,f_0) \in (\E_{0,T} \times Z_\alpha) \, \big| \, f(0)+A^h[f_0] \in \mathcal{D}_{A^h}(\beta)=X_{2\beta+\alpha} \big\} \text{ with } \\
\|(f,f_0)\|_{(\E_{0,T} \times Z_\alpha)^h_+} &\defr \|f\|_{\E_{0,T}} + \|f_0\|_{Z_\alpha} + \|f(0)+A^h[f_0]\|_{X_{2\beta+\alpha}} \text{ for } (f,f_0) \in (\E_{0,T} \times Z_\alpha)^h_+.
\end{align*}
In particular, $\Lambda^h = \Lambda^h(u_0)$ only depends on the initial value $u_0$.
\end{proposition}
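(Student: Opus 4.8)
The plan is to recognize $L^h$ as the standard linear initial-value operator associated with the autonomous, time-independent generator $A^h = A^h_{u_0,0}$, and to apply the abstract maximal-regularity result announced as Proposition~\ref{L_bij_allg} in the appendix. By Proposition~\ref{P_erzeugtHG_h} we already know that $A^h : Z_\alpha = \mathcal{D}(A^h) \subset X_\alpha \to X_\alpha$ generates an analytic $C^0$-semigroup with $\mathcal{D}_{A^h}(\beta) = X_{2\beta+\alpha}$. Hence the abstract theory tells us that for any fixed $T \in (0,1]$ the map
\begin{align*}
L^h : \E_{1,T} = h^{1+\beta}([0,T],X_\alpha) \cap h^\beta([0,T],Z_\alpha) \to (\E_{0,T} \times Z_\alpha)^h_+, \qquad L^h[\rho] = \binom{\partial_t \rho - A^h[\rho]}{\rho(0)},
\end{align*}
is a well-defined bounded linear bijection: well-definedness of the codomain is exactly the statement that a solution $\rho \in \E_{1,T}$ of $\partial_t\rho - A^h\rho = f$, $\rho(0)=f_0$ must satisfy the compatibility condition $f(0) + A^h[f_0] = \partial_t\rho(0) \in \mathcal{D}_{A^h}(\beta)$, and surjectivity plus injectivity is the maximal-regularity statement that this compatibility condition is also sufficient for a (unique) solution to exist in $\E_{1,T}$. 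First I would state this, citing Proposition~\ref{L_bij_allg} (equivalently \cite[Chapter~4]{LunardiASG}), and note that bounded bijectivity between Banach spaces gives a bounded inverse by the open mapping theorem.

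The only genuine content is the uniformity $\Lambda^h = \sup_{0 < T \le 1} \|(L^h)^{-1}\|_{\mathcal{L}((\E_{0,T}\times Z_\alpha)^h_+,\, \E_{1,T})} < \infty$, i.e.\ the operator norm of the solution operator does not blow up as $T \searrow 0$ (or as $T$ varies in $(0,1]$). The key point is that the maximal-regularity estimate furnished by the abstract theorem,
\begin{align*}
\|\rho\|_{\E_{1,T}} \le C\big( \|f\|_{\E_{0,T}} + \|f_0\|_{Z_\alpha} + \|f(0)+A^h[f_0]\|_{X_{2\beta+\alpha}} \big),
\end{align*}
holds with a constant $C$ that depends only on the semigroup generated by $A^h$ — that is, only on $A^h$ itself (its sectoriality constants, the interpolation constants for $(X_\alpha,Z_\alpha)_\beta$) — and on an \emph{upper} bound for $T$, but not on $T$ from below. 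This is a standard feature of the Lunardi-type estimates: the solution is built via the variation-of-constants formula $\rho(t) = e^{tA^h}f_0 + \int_0^t e^{(t-\sigma)A^h} f(\sigma)\,d\sigma$, and all the Hölder-in-time seminorm bounds one derives for $e^{tA^h}f_0$ and for the convolution term are controlled on $[0,1]$ uniformly, with constants that are monotone (non-increasing) in $T$ or at worst bounded by their value at $T=1$. Since $A^h$ is fixed once $u_0$ is fixed (it equals $g(u_0)a(0)P(0) = g(u_0)\,\Delta_\Sigma + \text{l.o.t.}$, depending only on $u_0$ and the reference surface $\Sigma$), the resulting constant $\Lambda^h$ depends only on $u_0$ (and the fixed data $\Sigma$, $\alpha$, $\beta$), proving the final assertion $\Lambda^h = \Lambda^h(u_0)$.

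The main obstacle — really the only thing requiring care — is making the $T$-uniformity of the maximal-regularity constant explicit rather than treating it as a black box. One must check that the $(\E_{0,T}\times Z_\alpha)^h_+$-norm on the right-hand side is the \emph{correct} norm for which the estimate is uniform: in particular the term $\|f(0)+A^h[f_0]\|_{X_{2\beta+\alpha}}$ on the data side precisely matches the $\mathcal{D}_{A^h}(\beta)$-norm appearing in the abstract compatibility condition, so that the estimate on $\partial_t\rho$ in $h^\beta([0,T],X_\alpha)$ near $t=0$ does not pick up a negative power of $T$. I would verify this by inspecting the statement of Proposition~\ref{L_bij_allg}: since that proposition is stated (by design, per the excerpt) precisely as ``$h^\beta([0,T],X)$ is of maximal regularity for the IVP with compatibility condition $Au_0 + f(0) \in \mathcal{D}_A(\beta)$,'' the uniform estimate in the graph norm of $(h^\beta([0,T],X) \times \mathcal{D}(A))_+$ is exactly what it provides, with a $T$-independent (equivalently, $\sup_{0<T\le1}$-finite) constant. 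Plugging in $X = X_\alpha$, $A = A^h$, $\mathcal{D}(A) = Z_\alpha$ and reading off $\mathcal{D}_{A^h}(\beta) = X_{2\beta+\alpha}$ from Proposition~\ref{P_erzeugtHG_h} completes the argument.
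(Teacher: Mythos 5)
Your proposal is correct and follows essentially the same route as the paper: the paper's proof consists precisely of noting that, by Proposition~\ref{P_erzeugtHG_h}, $A^h$ generates an analytic $C^0$-semigroup with $\mathcal{D}_{A^h}(\beta)=X_{2\beta+\alpha}$, and then invoking the abstract maximal-regularity result of Proposition~\ref{L_bij_allg}, whose statement already contains the $T$-uniform bound $\sup_{0<T\leq 1}\|L_T^{-1}\|_{\mathcal{L}}<\infty$. Your additional discussion of why the constant is uniform in $T$ and depends only on $u_0$ (through $A^h=g(u_0)a(0)P(0)$) is a correct elaboration of what the paper treats as contained in the cited appendix result.
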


\begin{proof}
By Proposition \ref{P_erzeugtHG_h}, $A^h$ satisfies the conditions of Proposition \ref{L_bij_allg}, which yields the claim. 
\end{proof}

As a next step, we prove a technical auxiliary lemma.

\begin{lemma}\label{G_wohldef_h}
We suppose Assumptions \ref{Vor} are valid and use Notations \ref{Not2}.
\begin{enumerate}
\item[(i)]
If $u \in M_T^c$ and $\rho \in M_{T}^h$ with $\rho(0) \in Z_{2\beta+\alpha}$, then $\big(G_u^h(\rho)\big)(0)=G_{u_0}^h\big(\rho(0)\big)$ holds in $X_\alpha$ and we have 
\begin{align*}
\big( G_u^h(\rho), \rho(0) \big) \in \big(\E_{0,T} \times Z_\alpha\big)^h_+.
\end{align*}
\item[(ii)]
There exists a constant $N^h = N^h \big(R^c,\delta_1\big)$ independent of $T$, $R^h$ and $u \in M_T^c$ such that
\begin{align*}
\left\| \big(G_u^h(\rho_0),\rho_0\big) \right\|_{(\E_{0,T} \times Z_\alpha)^h_+} \leq N^h
\end{align*}
holds for all $\rho_0 \in Z_{2\beta+\alpha}$ with $\|\rho_0\|_{Z_{2\beta+\alpha}} < \delta_1$, $\|\rho_0\|_{Y_\alpha} < \delta_0$.
\end{enumerate}
\end{lemma}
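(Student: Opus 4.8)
The plan is to analyze the two parts of Lemma \ref{G_wohldef_h} separately, with part (i) being the conceptual core and part (ii) a quantitative refinement of the same argument. For part (i), I would start by unpacking the definition $G_u^h(\rho) = g(u)a(\rho)H(\rho) - A^h[\rho]$ with $A^h[\rho] = g(u_0)a(0)P(0)[\rho]$. Using Lemma \ref{P,Q} to write $H(\rho) = P(\rho)[\rho] + Q(\rho)$, this becomes
\begin{align*}
G_u^h(\rho) = \big(g(u)a(\rho)P(\rho) - g(u_0)a(0)P(0)\big)[\rho] + g(u)a(\rho)Q(\rho).
\end{align*}
Evaluating at $t=0$ and using $u(0)=u_0$, $\rho(0)=\rho_0$ (here I use the defining property of $M_{T,\rho_0}^h$, though the statement is phrased for general $\rho \in M_T^h$ with $\rho(0)\in Z_{2\beta+\alpha}$, so I would just write $\rho(0)$), one gets $\big(G_u^h(\rho)\big)(0) = g(u_0)a(\rho(0))H(\rho(0)) - g(u_0)a(0)P(0)[\rho(0)] = G_{u_0}^h(\rho(0))$ directly, since all operators are evaluated pointwise in time. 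The membership $G_u^h(\rho) \in \E_{0,T}$ is exactly Corollary \ref{Produkt_Wohldef_h}. It then remains to verify the compatibility condition $\big(G_u^h(\rho)\big)(0) + A^h[\rho(0)] \in \mathcal{D}_{A^h}(\beta) = X_{2\beta+\alpha}$; but by the computation above this sum telescopes to $g(u_0)a(\rho(0))H(\rho(0)) = g(u_0)a(\rho(0))\big(P(\rho(0))[\rho(0)] + Q(\rho(0))\big)$, and since $\rho(0) = \rho_0 \in Z_{2\beta+\alpha}$, Corollary \ref{Vorfaktoren_C2} applied with $s = 2\beta+\alpha$ (giving $gaP \in C^2(U^c_s\times U^h_s, \mathcal{L}(Z_s,X_s))$ and $gaQ \in C^2(U^c_s\times U^h_s, X_s)$) shows this lies in $X_{2\beta+\alpha}$. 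This is where the $h^{2+2\beta+\alpha}$-regularity of $\rho_0$ is used, as flagged in the comments after Assumptions \ref{Vor}.

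For part (ii), the point is to specialize to $\rho \equiv \rho_0$ constant in time and to track the dependence of the norm bound on the parameters. By the norm on $(\E_{0,T}\times Z_\alpha)^h_+$, I need to bound three pieces: $\|G_u^h(\rho_0)\|_{\E_{0,T}}$, $\|\rho_0\|_{Z_\alpha}$, and $\|\big(G_u^h(\rho_0)\big)(0) + A^h[\rho_0]\|_{X_{2\beta+\alpha}}$. The middle term is bounded by $\|\rho_0\|_{Z_{2\beta+\alpha}} < \delta_1$ via the embedding $Z_{2\beta+\alpha}\hookrightarrow Z_\alpha$. For the first term, since $\rho_0$ is time-independent, $G_u^h(\rho_0)(t) = \big(g(u(t))a(\rho_0)P(\rho_0) - g(u_0)a(0)P(0)\big)[\rho_0] + g(u(t))a(\rho_0)Q(\rho_0)$, and I would apply Remark \ref{compositionOP_Zus} (the uniform estimates for $gaP$ and $gaQ$, which hold on $M_T^c \times M_T^h$) — but crucially, a constant function $t\mapsto\rho_0$ with $\|\rho_0\|_{Y_\alpha} < \delta_0 \le R^\Sigma$ and $\|\rho_0\|_{Z_\alpha} < \delta_1 \le R^h$ lies in the relevant sets, and its $h^\beta([0,T],\cdot)$-norm equals its static norm, so the resulting constant depends only on $R^c$, $\delta_1$ (and the fixed geometric data) but not on $R^h$ or $T$. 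Here one should be slightly careful: the generic constant from Remark \ref{compositionOP_Zus} a priori depends on $R^h$ through the set $M_T^h$, so I would instead invoke Corollary \ref{Vorfaktoren_C2} / Proposition \ref{compositionOP_C0}-type bounds directly on the fixed compact set $\{\rho : \|\rho\|_{Z_\alpha}\le\delta_1, \|\rho\|_{Y_\alpha}\le\delta_0\} \times K^c_s$, whose size is governed by $\delta_1$ and $R^c$ alone. The third term telescopes exactly as in part (i) to $g(u_0)a(\rho_0)H(\rho_0)$ evaluated in $X_{2\beta+\alpha}$, which by Corollary \ref{Vorfaktoren_C2} with $s=2\beta+\alpha$ is bounded in terms of $\|\rho_0\|_{Z_{2\beta+\alpha}} < \delta_1$ and $\|u_0\|_{h^{2+\alpha}(M)} \le R^c/2$ — again independent of $T$ and $R^h$. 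Setting $N^h$ to be the sum of these three bounds gives the claim.

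The main obstacle I anticipate is the bookkeeping of which constant depends on which parameter — specifically, making sure the estimate in part (ii) does not secretly pick up $R^h$- or $T$-dependence through the function-space operator bounds. The telescoping identity $\big(G_u^h(\rho)\big)(0) + A^h[\rho(0)] = g(u_0)a(\rho(0))H(\rho(0))$ is the clean structural fact that makes both parts work, and I would state it up front. The one subtlety worth double-checking is that $2\beta+\alpha \le s$ is permitted in Corollary \ref{Vorfaktoren_C2} (it is, since $s$ ranges over $\{\alpha, 2\beta+\alpha\}$ and the geometric regularity $h^{4+\alpha}$ of $\Sigma$ together with $G\in C^7$ was arranged precisely to allow $s = 2\beta+\alpha$), and that $U^h_{2\beta+\alpha} \subset Y_{2\beta+\alpha}$ so that $\rho_0 \in Z_{2\beta+\alpha}$ with small norms indeed sits in the domains where these $C^2$-regularity statements apply. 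Everything else is a routine application of the continuity of multiplication on little Hölder spaces (an algebra) together with the cited regularity lemmas.
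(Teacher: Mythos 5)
Your proposal is correct and follows essentially the same route as the paper: the pointwise identity at $t=0$, the telescoping of $A^h[\rho(0)]+\big(G_u^h(\rho)\big)(0)$ to $g(u_0)a(\rho(0))H(\rho(0))$, Corollary \ref{Produkt_Wohldef_h} for membership in $\E_{0,T}$, and Corollary \ref{Vorfaktoren_C2} at $s=2\beta+\alpha$ for the compatibility condition. The subtlety you flag about avoiding $R^h$-dependence in (ii) is exactly what the paper handles, by applying Remark \ref{compositionOP_Zus} with $\widetilde{M_T^h}$ defined via $\widetilde{R^h}\defr 2\delta_1$ in place of $R^h$, which is the same idea as your restriction to a $\delta_1$-sized set.
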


\begin{proof}$\phantom{.}$
\begin{enumerate}
\item[Ad (i)]
We have $G_u^h(\rho) \in \E_{0,T}$ by Corollary \ref{Produkt_Wohldef_h}, hence $\big( G_u^h(\rho), \rho(0) \big) \in \E_{0,T} \times Z_\alpha$ holds. 
Moreover, we have $u(0)=u_0 \in U^c_{2\beta+\alpha}$ and $\rho(0), 0 \in U^h_{2\beta+\alpha} \cap Z_{2\beta+\alpha}$. So, Corollary \ref{Vorfaktoren_C2} yields $\big(G_u^h(\rho)\big)(0) =  G_{u_0}^h\big(\rho(0)\big)$ in $X_{2\beta+\alpha} \hookrightarrow X_\alpha$ 
and therefore
\begin{align*}
A^h\big[\rho(0)\big] + \big(G_u^h(\rho)\big)(0) 
= g(u_0)a\big(\rho(0)\big)H\big(\rho(0)\big)  \in X_{2\beta+\alpha}
\end{align*}
follows with $X_{2\beta+\alpha} = \mathcal{D}_{A^h}(\beta)$ by Proposition \ref{P_erzeugtHG_h}. 
\item[Ad (ii)]
We have $u, u_0 \in M_T^c$ and $\rho_0, 0 \in \widetilde{M_T^h}$ with $\widetilde{M_T^h}$ defined as $M_T^h$ but with $\widetilde{R^h} \defr 2\delta_1$ instead of $R^h$. So, Remark \ref{compositionOP_Zus} together with Lemma \ref{compositionOP_lin} yields 
\begin{align*}
\big\| G_u^h(\rho_0) \big\|_{\E_{0,T}}
&= \left\| \big(g a H\big)(u,\rho_0) - \big(g a P\big)(u_0,0)[\rho_0] \right\|_{\E_{0,T}} \\
&\leq \left\| \Big(\big(g a P\big)(u,\rho_0) - \big(g a P\big)(u_0,0)\Big)[\rho_0] \right\|_{\E_{0,T}} + \big\| \big(g a Q\big)(u,\rho_0) \big\|_{\E_{0,T}} \\
&\leq C(R^c,\delta_1) \big(\|u-u_0\|_{h^\beta([0,T],Y_\alpha)} + \|\rho_0\|_{Y_\alpha} \big) \|\rho_0\|_{Z_\alpha} + C(R^c,\delta_1) \\
&\leq C(R^c,\delta_1)
\end{align*}
as well as
\begin{align*}
\left\| A^h[\rho_0] + \big(G_u^h(\rho_0)\big)(0) \right\|_{\mathcal{D}_{A^h}(\beta)} 
&\leq \left\| \big(g a P\big)(u_0,\rho_0)[\rho_0] \right\|_{X_{2\beta+\alpha}} + \left\| \big(g a Q\big)(u_0,\rho_0)\right\|_{X_{2\beta+\alpha}} \\
&\leq C(R^c,\delta_1) \big( \|\rho_0\|_{Z_{2\beta+\alpha}} + 1 \big) 
\leq C\big(R^c,\delta_1\big).
\end{align*}
(As $u_0$ and $\rho_0$ are independent of $t$, there is also no time dependence in the application of Remark \ref{compositionOP_Zus} in the estimate above.) Altogether, 
\begin{align*}
\left\| \big(G_u^h(\rho_0),\rho_0\big) \right\|_{(\E_{0,T} \times Z_\alpha)^h_+} 
\leq C\big(R^c,\delta_1\big) \defl N^h
\end{align*}
holds. \qedhere
\end{enumerate}
\end{proof}

The following proposition is the key point for the contraction argument. 

\begin{proposition}\label{Kontraktion_h}
We suppose Assumptions \ref{Vor} are valid and use Notations \ref{Not2}. There exists $\varepsilon>0$ with
\begin{align*}
\|G^h_{u_1}(\rho_1) - G^h_{u_2}(\rho_2) \|_{\E_{0,T}} 
&\leq C(R^\Sigma,R^c,R^h) T^\varepsilon \big( \| u_1-u_2\|_{\E_{1,T}} + \| \rho_1-\rho_2 \|_{\E_{1,T}} \big)\\
&\phantom{\leq x} + C(R^\Sigma,R^c,R^h) \|\rho_1(0)-\rho_2(0)\|_{Y_\alpha} \\
&\phantom{\leq x} + C(R^\Sigma, \|u_0\|_{Z_\alpha}, \|\rho_1(0)\|_{Z_\alpha}) \|\rho_1(0)\|_{Y_\alpha} \|\rho_1-\rho_2\|_{\E_{1,T}}
\end{align*}
for any $u_1, u_2 \in M_T^c$ and $\rho_1, \rho_2 \in M_{T}^h$.
\end{proposition}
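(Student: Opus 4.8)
The plan is to expand $G^h_u(\rho) = g(u)a(\rho)H(\rho) - A^h[\rho]$ using the decomposition $H(\rho) = P(\rho)[\rho] + Q(\rho)$ from Lemma \ref{P,Q}, so that
\begin{align*}
G^h_u(\rho) = \big(gaP\big)(u,\rho)[\rho] - \big(gaP\big)(u_0,0)[\rho] + \big(gaQ\big)(u,\rho).
\end{align*}
Writing out the difference $G^h_{u_1}(\rho_1) - G^h_{u_2}(\rho_2)$ and adding and subtracting suitable intermediate terms, I would split it into (a) a ``quasilinear principal part'' piece $\big[(gaP)(u_1,\rho_1) - (gaP)(u_2,\rho_2)\big][\rho_1]$, (b) a ``linear'' piece $\big[(gaP)(u_2,\rho_2) - (gaP)(u_0,0)\big][\rho_1-\rho_2]$, and (c) the lower-order piece $\big(gaQ\big)(u_1,\rho_1) - \big(gaQ\big)(u_2,\rho_2)$. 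The estimates for $\|\cdot\|_{\E_{0,T}} = \|\cdot\|_{h^\beta([0,T],X_\alpha)}$ of (a) and (c) follow directly from Remark \ref{compositionOP_Zus} applied to the $C^2$-functionals $gaP \in C^2(U^c_\alpha \times U^h_\alpha, \mathcal{L}(Z_\alpha,X_\alpha))$ and $gaQ \in C^2(U^c_\alpha \times U^h_\alpha, X_\alpha)$ from Corollary \ref{Vorfaktoren_C2}, combined with the algebra/product estimate in $h^\beta([0,T],X_\alpha)$ and the fact that $M_T^c, M_T^h$ have bounded $h^\beta([0,T],Y_\alpha)$-norms. This produces the Lipschitz bound $C(R^\Sigma,R^c,R^h)\big(\|u_1-u_2\|_{h^\beta([0,T],Y_\alpha)} + \|\rho_1-\rho_2\|_{h^\beta([0,T],Y_\alpha)}\big)$ for these terms.

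The gain of the small factor $T^\varepsilon$ comes from the interpolation/embedding $\E_{1,T} \hookrightarrow h^\beta([0,T], Y_{2\beta+\alpha})$ together with a standard time-trace argument: a function in $\E_{1,T}$ whose value and $Z_\alpha$-norm are controlled, with a fixed or controlled initial value, satisfies $\|w - w(0)\|_{h^\beta([0,T],Y_\alpha)} \le C\,T^\varepsilon \|w\|_{\E_{1,T}}$ for some $\varepsilon>0$ (using $\beta < 2\beta+\alpha$, hence a genuine Hölder exponent to spare). This lets me replace $\|u_1-u_2\|_{h^\beta([0,T],Y_\alpha)}$ and $\|\rho_1-\rho_2\|_{h^\beta([0,T],Y_\alpha)}$ by $T^\varepsilon\|u_1-u_2\|_{\E_{1,T}} + T^\varepsilon\|\rho_1-\rho_2\|_{\E_{1,T}}$ plus a leftover from the initial values, namely $\|\rho_1(0)-\rho_2(0)\|_{Y_\alpha}$ (since the $u_i$ share the initial value $u_0$, no such term appears for $u$). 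This accounts for the first two lines of the asserted estimate.

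The remaining third line is the delicate point and is the main obstacle. It arises from the ``linear'' piece (b): $\big[(gaP)(u_2,\rho_2) - (gaP)(u_0,0)\big][\rho_1-\rho_2]$. One cannot simply bound $(gaP)(u_2,\rho_2) - (gaP)(u_0,0)$ by a small constant, because $u_2,\rho_2$ need not be close to $u_0,0$ in the relevant norms uniformly; but by the mean value inequality for the $C^2$-functional $gaP$ it is controlled by $\|u_2-u_0\|_{h^\beta([0,T],Y_\alpha)} + \|\rho_2\|_{h^\beta([0,T],Y_\alpha)}$. The first term is again handled by the $T^\varepsilon$-trace trick (giving $T^\varepsilon\|u_2-u_0\|_{\E_{1,T}}$, absorbed into the first line), and the $\|\rho_2\|_{h^\beta([0,T],Y_\alpha)}$ part splits as $\|\rho_2 - \rho_2(0)\|_{h^\beta([0,T],Y_\alpha)} + \|\rho_2(0)\|_{Y_\alpha} \le C T^\varepsilon\|\rho_2\|_{\E_{1,T}} + \|\rho_2(0)\|_{Y_\alpha}$. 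Multiplying by $\|\rho_1-\rho_2\|_{\E_{1,T}}$, the $T^\varepsilon$ part goes into the first line, while the genuinely non-small contribution is $\|\rho_1(0)\|_{Y_\alpha}\,\|\rho_1-\rho_2\|_{\E_{1,T}}$ (after possibly replacing $\rho_2(0)$ by $\rho_1(0)$ at the cost of the already-present $\|\rho_1(0)-\rho_2(0)\|_{Y_\alpha}$ term), with a constant depending only on $R^\Sigma$, $\|u_0\|_{Z_\alpha}$ and $\|\rho_1(0)\|_{Z_\alpha}$ — which is exactly the third line. Care is needed here to keep the constant in front of this last term from depending on $R^h$, which forces one to keep $\rho_1(0)$ (not the bulk $\rho_1$) in that factor and to track which quantities the Lipschitz constant of $gaP$ depends on; this bookkeeping, rather than any hard analysis, is the crux, and it is what ultimately allows $\delta_0$ to be chosen small enough to close the contraction later.
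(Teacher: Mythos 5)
Your overall strategy coincides with the paper's: expand $G^h_u(\rho)$ via $H(\rho)=P(\rho)[\rho]+Q(\rho)$, split the difference into a quasilinear piece, a ``linear'' piece and the $gaQ$-piece, estimate coefficients by Remark \ref{compositionOP_Zus} together with Lemma \ref{compositionOP_lin}, and gain $T^\varepsilon$ from Remark \ref{Absch_Höldernorm} combined with the embedding $\E_{1,T}\hookrightarrow h^\gamma([0,T],Y_\alpha)$, $\gamma>\beta$ (Lemma \ref{EinbettungE_1}). However, there is a genuine gap exactly at the point you yourself flag as the crux: with your splitting, the piece carrying the non-small factor is $\big[(gaP)(u_2,\rho_2)-(gaP)(u_0,0)\big][\rho_1-\rho_2]$, and any mean value/Lipschitz bound for this coefficient difference must use the derivative bounds of $gaP$ on sets containing the full trajectories $u_2(t),\rho_2(t)$, i.e.\ on $K^c_\alpha\times K^h_\alpha$; the resulting constant is therefore $C(R^\Sigma,R^c,R^h)$. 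Consequently the leftover term $\|\rho_1(0)\|_{Y_\alpha}\|\rho_1-\rho_2\|_{\E_{1,T}}$ comes with a constant depending on $R^c$ and $R^h$, not $C(R^\Sigma,\|u_0\|_{Z_\alpha},\|\rho_1(0)\|_{Z_\alpha})$ as asserted; ``keeping $\rho_1(0)$ in that factor'' does not change which set the Lipschitz constant of $gaP$ is taken over, so your argument as written does not prove the stated inequality. This is not cosmetic: the independence of that constant from $R^c,R^h$ is what lets $\delta_0$ be chosen depending only on $R^\Sigma,u_0,\delta_1$, which is used later (e.g.\ in the uniqueness part of Theorem \ref{lokEx_param}, where $R^c,R^h$ are enlarged for a fixed $\rho_0$ and fixed $\delta_0$).

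The repair is the extra intermediate point the paper inserts: write the ``linear'' piece as
\begin{align*}
\Big(\big(gaP\big)(u_1,\rho_1)-\big(gaP\big)(u_0,\rho_1(0))\Big)[\rho_1-\rho_2]
+\Big(\big(gaP\big)(u_0,\rho_1(0))-\big(gaP\big)(u_0,0)\Big)[\rho_1-\rho_2].
\end{align*}
In the first bracket both $u_1-u_0$ and $\rho_1-\rho_1(0)$ vanish at $t=0$, so Remark \ref{Absch_Höldernorm} gives a clean factor $T^{\gamma-\beta}$ with constant $C(R^\Sigma,R^c,R^h)$ and no initial-value leftover; in the second bracket all arguments $u_0,\rho_1(0),0$ are time-independent, so Remark \ref{compositionOP_Zus} can be applied with the radii $\widetilde{R^c}\defr\|u_0\|_{Z_\alpha}$ and $\widetilde{R^h}\defr\|\rho_1(0)\|_{Z_\alpha}$ in place of $R^c,R^h$, yielding precisely $C(R^\Sigma,\|u_0\|_{Z_\alpha},\|\rho_1(0)\|_{Z_\alpha})\,\|\rho_1(0)\|_{Y_\alpha}\,\|\rho_1-\rho_2\|_{\E_{1,T}}$. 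With this modification (and your quasilinear and $gaQ$ pieces handled as you describe), the argument closes and is essentially the paper's proof.
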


\begin{proof}
Remark \ref{compositionOP_Zus} yields 
\begin{align*}
&\big\| \big( g a Q\big)(u_1,\rho_1)-\big( g a Q\big)(u_2,\rho_2) \big\|_{\E_{0,T}} \\
&\leq C(R^\Sigma,R^c,R^h) \big( \|u_1-u_2\|_{h^\beta([0,T],Y_\alpha)} + \|\rho_1-\rho_2\|_{h^\beta([0,T],Y_\alpha)} \big) \\
&\leq C(R^\Sigma,R^c,R^h) \Big( T^{\gamma-\beta} \big( \|u_1-u_2\|_{h^\gamma([0,T],Y_\alpha)} + \|\rho_1-\rho_2\|_{h^\gamma([0,T],Y_\alpha)} \big) + \|\rho_1(0)-\rho_2(0)\|_{Y_\alpha} \Big) \\
&\leq C(R^\Sigma,R^c,R^h) \Big( T^{\gamma-\beta} \big( \|u_1-u_2\|_{\E_{1,T}} + \|\rho_1-\rho_2\|_{\E_{1,T}} \big) + \|\rho_1(0)-\rho_2(0)\|_{Y_\alpha} \Big),
\end{align*}
where we used Remark \ref{Absch_Höldernorm} und Lemma \ref{EinbettungE_1} for the further estimate and $\gamma \in (0,1)$ with $\gamma>\beta$ is the exponent from Lemma \ref{EinbettungE_1}. For $w \in \E_{1,T} \subset h^\beta\big([0,T],Z_\alpha\big)$ and using Lemma \ref{compositionOP_lin}, we have analogously
\begin{align*}
&\Big\| \Big(\big( g a P \big)(u_1,\rho_1)-\big( g a P \big)(u_2,\rho_2)\Big)  [w] \Big\|_{\E_{0,T}} \\
&\leq C(R^\Sigma,R^c,R^h) \left( \|u_1-u_2\|_{h^\beta([0,T],Y_\alpha)} + \|\rho_1-\rho_2\|_{h^\beta([0,T],Y_\alpha)} \right) \|w\|_{h^\beta([0,T],Z_\alpha)} \\
&\leq C(R^\Sigma,R^c,R^h) \Big( T^{\gamma-\beta} \big( \|u_1-u_2\|_{\E_{1,T}} + \|\rho_1-\rho_2\|_{\E_{1,T}} \big) + \|\rho_1(0)-\rho_2(0)\|_{Y_\alpha} \Big) \|w\|_{\E_{1,T}}.
\end{align*}
Finally, using $\widetilde{R^c} \defr \|u_0\|_{Z_\alpha}$ and $\widetilde{R^h} \defr \|\rho_1(0)\|_{Z_\alpha}$ instead of $R^c$ and $R^h$, Remark \ref{compositionOP_Zus} with Lemma \ref{compositionOP_lin} implies
\begin{align*}
&\Big\| \Big(\big( g a P \big)(u_0,\rho_1(0))-\big( g a P \big)(u_0,0)\Big)  [w] \Big\|_{\E_{0,T}} 
\leq C(R^\Sigma, \|u_0\|_{Z_\alpha}, \|\rho_1(0)\|_{Z_\alpha}) \|\rho_1(0)\|_{Y_\alpha} \|w\|_{\E_{1,T}}
\end{align*}
for $w \in \E_{1,T}$.
Overall, 
\begin{align*}
&\| G^h_{u_1}(\rho_1)-G^h_{u_2}(\rho_2) \|_{\E_{0,T}} \\
&\leq \Big\| \Big(\big(g a P\big)(u_1,\rho_1)-\big(g a P\big)(u_0,\rho_1(0))\Big)[\rho_1-\rho_2] \Big\|_{\E_{0,T}} \\
&\phantom{\leq} + \Big\| \Big(\big(g a P\big)(u_0,\rho_1(0))-\big(g a P\big)(u_0,0)\Big)[\rho_1-\rho_2] \Big\|_{\E_{0,T}}\\
&\phantom{\leq} + \Big\| \Big(\big(g a P\big)(u_1,\rho_1)-\big(g a P\big)(u_2,\rho_2)\Big)[\rho_2] \Big\|_{\E_{0,T}} + \big\| \big(g a Q\big)(u_1,\rho_1)-\big(g a Q\big)(u_2,\rho_2) \big\|_{\E_{0,T}} \\
&\leq C(R^\Sigma,R^c,R^h) T^{\gamma-\beta} \big( \|u_1-u_0\|_{\E_{1,T}} + \|\rho_1-\rho_1(0)\|_{\E_{1,T}} \big) \|\rho_1-\rho_2\|_{\E_{1,T}}\\
&\phantom{\leq} + C(R^\Sigma, \|u_0\|_{Z_\alpha}, \|\rho_1(0)\|_{Z_\alpha}) \|\rho_1(0)\|_{Y_\alpha} \|\rho_1-\rho_2\|_{\E_{1,T}}\\
&\phantom{\leq} + C(R^\Sigma,R^c,R^h) \Big(T^{\gamma-\beta} \big( \|u_1-u_2\|_{\E_{1,T}} + \|\rho_1-\rho_2\|_{\E_{1,T}} \big) + \|\rho_1(0)-\rho_2(0)\|_{Y_\alpha} \Big) \big(\|\rho_2\|_{\E_{1,T}} {+} 1 \big) \\
&\leq C(R^\Sigma,R^c,R^h) \Big( T^{\gamma-\beta} \big( \|u_1-u_2\|_{\E_{T}} + \|\rho_1-\rho_2\|_{\E_{1,T}} \big) + \|\rho_1(0)-\rho_2(0)\|_{Y_\alpha} \Big) \\
&\phantom{\leq} + C(R^\Sigma, \|u_0\|_{Z_\alpha}, \|\rho_1(0)\|_{Z_\alpha}) \|\rho_1(0)\|_{Y_\alpha} \|\rho_1-\rho_2\|_{\E_{1,T}}
\end{align*}
follows.
\end{proof}

With this preparatory work, we can now prove short-time existence for the first equation \eqref{eq_lokEx_gls1}.

\begin{theorem}\label{lokEx_h}
We suppose Assumptions \ref{Vor} are valid and use Notations \ref{Not2}. Therein, choose $R^h = R^h\big(R^c,u_0,\delta_1\big)>0$ sufficiently large, choose $\delta_0 = \delta_0\big(R^\Sigma,u_0,\delta_1\big) \in (0,R^\Sigma)$ sufficiently small and choose $T=T\big(R^\Sigma,R^c,R^h,u_0,\delta_0\big) \in (0,1)$ sufficiently small. Then, for any initial value $\rho_0 \in Z_{2\beta+\alpha}$ with $\|\rho_0\|_{Z_{2\beta+\alpha}} < \delta_1$ and $\|\rho_0\|_{Y_\alpha} < \delta_0$ and any concentration $u \in M_T^c$, there exists a unique solution $\rho \defr \rho_{u,\rho_0} \in M_{T,\rho_0}^h$ of 
\begin{align*}
\phantom{\Leftrightarrow}\begin{cases}
\partial_t \rho &= \phantom{bl} g(u) a(\rho) H(\rho) \phantom{\rho_0} \text{in } \E_{0,T}, \\
\rho(0) &= \phantom{bl} \rho_0 \phantom{g(u) a(\rho) H(\rho)} \text{in } Z_\alpha.
\end{cases} 
\end{align*}
\end{theorem}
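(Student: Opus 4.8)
The plan is to solve the initial value problem by a standard fixed-point argument on the set $M_{T,\rho_0}^h$, using the invertibility of the linearized operator $L^h$ from Proposition \ref{L_bij_h} to recast the nonlinear problem as a fixed-point equation. First I would rewrite the equation $\partial_t\rho = g(u)a(\rho)H(\rho)$, $\rho(0)=\rho_0$, by adding and subtracting the linearization $A^h[\rho] = g(u_0)a(0)P(0)[\rho]$: the equation becomes $L^h[\rho] = \bigl(G_u^h(\rho),\rho_0\bigr)$, where $G_u^h(\rho) = g(u)a(\rho)H(\rho) - A^h[\rho]$. By Lemma \ref{G_wohldef_h}(i), for $\rho\in M_{T,\rho_0}^h$ the right-hand side $\bigl(G_u^h(\rho),\rho_0\bigr)$ lies in $(\E_{0,T}\times Z_\alpha)^h_+$, so by Proposition \ref{L_bij_h} we may define the map
\begin{align*}
\Phi_u: M_{T,\rho_0}^h \to \E_{1,T}, \qquad \Phi_u(\rho) \defr (L^h)^{-1}\bigl(G_u^h(\rho),\rho_0\bigr).
\end{align*}
A fixed point of $\Phi_u$ in $M_{T,\rho_0}^h$ is precisely the desired solution, since $L^h[\rho] = \binom{\partial_t\rho - A^h[\rho]}{\rho(0)}$ encodes both the PDE and the initial condition.

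Next I would check that $\Phi_u$ maps $M_{T,\rho_0}^h$ into itself. Using $\Phi_u(\rho)(0) = \rho_0$ (which holds by construction of $L^h$ and its inverse), only the bound $\|\Phi_u(\rho)\|_{\E_{1,T}}\le R^h$ needs verification. Writing $\Phi_u(\rho) = (L^h)^{-1}\bigl(G_u^h(\rho),\rho_0\bigr)$ and splitting $\bigl(G_u^h(\rho),\rho_0\bigr) = \bigl(G_u^h(\rho)-G_u^h(\rho_0),0\bigr) + \bigl(G_u^h(\rho_0),\rho_0\bigr)$, I would estimate the first term by the contraction estimate of Proposition \ref{Kontraktion_h} applied with $\rho_1=\rho$, $\rho_2=\rho_0$ (note $\rho(0)=\rho_0(0)=\rho_0$ here, so the $\|\rho_1(0)-\rho_2(0)\|_{Y_\alpha}$ term drops), giving a factor $C(R^\Sigma,R^c,R^h)T^\varepsilon\|\rho-\rho_0\|_{\E_{1,T}} + C(R^\Sigma,\|u_0\|_{Z_\alpha},\delta_1)\delta_0\|\rho-\rho_0\|_{\E_{1,T}}$, both of which can be made small; the second term is bounded by $N^h(R^c,\delta_1)$ via Lemma \ref{G_wohldef_h}(ii). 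Multiplying by $\Lambda^h = \Lambda^h(u_0)$ and using $\|\rho-\rho_0\|_{\E_{1,T}}\le 2R^h$, I would obtain $\|\Phi_u(\rho)\|_{\E_{1,T}} \le \Lambda^h N^h + \Lambda^h(C T^\varepsilon + C\delta_0)\cdot 2R^h$. Here is where the order of quantifiers matters: I would first fix $R^h \defr 2\Lambda^h N^h$ (so $R^h$ depends only on $R^c$, $u_0$, $\delta_1$ as claimed), then choose $\delta_0$ small and $T$ small — depending on this now-fixed $R^h$ — so that $\Lambda^h(CT^\varepsilon + C\delta_0)\cdot 2R^h \le \tfrac12 R^h$, yielding the self-mapping property. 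One must keep the estimate \eqref{Absch_T} in force so that $M_{T,\rho_0}^h\subset M_T^h$ and all geometric quantities remain well-defined.

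For the contraction property, given $\rho_1,\rho_2\in M_{T,\rho_0}^h$ (so both have initial value $\rho_0$), I would write $\Phi_u(\rho_1)-\Phi_u(\rho_2) = (L^h)^{-1}\bigl(G_u^h(\rho_1)-G_u^h(\rho_2),0\bigr)$ and apply Proposition \ref{Kontraktion_h} with $u_1=u_2=u$ and $\rho_1(0)=\rho_2(0)=\rho_0$: the $\|u_1-u_2\|$ and $\|\rho_1(0)-\rho_2(0)\|$ terms vanish, leaving
\begin{align*}
\|\Phi_u(\rho_1)-\Phi_u(\rho_2)\|_{\E_{1,T}} \le \Lambda^h\Bigl(C(R^\Sigma,R^c,R^h)T^\varepsilon + C(R^\Sigma,\|u_0\|_{Z_\alpha},\|\rho_0\|_{Z_\alpha})\,\|\rho_0\|_{Y_\alpha}\Bigr)\|\rho_1-\rho_2\|_{\E_{1,T}}.
\end{align*}
Since $\|\rho_0\|_{Z_\alpha}\le\|\rho_0\|_{Z_{2\beta+\alpha}}<\delta_1$ and $\|\rho_0\|_{Y_\alpha}<\delta_0$, after $R^h$ is fixed I would shrink $\delta_0$ and $T$ further (still respecting \eqref{Absch_T}) so that the bracketed factor times $\Lambda^h$ is at most $\tfrac12$, making $\Phi_u$ a contraction. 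Since $M_{T,\rho_0}^h$ is a closed subset of the Banach space $\E_{1,T}$ — closedness being clear from the definition and continuity of evaluation at $t=0$ into $Z_\alpha$ — the Banach fixed-point theorem yields a unique fixed point $\rho=\rho_{u,\rho_0}\in M_{T,\rho_0}^h$, which is the unique solution. The main obstacle, and the point requiring the most care, is the bookkeeping of the parameter dependencies: $R^h$ must be chosen before $\delta_0$ and $T$, the constants $N^h$ and $\Lambda^h$ must be confirmed independent of $R^h$ and $T$ (as asserted in Lemma \ref{G_wohldef_h}(ii) and Proposition \ref{L_bij_h}), and uniqueness must be argued within $M_{T,\rho_0}^h$ rather than in all of $\E_{1,T}$.
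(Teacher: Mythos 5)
Your proposal is correct and takes essentially the same route as the paper: the paper likewise rewrites the problem as the fixed-point equation $\rho=(L^h)^{-1}\bigl(G_u^h(\rho),\rho_0\bigr)$ on $M_{T,\rho_0}^h$, obtains the contraction from Proposition \ref{Kontraktion_h} with $u_1=u_2=u$ and $\rho_1(0)=\rho_2(0)=\rho_0$, gets the self-mapping from the splitting at the constant function $\rho_0$ together with Lemma \ref{G_wohldef_h}(ii), fixes $R^h\geq 2\Lambda^h N^h$ first and then shrinks $\delta_0$ and $T$. The only cosmetic difference is the bookkeeping for $\delta_0$: since the $\delta_0$-term in Proposition \ref{Kontraktion_h} carries a constant $C(R^\Sigma,\|u_0\|_{Z_\alpha},\delta_1)$ independent of $R^c,R^h$, the paper can (and the theorem statement does) take $\delta_0=\delta_0(R^\Sigma,u_0,\delta_1)$, whereas you let $\delta_0$ also depend on the already-fixed $R^h$, which is harmless here but slightly weaker than the stated dependence.
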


\begin{proof}
We show the existence of a unique solution $\rho \in M_{T,\rho_0}^h$ of
\begin{align}\label{DGL_h}
\begin{cases}
\partial_t \rho &= \phantom{bl} g(u) a(\rho) H(\rho) \phantom{\rho_0} \text{in } \E_{0,T}, \\
\rho(0) &= \phantom{bl} \rho_0 \phantom{g(u) a(\rho) H(\rho)} \text{in } Z_\alpha
\end{cases} 
\phantom{bla} \Leftrightarrow \phantom{bla} 
L^h[\rho] = \binom{G_u^h(\rho)}{\rho_0} \text{ in } \E_{0,T} \times Z_\alpha.
\end{align}
Equation \eqref{DGL_h} is well-defined because $A^h[\rho], G_u^h(\rho) \in \E_{0,T}$ holds for $\rho \in M_T^h$ and $u \in M_T^c$ by Corollary \ref{Produkt_Wohldef_h}. Due to Lemma \ref{G_wohldef_h}(i) and Proposition \ref{L_bij_h} it is equivalent to prove the existence of a unique $\rho \in M_{T,\rho_0}^h$ with 
\begin{align*}
L^h[\rho] = \binom{G_u^h(\rho)}{\rho_0} \text{ in } (\E_{0,T} \times Z_\alpha)_+^h
\phantom{bla} \Leftrightarrow \phantom{bla} 
\rho = \big(L^h\big)^{-1} \binom{G_u^h(\rho)}{\rho_0} \defl K_{u,\rho_0}^h(\rho) \text{ in } \E_{1,T}.
\end{align*}
So, we show that $K_{u,\rho_0}^h: M_{T,\rho_0}^h \subset \E_{1,T} \rightarrow \E_{1,T}$ has a unique fixed point $\rho \in M_{T,\rho_0}^h$ using the Banach fixed-point theorem. Due to Lemma \ref{G_wohldef_h}(i) and Proposition \ref{L_bij_h}, $K_{u,\rho_0}^h(\rho) \in \E_{1,T}$ is well-defined for $\rho \in M_{T,\rho_0}^h$. 
\begin{enumerate}
\item[Step 1:] 
We have to verify that $K_{u,\rho_0}^h$ is a contraction on $M_{T,\rho_0}^h$. For any $\rho_1, \rho_2 \in M_{T,\rho_0}^h$  
\begin{align*}
&\| K_{u,\rho_0}^h(\rho_1) - K_{u,\rho_0}^h(\rho_2) \|_{\E_{1,T}} 
\leq \Lambda^h \|G_u^h(\rho_1)-G_u^h(\rho_2)\|_{\E_{0,T}} \\
&\leq \Big( C(R^\Sigma,R^c,R^h,\Lambda^h) T^\varepsilon + C(R^\Sigma, \|u_0\|_{Z_\alpha}, \|\rho_0\|_{Z_\alpha}, \Lambda^h) \|\rho_0\|_{Y_\alpha} \Big) \|\rho_1-\rho_2\|_{\E_{1,T}} \\
&\leq \Big( C(R^\Sigma,R^c,R^h,\Lambda^h) T^\varepsilon + C(R^\Sigma, u_0, \delta_1, \Lambda^h) \delta_0 \Big) \|\rho_1-\rho_2\|_{\E_{1,T}}
\end{align*}
holds by Proposition \ref{L_bij_h}, Lemma \ref{G_wohldef_h}(i) as well as Proposition \ref{Kontraktion_h}. For sufficiently small $\delta_0>0$ and sufficiently small $T>0$,
\begin{align*}
\| K_{u,\rho_0}^h(\rho_1) - K_{u,\rho_0}^h(\rho_2) \|_{\E_{1,T}} 
\leq \frac{1}{4} \| \rho_1 - \rho_2 \|_{\E_{1,T}}
\end{align*}
follows. Because $\Lambda^h$ only depends on $u_0$, $\delta_0$ only depends on $R^\Sigma, u_0$ and $\delta_1$ whereas $T$ only depends on $R^\Sigma, R^c, R^h$ and $u_0$.
\item[Step 2:]
We have to show that $K_{u,\rho_0}^h: M_{T,\rho_0}^h \rightarrow M_{T,\rho_0}^h$ is a self-mapping.  Any $\rho \in M_{T,\rho_0}^h$ fulfills $\big(K_{u,\rho_0}^h(\rho)\big)(0) = \rho_0$ in $Z_\alpha$ because $w \defr K_{u,\rho_0}^h(\rho)$ is a solution to 
\begin{align*}
L^h w = \binom{[L^hw]_1}{w(0)} = \binom{G_u^h(\rho)}{\rho_0} \text{ in } \E_{0,T} \times Z_\alpha.
\end{align*}
Furthermore, we have
\begin{align*}
\| K_{u,\rho_0}^h(\rho) \|_{\E_{1,T}}
&\leq \|K_{u,\rho_0}^h(\rho_0)\|_{\E_{1,T}} + \| K_{u,\rho_0}^h(\rho) - K_{u,\rho_0}^h(\rho_0) \|_{\E_{1,T}} \\
&\leq \Lambda^h \big\| \big( G_u^h(\rho_0),\rho_0 \big) \big\|_{(\E_{0,T} \times Z_\alpha)^h_+} + \frac{1}{4} \| \rho -\rho_0\|_{\E_{1,T}} \\
&\leq \Lambda^h N^h + \frac{1}{4} \left( \|\rho\|_{\E_{1,T}} + 2\|\rho_0\|_{Z_\alpha} \right) 
\leq \frac{R^h}{2} + \frac{R^h}{2} = R^h,
\end{align*}
where the first summand is bounded by Proposition \ref{L_bij_h} and Lemma \ref{G_wohldef_h}(ii) and the second summand by the contraction-property (see step 2). The constant $R^h$ being sufficiently large thus means $R^h \geq 2\Lambda^hN^h$ and because $\Lambda^h$ only depends on $u_0$ and $N^h$ only depends on $R^c$ and $\delta_1$, we have $R^h=R^h\big(R^c,u_0,\delta_1\big)$. The two properties just deduced imply $K_{u,\rho_0}^h(\rho) \in M_{T,\rho_0}^h$ for all $\rho \in M_{T,\rho_0}^h$. \qedhere 
\end{enumerate}
\end{proof}

Now that we know that there exists a solution $\rho_{u,\rho_0}$ to the first equation \eqref{eq_lokEx_gls1}, we analyze some of its properties. First, we discuss its dependence on the concentration $u$ and the initial value $\rho_0$. The result in Proposition \ref{Absch_h_c} will be necessary for the contraction argument for the second equation \eqref{eq_lokEx_gls2}. Afterwards, we state an improved regularity in space for the solution in Proposition \ref{Neuanfang_h}.

\begin{proposition}\label{Absch_h_c}
We suppose that Assumptions \ref{Vor} are valid and use Notations \ref{Not2}. Therein, choose $R^h>0$ as large and choose $\delta_0>0$, $T>0$ as small as in Theorem \ref{lokEx_h}. There exists $\varepsilon>0$ with 
\begin{align*}
\| \rho_1 - \rho_2 \|_{\E_{1,T}} \leq C(R^\Sigma,R^c,R^h,\Lambda^h,\delta_1) \big(T^\varepsilon \|u_1-u_2\|_{\E_{1,T}} + \|\rho_{0,1}-\rho_{0,2}\|_{Z_{2\beta+\alpha}} \big) 
\end{align*}
for any $u_1,u_2 \in M_T^c$ and $\rho_{0,1}, \rho_{0,2} \in Z_{2\beta+\alpha}$ with $\|\rho_{0,i}\|_{Z_{2\beta+\alpha}} < \delta_1$ and $\|\rho_{0,i}\|_{Y_\alpha} < \delta_0$, where $\rho_i \defr \rho_{u_i,\rho_{0,i}} \in M_{T}^h$ is the solution from Theorem \ref{lokEx_h} associated with the concentration $u_i$ and the initial value $\rho_{0,i}$, respectively.
\end{proposition}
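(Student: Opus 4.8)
The plan is to exploit the fixed-point characterisation underlying Theorem~\ref{lokEx_h}. Write $\rho_i \defr \rho_{u_i,\rho_{0,i}}$; then, as in that proof, each $\rho_i$ is the unique fixed point of $K^h_{u_i,\rho_{0,i}}$, i.e.
\[
L^h[\rho_i] = \binom{G^h_{u_i}(\rho_i)}{\rho_{0,i}} \quad \text{in } (\E_{0,T}\times Z_\alpha)^h_+ .
\]
Since $L^h$ is linear and bijective with $\|(L^h)^{-1}\| \leq \Lambda^h$ by Proposition~\ref{L_bij_h}, subtracting the two identities yields
\[
\|\rho_1-\rho_2\|_{\E_{1,T}} \leq \Lambda^h \left\| \binom{G^h_{u_1}(\rho_1)-G^h_{u_2}(\rho_2)}{\rho_{0,1}-\rho_{0,2}} \right\|_{(\E_{0,T}\times Z_\alpha)^h_+},
\]
so it remains to control the three contributions to the norm on the right: $\|G^h_{u_1}(\rho_1)-G^h_{u_2}(\rho_2)\|_{\E_{0,T}}$, $\|\rho_{0,1}-\rho_{0,2}\|_{Z_\alpha}$, and the compatibility term $\|f(0)+A^h[f_0]\|_{X_{2\beta+\alpha}}$, where $f \defr G^h_{u_1}(\rho_1)-G^h_{u_2}(\rho_2)$ and $f_0 \defr \rho_{0,1}-\rho_{0,2}$.

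The first contribution is handled directly by Proposition~\ref{Kontraktion_h}, which applies since $\rho_1,\rho_2\in M^h_T$ and $u_1,u_2\in M^c_T$; here one uses $\rho_i(0)=\rho_{0,i}$, the embedding $Z_{2\beta+\alpha}\hookrightarrow Y_\alpha$ to bound $\|\rho_1(0)-\rho_2(0)\|_{Y_\alpha}\leq C\|\rho_{0,1}-\rho_{0,2}\|_{Z_{2\beta+\alpha}}$, and the smallness $\|\rho_1(0)\|_{Y_\alpha}=\|\rho_{0,1}\|_{Y_\alpha}<\delta_0$ to replace that factor by $\delta_0$. The second contribution is immediate from $Z_{2\beta+\alpha}\hookrightarrow Z_\alpha$. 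For the compatibility term, Lemma~\ref{G_wohldef_h}(i) together with $u_i(0)=u_0$ gives $f(0)=G^h_{u_0}(\rho_{0,1})-G^h_{u_0}(\rho_{0,2})$, and since $G^h_{u_0}(\rho)+A^h[\rho]=g(u_0)a(\rho)H(\rho)$ by the definitions in Notations~\ref{Not2}(iii),
\[
f(0)+A^h[f_0] = g(u_0)a(\rho_{0,1})H(\rho_{0,1}) - g(u_0)a(\rho_{0,2})H(\rho_{0,2}).
\]
Splitting $H(\rho)=P(\rho)[\rho]+Q(\rho)$ and invoking the $C^2$-regularity (hence local Lipschitz continuity) of $gaP$ and $gaQ$ at regularity level $s=2\beta+\alpha$ from Corollary~\ref{Vorfaktoren_C2}, together with Lemma~\ref{compositionOP_lin} and $\|\rho_{0,i}\|_{Z_{2\beta+\alpha}}<\delta_1$ — exactly the static-in-$t$ computation already performed in Lemma~\ref{G_wohldef_h}(ii), but applied to differences rather than to a single argument — yields
\[
\|f(0)+A^h[f_0]\|_{X_{2\beta+\alpha}} \leq C(R^\Sigma,R^c,\delta_1)\,\|\rho_{0,1}-\rho_{0,2}\|_{Z_{2\beta+\alpha}}.
\]

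Collecting the three estimates and using again $\rho_i(0)=\rho_{0,i}$ gives
\begin{align*}
\|\rho_1-\rho_2\|_{\E_{1,T}}
&\leq \Big( \Lambda^h C(R^\Sigma,R^c,R^h) T^\varepsilon + \Lambda^h C(R^\Sigma,\|u_0\|_{Z_\alpha},\delta_1)\,\delta_0 \Big)\|\rho_1-\rho_2\|_{\E_{1,T}} \\
&\quad + C(R^\Sigma,R^c,R^h,\Lambda^h,\delta_1)\Big( T^\varepsilon\|u_1-u_2\|_{\E_{1,T}} + \|\rho_{0,1}-\rho_{0,2}\|_{Z_{2\beta+\alpha}} \Big).
\end{align*}
The prefactor of $\|\rho_1-\rho_2\|_{\E_{1,T}}$ on the right is, up to absorbing $\Lambda^h$ into the constants, precisely the contraction constant that was made $\leq\frac14$ in Step~1 of the proof of Theorem~\ref{lokEx_h} by the choices of $T$ and $\delta_0$; in particular no further smallness is required. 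Absorbing that term into the left-hand side produces the asserted inequality (with the same $\varepsilon>0$ as in Proposition~\ref{Kontraktion_h}).

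The only step calling for genuine care is the compatibility term at the higher regularity level $2\beta+\alpha$: one must check that every eligible $\rho_{0,i}$ (and the segment joining $\rho_{0,1}$ to $\rho_{0,2}$) lies in the neighbourhood on which Corollary~\ref{Vorfaktoren_C2} and Lemma~\ref{compositionOP_lin} are available — this is ensured by $\delta_0<R^\Sigma$ and by taking $R^h$ sufficiently large — and that the resulting Lipschitz constants depend only on $R^\Sigma,R^c$ and $\delta_1$ (via $\|\rho_{0,i}\|_{Z_{2\beta+\alpha}}<\delta_1$). Once this bookkeeping of constant dependencies is carried out, the remainder is a routine reuse of Propositions~\ref{L_bij_h} and~\ref{Kontraktion_h} and of Lemma~\ref{G_wohldef_h}.
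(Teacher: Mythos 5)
Your proposal is correct and follows essentially the same route as the paper: the fixed-point identity plus Proposition \ref{L_bij_h}, Proposition \ref{Kontraktion_h} for the $\E_{0,T}$-difference (with the $T^\varepsilon$- and $\delta_0$-terms absorbed exactly as the contraction constant $\tfrac14$ from Theorem \ref{lokEx_h}), and the static estimate of the compatibility term $g(u_0)a(\rho_{0,1})H(\rho_{0,1})-g(u_0)a(\rho_{0,2})H(\rho_{0,2})$ at level $2\beta+\alpha$ via the splitting $H=P[\cdot]+Q$ and Remark \ref{compositionOP_Zus}/Lemma \ref{compositionOP_lin}. The only cosmetic difference is the bookkeeping of constants (the paper records the compatibility bound with $C(R^\Sigma,R^c,R^h,\delta_1)$), which does not affect the result.
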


\begin{proof}
As $\rho_i \in M_{T}^h$ is the solution from Theorem \ref{lokEx_h} associated with the concentration $u_i$ and the initial value $\rho_{0,i}$, it is a fixed point of $(L^h)^{-1}\big( G_{u_i}^h(\cdot),\rho_{0,i} \big)$ as in the proof of Theorem \ref{lokEx_h}. Therefore, we have
\begin{align*}
\|\rho_1-\rho_2\|_{\E_{1,T}} 
&\leq \Lambda^h \big\|\big(G_{u_1}^h(\rho_1),\rho_{0,1}\big) - \big(G_{u_2}^h(\rho_2),\rho_{0,2}\big) \big\|_{(\E_{0,T} \times Z_\alpha)^h_+} \\
&=\Lambda^h \big\| G_{u_1}^h(\rho_1) - G_{u_2}^h(\rho_2) \big\|_{\E_{0,T}} + \Lambda^h \|\rho_{0,1}-\rho_{0,2}\|_{Z_\alpha} \\
&\phantom{= x}+ \Lambda^h \big\| G_{u_0}(\rho_{0,1}) - G_{u_0}(\rho_{0,2}) + A^h[\rho_{0,1}-\rho_{0,2}] \big\|_{X_{2\beta+\alpha}}
\end{align*}
by Proposition \ref{L_bij_h} and Lemma \ref{G_wohldef_h}(i). With $\delta_0>0$ and $T>0$ as small as in Theorem \ref{lokEx_h}, Proposition \ref{Kontraktion_h} yields
\begin{align*}
&\Lambda^h \big\| G_{u_1}^h(\rho_1) - G_{u_2}^h(\rho_2) \big\|_{\E_{0,T}} \\
&\leq \frac{1}{4} \|\rho_1-\rho_2\|_{\E_{1,T}} + C(R^\Sigma, R^c, R^h, \Lambda^h) \big( T^\varepsilon \|u_1-u_2\|_{\E_{1,T}} + \|\rho_{0,1}-\rho_{0,2}\|_{Y_\alpha} \big).
\end{align*}
Due to $u_0 \in M_T^c$ and $\rho_{0,i} \in M_T^h$, Remark \ref{compositionOP_Zus} together with Lemma \ref{compositionOP_lin} implies 
\begin{align*}
&\big\| G_{u_0}(\rho_{0,1}) - G_{u_0}(\rho_{0,2}) + A^h[\rho_{0,1}-\rho_{0,2}] \big\|_{X_{2\beta+\alpha}} \\
&\leq \Big\| \Big( \big(gaP\big)(u_0,\rho_{0,1}) - \big(gaP\big)(u_0,\rho_{0,2}) \Big) [\rho_{0,1}] \Big\|_{X_{2\beta+\alpha}} + \big\| \big(gaP\big)(u_0,\rho_{0,2})[\rho_{0,1}-\rho_{0,2}] \big\|_{X_{2\beta+\alpha}} \\
&\phantom{= x} + \big\| \big(gaQ\big)(u_0,\rho_{0,1}) - \big(gaQ\big)(u_0,\rho_{0,2}) \big\|_{X_{2\beta+\alpha}} \\
&\leq C(R^\Sigma, R^c, R^h) \big( \|\rho_{0,1}-\rho_{0,2} \|_{Y_{2\beta+\alpha}} \|\rho_{0,1}\|_{Z_{2\beta+\alpha}} + \|\rho_{0,1}-\rho_{0,2}\|_{Z_{2\beta+\alpha}} + \|\rho_{0,1}-\rho_{0,2}\|_{Y_{2\beta+\alpha}} \big) \\
&\leq C(R^\Sigma, R^c, R^h, \delta_1) \|\rho_{0,1}-\rho_{0,2}\|_{Z_{2\beta+\alpha}}.
\end{align*}
(As $u_0$ and $\rho_{0,i}$ are all independent of $t$, there is also no time dependence in the appliciation of Lemma \ref{compositionOP_lin} in the estimate above.) Altogether, we thus have
\begin{align*}
\|\rho_1-\rho_2\|_{\E_{1,T}} 
\leq C(R^\Sigma, R^c, R^h, \Lambda^h, \delta_1) \big( T^\varepsilon \|u_1-u_2\|_{\E_{1,T}} + \|\rho_{0,1}-\rho_{0,2}\|_{Z_{2\beta+\alpha}} \big). & \qedhere
\end{align*}
\end{proof}

\begin{proposition}\label{Neuanfang_h}
We suppose that Assumptions \ref{Vor} are valid and use Notations \ref{Not2}. Therein, choose $R^h>0$ as large and choose $\delta_0>0$, $T>0$ as small as in Theorem \ref{lokEx_h}. Let $u \in M_T^c$ and $\rho_0 \in Z_{2\beta+\alpha}$ with $\|\rho_0\|_{Z_{2\beta+\alpha}} < \delta_1$ and $\|\rho_0\|_{Y_\alpha} < \delta_0$ be arbitrary and let $\rho \defr \rho_{u,\rho_0} \in M_{T,\rho_0}^h$ be the associated solution from Theorem \ref{lokEx_h}. Then, $\rho(t) \in Z_{2\beta+\alpha}$ holds for all $t \in [0,T]$.
\end{proposition}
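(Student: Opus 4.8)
The plan is to read Proposition \ref{Neuanfang_h} as a parabolic smoothing (a ``fresh start'') statement: once the fixed point $\rho:=\rho_{u,\rho_0}$ from Theorem \ref{lokEx_h} is in hand, I would no longer regard it as the solution of a quasilinear equation, but as a strict solution of the \emph{linear}, non-autonomous parabolic equation obtained by evaluating the coefficients along $(u,\rho)$, namely
\begin{align*}
\partial_t\rho = \mathcal A(t)[\rho] + f(t)\ \text{ in }\E_{0,T},\qquad \rho(0)=\rho_0,
\end{align*}
where $\mathcal A(t):=A^h_{u(t),\rho(t)}=(gaP)(u(t),\rho(t))$ and $f(t):=(gaQ)(u(t),\rho(t))$; this is just a rewriting of $\partial_t\rho = g(u)a(\rho)\big(P(\rho)[\rho]+Q(\rho)\big)$. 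The case $t=0$ is trivial because $\rho(0)=\rho_0\in Z_{2\beta+\alpha}$, so the real content is the gain of one Hölder step in space for $t\in(0,T]$.

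First I would upgrade the data to the level $s=2\beta+\alpha$. Using $Z_\alpha\hookrightarrow Y_{2\beta+\alpha}$ together with the embeddings recorded just before Corollary \ref{Vorfaktoren_C2}, one has $u\in h^\beta([0,T],U^c_{2\beta+\alpha})$ and $\rho\in h^\beta([0,T],U^h_{2\beta+\alpha})$; composing these Hölder paths with the $C^2$ maps $gaP,\,gaQ$ of Corollary \ref{Vorfaktoren_C2} then yields $\mathcal A\in h^\beta\big([0,T],\mathcal{L}(Z_{2\beta+\alpha},X_{2\beta+\alpha})\big)$ and $f\in h^\beta\big([0,T],X_{2\beta+\alpha}\big)$ (the pointwise part $f(t)\in X_{2\beta+\alpha}$ being already contained in Corollary \ref{Produkt_Wohldef_h}). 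By Proposition \ref{P_erzeugtHG_h}, each $\mathcal A(t)$ generates an analytic $C^0$-semigroup on $X_{2\beta+\alpha}$ with constant domain $Z_{2\beta+\alpha}$, and likewise on $X_\alpha$ with domain $Z_\alpha$.

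Next I would invoke the theory of parabolic evolution systems for non-autonomous problems with constant domain and Hölder-continuous coefficients (as developed in \cite{LunardiASG}): there is an evolution system $\big(G(t,\sigma)\big)_{0\le\sigma\le t\le T}$ on $X_{2\beta+\alpha}$ with $G(t,\sigma)X_{2\beta+\alpha}\subset Z_{2\beta+\alpha}$ for $t>\sigma$, the mild solution $t\mapsto G(t,0)\rho_0+\int_0^t G(t,\sigma)f(\sigma)\,\mathrm{d}\sigma$ is a strict solution, and—because $f$ is Hölder-continuous with values in $X_{2\beta+\alpha}$ and $\rho_0\in Z_{2\beta+\alpha}$—this function takes values in $Z_{2\beta+\alpha}$ for \emph{every} $t\in[0,T]$. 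The same construction carried out at the level $X_\alpha$ is consistent with $G$ on the dense subspace $X_{2\beta+\alpha}\hookrightarrow X_\alpha$ (the operators $\mathcal A(t)$ are the same differential operators, only with restricted domains), while $\rho\in\E_{1,T}$ is the unique strict solution of the displayed linear problem at level $\alpha$; hence $\rho(t)$ coincides with the mild-solution expression and therefore lies in $Z_{2\beta+\alpha}$ for all $t\in[0,T]$.

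The main obstacle—and the reason one obtains only pointwise-in-$t$ regularity, not an $h^\beta$-in-time bound—is that one cannot simply rerun the maximal-regularity fixed point of Theorem \ref{lokEx_h} at the level $2\beta+\alpha$: the compatibility condition would then read $A^h[\rho_0]+f(0)=g(u_0)a(\rho_0)H(\rho_0)\in\big(X_{2\beta+\alpha},Z_{2\beta+\alpha}\big)_\beta$, a proper subspace of $X_{2\beta+\alpha}$, which fails for a general $\rho_0\in Z_{2\beta+\alpha}=h^{2+2\beta+\alpha}(M)$, since applying the mean-curvature operator to such a $\rho_0$ only lands in $h^{2\beta+\alpha}$. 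So the extra regularity must be extracted from the smoothing of the evolution system rather than from a compatibility argument; the point to be careful about is that this smoothing reaches $Z_{2\beta+\alpha}$ even at $t=0$, which here is guaranteed precisely by the assumption $\rho_0\in Z_{2\beta+\alpha}$ (for $t>0$ it would hold for any $\rho_0\in X_{2\beta+\alpha}$).
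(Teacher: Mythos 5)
Your argument is correct, but it takes a genuinely different and noticeably heavier route than the paper. The paper's proof is purely pointwise in $t$ and needs no non-autonomous theory at all: by Proposition \ref{L_bij_allg}(i) one has $\E_{1,T}\hookrightarrow C^1\big([0,T],\mathcal{D}_{A^h}(\beta)\big)=C^1\big([0,T],X_{2\beta+\alpha}\big)$, so $\partial_t\rho(t)\in X_{2\beta+\alpha}$ for each fixed $t$; together with $\big(g(u)a(\rho)Q(\rho)\big)(t)\in X_{2\beta+\alpha}$ from Corollary \ref{Produkt_Wohldef_h} this gives $A^h_{u(t),\rho(t)}\big[\rho(t)\big]=\partial_t\rho(t)-(gaQ)(u,\rho)(t)\in X_{2\beta+\alpha}$, and then the elliptic-regularity Lemma \ref{Neuanfang_Lemma} (applicable since $A^h_{u(t),\rho(t)}\colon Z_s\to X_s$ generates analytic $C^0$-semigroups for both $s=\alpha$ and $s=2\beta+\alpha$ by Proposition \ref{P_erzeugtHG_h}) yields $\rho(t)\in Z_{2\beta+\alpha}$. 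You instead freeze the coefficients along $(u,\rho)$, build the parabolic evolution family for the resulting non-autonomous linear problem at the level $2\beta+\alpha$, use its smoothing together with $f\in h^\beta([0,T],X_{2\beta+\alpha})$ and $\rho_0\in Z_{2\beta+\alpha}$, and then identify this solution with $\rho$ via uniqueness at the level $\alpha$; all the ingredients you need (the regularity of $gaP$, $gaQ$ at $s=2\beta+\alpha$ via Remark \ref{compositionOP_Zus}, generation at both levels) are indeed available in the paper, your observation about why the compatibility condition blocks rerunning the fixed point at the higher level is accurate, and the identification argument is sound. What your route costs is the extra machinery (construction of the evolution operator, which also requires checking uniform sectoriality of $t\mapsto A^h_{u(t),\rho(t)}$ on $[0,T]$ — a standard but unstated step) and the cross-level consistency argument; what it could buy is more, namely continuity of $t\mapsto\rho(t)$ with values in $Z_{2\beta+\alpha}$ on $(0,T]$, which the paper's pointwise argument does not address and the proposition does not claim. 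For the statement as posed, the paper's two-line elliptic-regularity argument is the more economical proof.
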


\begin{proof}
Fix $t \in [0,T]$. We have 
\begin{align*}
A^h_{u(t),\rho(t)}\big[\rho(t)\big]
= \partial_t \rho(t) - (gaQ)(u,\rho)(t) 
\in X_{2\beta+\alpha}
\end{align*}
by Proposition \ref{L_bij_allg}(i) and Corollary \ref{Produkt_Wohldef_h}. Because $A^h_{u(t),\rho(t)}: Z_{s} \rightarrow X_{s}$ generates an analytic $C^0$-semigroup for $s \in \{\alpha, 2\beta+\alpha\}$ (see Proposition \ref{P_erzeugtHG_h}), Lemma \ref{Neuanfang_Lemma} yields $\rho(t) \in Z_{2\beta+\alpha}$.
\end{proof}

\subsection{Short-Time Existence for $u$}\label{ChaplokExc}

In this section, we discuss the second equation \eqref{eq_lokEx_gls2}
\begin{align*}
\partial_t u = \Delta_\rho G'(u) + g(u) a(\rho) H(\rho) \, \nu_\Sigma \cdot \nabla_\rho u + g(u)H(\rho)^2 u
\end{align*}
for concentrations $u$ with initial value $u(0)=u_0$. As height function $\rho$, we insert the solution function $\rho_{u,\rho_0}$ from Theorem \ref{lokEx_h} with initial value $\rho_0$. Both equations \eqref{eq_lokEx_gls1} and \eqref{eq_lokEx_gls2} are parabolic, quasilinear partial differential equations of second order. Due to this parallel structure, we apply the same approach as in Section \ref{ChaplokExh} to solve this second equation, using linearization and a contraction argument. \\
First, we deduce a corollary from Remark \ref{compositionOP_Zus}, which contains the analogous statement to Corollary \ref{Produkt_Wohldef_h} but for $A^c$ and $G^c$ instead of $A^h$ and $G^h$.

\begin{corollary}\label{Produkt_Wohldef_c}
We suppose Assumptions \ref{Vor} are valid and use Notations \ref{Not2}. Therein, choose $R^h>0$ as large and choose $\delta_0>0$, $T>0$ as small as in Theorem \ref{lokEx_h}. Let $\rho_0 \in Z_{2\beta+\alpha}$ with $\|\rho_0\|_{Z_{2\beta+\alpha}} < \delta_1$ and $\|\rho_0\|_{Y_\alpha} < \delta_0$. For $u \in M_T^c$, we have $A^c[u] \in \E_{0,T}$ and $G^c_{\rho_0}(u) \in \E_{0,T}$.
\end{corollary}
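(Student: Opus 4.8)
The plan is to mirror the proof of Corollary~\ref{Produkt_Wohldef_h}, but now for the operators attached to the concentration equation, using that $\rho_{u,\rho_0}\in M_T^h$ is available via Theorem~\ref{lokEx_h} and Proposition~\ref{Neuanfang_h}. Fix $u\in M_T^c$ and abbreviate $\rho\defr\rho_{u,\rho_0}\in M_{T,\rho_0}^h\subset M_T^h$. Recall from Notations~\ref{Not2}(iii) that
\[
A^c[u]=G''(u_0)\Delta_\Sigma u+g(u_0)H_\Sigma^2 u,\qquad
G^c_{\rho_0}(u)=\Delta_\rho G'(u)+g(u)a(\rho)H(\rho)\,\nu_\Sigma\cdot\nabla_\rho u+g(u)H(\rho)^2u-A^c[u].
\]
First I would treat $A^c[u]$: since $u\in M_T^c\subset\E_{1,T}=h^{1+\beta}([0,T],X_\alpha)\cap h^\beta([0,T],Z_\alpha)$, the map $t\mapsto u(t)$ lies in $h^\beta([0,T],Z_\alpha)$, and composing with the fixed bounded operators $G''(u_0)\Delta_\Sigma\in\mathcal L(Z_\alpha,X_\alpha)$ and $g(u_0)H_\Sigma^2\cdot\Id\in\mathcal L(X_\alpha,X_\alpha)$ (both independent of $t$, hence applicable by Lemma~\ref{compositionOP_lin}) gives $A^c[u]\in h^\beta([0,T],X_\alpha)=\E_{0,T}$.

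The substantive part is $G^c_{\rho_0}(u)\in\E_{0,T}$. I would split the Laplace–Beltrami term using Lemma~\ref{nabla,div,a}(iii), $\Delta_\rho G'(u)=D(\rho)[G'(u)]+J(\rho)[G'(u)]$, and further expand $D(\rho)[G'(u)]$ via the chain rule so that the second-order part reads $D(\rho)[G'(u)]=G''(u)D(\rho)[u]+G'''(u)\,\langle\nabla_\rho u,\nabla_\rho u\rangle$-type terms; this is exactly the combination of the functionals $G''D$, $G''J$, $gaP\,\nu_\Sigma\cdot\nabla$, $gaQ\,\nu_\Sigma\cdot\nabla$, $gP^2\Id^c$, $gPQ\Id^c$, $gQ^2\Id^c$ and $G'''\nabla_{(\cdot)}\cdot\nabla_{(\cdot)}$ listed in Corollary~\ref{Vorfaktoren_C2} (together with $A^c$, which by construction cancels the constant-coefficient second-order and zeroth-order pieces arising at $(u_0,0)$). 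For each of these $C^2$-functionals $F$ on $U^c_\alpha\times U^h_\alpha$, Remark~\ref{compositionOP_Zus} applies with $s=\alpha$: since $u\in M_T^c\subset h^\beta([0,T],K^c_\alpha)$ and $\rho\in M_T^h\subset h^\beta([0,T],K^h_\alpha)$, we get $F(u,\rho)\in h^\beta([0,T],W_\alpha)$ for the appropriate Banach space $W_\alpha$; pairing this with $u\in h^\beta([0,T],Z_\alpha)$ and $\nabla$-images via Lemma~\ref{compositionOP_lin} and then assembling the finitely many summands yields $G^c_{\rho_0}(u)\in\E_{0,T}$. The functional $G''J$ is the one exception, handled by the second half of Remark~\ref{compositionOP_Zus}: since $J$ is only defined on $U^h_{1+\alpha,1}\subset Z_\alpha$ and $M_T^h\subset h^\beta([0,T],U^h_{1+\alpha,1})$ with the $Z_\alpha$-bound $R^h$, Proposition~\ref{compositionOP_C0} (rather than the compactness-based Corollary~\ref{compositionOP_compact}) still gives $G''J(u,\rho)\in h^\beta([0,T],\mathcal L(Y_\alpha,X_\alpha))$, and composing with $G'(u)\in h^\beta([0,T],Y_\alpha)$ finishes that term.

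The main obstacle — really the only point needing care — is bookkeeping the chain-rule expansion of $\Delta_\rho G'(u)$ so that it genuinely decomposes into the functionals of Corollary~\ref{Vorfaktoren_C2} with the right target spaces (in particular that the quadratic gradient term lands in $\mathcal L(Y_\alpha,\mathcal L(Y_\alpha,X_\alpha))$, which needs $G'''\in C^1(U^c_\alpha,X_\alpha)$, available here because $G\in C^7$), and checking that $A^c$ is precisely the $(u_0,0)$-linearization so that $G^c_{\rho_0}(u)=G^c_{\rho_0}(u)$ has no leftover non-$\E_{0,T}$ contributions. Everything else is a routine application of Remark~\ref{compositionOP_Zus} and Lemma~\ref{compositionOP_lin}, exactly as in the proof of Corollary~\ref{Produkt_Wohldef_h}; note that, as there, no $t$-uniform issue arises since all constants in Remark~\ref{compositionOP_Zus} depend only on $R^\Sigma,R^c,R^h$ (and on $R=R^h$ in the $G''J$ case).
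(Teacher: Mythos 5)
Your argument is correct and is essentially the paper's proof in expanded form: the paper simply notes $u,u_0\in M_T^c$ and $\rho_{u,\rho_0},0\in M_T^h$ and invokes Remark \ref{compositionOP_Zus} together with Lemma \ref{compositionOP_lin}, which is exactly the decomposition into the functionals of Corollary \ref{Vorfaktoren_C2} (with the $G''J$-part handled via Proposition \ref{compositionOP_C0} as in the second half of Remark \ref{compositionOP_Zus}) that you spell out. One cosmetic slip: since $J(\rho)[G'(u)]=G''(u)J(\rho)[u]$, the first-order piece is $G''J(u,\rho)$ applied to $u$ (equivalently $J(\rho)$ applied to $G'(u)$), not $G''J(u,\rho)$ applied to $G'(u)$; either correct form goes through unchanged.
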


\begin{proof}
Let $\rho_{u,\rho_0} \in M_{T,\rho_0}^h$ be the solution from Theorem \ref{lokEx_h} associated with the concentration $u$ and the initial value $\rho_0$. Then, we have $u,u_0 \in M_T^c \subset h^\beta([0,T],Z_\alpha)$ and $\rho_{u,\rho_0},0 \in M_T^h \subset h^\beta([0,T],Z_\alpha)$. Thus, Remark \ref{compositionOP_Zus} together with Lemma \ref{compositionOP_lin} yields the statement. 
\end{proof}

As in Section \ref{ChaplokExh}, we show that the linearization of the (elliptic) operator on the right hand side of the equation generates an analytic $C^0$-semigroup, which implies that the linearization of the initial value problem defines an invertible operator. 

\begin{proposition}\label{P_erzeugtHG_c}
We suppose Assumptions \ref{Vor} are valid and use Notations \ref{Not2}. Then, 
\begin{align*}
&A^c = A^c_{u_0,0}: Z_\alpha \rightarrow X_\alpha
\end{align*}
generates an analytic $C^0$-semigroup with $\mathcal{D}_{A^c}(\beta) = X_{2\beta + \alpha}$. \\
Let $R^h$ be as large and let $\delta_0>0$, $T>0$ be as small as in Theorem \ref{lokEx_h}. If $\rho \defr \rho_{u,\rho_0} \in M_{T,\rho_0}^h$ is the solution from Theorem \ref{lokEx_h} associated with the concentration $u \in M_T^c$ and the initial value $\rho_0 \in Z_{2\beta+\alpha}$ with $\|\rho_0\|_{Z_{2\beta+\alpha}} < \delta_1$ and $\|\rho_0\|_{Y_\alpha} < \delta_0$, also
\begin{align*}
A^c_{u(t),\rho(t)}: Z_s \rightarrow X_s
\end{align*} 
generates an analytic $C^0$-semigroup for $s \in \{ \alpha, 2\beta+\alpha \}$ and $t \in [0,T]$. 
\end{proposition}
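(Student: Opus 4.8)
The plan is to follow the proof of Proposition~\ref{P_erzeugtHG_h} almost verbatim, reducing the generation statement to Proposition~\ref{HG_elliptic_semigroup_M} on elliptic operators in little Hölder spaces, and the domain statement to the real-interpolation identity for $\mathcal{D}_A(\beta)$. Fix $s \in \{\alpha, 2\beta+\alpha\}$ and $t \in [0,T]$ and write
\begin{align*}
A^c_{u(t),\rho(t)} = G''\big(u(t)\big)\Delta_{\rho(t)} + g\big(u(t)\big)a\big(\rho(t)\big)H\big(\rho(t)\big)\,\nu_\Sigma \cdot \nabla_{\rho(t)} + g\big(u(t)\big)H\big(\rho(t)\big)^2\,\Id.
\end{align*}
The first task is to check that every coefficient here lies in $X_s$ and that $A^c_{u(t),\rho(t)} \in \mathcal{L}(Z_s, X_s)$. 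We have $u(t) \in U^c_s$ and $\rho(t) \in U^h_s$ since $M_T^c \subset h^\beta([0,T],U^c_s)$ and $M_T^h \subset h^\beta([0,T],U^h_s)$, and — this is the one point that needs more than membership in $\E_{1,T}$ — $\rho(t) = \rho_{u,\rho_0}(t) \in Z_{2\beta+\alpha} \subset Z_s$ by Proposition~\ref{Neuanfang_h}, so that $\rho(t) \in U^h_{1+s,1}$ (its $Y_\alpha$-norm being $\leq R^\Sigma$ because $\rho \in M_{T,\rho_0}^h \subset M_T^h$). Then Lemma~\ref{g} gives $g(u(t)), G''(u(t)) \in X_s$, Lemma~\ref{nabla,div,a}(ii) gives $a(\rho(t)) \in X_s$, Lemmas~\ref{P,Q} and~\ref{nabla,div,a}(iii) give $H(\rho(t)) \in X_s$ and $\Delta_{\rho(t)} \in \mathcal{L}(Z_s,X_s)$, and Lemma~\ref{nabla,div,a}(i) gives $\nabla_{\rho(t)} \in \mathcal{L}(Y_s,X_s^{d+1})$; since $X_s$ is an algebra and $Z_s \hookrightarrow Y_s \hookrightarrow X_s$, this already yields $A^c_{u(t),\rho(t)} \in \mathcal{L}(Z_s,X_s)$.

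Second, I would identify the principal part $G''(u(t))\Delta_{\rho(t)}$ as a symmetric, elliptic second-order operator. By Remark~\ref{Vorfaktoren_elliptisch_Laplace}, in every sufficiently small local parameterization $\Delta_{\rho(t)}$ has principal coefficient matrix $[g^{ij}_{\rho(t)}]$, which is symmetric and positive definite; multiplying by the strictly positive function $G''(u(t)) > 0$ (Assumption~\ref{Vor}(i)) preserves this. The two remaining summands form a first-order operator in $\mathcal{L}(Y_s,X_s)$ and a zeroth-order multiplication operator in $\mathcal{L}(X_s,X_s)$, i.e. lower-order perturbations that do not affect ellipticity. Hence $A^c_{u(t),\rho(t)}$ is a symmetric, elliptic differential operator of second order, and Proposition~\ref{HG_elliptic_semigroup_M} shows that $A^c_{u(t),\rho(t)}: \mathcal{D}(A^c_{u(t),\rho(t)}) = Z_s \subset X_s \to X_s$ generates an analytic $C^0$-semigroup.

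For $A^c = A^c_{u_0,0}$ this is the special case $u \equiv u_0$, $\rho \equiv 0$, where $\Delta_0 = \Delta_\Sigma$, $H(0) = H_\Sigma$ and the middle summand vanishes because $\nu_\Sigma \perp \nabla_\Sigma u$; here no appeal to Proposition~\ref{Neuanfang_h} is needed since $u_0 \in h^{2+2\beta+\alpha}(M)$ already. The same argument shows that $A^c: Z_\alpha \to X_\alpha$ generates an analytic $C^0$-semigroup with $\mathcal{D}(A^c) = Z_\alpha$, and then the real-interpolation representation together with Lemma~\ref{Interpol_ComO} and the reiteration theorem give
\begin{align*}
\mathcal{D}_{A^c}(\beta) = \big(X_\alpha, \mathcal{D}(A^c)\big)_\beta = \big(h^\alpha(\Sigma), h^{2+\alpha}(\Sigma)\big)_\beta = h^{2\beta+\alpha}(\Sigma) = X_{2\beta+\alpha},
\end{align*}
exactly as for $A^h$ in Proposition~\ref{P_erzeugtHG_h}.

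The only genuine obstacle is Step~1: unlike $A^h$, the operator $A^c$ contains the full second-order operator $\Delta_\rho$, so to make sense of $A^c_{u(t),\rho(t)}$ on the higher scale $X_{2\beta+\alpha}$ — in particular to have $H(\rho(t)) \in X_{2\beta+\alpha}$ and $\Delta_{\rho(t)} \in \mathcal{L}(Z_{2\beta+\alpha},X_{2\beta+\alpha})$ — one genuinely needs the improved spatial regularity $\rho(t) \in Z_{2\beta+\alpha}$ from Proposition~\ref{Neuanfang_h}, since a priori $\rho_{u,\rho_0} \in \E_{1,T}$ is only $Z_\alpha$-valued. Everything else is bookkeeping of the regularity lemmas collected at the beginning of the section.
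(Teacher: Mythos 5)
Your proposal is correct and follows essentially the same route as the paper's proof: it uses Proposition \ref{Neuanfang_h} to get $\rho(t) \in Z_{2\beta+\alpha}$ (so that $\rho(t) \in U^h_{1+s,1}$ and $A^c_{u(t),\rho(t)} \in \mathcal{L}(Z_s,X_s)$ makes sense on both scales), identifies the principal part $G''(u(t))\Delta_{\rho(t)}$ as symmetric and elliptic via Remark \ref{Vorfaktoren_elliptisch_Laplace} and $G''>0$, applies Proposition \ref{HG_elliptic_semigroup_M}, and obtains $\mathcal{D}_{A^c}(\beta)=X_{2\beta+\alpha}$ from Lemma \ref{Interpol_ComO} and the reiteration theorem. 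The extra bookkeeping of the coefficients and the observation that the first-order term vanishes for $\rho=0$ are consistent with the paper's definitions and do not change the argument.
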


\begin{proof}
Fix $s \in \{\alpha, 2\beta+\alpha\}$ and $t \in [0,T]$. Any solution $\rho \defr \rho_{u,\rho_0} \in M_{T,\rho_0}^h$ from Theorem \ref{lokEx_h} fulfills $\rho(t) \in Z_s$ with Proposition \ref{Neuanfang_h} and due to $\|\rho(t)\|_{C^1(M)} \leq \|\rho(t)\|_{Y_\alpha} < R^\Sigma$ (see remark after Notations \ref{Not2}), $\rho(t) \in U^h_{1+s,1}$ follows. 
By Remark \ref{Vorfaktoren_elliptisch_Laplace}, $\Delta_{\rho(t)} \in \mathcal{L}(Z_s,X_s)$ is a symmetric and elliptic differential operator of second order. Because we have 
\begin{align*}
A^c_{u(t),\rho(t)} = G''\big(u(t)\big) \Delta_{\rho(t)} + \text{ lower order terms}
\end{align*}
with $G''>0$ by Assumption \ref{Vor}(i), $A^c_{u(t),\rho(t)} \in \mathcal{L}(Z_s,X_s)$ is a symmetric and elliptic differential operator of second order, too. The operator $A^c_{u(t),\rho(t)}: \mathcal{D}\big(A^c_{u(t),\rho(t)}\big) \subset X_s \rightarrow X_s$ therefore generates an analytic $C^0$-semigroup with $\mathcal{D}\big(A^c_{u(t),\rho(t)}\big) = Z_s$ on account of Proposition \ref{HG_elliptic_semigroup_M}. 
Lemma \ref{Interpol_ComO} together with the reiteration theorem finally implies
\begin{align*}
\mathcal{D}_{A^c_{u_0,0}}(\beta)
= \big( X_\alpha, \mathcal{D}\big(A^c_{u_0,0}\big) \Big)_{\beta}
= \big( h^\alpha(\Sigma), h^{2+\alpha}(\Sigma) \big)_{\beta}
= h^{2\beta + \alpha}(\Sigma)
= X_{2\beta+\alpha}, 
\end{align*}
where $(\cdot,\cdot)_\beta$ denotes the continuous interpolation functor.
\end{proof}

\begin{proposition}\label{L_bij_c}
We suppose Assumptions \ref{Vor} are valid and use Notations \ref{Not2}. Then, 
\begin{align*}
L^c: \E_{1,T} \rightarrow (\E_{0,T} \times Z_\alpha)^c_+ 
\end{align*}
is bijective with 
\begin{align*}
\Lambda^c \defr \sup_{0<T\leq 1} \| \big(L^c\big)^{-1} \|_{\mathcal{L}((\E_{0,T} \times Z_\alpha)^c_+,\E_{1,T})} < \infty, 
\end{align*}
where
\begin{align*}
(\E_{0,T} \times Z_\alpha)^c_+ &\defr \big\{ (f,f_0) \in (\E_{0,T} \times Z_\alpha) \, \big| \, f(0)+A^c[f_0] \in \mathcal{D}_{A^c}(\beta)=X_{2\beta+\alpha} \big\} \text{ with } \\
\|(f,f_0)\|_{(\E_{0,T} \times Z_\alpha)^c_+} &\defr \|f\|_{\E_{0,T}} + \|f_0\|_{Z_\alpha} + \|f(0)+A^c[f_0]\|_{X_{2\beta+\alpha}} \text{ for } (f,f_0) \in (\E_{0,T} \times Z_\alpha)^c_+.
\end{align*}
In particular, $\Lambda^c = \Lambda^c(u_0)$ only depends on the initial value $u_0$.
\end{proposition}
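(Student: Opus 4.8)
The plan is to reduce the assertion to the abstract maximal regularity statement Proposition~\ref{L_bij_allg} of the appendix, in exactly the same way as Proposition~\ref{L_bij_h} was obtained for the height function. Recall that $A^c = A^c_{u_0,0} = G''(u_0)\Delta_\Sigma + g(u_0)H_\Sigma^2\,\Id$ is a fixed linear operator on $X_\alpha = h^\alpha(M)$ with domain $Z_\alpha = h^{2+\alpha}(M)$, and that $L^c[u] = \binom{\partial_t u - A^c[u]}{u(0)}$.

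First I would invoke the first part of Proposition~\ref{P_erzeugtHG_c}, which states that $A^c$ generates an analytic $C^0$-semigroup on $X_\alpha$ and that $\mathcal{D}_{A^c}(\beta) = X_{2\beta+\alpha}$ (here $2\beta+\alpha \notin \N$ by Assumptions~\ref{Vor}(i), so this really is a little Hölder space). With this in hand, $A^c$ satisfies all hypotheses of Proposition~\ref{L_bij_allg} with the choices $A \defr A^c$, $X \defr X_\alpha$, $\mathcal{D}(A) \defr Z_\alpha$ and the given $\beta$. In particular the compatibility space appearing there, $\big(h^\beta([0,T],X_\alpha) \times \mathcal{D}(A^c)\big)_+$, coincides with $(\E_{0,T} \times Z_\alpha)^c_+$ as defined in the statement, because $\E_{0,T} = h^\beta([0,T],X_\alpha)$ and $\mathcal{D}_{A^c}(\beta) = X_{2\beta+\alpha}$. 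Proposition~\ref{L_bij_allg} then yields that $L^c : \E_{1,T} \to (\E_{0,T} \times Z_\alpha)^c_+$ is bijective for every $T \in (0,1]$, together with a bound on $\|(L^c)^{-1}\|$ that is uniform in $T \in (0,1]$; taking the supremum over $T$ gives $\Lambda^c < \infty$.

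It remains to trace the dependence of the constant. The bound in Proposition~\ref{L_bij_allg} depends only on the operator $A^c$ (through its sectoriality data and the interpolation constants relating $X_\alpha$, $Z_\alpha$ and $X_{2\beta+\alpha}$), and $A^c = A^c_{u_0,0}$ is determined by the fixed ambient data $\Sigma$, $G$ and by the initial concentration $u_0$ alone. Hence $\Lambda^c = \Lambda^c(u_0)$, as claimed.

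I expect no genuine obstacle here: the entire analytic content — that a symmetric elliptic second order operator with positive leading coefficient generates an analytic semigroup on little Hölder spaces, and the identification of its real interpolation spaces — has already been packaged into Propositions~\ref{HG_elliptic_semigroup_M}, \ref{L_bij_allg} and \ref{P_erzeugtHG_c}. The only thing that has to be checked by hand is that the space $(\E_{0,T}\times Z_\alpha)^c_+$ in the statement is literally the space to which Proposition~\ref{L_bij_allg} applies, which is immediate from the definitions. If one wanted to be extra careful, the mild point worth double-checking is the uniformity of $\Lambda^c$ over $T\in(0,1]$, but this is exactly the form in which Proposition~\ref{L_bij_allg} is stated, so the proof is essentially a verbatim copy of the one for $L^h$.
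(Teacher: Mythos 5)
Your proposal is correct and follows exactly the paper's argument: Proposition \ref{P_erzeugtHG_c} supplies that $A^c$ generates an analytic $C^0$-semigroup with $\mathcal{D}_{A^c}(\beta)=X_{2\beta+\alpha}$, and then Proposition \ref{L_bij_allg} yields bijectivity of $L^c$ together with the $T$-uniform bound, with the dependence of $\Lambda^c$ on $u_0$ alone as you note. No gaps.
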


\begin{proof}
By Proposition \ref{P_erzeugtHG_c}, $A^c$ satisfies the conditions of Proposition \ref{L_bij_allg}, which yields the claim. 
\end{proof}

We show a technical auxiliary lemma analogous to Lemma \ref{G_wohldef_h}.

\begin{lemma}\label{G_wohldef_c}
We suppose Assumptions \ref{Vor} are valid and use Notations \ref{Not2}. Therein, choose $R^h>0$ as large and choose $\delta_0>0$, $T>0$ as small as in Theorem \ref{lokEx_h}. 
\begin{enumerate}
\item[(i)]
Let $\rho_0 \in Z_{2\beta+\alpha}$ with $\|\rho_0\|_{Z_{2\beta+\alpha}} < \delta_1$ and $\|\rho_0\|_{Y_\alpha} < \delta_0$ and let $u \in M_T^c$. Then 
\begin{align*}
\big(G^c_{\rho_0}(u)\big)(0) 
= \Delta_{\rho_0} G'(u_0) + g(u_0)a(\rho_0)H(\rho_0)\nu_\Sigma \cdot \nabla_{\rho_0} u_0 + g(u_0)H(\rho_0)^2u_0 - A^c[u_0]
\end{align*}
holds in $X_\alpha$. In particular, $\big(G^c_{\rho_0}(u)\big)(0)$ is independent of $u$. Furthermore, we have 
\begin{align*}
\big( G^c_{\rho_0}(u), u_0\big) \in \big(\E_{0,T} \times Z_\alpha\big)^c_+. 
\end{align*}
\item[(ii)]
There exists a constant $N^c = N^c \big(R^\Sigma, u_0, \delta_1\big)$ independent of $T$, $R^c$ and $R^h$ such that 
\begin{align*}
\big\|\big(G^c_{\rho_0}(u_0),u_0\big)\big\|_{(\E_{0,T} \times Z_\alpha)^c_+} \leq N^c
\end{align*}
holds for all $\rho_0 \in Z_{2\beta+\alpha}$ with $\|\rho_0\|_{Z_{2\beta+\alpha}} < \delta_1$ and $\|\rho_0\|_{Y_\alpha} < \delta_0$.
\end{enumerate} 
\end{lemma}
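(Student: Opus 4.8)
The proof follows the pattern of Lemma~\ref{G_wohldef_h}, the only genuinely new feature being the second-order term $\Delta_{\rho_{u,\rho_0}}G'(u)$ of the concentration equation, which I would handle by the chain rule for the Laplace-Beltrami operator. For (i), I first identify $\big(G^c_{\rho_0}(u)\big)(0)$: since $u\in M^c_T$ gives $u(0)=u_0$ and $\rho:=\rho_{u,\rho_0}\in M^h_{T,\rho_0}$ gives $\rho(0)=\rho_0$ in $Z_\alpha$, since $G^c_{\rho_0}(u)\in\E_{0,T}$ is continuous in time by Corollary~\ref{Produkt_Wohldef_c}, and since every constituent operator ($\Delta_{(\cdot)}$, $a(\cdot)$, $H(\cdot)=P(\cdot)[\cdot]+Q(\cdot)$, $\nabla_{(\cdot)}$, $G'$, $G''$, $g$) depends continuously on $(u,\rho)$ by Lemmas~\ref{P,Q}, \ref{nabla,div,a}, \ref{g} and Corollary~\ref{Vorfaktoren_C2}, the value at $t=0$ is obtained by inserting $u_0$ and $\rho_0$; this gives the stated formula in $X_\alpha$ and makes the independence of $u$ manifest. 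It then remains to verify the compatibility condition $\big(G^c_{\rho_0}(u)\big)(0)+A^c[u_0]\in\mathcal{D}_{A^c}(\beta)=X_{2\beta+\alpha}$ (Proposition~\ref{P_erzeugtHG_c}). Now $\big(G^c_{\rho_0}(u)\big)(0)+A^c[u_0]$ is exactly the right-hand side of \eqref{eq_lokEx_gls2} evaluated at $(u_0,\rho_0)$, namely $\Delta_{\rho_0}G'(u_0)+g(u_0)a(\rho_0)H(\rho_0)\,\nu_\Sigma\cdot\nabla_{\rho_0}u_0+g(u_0)H(\rho_0)^2u_0$. Inserting $\Delta_{\rho_0}G'(u_0)=G''(u_0)\Delta_{\rho_0}u_0+G'''(u_0)|\nabla_{\rho_0}u_0|^2$, the splitting $\Delta_{\rho_0}=D(\rho_0)+J(\rho_0)$, and $H(\rho_0)=P(\rho_0)[\rho_0]+Q(\rho_0)$ turns this into finitely many summands; applying Corollary~\ref{Vorfaktoren_C2} with $s:=2\beta+\alpha$ (here $u_0\in U^c_{2\beta+\alpha}$ and, because $\|\rho_0\|_{Y_\alpha}<\delta_0<R^\Sigma$ and $\rho_0\in Z_{2\beta+\alpha}$, also $\rho_0\in U^h_{2\beta+\alpha}\cap U^h_{1+s,1}$) to the functionals $G''D$, $G''J$, $gaP\,\nu_\Sigma\cdot\nabla$, $gaQ\,\nu_\Sigma\cdot\nabla$, $gP^2\Id^{\,c}$, $gPQ\Id^{\,c}$, $gQ^2\Id^{\,c}$, together with Corollary~\ref{Produkt_Wohldef_h} for the piece $G'''(u_0)|\nabla_{\rho_0}u_0|^2$, shows that each summand lies in $X_{2\beta+\alpha}$; this is precisely the point for which the $h^{2+2\beta+\alpha}$-regularity of $u_0$ and $\rho_0$, the $C^7$-regularity of $G$ and the $h^{4+\alpha}$-regularity of $\Sigma$ were imposed. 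Combined with $G^c_{\rho_0}(u)\in\E_{0,T}$ and $u_0\in h^{2+2\beta+\alpha}(M)\hookrightarrow Z_\alpha$ this yields $\big(G^c_{\rho_0}(u),u_0\big)\in(\E_{0,T}\times Z_\alpha)^c_+$.

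For (ii), I follow Lemma~\ref{G_wohldef_h}(ii), taking care that $N^c$ may depend on $R^\Sigma$, $u_0$, $\delta_1$ only. Write $\rho:=\rho_{u_0,\rho_0}\in M^h_{T,\rho_0}$. The crucial preliminary step is an a priori bound $\|\rho\|_{\E_{1,T}}\le M_0(R^\Sigma,u_0,\delta_1)$ that involves neither $R^c$ nor $R^h$ nor $T$: $\rho$ is the fixed point of the $\tfrac14$-contraction $K^h_{u_0,\rho_0}$ on $M^h_{T,\rho_0}$ from the proof of Theorem~\ref{lokEx_h}, and the constant-in-time function $\rho_0$ lies in $M^h_{T,\rho_0}$ with $\|\rho_0\|_{\E_{1,T}}\le 2\|\rho_0\|_{Z_\alpha}<2\delta_1$, so $\|\rho\|_{\E_{1,T}}\le\|K^h_{u_0,\rho_0}(\rho_0)\|_{\E_{1,T}}+\tfrac14\big(\|\rho\|_{\E_{1,T}}+2\delta_1\big)$; moreover $\|K^h_{u_0,\rho_0}(\rho_0)\|_{\E_{1,T}}\le\Lambda^h\big\|\big(G^h_{u_0}(\rho_0),\rho_0\big)\big\|_{(\E_{0,T}\times Z_\alpha)^h_+}\le\Lambda^h\,C(R^\Sigma,u_0,\delta_1)$ by the estimate in the proof of Lemma~\ref{G_wohldef_h}(ii) specialised to the fixed concentration $u=u_0$ (the $\|u-u_0\|$-terms then drop out and $R^c$ may be replaced by $\|u_0\|_{Z_\alpha}$ in Remark~\ref{compositionOP_Zus}); solving for $\|\rho\|_{\E_{1,T}}$ and using $\Lambda^h=\Lambda^h(u_0)$ gives $M_0$.

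It then remains to bound the three contributions to $\big\|\big(G^c_{\rho_0}(u_0),u_0\big)\big\|_{(\E_{0,T}\times Z_\alpha)^c_+}$. The term $\|u_0\|_{Z_\alpha}$ depends on $u_0$ only. For $\|G^c_{\rho_0}(u_0)\|_{\E_{0,T}}$ one subtracts the constant-in-time term $A^c[u_0]\in X_\alpha$ and estimates the remaining composition-operator expression in $\Delta_\rho G'$, $gaH\,\nu_\Sigma\cdot\nabla$ and $gH^2\Id^{\,c}$ by Remark~\ref{compositionOP_Zus} (whose variant for the $G''J$-contribution applies since $J$ is bounded on bounded subsets of $Z_\alpha$), using the fixed concentration $u_0$ and the height function $\rho$, which by the a priori bound satisfies $\|\rho(t)\|_{Z_\alpha}\le M_0$, $\|\rho(t)\|_{Y_\alpha}\le R^\Sigma$ for all $t$ and $\|\rho\|_{h^\beta([0,T],Y_\alpha)}\le M_0$; Remark~\ref{compositionOP_Zus} then applies with $R^h$ replaced by $M_0$ and $R^c$ by $\|u_0\|_{Z_\alpha}$, producing a bound $C(R^\Sigma,u_0,\delta_1)$. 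For $\big\|\big(G^c_{\rho_0}(u_0)\big)(0)+A^c[u_0]\big\|_{X_{2\beta+\alpha}}$ one reuses the termwise decomposition from (i), bounding each summand by Corollaries~\ref{Vorfaktoren_C2} and \ref{Produkt_Wohldef_h} in terms of $\|u_0\|_{Z_{2\beta+\alpha}}$, $\|\rho_0\|_{Z_{2\beta+\alpha}}<\delta_1$ and $R^\Sigma$ only. Summing the three bounds gives $N^c=N^c(R^\Sigma,u_0,\delta_1)$. I expect the main difficulty to be precisely this bookkeeping of constants, above all the $R^h$-free a priori bound on $\rho_{u_0,\rho_0}$ (which forces a second look at the contraction estimate of Theorem~\ref{lokEx_h} with $u$ held fixed) and the correct organisation of the expansions of $\Delta_{\rho_0}G'(u_0)$ and of $H(\rho_0)^2$ into exactly the $C^2$-functionals of Corollary~\ref{Vorfaktoren_C2} so that Remark~\ref{compositionOP_Zus} applies verbatim; the compatibility statement itself, that these second-order expressions applied to $h^{2+2\beta+\alpha}$-data remain in $h^{2\beta+\alpha}$, is exactly what Assumptions~\ref{Vor} were tailored to guarantee.
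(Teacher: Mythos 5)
Your argument matches the paper's proof in all essentials: part (i) is the same evaluation at $t=0$ via the regularity of the constituent functionals plus the termwise verification of the compatibility condition at level $2\beta+\alpha$ (including the separate treatment of the $G''J$-term), and part (ii) hinges, exactly as in the paper, on bounding $\|\rho_{u_0,\rho_0}\|_{\E_{1,T}}$ by a constant depending only on $R^\Sigma,u_0,\delta_1$ and then applying Remark \ref{compositionOP_Zus} together with Lemma \ref{compositionOP_lin} with reduced radii $\widetilde{R^c}\sim\|u_0\|_{Z_\alpha}$ and $\widetilde{R^h}$. The only (harmless) deviation is that you re-derive this a priori bound directly from the $\tfrac{1}{4}$-contraction inequality for $K^h_{u_0,\rho_0}$ applied to the constant-in-time competitor $\rho_0$, whereas the paper simply invokes Theorem \ref{lokEx_h} with $R^h=R^h\big(2\|u_0\|_{Z_\alpha},u_0,\delta_1\big)$ for the fixed concentration $u_0$ — the same mechanism, made explicit.
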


\begin{proof}$\phantom{.}$
\begin{enumerate}
\item[Ad (i)]
We have $G^c_{\rho_0}(u) \in \E_{0,T}$ by Corollary \ref{Produkt_Wohldef_c}, hence $\big( G^c(u), u_0\big) \in \E_{0,T} \times Z_\alpha$ holds. \\
Let $\rho \defr \rho_{u,\rho_0} \in M_{T,\rho_0}^h$ be the solution from Theorem \ref{lokEx_h} associated with the concentration $u \in M_T^c$ and the initial value $\rho_0$. Then, $u(0)=u_0 \in U^c_{2\beta+\alpha} \cap Z_{2\beta+\alpha}$ and $\rho(0)=\rho_0,0 \in U^h_{2\beta+\alpha} \cap U^h_{1+2\beta+\alpha,1} \subset Z_{2\beta+\alpha}$ hold. So, Corollary \ref{Vorfaktoren_C2} yields
\begin{align*}
\big(G^c_{\rho_0}(u)\big)(0) 
&= \Delta_{\rho_0} G'(u_0) + g(u_0) a(\rho_0) H(\rho_0) \nu_\Sigma \cdot \nabla_{\rho_0} u_0 + g(u_0) H(\rho_0)^2 u_0 \\
&\phantom{bla} - G''(u_0) \Delta_\Sigma u_0 - g(u_0) H_\Sigma^2 u_0 \quad \text{ in } X_{2\beta+\alpha} \hookrightarrow X_\alpha
\end{align*}
and therefore
\begin{align*}
A^c[u_0] {+} \big(G^c_{\rho_0}(u)\big)(0)
&= \Delta_{\rho_0} G'(u_0) {+} g(u_0) a(\rho_0) H(\rho_0) \nu_\Sigma {\cdot} \nabla_{\rho_0} u_0 {+} g(u_0) H(\rho_0)^2 u_0 \in X_{2\beta+\alpha}
\end{align*}
follows with $X_{2\beta+\alpha} = \mathcal{D}_{A^c}(\beta)$ by Proposition \ref{P_erzeugtHG_c}. 
\item[Ad (ii)]
We have 
\begin{align*}
\big\|\big(G^c_{\rho_0}(u_0),u_0\big)\big\|_{(\E_{0,T} \times Z_\alpha)^c_+} 
= \big\| G^c_{\rho_0}(u_0) \big\|_{\E_{0,T}} + \|u_0\|_{Z_\alpha} + \left\| A^c[u_0] {+} \big(G^c_{\rho_0}(u_0)\big)(0) \right\|_{\mathcal{D}_{A^c}(\beta)}.
\end{align*}
Let $\rho \defr \rho_{u_0,\rho_0} \in M^h_{T,\rho_0} \subset h^\beta\big([0,T],Z_\alpha\big)$ be the solution from Theorem \ref{lokEx_h} associated with the concentration $u_0$ and the initial value $\rho_0$. We have $u_0 \in \widetilde{M_T^c}$ and $\rho, \rho_0, 0 \in \widetilde{M_T^h}$ with $\widetilde{M_T^c}$, $\widetilde{M_T^h}$ defined as $M_T^c$, $M_T^h$ but with $\widetilde{R^c} \defr 2\|u_0\|_{Z_\alpha}$, $\widetilde{R^h} \defr \|\rho\|_{\E_{1,T}} \geq 2\|\rho_0\|_{Z_\alpha}$ instead of $R^c$, $R^h$. We thus can use Remark \ref{compositionOP_Zus} and Lemma \ref{compositionOP_lin} to bound
\begin{align*}
\big\| G^c_{\rho_0}(u_0) \big\|_{\E_{0,T}} 
&= \big\| \Delta_\rho G'(u_0) + g(u_0)a(\rho)H(\rho)\nu_\Sigma \cdot \nabla_\rho u_0 + g(u_0)H(\rho)^2u_0 \\
&\phantom{xxx} - G''(u_0)\Delta_\Sigma u_0 - g(u_0)H_\Sigma^2u_0 \big\|_{\E_{0,T}} \\
&\leq C\big(R^\Sigma, \|u_0\|_{Z_\alpha}, \|\rho\|_{\E_{1,T}}\big).
\end{align*}
as well as 
\begin{align*}
&\left\| A^c[u_0] + \big(G^c_{\rho_0}(u_0)\big)(0) - G''(u_0) J(\rho_0)[u_0] \right\|_{\mathcal{D}_{A^c}(\beta)} \\
&= \big\| G''(u_0) D(\rho_0)[u_0] + G'''(u_0) \big|\nabla_{\rho_0} u_0 \big|^2 \\
&\phantom{= \big\| } + g(u_0) a(\rho_0) H(\rho_0) \nu_\Sigma \cdot \nabla_{\rho_0} u_0 + g(u_0) H(\rho_0)^2 u_0 \big\|_{X_{2\beta+\alpha}} \\
&\leq C\big(R^\Sigma, \|u_0\|_{Z_\alpha}, \|\rho\|_{\E_{1,T}} \big) \, C\big( \|u_0\|_{Z_{2\beta+\alpha}}, \|\rho_0\|_{Z_{2\beta+\alpha}} \big) \\
&\leq C\big( R^\Sigma, \|u_0\|_{Z_{2\beta+\alpha}}, \delta_1, \|\rho\|_{\E_{1,T}} \big).
\end{align*}
(As $u_0$ and $\rho_0$ are independent of $t$, there is also no time dependence in the application of Remark \ref{compositionOP_Zus} in the estimate above.)
Moreover, we have $\rho_0 \in U^h_{1+2\beta+\alpha,1} \cap B_{\delta_1}^{Z_{2\beta+\alpha}}(0)$ and therefore a last application of Remark \ref{compositionOP_Zus} and Lemma \ref{compositionOP_lin} yields
\begin{align*}
\big\| G''(u_0) J(\rho_0)[u_0] \big\|_{\mathcal{D}_{A^c}(\beta)} 
\leq C\big( R^\Sigma, \|u_0\|_{Z_\alpha}, \delta_1 \big) \|u_0\|_{Y_{2\beta+\alpha}} 
\leq C\big( R^\Sigma, \|u_0\|_{Z_{2\beta+\alpha}}, \delta_1 \big).
\end{align*}
So, 
\begin{align*}
\left\| A^c[u_0] + \big(G^c_{\rho_0}(u_0)\big)(0) \right\|_{\mathcal{D}_{A^c}(\beta)} 
&\leq C\big( R^\Sigma, \|u_0\|_{Z_{2\beta+\alpha}}, \delta_1, \|\rho\|_{\E_{1,T}} \big)
\end{align*}
follows. Altogether, we thus have
\begin{align*}
\big\|\big(G^c_{\rho_0}(u_0),u_0\big)\big\|_{(\E_{0,T} \times Z_\alpha)^c_+} 
\leq C\big( R^\Sigma, \|u_0\|_{Z_{2\beta+\alpha}}, \delta_1, \|\rho\|_{\E_{1,T}} \big).
\end{align*}
Now, we have to explain why $\|\rho\|_{\E_{1,T}}$ can be bounded by a constant depending only on $R^\Sigma$, $u_0$ and $\delta_1$. As $\rho$ is the solution from Theorem \ref{lokEx_h}, $\|\rho\|_{\E_{1,T}} \leq R^h$ holds with $R^h = R^h\big(R^c, u_0, \delta_1\big)$. Because $\rho$ is associated to the concentration $u_0$, it suffices to use $R^h = R^h\big( 2\|u_0\|_{Z_\alpha}, u_0, \delta_1 \big)$ for the statement of Theorem \ref{lokEx_h}. Thus, we have $\|\rho\|_{\E_{1,T}} \leq R^h = C\big( u_0, \delta_1 \big)$ and therefore finally 
\begin{align*}
\big\|\big(G^c_{\rho_0}(u_0),u_0\big)\big\|_{(\E_{0,T} \times Z_\alpha)^c_+} 
\leq C\big( R^\Sigma, u_0, \delta_1 \big) \defl N^c
\end{align*}
follows. \qedhere
\end{enumerate}
\end{proof}

With the help of Proposition \ref{Absch_h_c}, an analogous statement to Proposition \ref{Kontraktion_h} holds which again will be the key point to the contraction argument.

\begin{proposition}\label{Kontraktion_c}
We suppose that Assumptions \ref{Vor} are valid and use Notations \ref{Not2}. Therein, choose $R^h>0$ as large and choose $\delta_0>0$, $T>0$ as small as in Theorem \ref{lokEx_h}. There exists $\varepsilon>0$ with
\begin{align*}
\|G^c_{\rho_{0,1}}(u_1) - G^c_{\rho_{0,2}}(u_2) \|_{\E_{0,T}} 
&\leq C(R^\Sigma, R^c, R^h, \Lambda^h, \delta_1) \big(T^\varepsilon \|u_1-u_2\|_{\E_{1,T}} + \|\rho_{0,1}-\rho_{0,2}\|_{Z_{2\beta+\alpha}} \big) \\
&\phantom{xxx} + C(R^\Sigma, \|u_0\|_{Z_\alpha}, \delta_1) \delta_0 \|u_1-u_2\|_{\E_{1,T}}
\end{align*}
for $u_1,u_2 \in M_T^c$ and initial values $\rho_{0,1}, \rho_{0,2} \in Z_{2\beta+\alpha}$ with $\|\rho_{0,i}\|_{Z_{2\beta+\alpha}} < \delta_1$ and $\|\rho_{0,i}\|_{Y_\alpha} < \delta_0$. 
\end{proposition}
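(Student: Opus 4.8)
The plan is to run, for the concentration operator, the same linearize–telescope–estimate scheme used in the proof of Proposition~\ref{Kontraktion_h}, the one genuinely new ingredient being that the height function attached to each $u_i$ is not a free variable: writing $\rho_i\defr\rho_{u_i,\rho_{0,i}}\in M^h_{T,\rho_{0,i}}$ for the corresponding solution from Theorem~\ref{lokEx_h}, every difference $\|\rho_1-\rho_2\|_{\E_{1,T}}$ that appears along the way is to be reinserted via Proposition~\ref{Absch_h_c}, i.e.\ replaced by $C(R^\Sigma,R^c,R^h,\Lambda^h,\delta_1)\big(T^\varepsilon\|u_1-u_2\|_{\E_{1,T}}+\|\rho_{0,1}-\rho_{0,2}\|_{Z_{2\beta+\alpha}}\big)$. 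First I would rewrite $G^c_{\rho_0}(u)$ (with $\rho\defr\rho_{u,\rho_0}$) as a finite sum of composition-operator terms by using $\Delta_{\rho}=D(\rho)+J(\rho)$, $H(\rho)=P(\rho)[\rho]+Q(\rho)$, the chain rule $\Delta_{\rho}G'(u)=G''(u)\Delta_{\rho}u+G'''(u)|\nabla_{\rho}u|^2$ and $A^c[u]=G''(u_0)D(0)[u]+G''(u_0)J(0)[u]+g(u_0)Q(0)^2u$ (note $\Delta_\Sigma=D(0)+J(0)$ and $H_\Sigma=Q(0)$): then $G^c_{\rho_0}(u)$ is the sum of $\big(G''(u)D(\rho)-G''(u_0)D(0)\big)[u]$, $\big(G''(u)J(\rho)-G''(u_0)J(0)\big)[u]$, $G'''(u)|\nabla_{\rho}u|^2$, the transport term $g(u)a(\rho)H(\rho)\,\nu_\Sigma\cdot\nabla_\rho u$ rewritten through $gaP\nu_\Sigma\cdot\nabla$ and $gaQ\nu_\Sigma\cdot\nabla$, and the zeroth–order–in–$u$ part $g(u)H(\rho)^2u-g(u_0)Q(0)^2u$ rewritten through $gP^2\,\Id^{\,c}$, $gPQ\,\Id^{\,c}$ and $gQ^2\,\Id^{\,c}$. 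By Corollary~\ref{Vorfaktoren_C2} each summand is a $C^2$-functional of $(u,\rho)$ on $U^c_\alpha\times U^h_\alpha$ (for $G'''\,\nabla_{(\cdot)}\cdot\nabla_{(\cdot)}$ this is still $C^2$ because $s=\alpha$ here) applied to copies of $\rho\in Z_\alpha$ and of $u$ lying in $Z_\alpha$, $Y_\alpha$ or $X_\alpha$ according to the operator.

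Second, I would form $G^c_{\rho_{0,1}}(u_1)-G^c_{\rho_{0,2}}(u_2)$ and telescope each summand in its coefficient and in each argument slot exactly as in Proposition~\ref{Kontraktion_h}, estimating every resulting piece with Remark~\ref{compositionOP_Zus} — using its special variant for $G''J$, since $J$ is only bounded, resp.\ Lipschitz, in the $Z_\alpha$-norm of $\rho$ — together with Lemma~\ref{compositionOP_lin} for the products in $h^\beta([0,T],\cdot)$. This produces factors $\|u_1-u_2\|_{h^\beta([0,T],Y_\alpha)}$, $\|\rho_1-\rho_2\|_{h^\beta([0,T],Y_\alpha)}$ (and, for $G''J$ and for the $\rho$-slots of the transport and $H(\rho)^2$ terms, $\|\rho_1-\rho_2\|_{h^\beta([0,T],Z_\alpha)}$), each multiplied by a bounded coefficient norm. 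Since $u_1(0)=u_2(0)=u_0$, the factor $\|u_1-u_2\|_{h^\beta([0,T],Y_\alpha)}$ is at most $C\,T^{\gamma-\beta}\|u_1-u_2\|_{\E_{1,T}}$ by Lemma~\ref{EinbettungE_1} (with $\gamma\in(0,1)$, $\gamma>\beta$, the exponent there); by Remark~\ref{Absch_Höldernorm} the $Y_\alpha$-type $\rho$-difference factor is at most $C\big(T^{\gamma-\beta}\|\rho_1-\rho_2\|_{\E_{1,T}}+\|\rho_{0,1}-\rho_{0,2}\|_{Y_\alpha}\big)$, the $Z_\alpha$-type factor is simply $\le\|\rho_1-\rho_2\|_{\E_{1,T}}$, and then Proposition~\ref{Absch_h_c} together with $\|\cdot\|_{Y_\alpha}\le\|\cdot\|_{Z_{2\beta+\alpha}}$ reduces all of these to $C(R^\Sigma,R^c,R^h,\Lambda^h,\delta_1)\big(T^\varepsilon\|u_1-u_2\|_{\E_{1,T}}+\|\rho_{0,1}-\rho_{0,2}\|_{Z_{2\beta+\alpha}}\big)$.

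The one summand for which this does not immediately yield a small factor — and hence the main obstacle — is the highest-order-in-$u$ term $\big(G''(u)D(\rho)-G''(u_0)D(0)\big)[u]$: after telescoping, the piece $\big(G''(u_1)D(\rho_1)-G''(u_0)D(0)\big)[u_1-u_2]$ has its argument $u_1-u_2$ only in $Z_\alpha$ (since $D\in\mathcal{L}(Z_\alpha,X_\alpha)$), so no power of $T$ can be gained from it, while the operator is merely bounded. Here I would split $G''(u_1)D(\rho_1)-G''(u_0)D(0)=\big(G''(u_1)D(\rho_1)-G''(u_0)D(\rho_{0,1})\big)+\big(G''(u_0)D(\rho_{0,1})-G''(u_0)D(0)\big)$: the first bracket is of order $T^{\gamma-\beta}$ because $u_1-u_0$ and $\rho_1-\rho_{0,1}$ vanish at $t=0$ in $Y_\alpha$, whereas the second is bounded by $C(R^\Sigma,\|u_0\|_{Z_\alpha},\delta_1)\|\rho_{0,1}\|_{Y_\alpha}<C(R^\Sigma,\|u_0\|_{Z_\alpha},\delta_1)\,\delta_0$, which produces exactly the $\delta_0\|u_1-u_2\|_{\E_{1,T}}$ summand of the claim. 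The remaining bookkeeping — tracking for each $u$- or $\rho$-difference which slot ($Z_\alpha$, $Y_\alpha$ or $X_\alpha$) it lands in, hence whether its smallness comes from a power of $T$, from $\|\rho_{0,1}-\rho_{0,2}\|_{Z_{2\beta+\alpha}}$, or from $\delta_0$ — is routine (for $G''J$, whose argument slot is $Y_\alpha$, the $T^{\gamma-\beta}$ gain is available, so no $\delta_0$-split is needed there). Finally one takes $\varepsilon>0$ to be the minimum of $\gamma-\beta$ and the exponent furnished by Proposition~\ref{Absch_h_c} and uses $T\in(0,1)$ to absorb all occurring powers of $T$ into $T^\varepsilon$, collecting the stated constants.
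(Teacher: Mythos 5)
Your proposal is correct and follows essentially the same route as the paper's proof: isolate the highest-order piece $\big(G''(u_1)D(\rho_1)-G''(u_0)D(0)\big)[u_1-u_2]$, split it through $D(\rho_{0,1})$ so that the $t=0$ vanishing of $u_1-u_0$ and $\rho_1-\rho_{0,1}$ gives a $T^{\gamma-\beta}$ factor and $\|\rho_{0,1}\|_{Y_\alpha}<\delta_0$ gives the $\delta_0$ term, telescope the remaining summands via Remark \ref{compositionOP_Zus} (with the special treatment of $G''J$) and Lemma \ref{compositionOP_lin}, and finally convert $\|\rho_1-\rho_2\|_{\E_{1,T}}$ by Proposition \ref{Absch_h_c}. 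This matches the paper's argument in both decomposition and the choice of key lemmas.
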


\begin{proof}
Let $\rho_i \defr \rho_{u_i,\rho_{0,i}} \in M_T^h$ be the solution from Theorem \ref{lokEx_h} associated with the concentration $u_i$ and the initial value $\rho_{0,i}$. Using appropriate  triangle inequalities (as in the proof of Proposition \ref{Kontraktion_h}), Remark \ref{compositionOP_Zus} together with Lemma \ref{compositionOP_lin} yields
\begin{align*}
&\Big\| \Big( G^c_{\rho_{0,1}}(u_1)-G^c_{\rho_{0,2}}(u_2) \Big) - \Big( G''(u_1)D(\rho_1)[u_1-u_2]- G''(u_0)D(0)[u_1-u_2] \Big) \Big\|_{\E_{0,T}} \\
&= \Big\| G''(u_1)D(\rho_1)[u_2] - G''(u_2)D(\rho_2)[u_2] \\
&\phantom{xxx} + G''(u_1)J(\rho_1)[u_1] - G''(u_2)J(\rho_2)[u_2] - G''(u_0)J(0)[u_1-u_2] \\ 
&\phantom{xxx}+ G'''(u_1)\big|\nabla_{\rho_1}u_1 \big|^2 - G'''(u_2)\big|\nabla_{\rho_2}u_2 \big|^2 \\
&\phantom{xxx}+ g(u_1) a(\rho_1) H(\rho_1) \nu_\Sigma \cdot \nabla_{\rho_1} u_1 - g(u_2) a(\rho_2) H(\rho_2) \nu_\Sigma \cdot \nabla_{\rho_2} u_2 \\
&\phantom{xxx}+ g(u_1) H(\rho_1)^2 u_1 - g(u_2) H(\rho_2)^2 u_2 - g(u_0) H_\Sigma^2 [u_1-u_2] \Big\|_{\E_{0,T}} \\
&\leq C( R^\Sigma, R^c, R^h) \big( \|u_1-u_2\|_{h^\beta([0,T],Y_\alpha)} + \|\rho_1-\rho_2\|_{h^\beta([0,T],Y_\alpha)} \big) \\
&\phantom{xxx} + C( R^\Sigma, R^c, R^h) \big( \|u_1-u_0\|_{h^\beta([0,T],Y_\alpha)} + \|\rho_1\|_{h^\beta([0,T],Z_\alpha)} \big) \|u_1-u_2\|_{h^\beta([0,T],Y_\alpha)} \\
&\phantom{xxx} + C(R^\Sigma, R^c, R^h) \big( \|u_1-u_2\|_{h^\beta([0,T],Y_\alpha)} + \|\rho_1-\rho_2\|_{h^\beta([0,T],Z_\alpha)} \big) \\
&\leq C(R^\Sigma, R^c, R^h) \big( \|u_1-u_2\|_{h^\beta([0,T],Y_\alpha)} + \|\rho_1-\rho_2\|_{h^\beta([0,T],Z_\alpha)} \big).
\end{align*}
Analogously, using $\widetilde{R^c} \defr \|u_0\|_{Z_\alpha}$ and $\widetilde{R^h} \defr \|\rho_{1,0}\|_{Z_\alpha}$ instead of $R^c$ and $R^h$ for the second summand, Remark \ref{compositionOP_Zus} with Lemma \ref{compositionOP_lin} implies
\begin{align*}
&\big\| G''(u_1)D(\rho_1)[u_1-u_2]- G''(u_0)D(0)[u_1-u_2] \big\|_{\E_{0,T}} \\
&\leq \big\| \big(G''(u_1)D(\rho_1) - G''(u_0)D(\rho_{0,1})\big)[u_1-u_2] \big\|_{\E_{0,T}} \\
&\phantom{xxx} + \big\| \big(G''(u_0)D(\rho_{0,1}) - G''(u_0)D(0)\big)[u_1-u_2] \big\|_{\E_{0,T}} \\
&\leq C(R^\Sigma, R^c, R^h) \big( \|u_1-u_0\|_{h^\beta([0,T],Y_\alpha)} + \|\rho_1-\rho_{0,1}\|_{h^\beta([0,T],Y_\alpha)} \big) \|u_1-u_2\|_{h^\beta([0,T],Z_\alpha)} \\
&\phantom{xxx} + C(R^\Sigma, \|u_0\|_{Z_\alpha}, \|\rho_{0,1}\|_{Z_\alpha}) \|\rho_{0,1}\|_{Y_\alpha} \|u_1-u_2\|_{h^\beta([0,T],Z_\alpha)}.
\end{align*}
So, altogether, we have
\begin{align*}
&\big\| G^c_{\rho_{0,1}}(u_1)-G^c_{\rho_{0,2}}(u_2) \big\|_{\E_{0,T}} \\
&\leq C(R^\Sigma, R^c, R^h) \big( \|u_1-u_2\|_{h^\beta([0,T],Y_\alpha)} + \|\rho_1-\rho_2\|_{h^\beta([0,T],Z_\alpha)} \big) \\
&\phantom{xxx} + C(R^\Sigma, R^c, R^h) \big( \|u_1-u_0\|_{h^\beta([0,T],Y_\alpha)} + \|\rho_1-\rho_{0,1}\|_{h^\beta([0,T],Y_\alpha)} \big) \|u_1-u_2\|_{h^\beta([0,T],Z_\alpha)} \\
&\phantom{xxx} + C(R^\Sigma, \|u_0\|_{Z_\alpha}, \delta_1) \delta_0 \|u_1-u_2\|_{h^\beta([0,T],Z_\alpha)}.
\end{align*}
For the further estimate, we use Remark \ref{Absch_Höldernorm} und Lemma \ref{EinbettungE_1} and choose $\gamma \in (0,1)$ with $\gamma>\beta$ as the exponent from Lemma \ref{EinbettungE_1}. We obtain
\begin{align*}
&\big\| G^c_{\rho_{0,1}}(u_1)-G^c_{\rho_{0,2}}(u_2) \big\|_{\E_{0,T}} \\
&\leq C(R^\Sigma,R^c,R^h) \left( T^{\gamma-\beta} \|u_1-u_2\|_{\E_{1,T}} + \|\rho_1-\rho_2\|_{h^\beta([0,T],Z_\alpha)} \right) \\
&\phantom{xxx} + C(R^\Sigma,R^c,R^h) T^{\gamma-\beta} \big(\|u_1-u_0\|_{\E_{1,T}} + \|\rho_1-\rho_{0,1}\|_{\E_{1,T}} \big) \|u_1-u_2\|_{h^\beta([0,T],Z_\alpha)} \\
&\phantom{xxx} + C(R^\Sigma, \|u_0\|_{Z_\alpha}, \delta_1) \delta_0 \|u_1-u_2\|_{h^\beta([0,T],Z_\alpha)} \\
&\leq C(R^\Sigma,R^c,R^h) \big( T^{\gamma-\beta} \|u_1-u_2\|_{\E_{1,T}} + \|\rho_1-\rho_2\|_{\E_{1,T}} \big) \\
&\phantom{xxx} + C(R^\Sigma, \|u_0\|_{Z_\alpha}, \delta_1) \delta_0 \|u_1-u_2\|_{\E_{1,T}}.
\end{align*}
Finally, due to Proposition \ref{Absch_h_c},
\begin{align*}
\big\| G^c_{\rho_{0,1}}(u_1)-G^c_{\rho_{0,2}}(u_2) \big\|_{\E_{0,T}} 
&\leq C(R^\Sigma, R^c, R^h, \Lambda^h, \delta_1) \big(T^\varepsilon \|u_1-u_2\|_{\E_{1,T}} + \|\rho_{0,1}-\rho_{0,2}\|_{Z_{2\beta+\alpha}} \big) \\
&\phantom{xxx} + C(R^\Sigma, \|u_0\|_{Z_\alpha}, \delta_1) \delta_0 \|u_1-u_2\|_{\E_{1,T}}
\end{align*}
holds for some $\varepsilon > 0$.
\end{proof}

The preparatory work above enables us to prove the short-time existence result for the second equation \eqref{eq_lokEx_gls2}.

\begin{theorem}\label{lokEx_c}
We suppose that Assumptions \ref{Vor} are valid and use Notations \ref{Not2}. Therein, choose $R^c=R^c\big(R^\Sigma,u_0,\delta_1\big)>0$ sufficiently large and then, depending on this $R^c$, choose $R^h=R^h\big(R^c,u_0,\delta_1\big)>0$ as large as in Theorem \ref{lokEx_h}. Also, choose $\delta_0 = \delta_0\big(R^\Sigma, u_0, \delta_1 \big)>0$ and $T=T\big(R^\Sigma, R^c, R^h, u_0, \delta_0, \delta_1 \big)>0$ sufficiently small, but at least as small as in Theorem \ref{lokEx_h}. Then, for any initial value $\rho_0 \in Z_{2\beta+\alpha}$ with $\|\rho_0\|_{Z_{2\beta+\alpha}} < \delta_1$ and $\|\rho_0\|_{Y_\alpha} < \delta_0$, there exists a unique solution $u \defr u_{\rho_0} \in M_T^c$ of 
\begin{align*}
\phantom{\Leftrightarrow}\begin{cases}
\partial_t u &= \phantom{bl} \Delta_{\rho_u} G'(u) + g(u) a(\rho_u) H(\rho_u) \nu_\Sigma \cdot \nabla_{\rho_u}u + g(u) H(\rho_u)^2 u \phantom{u_0} \text{in } \E_{0,T}, \\
u(0) &= \phantom{bl} u_0 \phantom{\Delta_{\rho_u} G'(u) + g(u) a(\rho_u) H(\rho_u) \nu_\Sigma \cdot \nabla_{\rho_u}u + g(u) H(\rho_u)^2 u} \text{in } Z_\alpha,
\end{cases} 
\end{align*}
where $\rho_u \defr \rho_{u,\rho_0} \in M_{T,\rho_0}^h$ is the solution from Theorem \ref{lokEx_h} associated with the concentration $u$ and the initial value $\rho_0$.
\end{theorem}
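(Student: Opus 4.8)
The plan is to run the same linearization-and-contraction scheme as in the proof of Theorem \ref{lokEx_h}, now for the concentration equation with the height function $\rho_u = \rho_{u,\rho_0}$ supplied by Theorem \ref{lokEx_h}. Since the hypotheses on $R^h$, $\delta_0$ and $T$ subsume those of Theorem \ref{lokEx_h}, the assignment $u \mapsto \rho_{u,\rho_0}$ is well-defined on $M_T^c$, and Corollary \ref{Produkt_Wohldef_c} gives $A^c[u], G^c_{\rho_0}(u) \in \E_{0,T}$ for every $u \in M_T^c$. Hence the initial value problem is equivalent to
\begin{align*}
L^c[u] = \binom{G^c_{\rho_0}(u)}{u_0} \quad \text{in } \E_{0,T} \times Z_\alpha ,
\end{align*}
and by Lemma \ref{G_wohldef_c}(i) the right-hand side lies in $(\E_{0,T}\times Z_\alpha)^c_+$. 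Proposition \ref{L_bij_c} then turns this into the fixed point equation $u = (L^c)^{-1}\binom{G^c_{\rho_0}(u)}{u_0} \defl K^c_{\rho_0}(u)$ in $\E_{1,T}$, with $K^c_{\rho_0}\colon M_T^c \to \E_{1,T}$ well-defined; it remains to show that $K^c_{\rho_0}$ is a contractive self-mapping of the complete metric space $M_T^c$ and to apply Banach's fixed point theorem.

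For the contraction step, I would take $u_1, u_2 \in M_T^c$; since $(G^c_{\rho_0}(u))(0)$ does not depend on $u$ by Lemma \ref{G_wohldef_c}(i), the initial-value component of $L^c$ cancels, and Proposition \ref{L_bij_c} together with Proposition \ref{Kontraktion_c} (applied with $\rho_{0,1}=\rho_{0,2}=\rho_0$, so that the $\|\rho_{0,1}-\rho_{0,2}\|_{Z_{2\beta+\alpha}}$-term vanishes) yields
\begin{align*}
\|K^c_{\rho_0}(u_1) - K^c_{\rho_0}(u_2)\|_{\E_{1,T}}
&\leq \Lambda^c \big\| G^c_{\rho_0}(u_1) - G^c_{\rho_0}(u_2) \big\|_{\E_{0,T}} \\
&\leq \big( C(R^\Sigma,R^c,R^h,\Lambda^h,\delta_1)\,T^\varepsilon + C(R^\Sigma,\|u_0\|_{Z_\alpha},\delta_1)\,\delta_0 \big) \|u_1-u_2\|_{\E_{1,T}}.
\end{align*}
Choosing first $\delta_0 = \delta_0(R^\Sigma,u_0,\delta_1)$ small enough and then $T$ small enough (both still compatible with the requirements of Theorem \ref{lokEx_h}) makes the prefactor $\le \tfrac14$.

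For the self-mapping step, any $w \defr K^c_{\rho_0}(u)$ solves $L^c w = \binom{[L^c w]_1}{w(0)} = \binom{G^c_{\rho_0}(u)}{u_0}$, hence $w(0)=u_0$ in $Z_\alpha$; and combining Proposition \ref{L_bij_c}, Lemma \ref{G_wohldef_c}(ii) and the contraction estimate,
\begin{align*}
\|K^c_{\rho_0}(u)\|_{\E_{1,T}}
&\leq \|K^c_{\rho_0}(u_0)\|_{\E_{1,T}} + \|K^c_{\rho_0}(u)-K^c_{\rho_0}(u_0)\|_{\E_{1,T}} \\
&\leq \Lambda^c N^c + \tfrac14\big( \|u\|_{\E_{1,T}} + 2\|u_0\|_{Z_\alpha} \big)
\leq \tfrac{R^c}{2} + \tfrac{R^c}{2} = R^c,
\end{align*}
provided $R^c \ge 2\Lambda^c N^c$; here I used $\|u\|_{\E_{1,T}}\le R^c$ and $2\|u_0\|_{Z_\alpha}\le R^c$ from Assumptions \ref{Vor}(iv). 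Since $\Lambda^c = \Lambda^c(u_0)$ and $N^c = N^c(R^\Sigma,u_0,\delta_1)$ are independent of $R^c$ and $R^h$, this is exactly the meaning of "$R^c = R^c(R^\Sigma,u_0,\delta_1)$ sufficiently large". Thus $K^c_{\rho_0}(M_T^c)\subset M_T^c$, and Banach's fixed point theorem produces the unique fixed point $u = u_{\rho_0} \in M_T^c$, which is the asserted unique solution.

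The only genuinely delicate point is the bookkeeping of constant dependencies ensuring that the chain of choices $R^c \to R^h \to (\delta_0, T)$ is non-circular: this works precisely because $\Lambda^c$ depends only on $u_0$ (through the frozen operator $A^c_{u_0,0}$) and $N^c$ only on $(R^\Sigma,u_0,\delta_1)$, so $R^c$ may be fixed first; then $R^h = R^h(R^c,u_0,\delta_1)$ is taken as in Theorem \ref{lokEx_h}, and finally $\delta_0$ and $T$ are shrunk to meet simultaneously the hypotheses of Theorem \ref{lokEx_h} and the bound $\le\tfrac14$ above. Everything else is routine once Corollary \ref{Produkt_Wohldef_c}, Lemma \ref{G_wohldef_c} and Propositions \ref{L_bij_c}, \ref{Kontraktion_c} are in hand; note in particular that the substantial analytic effort — controlling the dependence of $\rho_{u,\rho_0}$ on $u$, which enters Proposition \ref{Kontraktion_c} through Proposition \ref{Absch_h_c} — has already been carried out.
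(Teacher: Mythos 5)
Your proposal is correct and follows essentially the same route as the paper's proof: the fixed-point reformulation via Corollary \ref{Produkt_Wohldef_c}, Lemma \ref{G_wohldef_c}(i) and Proposition \ref{L_bij_c}, the contraction estimate from Proposition \ref{Kontraktion_c} with $\rho_{0,1}=\rho_{0,2}=\rho_0$, and the self-mapping bound $\Lambda^c N^c + \tfrac14(\|u\|_{\E_{1,T}}+2\|u_0\|_{Z_\alpha}) \le R^c$ under $R^c \ge 2\Lambda^c N^c$, with the same bookkeeping of how $R^c$, $R^h$, $\delta_0$, $T$ are chosen.
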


\begin{proof}
We show the existence of a unique solution $u \in M_T^c$ of 
\begin{align}\label{DGL_c}
\begin{cases}
\partial_t u &= \phantom{bl} \Delta_{\rho_u} G'(u) + g(u) a(\rho_u) H(\rho_u) \nu_\Sigma \cdot \nabla_{\rho_u}u + g(u) H(\rho_u)^2 u \phantom{_0} \text{in } \E_{0,T}, \\
u(0) &= \phantom{bl} u_0 \phantom{\Delta_{\rho_u} G'(u) + g(u) a(\rho_u) H(\rho_u) \cdot \nabla_{\rho_u}u + g(u) H(\rho_u)^2ii_{ii}} \text{in } Z_\alpha
\end{cases} \notag \\
\Leftrightarrow
L^c[u] = \binom{G^c_{\rho_0}(u)}{u_0} \text{ in } \E_{0,T} \times Z_\alpha. \phantom{bla} 
\end{align}
Equation (\ref{DGL_c}) is well-defined because $A^c[u], G^c_{\rho_0}(u) \in \E_{0,T}$ holds for $u \in M_T^c$ by Corollary \ref{Produkt_Wohldef_c}. Due to Lemma \ref{G_wohldef_c}(i) and Proposition \ref{L_bij_c} it is equivalent to prove the existence of a unique $u \in M_T^c$ with 
\begin{align*}
L^c[u] = \binom{G^c_{\rho_0}(u)}{u_0} \text{ in } (\E_{0,T} \times Z_\alpha)^c_+ 
\phantom{bla} \Leftrightarrow \phantom{bla}
u = \big(L^c\big)^{-1} \binom{G^c_{\rho_0}(u)}{u_0} \defl K^c_{\rho_0}(u) \text{ in } \E_{1,T}.
\end{align*} 
So, we show that $K^c_{\rho_0}: M_T^c \subset \E_{1,T} \rightarrow \E_{1,T}$ has a unique fixed point $u \in M_T^c$ using the Banach fixed-point theorem. Due to Lemma \ref{G_wohldef_c}(i) and Proposition \ref{L_bij_c}, $K^c_{\rho_0}(u) \in \E_{1,T}$ is well-defined for $u \in M_T^c$. 
\begin{enumerate}
\item[Step 1:] 
We have to verify that $K^c_{\rho_0}$ is a contraction on $M_T^c$. For any $u_1, u_2 \in M_T^c$  
\begin{align*}
&\| K^c_{\rho_0}(u_1) - K^c_{\rho_0}(u_2) \|_{\E_{1,T}} 
\leq \Lambda^c \|G^c_{\rho_0}(u_1)-G^c_{\rho_0}(u_2)\|_{\E_{0,T}} \\
&\leq \Big( C( R^\Sigma, R^c, R^h, \Lambda^c, \Lambda^h, \delta_1 ) T^\varepsilon + C(R^\Sigma, u_0, \delta_1, \Lambda^c) \delta_0 \Big) \|u_1-u_2\|_{\E_{1,T}} 
\end{align*}
holds by Proposition \ref{L_bij_c}, Lemma \ref{G_wohldef_c}(i) as well as Proposition \ref{Kontraktion_c}. For sufficiently small $\delta_0>0$ and sufficiently small $T>0$, 
\begin{align*}
\| K^c_{\rho_0}(u_1) - K^c_{\rho_0}(u_2) \|_{\E_{1,T}} 
\leq \frac{1}{4} \| u_1 - u_2 \|_{\E_{1,T}}
\end{align*}
follows. Because $\Lambda^c$ and $\Lambda^h$ only depend on $u_0$, $\delta_0$ only depends on $R^\Sigma, u_0$ and $\delta_1$ whereas $T$ only depends on $R^\Sigma, R^c, R^h, u_0$ and $\delta_1$.
\item[Step 2:]
We have to show that $K^c_{\rho_0}: M_T^c \rightarrow M_T^c$ is a self-mapping. Any $u \in M_T^c$ fulfills $\big(K^c_{\rho_0}(u)\big)(0) = u_0$ in $Z_\alpha$ because $w \defr K^c_{\rho_0}(u)$ is a solution to
\begin{align*}
L^c w = \binom{[L^c w]_1}{w(0)} = \binom{G^c_{\rho_0}(u)}{u_0} \text{ in } \E_{0,T} \times Z_\alpha.
\end{align*}
Furthermore, we have
\begin{align*}
\| K^c_{\rho_0}(u) \|_{\E_{1,T}}
&\leq \|K^c_{\rho_0}(u_0)\|_{\E_{1,T}} + \| K^c_{\rho_0}(u) - K^c_{\rho_0}(u_0) \|_{\E_{1,T}} \\
&\leq \Lambda^c \big\| \big( G^c_{\rho_0}(u_0),u_0 \big) \big\|_{(\E_{0,T} \times Z_\alpha)^c_+} + \frac{1}{4} \| u-u_0\|_{\E_{1,T}} \\
&\leq \Lambda^c N^c + \frac{1}{4} \left( \|u\|_{\E_{1,T}} + 2\|u_0\|_{Z_\alpha} \right) 
\leq \frac{R^c}{2} + \frac{R^c}{2} = R^c,
\end{align*}
where the first summand is bounded by Proposition \ref{L_bij_c} and Lemma  \ref{G_wohldef_c}(ii) and the second summand by the contraction-property (see step 2). The constant $R^c$ being sufficiently large thus means $R^c \geq 2\Lambda^cN^c$ and because $\Lambda^c$ only depends on $u_0$ and $N^c$ only depends on $R^\Sigma, u_0$ and $\delta_1$, we have $R^c = R^c\big( R^\Sigma, u_0, \delta_1 \big)$. The two properties just deduced imply $K^c_{\rho_0}(u) \in M_T^c$ for all $u \in M_T^c$. \qedhere
\end{enumerate}
\end{proof}

\begin{proposition}\label{Neuanfang_c}
We suppose that Assumptions \ref{Vor} are valid and use Notations \ref{Not2}. Therein, choose $R^c>0$, $R^h>0$ as large and choose $\delta_0>0$, $T>0$ as small as in Theorem \ref{lokEx_c}. Let $\rho_0 \in Z_{2\beta+\alpha}$ with $\|\rho_0\|_{Z_{2\beta+\alpha}} < \delta_1$ and $\|\rho_0\|_{Y_\alpha} < \delta_0$ and let $u \defr u_{\rho_0} \in M_T^c$ be the solution from Theorem \ref{lokEx_c} associated with $\rho_0$. Then, $u(t) \in Z_{2\beta+\alpha}$ holds for all $t \in [0,T]$.
\end{proposition}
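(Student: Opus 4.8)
The plan is to transcribe the argument of Proposition \ref{Neuanfang_h} to the concentration equation. The first step is to rewrite the right hand side of the equation solved by $u := u_{\rho_0}$ so that its principal part becomes the linearized operator $A^c_{u(t),\rho_u(t)}$. Writing $\Delta_\rho = D(\rho) + J(\rho)$ as in Lemma \ref{nabla,div,a}(iii) and computing in a sufficiently small local parameterization with the formula from Remark \ref{IMM_evolvHF_rho_bem}, one finds $D(\rho)[G'(u)] = G''(u)\,D(\rho)[u] + G'''(u)\,|\nabla_\rho u|^2$ and $J(\rho)[G'(u)] = G''(u)\,J(\rho)[u]$, hence the chain rule
\begin{align*}
\Delta_\rho G'(u) = G''(u)\,\Delta_\rho u + G'''(u)\,|\nabla_\rho u|^2,
\end{align*}
which is exactly the identity already used implicitly in the proof of Lemma \ref{G_wohldef_c}. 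Substituting this into the equation from Theorem \ref{lokEx_c} and recalling the definition of $A^c_{u_1,\rho_1}$ from Notations \ref{Not2}(iii), we obtain, for every fixed $t \in [0,T]$,
\begin{align*}
A^c_{u(t),\rho_u(t)}\big[u(t)\big] = \partial_t u(t) - \big(G'''(u)\,|\nabla_{\rho_u}u|^2\big)(t).
\end{align*}

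The second step is to show that both terms on the right hand side lie in $X_{2\beta+\alpha}$. Since $u$ is the solution from Theorem \ref{lokEx_c}, it solves the linear problem $L^c[u] = \binom{G^c_{\rho_0}(u)}{u_0}$, and by Lemma \ref{G_wohldef_c}(i) this datum lies in $(\E_{0,T}\times Z_\alpha)^c_+$, i.e., the compatibility condition $A^c[u_0] + G^c_{\rho_0}(u)(0) \in \mathcal{D}_{A^c}(\beta) = X_{2\beta+\alpha}$ (see Proposition \ref{P_erzeugtHG_c}) holds; hence Proposition \ref{L_bij_allg}(i) yields $\partial_t u(t) \in \mathcal{D}_{A^c}(\beta) = X_{2\beta+\alpha}$ for every $t$. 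For the remainder, $\rho_u = \rho_{u,\rho_0} \in M_{T,\rho_0}^h \subset M_T^h$ and $u \in M_T^c$, so Corollary \ref{Produkt_Wohldef_h} gives $\big(G'''(u)\,|\nabla_{\rho_u}u|^2\big)(t) \in X_{2\beta+\alpha}$. Therefore $A^c_{u(t),\rho_u(t)}[u(t)] \in X_{2\beta+\alpha}$.

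The final step is the bootstrap. By Proposition \ref{P_erzeugtHG_c}, the operator $A^c_{u(t),\rho_u(t)} : Z_s \to X_s$ generates an analytic $C^0$-semigroup for both $s = \alpha$ and $s = 2\beta+\alpha$; moreover $u(t) \in Z_\alpha$ because $u \in \E_{1,T} \subset h^\beta([0,T],Z_\alpha)$. Since $A^c_{u(t),\rho_u(t)}[u(t)] \in X_{2\beta+\alpha}$ by the previous step, Lemma \ref{Neuanfang_Lemma} applies and yields $u(t) \in Z_{2\beta+\alpha}$, which is the claim. I expect the only slightly technical point to be the verification of the chain rule identity for $\Delta_\rho$, but this is a short local computation already built into the manipulations in the proof of Lemma \ref{G_wohldef_c}; everything else is a line-by-line analogue of the proof of Proposition \ref{Neuanfang_h}, with $\rho_u \in M_{T,\rho_0}^h \subset M_T^h$ playing the role that $\rho$ played there.
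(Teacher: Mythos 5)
Your proposal is correct and follows essentially the same route as the paper's proof: the same chain-rule identity $\Delta_\rho G'(u)=G''(u)\Delta_\rho u+G'''(u)|\nabla_\rho u|^2$ to write $A^c_{u(t),\rho_u(t)}[u(t)]=\partial_t u(t)-\big(G'''(u)|\nabla_{\rho_u}u|^2\big)(t)$, membership in $X_{2\beta+\alpha}$ via Proposition \ref{L_bij_allg}(i) and Corollary \ref{Produkt_Wohldef_h}, and the bootstrap via Proposition \ref{P_erzeugtHG_c} and Lemma \ref{Neuanfang_Lemma}. The compatibility-condition detour for $\partial_t u(t)$ is unnecessary (the embedding in Proposition \ref{L_bij_allg}(i) applied to $u\in\E_{1,T}$ suffices), but it is harmless.
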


\begin{proof}
Let $\rho \defr \rho_{u,\rho_0} \in M_{T,\rho_0}^h$ be the solution from Theorem \ref{lokEx_h} and fix $t \in [0,T]$. We have 
\begin{align*}
A^c_{u(t),\rho(t)}\big[u(t)\big] 
= \partial_t u(t) - \big(G'''(u) \big|\nabla_\rho u\big|^2\big)(t) 
\in X_{2\beta+\alpha}
\end{align*}
by Proposition \ref{L_bij_allg}(i) and Corollary \ref{Produkt_Wohldef_h}. Because $A^c_{u(t),\rho(t)}: Z_{s} \rightarrow X_{s}$ generates an analytic $C^0$-semigroup for $s \in \{ \alpha,  2\beta+\alpha \}$ (see Proposition \ref{P_erzeugtHG_c}), Lemma \ref{Neuanfang_Lemma} yields $u(t) \in Z_{2\beta+\alpha}$.
\end{proof}

\subsection{Analytic Short-Time Existence}\label{ChaplokExAnalyt}

Combining the results from Sections \ref{ChaplokExh} and \ref{ChaplokExc} yields our full statement on short-time existence. We formulate it in a self-contained way, such that the reader does not have to look up Assumptions \ref{Vor} or Notations \ref{Not2} that were continually used above. 

\begin{theorem}\label{lokEx_param}
Let $\alpha \in (0,1)$ and $\beta \in (0,\frac{1}{2})$ with $2\beta + \alpha \notin \N$ and let $G \in C^7(\R)$ with $G''>0$ and $g \defr G-G' \cdot \Id > 0$. Moreover, let $\Sigma=\bar{\theta}(M)$ be an $h^{4+\alpha}$-immersed closed hypersurface with unit normal $\nu_\Sigma$. Let $u_0 \in h^{2+2\beta+\alpha}(M)$ and $\delta_1>0$ be arbitrary. Then, choose $\delta_0 = \delta_0(\Sigma,u_0,\delta_1)>0$ and $T = T(\Sigma,u_0,\delta_1)>0$ sufficiently small. For every function $\rho_0 \in h^{2+2\beta+\alpha}(M)$ with $\|\rho_0\|_{h^{2+2\beta+\alpha}(M)} < \delta_1$ and $\|\rho_0\|_{h^{1+\alpha}(M)} < \delta_0$, there exists a solution $(\rho,u)$ with $\rho, u \in \E_{1,T} \defr h^{1+\beta}\big([0,T],h^\alpha(M)\big) \cap h^\beta\big([0,T],h^{2+\alpha}(M)\big)$ to
\begin{align*}
\left\{
\begin{aligned}
\partial_t \rho \phantom{bl} &= \phantom{bl} g(u)a(\rho)H(\rho) & &\text{ in } h^\beta\big([0,T],h^\alpha(M)\big), \\
\partial_t u \phantom{bl} &= \phantom{bl} \Delta_\rho G'(u) + g(u) a(\rho) H(\rho) \nu_\Sigma \cdot \nabla_\rho u + g(u) H(\rho)^2 u & &\text{ in } h^\beta\big([0,T],h^\alpha(M)\big), \\
\rho(0) \phantom{bl} &= \phantom{bl} \rho_0 & &\text{ in } h^{2+\alpha}(M), \\
u(0) \phantom{bl} &= \phantom{bl} u_0 & &\text{ in } h^{2+\alpha}(M).
\end{aligned}
\right.
\end{align*}
Furthermore, $\rho(t), u(t) \in h^{2+2\beta+\alpha}(M)$ as well as $\|\rho(t)\|_{h^{1+\alpha}(M)}<R^\Sigma$ hold for all $t \in [0,T]$ and there exists a constant $R = R(\Sigma,u_0,\delta_1)>0$ independent of $\rho_0$ with $\|\rho\|_{\E_{1,T}}, \|u\|_{\E_{1,T}} \leq R$. For any two solutions, there exists $\overline{T} \in (0,T]$ such that the solutions coincide on $[0,\overline{T}]$.
\end{theorem}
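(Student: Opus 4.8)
The plan is to chain the two decoupling theorems together and then read off the regularity and uniqueness. First I would fix the data exactly as in Assumptions \ref{Vor}: given $\alpha,\beta,G,\Sigma,u_0,\delta_1$ as in the statement (replacing $\delta_1$ by $\max\{\delta_1,2R^\Sigma\}$ if necessary, which only strengthens the admissibility condition on $\rho_0$), apply Theorem \ref{lokEx_c} to choose $R^c=R^c(R^\Sigma,u_0,\delta_1)$ sufficiently large, then $R^h=R^h(R^c,u_0,\delta_1)$ sufficiently large as in Theorem \ref{lokEx_h}, and finally $\delta_0=\delta_0(R^\Sigma,u_0,\delta_1)$ and $T=T(R^\Sigma,R^c,R^h,u_0,\delta_0,\delta_1)$ sufficiently small, with \eqref{Absch_T} respected. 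Since $R^\Sigma$ depends only on $\Sigma$ (Assumptions \ref{Vor}(ii), Lemma \ref{theta_h_wohldef}) and $N^h,N^c,\Lambda^h,\Lambda^c$ depend only on $R^\Sigma,u_0,\delta_1$ by Lemmas \ref{G_wohldef_h}, \ref{G_wohldef_c} and Propositions \ref{L_bij_h}, \ref{L_bij_c}, all of $R^c,R^h,\delta_0,T$ depend only on $\Sigma,u_0,\delta_1$, as claimed.

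Next, fix any $\rho_0\in h^{2+2\beta+\alpha}(M)=Z_{2\beta+\alpha}$ with $\|\rho_0\|_{Z_{2\beta+\alpha}}<\delta_1$ and $\|\rho_0\|_{Y_\alpha}<\delta_0$. Theorem \ref{lokEx_c} yields a unique $u\defr u_{\rho_0}\in M_T^c$ solving the concentration equation with $\rho=\rho_{u,\rho_0}$ inserted and $u(0)=u_0$ in $Z_\alpha$; put $\rho\defr\rho_{u_{\rho_0},\rho_0}\in M_{T,\rho_0}^h$. By Theorem \ref{lokEx_h}, $\rho$ solves $\partial_t\rho=g(u)a(\rho)H(\rho)$ in $\E_{0,T}$ with $\rho(0)=\rho_0$ in $Z_\alpha$, and substituting $\rho=\rho_u$ into the concentration equation shows $(\rho,u)$ solves the full coupled system in $h^\beta([0,T],h^\alpha(M))$ with the prescribed initial values in $h^{2+\alpha}(M)$; in particular $\rho,u\in\E_{1,T}$. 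The spatial regularity $\rho(t),u(t)\in h^{2+2\beta+\alpha}(M)$ for all $t$ is precisely Propositions \ref{Neuanfang_h} and \ref{Neuanfang_c}; the bound $\|\rho(t)\|_{h^{1+\alpha}(M)}<R^\Sigma$ is the inclusion $M_{T,\rho_0}^h\subset M_T^h$ established after Notations \ref{Not2}; and since $\|u\|_{\E_{1,T}}\le R^c$ and $\|\rho\|_{\E_{1,T}}\le R^h$, one takes $R\defr\max\{R^c,R^h\}=R(\Sigma,u_0,\delta_1)$.

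For local uniqueness, let $(\rho_1,u_1),(\rho_2,u_2)$ be two solutions in $\E_{1,T}$ with the stated initial conditions. Since $\E_{1,T}$ is built from \emph{little} Hölder spaces in time, the Hölder-in-time seminorms of $u_i,\rho_i$ over $[0,\overline T]$ tend to $0$ as $\overline T\searrow0$, while their $C^0$- and $C^1$-in-time norms tend to their values at $t=0$, which are controlled solely by $u_0,\rho_0,\Sigma$ via the right-hand sides of the equations (the same estimates underlying $N^c,N^h$). Strengthening, if needed, the meaning of ``sufficiently large'' for $R^c,R^h$ so that they also dominate these initial-time quantities — still a constant depending only on $\Sigma,u_0,\delta_1$ — there is $\overline T\in(0,T]$ with $u_i|_{[0,\overline T]}\in M_{\overline T}^c$ and $\rho_i|_{[0,\overline T]}\in M_{\overline T,\rho_0}^h$ for $i=1,2$. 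By uniqueness in Theorem \ref{lokEx_h}, $\rho_i|_{[0,\overline T]}=\rho_{u_i|_{[0,\overline T]},\rho_0}$; hence $u_i|_{[0,\overline T]}$ solves the reduced concentration equation of Theorem \ref{lokEx_c} and lies in $M_{\overline T}^c$, so uniqueness there gives $u_1|_{[0,\overline T]}=u_{\rho_0}=u_2|_{[0,\overline T]}$, and then $\rho_1|_{[0,\overline T]}=\rho_{u_1,\rho_0}=\rho_{u_2,\rho_0}=\rho_2|_{[0,\overline T]}$.

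The main obstacle is this last step: the solutions granted by the statement are a priori only in $\E_{1,T}$ (plus the a posteriori bounds), not in the closed balls $M_T^c$, $M_{T,\rho_0}^h$ on which the contraction arguments of Theorems \ref{lokEx_h} and \ref{lokEx_c} operate, so one must exploit the little-Hölder structure in time (and continuity of the right-hand sides) to squeeze them into those balls on a short enough interval. Everything else is bookkeeping — principally verifying that none of $R^c,R^h,\delta_0,T$ secretly depends on $\rho_0$, which is exactly what the $\rho_0$-independence of $N^h,N^c,\Lambda^h,\Lambda^c$ provides.
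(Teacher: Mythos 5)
Your proposal is correct, and the existence part is essentially identical to the paper's proof: fit the data into Assumptions \ref{Vor}, chain Theorems \ref{lokEx_h} and \ref{lokEx_c}, get spatial regularity from Propositions \ref{Neuanfang_h} and \ref{Neuanfang_c}, the bound $\|\rho(t)\|_{h^{1+\alpha}(M)}<R^\Sigma$ from \eqref{Absch_T}, and $R\defr\max\{R^c,R^h\}$. The only substantive divergence is the uniqueness step. The paper re-invokes Theorems \ref{lokEx_h} and \ref{lokEx_c} after enlarging $R^c,R^h$ so that they dominate $\|u\|_{\E_{1,T}},\|\tilde u\|_{\E_{1,T}},\|\rho\|_{\E_{1,T}},\|\tilde\rho\|_{\E_{1,T}}$ (so the radii may depend on the particular second solution, which is harmless because $\overline{T}$ is then shrunk accordingly and $\delta_0$ is independent of $R^c,R^h$); both solutions then lie in the contraction balls on $[0,\overline{T}]$ and coincide. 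You instead keep the radii data-dependent and squeeze the solutions into the original balls by shrinking the interval: the little-Hölder seminorms of any $\E_{1,T}$-solution vanish as $\overline{T}\searrow 0$, while the sup-norms converge to the initial-time quantities, and $\partial_t u_i(0)$, $\partial_t\rho_i(0)$ are determined by $(u_0,\rho_0)$ through the equations at $t=0$, hence bounded by constants depending only on $\Sigma,u_0,\delta_1$. Both devices are legitimate; the paper's is slightly more economical (no inspection of the time derivatives at $t=0$ is needed), whereas yours keeps every constant data-dependent, at the price of that extra observation and of possibly enlarging $R^c,R^h$ (and re-shrinking $T$) a priori, which is consistent since these remain functions of $(\Sigma,u_0,\delta_1)$ only. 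Two cosmetic points: replacing $\delta_1$ by $\max\{\delta_1,2R^\Sigma\}$ \emph{weakens} rather than strengthens the admissibility condition on $\rho_0$ (which is exactly why it is harmless), and the final identification $\rho_i|_{[0,\overline{T}]}=\rho_{u_i|_{[0,\overline{T}]},\rho_0}$ indeed needs the membership $\rho_i|_{[0,\overline{T}]}\in M^h_{\overline{T},\rho_0}$ that your shrinking argument provides, so your stated "main obstacle" is genuinely resolved by it.
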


\begin{proof}
For sufficiently small $R^\Sigma>0$ and sufficiently large $R^c,R^h>0$, choosing $\delta_0>0$ and $T>0$ sufficiently small, Assumptions \ref{Vor} and the conditions of Theorems \ref{lokEx_h} and \ref{lokEx_c} are satisfied. The existence of a solution $(\rho,u)$ with $\rho, u \in \E_{1,T}$ then follows directly from Theorems \ref{lokEx_h} and \ref{lokEx_c}. With $R \defr \max \{R^c,R^h\}$, we have $\|\rho\|_{\E_{1,T}}, \|u\|_{\E_{1,T}} \leq R$, where $R^c, R^h$ and thus also $R$ only depend on $\Sigma$, $u_0$ and $\delta_1$ (see Theorems \ref{lokEx_h} and \ref{lokEx_c}). The property $\rho(t), u(t) \in h^{2+2\beta+\alpha}(M)$ for all $t \in [0,T]$ is due to Propositions \ref{Neuanfang_h} and \ref{Neuanfang_c} and 
\begin{align*}
\|\rho(t)\|_{h^{1+\alpha}(M)}
\leq \frac{\|\rho(t)-\rho(0)\|_{h^{1+\alpha}(M)}}{|t-0|^\beta} T^\beta + \|\rho_0\|_{h^{1+\alpha}(M)}
\leq \|\rho\|_{\E_{1,T}} T^\beta + \delta_0
< R^\Sigma
\end{align*}
follows with Estimate \eqref{Absch_T}. \\
To prove the stateted uniqueness property of the solution, assume that there exists a second solution $(\tilde{\rho},\tilde{u})$ with $\tilde{\rho},\tilde{u} \in \E_{1,T}$. Choose $R^c$ and $R^h$ as large as in Theorem \ref{lokEx_c}, but at least as large such that $\|u\|_{\E_{1,T}}, \|\tilde{u}\|_{\E_{1,T}} \leq R^c$ and $\|\rho\|_{\E_{1,T}}, \|\tilde{\rho}\|_{\E_{1,T}} \leq R^h$ hold. Then, choose $\overline{T}>0$ as small as in Theorem \ref{lokEx_c} but at least as small such that $\overline{T} \leq T$ holds. As $\delta_0$ is independent of $R^c$ and $R^h$, the conditions of Theorems \ref{lokEx_h} and \ref{lokEx_c} are satisfied. We hence obtain a unique solution in
\begin{align*}
M_{\overline{T}} \defr \big\{ (\bar{\rho},\bar{u}) \in \E_{1,\overline{T}} \times \E_{1,\overline{T}} \, \big| \, \|\bar{\rho}\|_{\E_{1,\overline{T}}} \leq R^h \text{ and } \|\bar{u}\|_{\E_{1,\overline{T}}} \leq R^c \big\}. 
\end{align*}
As we have $(\rho,u), (\tilde{\rho},\tilde{u}) \in M_{\overline{T}}$, the two solutions coincide on $[0,\overline{T}]$.
\end{proof}

If we could apply a continuation argument to the two solutions $(\rho,u)$ and $(\tilde{\rho},\tilde{u})$ from the proof above, we could show that they coincide on the full time interval $[0,T]$ and thus obtain uniqueness of the solution. For this, we would need to ensure that for a solution $(\rho,u)$ at any time $t$, the pair $\big(\rho(t),u(t)\big)$ fulfills the conditions for the initial values in Theorem \ref{lokEx_param}. In particular, $\rho(t)$ needs to be bounded by $\delta_0\big(u(t)\big)$ in the appropriate norm. To achieve this, the dependence of $\delta_0$ on $u_0$ should be controlled in a uniform way. 

\appendix
\section{Appendix}

\subsection{Basic Properties of Hölder Spaces}

In this section, we gather some basic properties of little Hölder spaces. For their proofs, we refer to \cite[Section 2.2]{Buerger}. Via localization, the results transfer to embedded closed hypersurfaces.
 
\begin{lemma}[Hölder Spaces as Continuous Interpolation Spaces]\label{Interpol_ComO} $\phantom{x}$ \\
Let $m \in \N_{\geq 1}$ and $\theta \in (0,1)$ with $\theta m \notin \N$ and let $W \subset \R^d$ be an open subset with regular boundary (see \cite[Section 0.1, pages 2 and 3]{LunardiASG} for an explicit definition). Then, 
\begin{align*}
\big( C^0_b(\overline{W}), C^m_b(\overline{W}) \big)_{\theta} = h^{\theta m}(\overline{W})
\end{align*}
holds with equivalent norms, where $(\cdot,\cdot)_\theta$ denotes the continuous interpolation functor. 
\end{lemma}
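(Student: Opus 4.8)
The final statement to prove is Lemma~\ref{Interpol_ComO}, identifying the little Hölder space $h^{\theta m}(\overline{W})$ as the continuous interpolation space $\bigl(C^0_b(\overline{W}), C^m_b(\overline{W})\bigr)_\theta$. The plan is to reduce everything to the classical characterization of real interpolation spaces between $C^0_b$ and $C^m_b$ in terms of the $K$-functional, and then to pick out the continuous (little) interpolation functor as the closure of the nicer space inside the real interpolation space.

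First I would recall the standard identification of the real interpolation spaces $\bigl(C^0_b(\overline{W}), C^m_b(\overline{W})\bigr)_{\theta,\infty}$ with the (big) Hölder–Zygmund spaces $C^{\theta m}_b(\overline{W})$ for $\theta m \notin \N$; this is exactly where the hypothesis $\theta m \notin \N$ and the regularity of $\partial W$ enter, since for non-integer exponents the Zygmund space coincides with the ordinary Hölder space, and the regular-boundary assumption guarantees the existence of an extension operator $E\colon C^k_b(\overline{W}) \to C^k_b(\R^d)$ simultaneously for $k=0$ and $k=m$, so that the interpolation computation can be done on $\R^d$ and transported back. This part is entirely classical — I would cite \cite[Chapter~1 and Section~0.2]{LunardiASG} (or \cite{Buerger}, Section~2.2, as the excerpt suggests) rather than reprove it. Next, I would invoke the general fact that the continuous interpolation functor $(\cdot,\cdot)_\theta$ produces precisely the closure of the ``richer'' endpoint space in the $(\cdot,\cdot)_{\theta,\infty}$-norm: concretely, $\bigl(X_0,X_1\bigr)_\theta = \overline{X_1}^{\,\|\cdot\|_{(X_0,X_1)_{\theta,\infty}}}$, which is the content of \cite[Section~2.2]{Buerger} / the construction in \cite{LunardiASG}.

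The heart of the argument is then to show that the closure of $C^m_b(\overline{W})$ inside $C^{\theta m}_b(\overline{W})$ is exactly the little Hölder space $h^{\theta m}(\overline{W})$. For this I would argue in both directions. For the inclusion $\overline{C^m_b} \subseteq h^{\theta m}$: since $h^{\theta m}(\overline{W})$ is a closed subspace of $C^{\theta m}_b(\overline{W})$ (the seminorm condition $\lim_{R\to 0}[f]^R_{h^\alpha} = 0$ is preserved under uniform Hölder-norm limits, by a straightforward $3\varepsilon$-argument splitting $|x-y|<R$ into the tail and the approximant), and since every $C^m_b$ function lies in $h^{\theta m}$ (a $C^1$ function is little-Hölder of any exponent $<1$ on a bounded convex set, hence by the higher-order definition a $C^m_b$ function is in $h^{\theta m}$ for $\theta m \notin \N$), the closure is contained in $h^{\theta m}$. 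For the reverse inclusion $h^{\theta m} \subseteq \overline{C^m_b}$: given $f \in h^{\theta m}(\overline{W})$, extend it to $Ef \in h^{\theta m}_b(\R^d)$, mollify, $f_\varepsilon \defr \eta_\varepsilon * (Ef)$, and check that $f_\varepsilon \to Ef$ in the $C^{\theta m}$-norm on $\overline{W}$; the uniform convergence of derivatives up to order $\lfloor \theta m\rfloor$ is standard, and the convergence of the top Hölder seminorm is precisely where the ``little'' condition $\lim_{R\to 0}[Ef]^R = 0$ is used to make the estimate $[f_\varepsilon - Ef]^{\,\infty}_{h^{\{\theta m\}}} \to 0$ work uniformly in $\varepsilon$. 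Restricting $f_\varepsilon$ to $\overline{W}$ gives the desired $C^m_b$-approximants.

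The main obstacle I anticipate is the bookkeeping in this last mollification step: one must be careful that the extension operator $E$ preserves membership in the little Hölder space (not just the big one), which again relies on the regular-boundary hypothesis, and that the seminorm estimates for the mollified family are genuinely uniform — the naive bound only gives boundedness of $[f_\varepsilon]$, and upgrading to convergence requires splitting the difference quotient at scale $\varepsilon$ and exploiting $[Ef]^R \to 0$. Once this is in place, the chain $\bigl(C^0_b,C^m_b\bigr)_\theta = \overline{C^m_b}^{\,C^{\theta m}_b} = h^{\theta m}(\overline{W})$ closes the proof, with norm equivalence inherited from the real-interpolation identification. Finally, the transfer ``via localization'' to embedded closed hypersurfaces, as the excerpt notes, is routine: one uses the finitely many local parameterizations $(\gamma_l,W_l)$ from Remark~\ref{hypersurface_remark} together with a subordinate partition of unity, applying the Euclidean result on each $\overline{W_l}$ and summing.
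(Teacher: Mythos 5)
The paper itself does not prove this lemma: the appendix states it as a known property and refers to \cite[Section 2.2]{Buerger} (which in turn follows the treatment in \cite{LunardiASG}), so there is no in-paper argument to compare with step by step. Your route is the standard one and is sound: (a) the real-interpolation identification $\big(C^0_b(\overline{W}),C^m_b(\overline{W})\big)_{\theta,\infty}=C^{\theta m}_b(\overline{W})$ for $\theta m\notin\N$ via an extension operator, (b) the general fact that the continuous interpolation space is the closure of the finer endpoint $C^m_b(\overline{W})$ in the $(\cdot,\cdot)_{\theta,\infty}$-norm, and (c) the identification of that closure with $h^{\theta m}(\overline{W})$ by extension, mollification and restriction, using the vanishing small-scale seminorm exactly where you say it is used. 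The only step that needs genuine care is the one you flagged: that the extension operator maps $h^{\theta m}(\overline{W})$ into $h^{\theta m}(\R^d)$ is a property of the concrete (reflection/Stein-type) operator and must be checked directly from its construction — it cannot be deduced abstractly from boundedness of $E$ on two Hölder scales, and deducing it from density of smooth functions in $h^{\theta m}(\overline{W})$ would be circular, since that density is what step (c) is proving. An alternative, and the way such computations are often organized in \cite{LunardiASG}, is to skip the closure detour and verify directly that $t^{-\theta}K(t,f)\to 0$ as $t\to 0$ is equivalent to the vanishing of the top-order Hölder modulus at small scales (comparing $K(t^m,f)$ with mollifications at scale $t$); this trades your step (c) for an equivalent computation but avoids having to discuss littleness-preservation of $E$ separately. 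Your localization remark for closed hypersurfaces matches the paper's. In short: correct, essentially the standard argument, with the one delicate point correctly identified though only asserted rather than carried out.
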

 
\begin{lemma}[Embeddings of Hölder Spaces]\label{HS_Einbettung} $\phantom{x}$ \\
Let $W \subset \R^d$ be an open, bounded and convex subset. For any $s_1,s_2 \in \R_{> 0}$ with $s_1 \leq s_2$,
\begin{align*}
h^{s_2}_b(\overline{W},X) \hookrightarrow h^{s_1}_b(\overline{W},X)
\end{align*}
holds. For $X=\R^n$, the statement also holds if $W \subset \R^d$ is an open subset with regular boundary.
\end{lemma}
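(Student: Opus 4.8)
The plan is to reduce the statement to two elementary building blocks by passing to top-order derivatives. Write $s_1 = k_1 + \alpha_1$ and $s_2 = k_2 + \alpha_2$ with $k_i \in \N_{\geq 0}$ and $\alpha_i \in [0,1)$; the hypothesis $s_1 \leq s_2$ means either $k_1 < k_2$, or $k_1 = k_2$ and $\alpha_1 \leq \alpha_2$. Since differentiating up to order $k_1$ is a bounded linear operation out of $h^{s_1}_b(\overline W,X)$ and out of $h^{s_2}_b(\overline W,X)$, and since $C^m_b(\overline W,X) \hookrightarrow C^{m'}_b(\overline W,X)$ for $m \geq m'$ as well as $h^{k+\alpha}(\overline W,X) \hookrightarrow C^k_b(\overline W,X)$ are immediate from the definitions of the respective norms, it suffices to handle the top-order part in two cases: \textbf{(A)} $0 < \alpha_1 \leq \alpha_2 < 1$ with the embedding $h^{\alpha_2}(\overline W,X) \hookrightarrow h^{\alpha_1}(\overline W,X)$; and \textbf{(B)} $\alpha_1 \in (0,1)$ with the embedding $C^1_b(\overline W,X) \hookrightarrow h^{\alpha_1}(\overline W,X)$. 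The degenerate combinations (an integer exponent on one side) are covered by the trivial inclusions just mentioned.

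For (A), let $f \in h^{\alpha_2}(\overline W,X)$ and put $\delta \defr \operatorname{diam}(W) < \infty$, finite since $W$ is bounded. For $x,y \in \overline W$ with $0 < |x-y| < R$,
\begin{align*}
\frac{\|f(x)-f(y)\|_X}{|x-y|^{\alpha_1}} = \frac{\|f(x)-f(y)\|_X}{|x-y|^{\alpha_2}}\,|x-y|^{\alpha_2-\alpha_1} \leq [f]_{h^{\alpha_2}(\overline W,X)}^\infty\,\min\{R,\delta\}^{\alpha_2-\alpha_1},
\end{align*}
so $[f]^\infty_{h^{\alpha_1}} \leq \delta^{\alpha_2-\alpha_1}[f]^\infty_{h^{\alpha_2}} < \infty$ and $[f]^R_{h^{\alpha_1}} \leq R^{\alpha_2-\alpha_1}[f]^\infty_{h^{\alpha_2}} \to 0$ as $R \searrow 0$; hence $f \in h^{\alpha_1}(\overline W,X)$ with $\|f\|_{h^{\alpha_1}} \leq \max\{1,\delta^{\alpha_2-\alpha_1}\}\,\|f\|_{h^{\alpha_2}}$. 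For (B), let $f \in C^1_b(\overline W,X)$. Since $W$ is convex, for $x,y \in \overline W$ the segment $[x,y]$ lies in $\overline W$, so the mean value inequality gives $\|f(x)-f(y)\|_X \leq \|\mathrm{d}f\|_{C^0(\overline W)}\,|x-y|$, whence for $0<|x-y|<R$
\begin{align*}
\frac{\|f(x)-f(y)\|_X}{|x-y|^{\alpha_1}} \leq \|\mathrm{d}f\|_{C^0(\overline W)}\,|x-y|^{1-\alpha_1} \leq \|\mathrm{d}f\|_{C^0(\overline W)}\,\min\{R,\delta\}^{1-\alpha_1}.
\end{align*}
Exactly as in (A) this yields $f \in h^{\alpha_1}(\overline W,X)$ with $\|f\|_{h^{\alpha_1}} \leq C(\delta,\alpha_1)\,\|f\|_{C^1_b(\overline W,X)}$. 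Combining (A), (B) and the trivial inclusions along the chain $h^{s_2}_b \hookrightarrow C^{k_1+1}_b \hookrightarrow h^{s_1}_b$ when $k_1 < k_2$ (reading (B) on the $k_1$-th derivatives for the second arrow, or the trivial inclusion if $\alpha_1 = 0$), respectively directly via (A) on the $k_1$-th derivatives when $k_1 = k_2$, proves the claim.

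For $X = \R^n$ and $W$ merely an open set with regular boundary, convexity was used only for the mean value inequality in (B); it is replaced by the quasiconvexity of such domains, i.e.\ the existence of $L = L(W)$ such that any $x,y \in \overline W$ can be joined by a rectifiable path in $\overline W$ of length at most $L|x-y|$ (a consequence of the regularity of the boundary in the sense of \cite[Section 0.1]{LunardiASG}). Integrating $\mathrm{d}f$ along such a path gives $\|f(x)-f(y)\| \leq L\,\|\mathrm{d}f\|_{C^0(\overline W)}\,|x-y|$, after which the argument of (B) applies verbatim. The only genuine obstacle is this geometric substitute for convexity; everything else is a direct estimate exploiting boundedness of $W$. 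The one point deserving care is that it is the full little-Hölder condition $\lim_{R \searrow 0}[f]^R = 0$, not merely finiteness of the seminorm, that must be inherited — and the displayed bounds $[f]^R_{h^{\alpha_1}} \lesssim R^{\alpha_2-\alpha_1}$ in (A) and $\lesssim R^{1-\alpha_1}$ in (B) make this transparent. A fully detailed proof is given in \cite[Section 2.2]{Buerger}.
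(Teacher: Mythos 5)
The paper itself gives no proof of this lemma (it is quoted from \cite[Section 2.2]{Buerger}), so there is nothing internal to compare against; judged on its own, your argument for the main statement ($W$ bounded and convex, $X$ an arbitrary Banach space) is correct and is the standard one: reduce to the top-order derivatives, interpolate the exponent using boundedness of $W$ for the seminorm bound, and get the little-H\"older limit for free from the factors $R^{\alpha_2-\alpha_1}$, resp.\ $R^{1-\alpha_1}$ (for $\alpha_1=\alpha_2$ that factor is $1$ and your displayed limit argument does not apply, but then the embedding is the identity, so nothing is lost). The mean value inequality step is where convexity enters, exactly as you say.

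The one genuine flaw is in the addendum for $X=\R^n$ and $W$ with regular boundary: the ``quasiconvexity'' you invoke --- a single constant $L=L(W)$ such that \emph{any} $x,y\in\overline W$ can be joined in $\overline W$ by a path of length at most $L|x-y|$ --- is not a consequence of boundary regularity in the sense of \cite[Section 0.1]{LunardiASG}. That notion imposes only uniform local conditions on $\partial W$ and allows, e.g., disconnected sets (two disjoint balls: no joining path at all) and connected unbounded sets such as the complement of a smoothed half-strip in $\R^2$, where points on opposite sides of the strip far from its end have Euclidean distance of order one but intrinsic distance of order $R\to\infty$; so the global inequality $\|f(x)-f(y)\|\le L\|\mathrm{d}f\|_{C^0}|x-y|$ you integrate to obtain is simply unavailable. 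The conclusion of the lemma nevertheless holds, and the standard repair is a scale splitting: uniform regularity of $\partial W$ does give an $r_0>0$ and an $L$ such that the chain condition holds for pairs with $|x-y|<r_0$, which yields the Lipschitz-type bound at small scales (and in particular the little-H\"older limit, which only involves small $|x-y|$), while for $|x-y|\ge r_0$ one estimates the H\"older quotient crudely by $2\|f\|_{C^0}\,r_0^{-\alpha_1}$, using that $f$ is bounded. With that adjustment (or by simply deferring this part to \cite[Section 2.2]{Buerger} as the paper does) your proof is complete.
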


As a special case of Lemma \ref{HS_Einbettung}, the following remark holds.

\begin{remark}\label{Absch_Höldernorm}
Let $T \in (0,1]$ and let $\alpha_1,\alpha_2 \in (0,1)$ with $\alpha_1 < \alpha_2$. We have $h^{\alpha_2}([0,T],X) \hookrightarrow h^{\alpha_1}([0,T],X)$ with
\begin{align*}
\|f\|_{h^{\alpha_1}([0,T],X)} \leq 2 T^{\alpha_2-\alpha_1} \|f\|_{h^{\alpha_2}([0,T],X)} + \|f(0)\|_X
\end{align*}
for all $f \in h^{\alpha_2}\big([0,T],X\big)$. 
\end{remark}

\begin{lemma}[Embeddings of Hölder Spaces in Time and Space]\label{EinbettungE_1} $\phantom{x}$ \\
Let $T \in (0,\infty)$ and let $\alpha, \beta \in (0,1)$. Furthermore, let $M \subset \R^{d+1}$ be a $d$-dimensional $h^{2+\alpha}$-embedded closed hypersurface. Define $X \defr h^\alpha(M)$, $Y \defr h^{1+\alpha}(M)$ and $Z \defr h^{2+\alpha}(M)$. Then, there exists $\gamma \in (0,1)$ with $\gamma > \beta$ such that
\begin{align*}
h^{1+\beta}([0,T],X) \cap h^\beta([0,T],Z) \hookrightarrow h^\gamma([0,T],Y)
\end{align*}
is a continuous embedding. 
\end{lemma}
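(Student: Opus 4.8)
The plan is to view $u$ as a curve in $Y$, which is the interpolation space lying half\-way between $X$ and $Z$, and to play off the high temporal regularity of $u$ with values in $X$ against its lower temporal regularity with values in $Z$ by means of an interpolation inequality; the gain in the temporal Hölder exponent will be strictly positive precisely because $\beta<1$.

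First I would record the interpolation identity $Y=(X,Z)_{1/2}$ with equivalent norms, where $(\cdot,\cdot)_\theta$ denotes the continuous interpolation functor. This is obtained exactly as in the proof of Proposition~\ref{P_erzeugtHG_h}: working in local coordinates, Lemma~\ref{Interpol_ComO} together with the reiteration theorem gives $\bigl(h^\alpha(M),h^{2+\alpha}(M)\bigr)_{1/2}=h^{1+\alpha}(M)$, using that $\alpha$, $1+\alpha$ and $2+\alpha$ are all non\-integers. In particular there is a constant $C=C(M,\alpha)$ with $\|v\|_Y\le C\,\|v\|_X^{1/2}\,\|v\|_Z^{1/2}$ for every $v\in Z$, by the standard interpolation inequality $\|v\|_{(X,Z)_{1/2}}\le\|v\|_X^{1/2}\|v\|_Z^{1/2}$ combined with the norm equivalence just mentioned.

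Next, for $u\in h^{1+\beta}([0,T],X)\cap h^\beta([0,T],Z)$ and $t,\tau\in[0,T]$, I would use that $u\in C^1([0,T],X)$, so that the fundamental theorem of calculus yields $u(t)-u(\tau)=\int_\tau^t\partial_t u(\sigma)\,d\sigma$ in $X$ and hence $\|u(t)-u(\tau)\|_X\le|t-\tau|\,\|\partial_t u\|_{C^0([0,T],X)}\le|t-\tau|\,\|u\|_{h^{1+\beta}([0,T],X)}$, while directly from the definition of the norm $\|u(t)-u(\tau)\|_Z\le|t-\tau|^\beta\,\|u\|_{h^\beta([0,T],Z)}$. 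Since $u(t)-u(\tau)\in Z$, the interpolation inequality from the previous step applies to it and gives $\|u(t)-u(\tau)\|_Y\le C\,|t-\tau|^{(1+\beta)/2}\,\|u\|_{h^{1+\beta}([0,T],X)}^{1/2}\,\|u\|_{h^\beta([0,T],Z)}^{1/2}$. I would then fix any $\gamma\in\bigl(\beta,(1+\beta)/2\bigr)$, which is a nonempty subinterval of $(0,1)$ because $\beta<1$. The estimate gives $[u]_{h^\gamma([0,T],Y)}^R\le C\,R^{(1+\beta)/2-\gamma}\,\|u\|_{h^{1+\beta}([0,T],X)}^{1/2}\,\|u\|_{h^\beta([0,T],Z)}^{1/2}$, which is finite for $R=T$ and tends to $0$ as $R\to0$; together with $u\in C^0([0,T],Y)$ (from $Z\hookrightarrow Y$ and the same estimate) this shows $u\in h^\gamma([0,T],Y)$, and a Young\-inequality step turns the mixed bound into $\|u\|_{h^\gamma([0,T],Y)}\le C\bigl(\|u\|_{h^{1+\beta}([0,T],X)}+\|u\|_{h^\beta([0,T],Z)}\bigr)$, i.e.\ the embedding is continuous.

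I do not expect a genuine obstacle here: the only non\-elementary ingredient is the interpolation identity $Y=(X,Z)_{1/2}$ on $M$, and it is already used verbatim in Proposition~\ref{P_erzeugtHG_h}. The one point that needs a little care is the little Hölder condition $\lim_{R\to0}[u]_{h^\gamma([0,T],Y)}^R=0$, but it is automatic once $\gamma$ is taken strictly below the interpolation exponent $(1+\beta)/2$ (alternatively one could keep $\gamma=(1+\beta)/2$ and instead invoke the little Hölder property of $u$ as an element of $h^\beta([0,T],Z)$).
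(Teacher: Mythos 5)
Your argument is correct: the interpolation identity $h^{1+\alpha}(M)=\bigl(h^\alpha(M),h^{2+\alpha}(M)\bigr)_{1/2}$ (via Lemma \ref{Interpol_ComO}, reiteration and localization) gives $\|v\|_Y\le C\|v\|_X^{1/2}\|v\|_Z^{1/2}$, the Lipschitz bound in $X$ and the $\beta$-Hölder bound in $Z$ then yield the exponent $(1+\beta)/2>\beta$ for differences in $Y$, and choosing $\gamma\in\bigl(\beta,(1+\beta)/2\bigr)$ makes the little-Hölder vanishing condition automatic, with Young's inequality giving continuity of the embedding. The paper itself only cites the dissertation \cite[Section 2.2]{Buerger} for this lemma, and your proof is exactly the standard interpolation-trading argument consistent with the identity the paper already invokes in Proposition \ref{P_erzeugtHG_h}, so no discrepancy or gap to report.
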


\begin{lemma}[Compact Embeddings of Hölder Spaces]\label{kompHölder} $\phantom{x}$ \\
Let $W \subset \R^d$ be an open, bounded and convex subset. For every ${s_1,s_2 \in \R_{>0} \setminus \N}$ with $s_1 < s_2$,
\begin{align*}
h^{s_2}(\overline{W}) \hookrightarrow h^{s_1}(\overline{W})
\end{align*}
is a compact embedding. 
\end{lemma}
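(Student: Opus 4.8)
The plan is to reduce to the case where $s_1$ and $s_2$ have the same integer part, and then to combine the Arzelà--Ascoli theorem with a scale-splitting estimate for the Hölder seminorm. \emph{Reduction.} If $\lfloor s_2\rfloor>\lfloor s_1\rfloor$, I would pick any non-integer $s'\in(s_1,\lfloor s_1\rfloor+1)$; then $s_1<s'<\lfloor s_1\rfloor+1\le\lfloor s_2\rfloor\le s_2$, so Lemma~\ref{HS_Einbettung} yields a continuous embedding $h^{s_2}(\overline W)\hookrightarrow h^{s'}(\overline W)$, and since a compact operator composed with a bounded one is compact, it suffices to prove compactness of $h^{s'}(\overline W)\hookrightarrow h^{s_1}(\overline W)$, which has equal integer parts. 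So henceforth write $s_i=k+\alpha_i$ with $k\defr\lfloor s_1\rfloor=\lfloor s_2\rfloor\in\N_{\ge 0}$ and $0<\alpha_1<\alpha_2<1$.

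\emph{Arzelà--Ascoli.} Let $(f_n)_n\subset h^{k+\alpha_2}(\overline W)$ be bounded, $\|f_n\|_{h^{k+\alpha_2}(\overline W)}\le C_0$. Since $\overline W$ is compact and, by convexity of $W$, integration along line segments gives $|D^\gamma f_n(x)-D^\gamma f_n(y)|\le C_0|x-y|$ for $|\gamma|\le k-1$, while $[D^\gamma f_n]^\infty_{h^{\alpha_2}(\overline W)}\le C_0$ for $|\gamma|=k$, all the families $(D^\gamma f_n)_n$ with $|\gamma|\le k$ are uniformly bounded and equicontinuous. Finitely many successive extractions via Arzelà--Ascoli produce a subsequence (not relabelled) along which $D^\gamma f_n$ converges uniformly on $\overline W$ for every $|\gamma|\le k$; in particular each $(D^\gamma f_n)_n$ is Cauchy in $C^0(\overline W)$.

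\emph{Upgrading to $h^{s_1}$.} The quantitative ingredient is the interpolation-type bound, valid for $h\in h^{\alpha_2}(\overline W)$ and $\delta\in(0,\operatorname{diam}(\overline W)]$:
\[
[h]^\infty_{h^{\alpha_1}(\overline W)}\ \le\ \delta^{\alpha_2-\alpha_1}\,[h]^\infty_{h^{\alpha_2}(\overline W)}\ +\ 2\,\delta^{-\alpha_1}\,\|h\|_{C^0(\overline W)},
\]
which follows by bounding $|h(x)-h(y)|\le[h]^\infty_{h^{\alpha_2}(\overline W)}|x-y|^{\alpha_2}$ when $|x-y|<\delta$ and $|h(x)-h(y)|\le 2\|h\|_{C^0(\overline W)}$ when $|x-y|\ge\delta$. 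Applying this to $h=D^\gamma(f_n-f_m)$ with $|\gamma|=k$, where $[D^\gamma(f_n-f_m)]^\infty_{h^{\alpha_2}(\overline W)}\le 2C_0$, and first choosing $\delta$ small so that $2C_0\delta^{\alpha_2-\alpha_1}<\varepsilon/2$, then $n,m$ large so that $2\delta^{-\alpha_1}\|D^\gamma(f_n-f_m)\|_{C^0(\overline W)}<\varepsilon/2$ (possible by the $C^0$-Cauchy property), shows that $([D^\gamma f_n]^\infty_{h^{\alpha_1}(\overline W)})_n$ is Cauchy. Together with the $C^0$-Cauchy property of all $D^\gamma f_n$, $|\gamma|\le k$, this gives that $(f_n)_n$ is Cauchy in the Banach space $h^{k+\alpha_1}(\overline W)$, hence convergent there. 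Therefore every bounded sequence in $h^{s_2}(\overline W)$ has a subsequence converging in $h^{s_1}(\overline W)$, that is, the embedding is compact.

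\emph{Main obstacle.} Everything except the displayed scale-splitting estimate is routine bookkeeping: that estimate is precisely what converts the merely uniform convergence produced by Arzelà--Ascoli into convergence in the finer seminorm $[\cdot]^\infty_{h^{\alpha_1}(\overline W)}$. Convexity of $W$ enters only to deduce equicontinuity of the lower-order derivatives from uniform bounds on the derivatives one order higher, and completeness of the little Hölder spaces is used to pass from ``Cauchy'' to ``convergent''.
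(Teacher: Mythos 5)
Your proof is correct and complete: the reduction to equal integer parts via Lemma \ref{HS_Einbettung}, the Arzel\`a--Ascoli extraction, and the scale-splitting bound $[h]^\infty_{h^{\alpha_1}}\le \delta^{\alpha_2-\alpha_1}[h]^\infty_{h^{\alpha_2}}+2\delta^{-\alpha_1}\|h\|_{C^0}$ constitute the standard argument, and the paper itself only cites the dissertation for this lemma rather than giving a proof. One cosmetic point: what your estimate shows is that the sequence $(D^\gamma f_n)_n$ is Cauchy with respect to the seminorm $[\,\cdot\,]^\infty_{h^{\alpha_1}(\overline W)}$ (i.e. $[D^\gamma(f_n-f_m)]^\infty_{h^{\alpha_1}(\overline W)}\to 0$), not that the sequence of real numbers $[D^\gamma f_n]^\infty_{h^{\alpha_1}(\overline W)}$ is Cauchy; the surrounding display makes the intended meaning clear, and the conclusion via completeness of the little H\"older space (a closed subspace of the H\"older space) is sound.
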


\begin{proposition}[Pointwise Product in Hölder Spaces]\label{HS_Produkt} $\phantom{x}$ \\
Let $W \subset \R^d$ be an open, bounded and convex subset and let $s \in \R_{\geq 0}$. Furthermore, let $X_1, X_2, X$ be Banach spaces with a $\R$-bilinear operation $\cdot: X_1 \times X_2 \rightarrow X$ such that $\|u_1 \cdot u_2\|_X \lesssim \|u_1\|_{X_1} \|u_2\|_{X_2}$ holds for all $u_1 \in X_1$, $u_2 \in X_2$. \\
Then, with pointwise multiplication, $f \cdot g \in h_b^s(\overline{W},X)$ with
\begin{align*}
\| f \cdot g \|_{h^s(\overline{W},X)} \leq C \|f\|_{h^s(\overline{W},X_1)} \|g\|_{h^s(\overline{W},X_2)}
\end{align*}
holds for all $f \in h^s_b(\overline{W},X_1)$, $g \in h^s_b(\overline{W},X_2)$. For $X_1=X_2=\R^n$ and $X=\R$, the statement also holds if $W \subset \R^d$ is an open subset with regular boundary. 
\end{proposition}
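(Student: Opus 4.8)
The plan is to peel the statement down to the two base cases $s=0$ and $s=\alpha\in(0,1)$ and then lift to general $s$ by the Leibniz rule. Write $k\defr\lfloor s\rfloor\in\N_{\geq0}$ and $\alpha\defr s-k\in[0,1)$. For $s=0$ there is nothing to do beyond the hypothesis: pointwise $\|(f\cdot g)(x)\|_X\lesssim\|f(x)\|_{X_1}\|g(x)\|_{X_2}\leq\|f\|_{C^0(\overline W,X_1)}\|g\|_{C^0(\overline W,X_2)}$, so $f\cdot g\in C^0_b(\overline W,X)$ with the claimed estimate.

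For $s=\alpha\in(0,1)$ I would use bilinearity to split the difference quotient. For $x,y\in\overline W$,
$$(f\cdot g)(x)-(f\cdot g)(y)=f(x)\cdot\bigl(g(x)-g(y)\bigr)+\bigl(f(x)-f(y)\bigr)\cdot g(y),$$
so, applying $\|\cdot\|_X\lesssim\|\cdot\|_{X_1}\|\cdot\|_{X_2}$, dividing by $|x-y|^\alpha$ and taking the supremum over $0<|x-y|<R$,
$$[f\cdot g]^R_{h^\alpha(\overline W,X)}\leq C\bigl(\|f\|_{C^0(\overline W,X_1)}\,[g]^R_{h^\alpha(\overline W,X_2)}+[f]^R_{h^\alpha(\overline W,X_1)}\,\|g\|_{C^0(\overline W,X_2)}\bigr).$$
Taking $R=\infty$ gives the seminorm bound, and letting $R\to0$ gives $\lim_{R\to0}[f\cdot g]^R_{h^\alpha}=0$ since $f\in h^\alpha(\overline W,X_1)$ and $g\in h^\alpha(\overline W,X_2)$; combined with the $s=0$ bound this shows $f\cdot g\in h^\alpha(\overline W,X)$ with $\|f\cdot g\|_{h^\alpha}\leq C\|f\|_{h^\alpha}\|g\|_{h^\alpha}$.

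For $s=k+\alpha$ with $k\geq1$: since the bilinear map $\cdot$ is continuous, the Leibniz formula
$$\partial^\gamma(f\cdot g)=\sum_{\beta\leq\gamma}\tbinom{\gamma}{\beta}\,\partial^\beta f\cdot\partial^{\gamma-\beta}g,\qquad|\gamma|\leq k,$$
holds for Fréchet derivatives. For $|\gamma|<k$ each summand is a product of $C^0_b$-functions and is handled by the $s=0$ estimate, giving $f\cdot g\in C^k_b(\overline W,X)$ with the $C^k$-part of the norm controlled. For $|\gamma|=k$, in each summand at most one factor carries $k$ derivatives: a factor with exactly $k$ derivatives lies in $h^\alpha_b$ by hypothesis, and a factor with fewer than $k$ derivatives lies in $C^1_b$, hence in $h^\alpha_b$ (on a bounded convex $W$ the mean value inequality gives $[h]^R_{h^\alpha}\leq\|\partial h\|_\infty R^{1-\alpha}\to0$; in the regular-boundary case this is Lemma \ref{HS_Einbettung}). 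Thus every summand is covered by the $s=\alpha$ step above; summing the finitely many terms and using $h^{k+\alpha}_b\hookrightarrow h^\alpha_b$ (Lemma \ref{HS_Einbettung}) yields $\partial^\gamma(f\cdot g)\in h^\alpha_b(\overline W,X)$ with $\|\partial^\gamma(f\cdot g)\|_{h^\alpha}\leq C\|f\|_{h^{k+\alpha}}\|g\|_{h^{k+\alpha}}$, which with the $C^k$-bound gives the full claim. The real-valued, regular-boundary variant runs identically, with $C^1_b\hookrightarrow h^\alpha_b$ quoted from Lemma \ref{HS_Einbettung} rather than proved by a straight segment.

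The \emph{main obstacle} is not any single inequality but the bookkeeping in the Leibniz step: one must check that in every summand the two factors genuinely land in $h^\alpha_b$, which forces the embedding of the lower-order spaces $C^j_b$ into $h^\alpha_b$ — and this is precisely the point where the convexity of $W$ (respectively the regular-boundary hypothesis) is used — and one must verify that the \emph{little}-Hölder limit condition $\lim_{R\to0}[\,\cdot\,]^R_{h^\alpha}=0$ is inherited by the product, not merely the finiteness of the seminorm.
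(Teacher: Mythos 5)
Your proof is correct. The paper itself gives no argument for Proposition \ref{HS_Produkt} — it only refers to \cite[Section 2.2]{Buerger} — and your argument (bilinear splitting of the difference quotient for the base case $s\in[0,1)$, then the Leibniz rule combined with the embedding $C^1_b\hookrightarrow h^\alpha_b$ on a bounded convex set, respectively Lemma \ref{HS_Einbettung} in the regular-boundary case, for $s=k+\alpha$) is exactly the standard route one expects there, including the correct attention to the little-Hölder vanishing condition $\lim_{R\to 0}[\,\cdot\,]^R_{h^\alpha}=0$.
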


\begin{proposition}[Composition of Hölder Functions]\label{HS_Verknüpfung} $\phantom{x}$ \\
Let $W_1 \subset \R^{d_1}$, $W_2 \subset \R^{d_2}$ be open, bounded and convex subsets, let $X$ be a Banach space and let $s \in \R_{\geq 0}$. Furthermore, let $\varphi \in h^s_b(\overline{W_1},\R^{d_2})$ such that $\varphi(\overline{W_1}) \subset \overline{W_2}$ holds and $\varphi: \overline{W_1} \rightarrow \R^{d_2}$ is Lipschitz continuous, i.e., there exists a constant $L \geq 0$ with 
\begin{align*}
\sup_{\substack{x, y \in \overline{W_1}\\x \neq y}} \frac{|\varphi(x)-\varphi(y)|}{|x-y|} \leq L. 
\end{align*}
Then, if $F \in h^s_b(\overline{W_2},X)$, we have $F \circ \varphi \in h^s_b(\overline{W_1},X)$. For $X=\R^n$, the statement also holds if $W_1 \subset \R^{d_1}$, $W_2 \subset \R^{d_2}$ are open subsets with regular boundaries.
\end{proposition}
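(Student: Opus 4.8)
The plan is to prove the statement by induction on $k \defr \lfloor s \rfloor \in \N_{\geq 0}$, writing $s = k + \alpha$ with $\alpha \in [0,1)$. The whole argument then reduces to two ingredients: a short direct estimate in the Hölder scale handling the base case $k = 0$, and the chain rule combined with the product rule already available from Proposition~\ref{HS_Produkt} for the inductive step. The extension to $X = \R^n$ on regular-boundary domains needs no separate treatment, since the base-case estimate uses only the Lipschitz hypothesis and Proposition~\ref{HS_Produkt} holds in that setting as well; note also that for $s \geq 1$ and convex $W_1$ the Lipschitz hypothesis on $\varphi$ is automatic (mean value inequality on the convex domain), so it only genuinely bites when $s \in (0,1)$.

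\textbf{Base case.} If $s = 0$ the claim is immediate, since $F \circ \varphi$ is bounded and continuous. If $s = \alpha \in (0,1)$, I would first note $F \circ \varphi \in C^0_b(\overline{W_1}, X)$, and then use Lipschitz continuity of $\varphi$ to transfer the Hölder modulus: whenever $0 < |x - y| < R$ we have $|\varphi(x) - \varphi(y)| < L R$ (the case $\varphi(x) = \varphi(y)$ being harmless), hence $[F \circ \varphi]^R_{h^\alpha(\overline{W_1}, X)} \leq L^\alpha [F]^{L R}_{h^\alpha(\overline{W_2}, X)}$ for every $R \in (0, \infty]$. Taking $R = \infty$ yields a finite uniform Hölder seminorm, and letting $R \searrow 0$ yields the little-Hölder condition $\lim_{R \to 0} [F \circ \varphi]^R = 0$, because $L R \searrow 0$ and $F$ is little Hölder; thus $F \circ \varphi \in h^\alpha_b(\overline{W_1}, X)$.

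\textbf{Inductive step.} Assume the statement for all orders strictly below $s$, with $k \geq 1$. Then $\varphi \in h^s_b(\overline{W_1}, \R^{d_2}) \hookrightarrow C^1_b$ and $F \in h^s_b(\overline{W_2}, X) \hookrightarrow C^1_b$ by Lemma~\ref{HS_Einbettung}, so $F \circ \varphi \in C^1_b$ and the chain rule gives $\partial_i (F \circ \varphi) = \sum_{j} (\partial_j F \circ \varphi)\, \partial_i \varphi_j$. Each $\partial_j F$ lies in $h^{s-1}_b(\overline{W_2}, X)$ with $s - 1 < s$, and $\varphi$ still maps $\overline{W_1}$ into $\overline{W_2}$ and is Lipschitz, so the induction hypothesis gives $\partial_j F \circ \varphi \in h^{s-1}_b(\overline{W_1}, X)$. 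Since $\partial_i \varphi_j \in h^{s-1}_b(\overline{W_1}, \R)$, Proposition~\ref{HS_Produkt} applied to the scalar action $X \times \R \to X$ puts each summand, and hence $\partial_i(F \circ \varphi)$, in $h^{s-1}_b(\overline{W_1}, X)$. So $F \circ \varphi$ is bounded with all first partials in $h^{(k-1)+\alpha}_b(\overline{W_1}, X)$, which unravels exactly to $F \circ \varphi \in C^k_b(\overline{W_1}, X)$ with $k$-th derivatives in $h^\alpha(\overline{W_1}, X)$, i.e. $F \circ \varphi \in h^s_b(\overline{W_1}, X)$. This closes the induction.

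I expect the base case to be the conceptual crux: it is where the Lipschitz hypothesis (rather than mere $\alpha$-Hölder regularity of $\varphi$, which would only give an $\alpha^2$-Hölder composition) is essential, and where one must be careful to verify not only boundedness of the Hölder seminorm but also the vanishing-at-$0$ little-Hölder condition. The inductive step is then purely mechanical, the only things to watch being that the induction hypothesis is invoked at the strictly smaller order $s - 1$ with $\varphi$ still satisfying all its hypotheses there, and that the conclusion ``first partials in $h^{s-1}_b$'' is matched correctly against the definition of $h^s_b$.
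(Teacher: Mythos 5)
Your argument is correct, and it is the standard route for this statement; the paper itself does not spell out a proof but defers it to \cite[Section 2.2]{Buerger}, where the argument runs along exactly these lines (base case by transferring the Hölder modulus through the Lipschitz bound, then induction on $\lfloor s\rfloor$ via chain rule and the product statement of Proposition~\ref{HS_Produkt}). Your base case is the right crux: the estimate $[F\circ\varphi]^R_{h^\alpha(\overline{W_1},X)}\leq L^\alpha\,[F]^{LR}_{h^\alpha(\overline{W_2},X)}$ gives both finiteness at $R=\infty$ and the little-Hölder decay as $R\searrow 0$, and you correctly identify that Lipschitz continuity of $\varphi$ (not mere $\alpha$-Hölder regularity) is what preserves the exponent; the degenerate case $L=0$ is trivial since $\varphi$ is then constant. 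Two small points deserve a sentence in a written-out version. First, in the inductive step the chain rule $\partial_i(F\circ\varphi)(x)=\sum_j \partial_j F(\varphi(x))\,\partial_i\varphi_j(x)$ is applied at interior points $x\in W_1$ whose image $\varphi(x)$ may lie on $\partial W_2$, where $DF$ is only defined as a continuous extension; this is justified by writing the difference quotient with the fundamental theorem of calculus along the segment joining $\varphi(x)$ and $\varphi(x+he_i)$, which stays in $\overline{W_2}$ by convexity (and in the regular-boundary case one argues via the boundary regularity or an extension of $F$, which is where that hypothesis actually enters). Second, for the $X=\R^n$, regular-boundary variant, Proposition~\ref{HS_Produkt} is stated there only for the pairing $\R^n\times\R^n\to\R$, so the products $(\partial_j F\circ\varphi)\,\partial_i\varphi_j$ should be handled componentwise (scalar times scalar), which is enough. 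With these two remarks made explicit, the induction closes exactly as you describe.
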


\subsection{Composition Operators of Hölder Regular Functions}\label{Sec_CompOp}

In the following, let $W \subset \R^d$ be an open, bounded and convex subset, let $s \in \R_{\geq 0}$ and let $X,Y,Z$ be Banach spaces.

\begin{lemma}\label{compositionOP_lin}
Let $g: \overline{W} \rightarrow \mathcal{L}(Y,Z)$ and define $G(v): \overline{W} \rightarrow Z, \, \big(G[v]\big)(x) \defr g(x)\big[v(x)\big]$ for any function $v: \overline{W} \rightarrow Y$. 
If $g \in h^s(\overline{W},\mathcal{L}(Y,Z))$, then $G \in \mathcal{L}\big( h^s(\overline{W},Y), h^s(\overline{W},Z) \big)$ holds with 
\begin{align*}
\|G\|_{\mathcal{L}(h^s(\overline{W},Y),h^s(\overline{W},Z))} \lesssim \|g\|_{h^s(\overline{W},\mathcal{L}(Y,Z))}.
\end{align*}
\end{lemma}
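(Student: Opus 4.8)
The statement to prove is that for $g \in h^s(\overline{W},\mathcal{L}(Y,Z))$, the pointwise-evaluation map $G\colon v\mapsto \big(x\mapsto g(x)[v(x)]\big)$ is a bounded linear operator $h^s(\overline W,Y)\to h^s(\overline W,Z)$ with $\|G\|\lesssim\|g\|_{h^s(\overline W,\mathcal{L}(Y,Z))}$. The plan is to recognize this as an instance of the pointwise-product result, Proposition \ref{HS_Produkt}. Indeed, the evaluation pairing $\cdot\colon \mathcal{L}(Y,Z)\times Y\to Z$, $(T,y)\mapsto T[y]$, is $\R$-bilinear and satisfies $\|T[y]\|_Z\le \|T\|_{\mathcal{L}(Y,Z)}\|y\|_Y$, so it qualifies as the bilinear operation required there. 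Applying Proposition \ref{HS_Produkt} with $X_1=\mathcal{L}(Y,Z)$, $X_2=Y$, $X=Z$ immediately gives, for every $v\in h^s_b(\overline W,Y)$, that $G[v]=g\cdot v\in h^s_b(\overline W,Z)$ with
\begin{align*}
\|G[v]\|_{h^s(\overline W,Z)}\le C\,\|g\|_{h^s(\overline W,\mathcal{L}(Y,Z))}\,\|v\|_{h^s(\overline W,Y)}.
\end{align*}

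Next I would address the two points not handled verbatim by Proposition \ref{HS_Produkt}: first, that $G$ actually maps into the \emph{little} Hölder space $h^s(\overline W,Z)$ (when $s\notin\N$) and not merely into the space satisfying a uniform Hölder bound, and second, linearity of $G$. Linearity is immediate since $g(x)[\cdot]$ is linear for each $x$ and the operations on $h^s(\overline W,Y)$ are pointwise. For the little-Hölder membership, write $s=k+\alpha$ with $k\in\N$, $\alpha\in(0,1)$; one must check the vanishing condition $\lim_{R\to 0}[\partial^\gamma(G[v])]^R_{h^\alpha}=0$ for the top-order derivatives $|\gamma|=k$. Expanding $\partial^\gamma(g\cdot v)$ by the Leibniz rule produces a finite sum of terms $\partial^{\gamma_1}g\cdot\partial^{\gamma_2}v$ with $|\gamma_1|+|\gamma_2|=k$; the summands with $|\gamma_1|=k$ or $|\gamma_2|=k$ have one factor in $h^\alpha$ and the other in $C^0_b$, and a factor in $h^\alpha$ times a bounded factor still satisfies the vanishing-seminorm condition (this is exactly the mechanism by which Proposition \ref{HS_Produkt} preserves $h^s_b$, so it can simply be cited), while the summands with $|\gamma_1|,|\gamma_2|<k$ involve a product of two $C^1$-type factors and are therefore Lipschitz, hence have a seminorm that vanishes as $R\to 0$ by interpolation. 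Thus $G[v]\in h^s(\overline W,Z)$.

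Finally, the operator-norm bound: the displayed estimate above says precisely $\|G[v]\|_{h^s(\overline W,Z)}\le C\|g\|_{h^s(\overline W,\mathcal{L}(Y,Z))}\|v\|_{h^s(\overline W,Y)}$ for all $v$, which by definition of the operator norm yields $\|G\|_{\mathcal{L}(h^s(\overline W,Y),h^s(\overline W,Z))}\le C\|g\|_{h^s(\overline W,\mathcal{L}(Y,Z))}$, i.e.\ the claimed $\lesssim$. I do not expect any real obstacle here: the lemma is essentially a repackaging of Proposition \ref{HS_Produkt} with the bilinear map being operator evaluation, and the only mild subtlety is confirming that the target is the little (rather than the ``big'') Hölder space, which follows from the same Leibniz-rule bookkeeping that underlies Proposition \ref{HS_Produkt} itself. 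If one prefers a fully self-contained argument, the estimate can also be proved directly by bounding $\|g(x)[v(x)]-g(y)[v(y)]\|_Z\le \|g(x)-g(y)\|_{\mathcal{L}(Y,Z)}\|v(x)\|_Y+\|g(y)\|_{\mathcal{L}(Y,Z)}\|v(x)-v(y)\|_Y$ and dividing by $|x-y|^\alpha$, then iterating for higher-order derivatives via Leibniz; but invoking Proposition \ref{HS_Produkt} is cleaner.
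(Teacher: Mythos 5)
Your proposal is correct and matches the paper's proof, which likewise obtains the lemma as a direct consequence of Proposition \ref{HS_Produkt} applied with the evaluation pairing $\mathcal{L}(Y,Z)\times Y\to Z$. Note that Proposition \ref{HS_Produkt} already asserts membership in $h^s_b(\overline{W},Z)$ (the little Hölder space for $s\notin\N$), so your extra Leibniz-rule verification of the vanishing-seminorm condition, while sound, is not needed.
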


\begin{proof}
The result is a consequence of Proposition \ref{HS_Produkt}. 
\end{proof}

\begin{proposition}\label{compositionOP_C0}
Let $U \subset Y$ be an open subset and $K \subset U$ a convex subset. Furthermore, let $f: U \rightarrow Z$ and define $F(u): \overline{W} \rightarrow Z, \, \big(F(u)\big)(x) \defr f\big(u(x)\big)$ for any function $u: \overline{W} \rightarrow U$. Then the following hold: 
\begin{enumerate}
\item[(i)]
If $f \in C^{\lfloor s \rfloor+1}(U,Z)$ with $f \in C^{\lfloor s \rfloor+1}_b(K,Z)$, then we have $F(u) \in h^s(\overline{W},Z)$ for all $u \in h^s(\overline{W},K)$. 
In addition, for any $R>0$ there exists a $C(R)>0$ such that 
\begin{align*}
\|F(u)\|_{h^s(\overline{W},Z)} \leq C(R)
\end{align*}
holds for all $u \in h^s(\overline{W},K)$ with $\|u\|_{h^s(\overline{W},Y)} \leq R$.
\item[(ii)]
If $f \in C^{\lfloor s \rfloor+2}(U,Z)$ with $f \in C^{\lfloor s \rfloor+2}_b(K,Z)$, then $F \in C^0\big( h^s(\overline{W},K), h^s(\overline{W},Z)\big)$. In particular, for any $R>0$ there exists a $C(R)>0$ such that we have
\begin{align*}
\|F(u_1)-F(u_2)\|_{h^s(\overline{W},Z)} \leq C(R) \|u_1-u_2\|_{h^s(\overline{W},Y)}
\end{align*}
for all $u_1,u_2 \in h^s(\overline{W},K)$ with $\|u_j\|_{h^s(\overline{W},Y)} \leq R$. Moreover, $F \in C^0_b\big( \mathcal{B}, h^s(\overline{W},Z)\big)$ holds for all subsets $\mathcal{B} \subset h^s(\overline{W},K)$ that are bounded in $h^s(\overline{W},Y)$.
\item[(iii)]
If $f \in C^{k+\lfloor s \rfloor+2}(U,Z)$ with $f \in C^{k+\lfloor s \rfloor+2}_b(K,Z)$, then $F \in C^k\big( h^s(\overline{W},V), h^s(\overline{W},Z)\big)$ and $F \in C^k_b\big( \mathcal{B},h^s(\overline{W},Z)\big)$ hold for any $k \in \N_{\geq 0}$, any open subset $V \subset K$ and any bounded subset $\mathcal{B} \subset h^s(\overline{W},V)$. 
\end{enumerate}
\end{proposition}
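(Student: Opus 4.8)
The plan is to obtain all three parts from the higher‑order chain rule (Faà di Bruno) in Banach spaces, combined with the pointwise product estimate Proposition~\ref{HS_Produkt} and the linear case Lemma~\ref{compositionOP_lin}; we treat (i), (ii), (iii) in this order and prove (iii) by induction on $k$. Write $m \defr \lfloor s \rfloor$ and $\sigma \defr s-m \in [0,1)$. For (i): since $u(\overline W)\subseteq K$ is compact and $f$ is $C^{m+1}$ on the open set $U\supseteq K$, the composition $F(u)=f\circ u$ is $C^m$ on $\overline W$, and for $|\gamma|=j\le m$ the chain rule writes $\partial^\gamma(f\circ u)$ as a finite sum of terms $(D^if)(u)\big[\partial^{\gamma_1}u,\dots,\partial^{\gamma_i}u\big]$ with $1\le i\le j$, $\gamma_1+\dots+\gamma_i=\gamma$, $|\gamma_l|\ge 1$. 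When $m=0$ one argues directly, using that $f$ is Lipschitz on the convex set $K$, so $[f\circ u]^\rho\le\mathrm{Lip}(f)\,[u]^\rho\to 0$. When $m\ge 1$, each factor $\partial^{\gamma_l}u$ lies in $h^{s-|\gamma_l|}_b(\overline W,Y)\hookrightarrow h^\sigma_b(\overline W,Y)$ by Lemma~\ref{HS_Einbettung} (as $|\gamma_l|\le m$), while $(D^if)(u)\in h^\sigma_b$ because $D^if\in C^{m+1-i}_b(K)$ is Lipschitz on the convex $K$ for $i\le m$, hence $(D^if)\circ u$ is Lipschitz in $x$, and a Lipschitz map has $[\cdot]^\rho\le\mathrm{Lip}\cdot\rho^{1-\sigma}\to 0$. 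Iterating Proposition~\ref{HS_Produkt} over the multilinear evaluation pairings puts every summand, hence $\partial^\gamma(f\circ u)$, in $h^\sigma_b(\overline W,Z)$ with the little‑Hölder property, so $F(u)\in h^s_b(\overline W,Z)$; all constants are controlled by $\|f\|_{C^{m+1}_b(K)}$ and by $\|u\|_{h^s}\le R$ (through $\mathrm{Lip}(u)$), which gives the bound $C(R)$.

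For (ii), fix $u_1,u_2\in h^s(\overline W,K)$ with $\|u_j\|_{h^s}\le R$, put $w\defr u_1-u_2$ and $v_t\defr u_2+tw$; convexity of $K$ gives $v_t(\overline W)\subseteq K$ and $\|v_t\|_{h^s}\le 2R$. The fundamental theorem of calculus yields, pointwise on $\overline W$,
$$F(u_1)-F(u_2)=\int_0^1 (Df)(v_t)\big[w\big]\,dt .$$
Applying (i) to $Df\in C^{m+1}_b(K,\mathcal L(Y,Z))$ gives $(Df)(v_t)\in h^s(\overline W,\mathcal L(Y,Z))$ with norm $\le C(2R)$ uniformly in $t$, so Proposition~\ref{HS_Produkt} gives $\|(Df)(v_t)[w]\|_{h^s}\le C(2R)\|w\|_{h^s}$ uniformly in $t$. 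Since the integrand and its $x$‑derivatives up to order $m$ are jointly continuous in $(t,x)$ and uniformly bounded, the parameter integral again lies in $h^s(\overline W,Z)$, with norm at most the $t$‑supremum of the integrand's $h^s$‑norm; hence $\|F(u_1)-F(u_2)\|_{h^s}\le C(2R)\|u_1-u_2\|_{h^s}$. This local Lipschitz estimate gives continuity of $F$ on $h^s(\overline W,K)$, and together with (i) it gives $F\in C^0_b(\mathcal B,h^s(\overline W,Z))$ for $h^s$‑bounded $\mathcal B\subseteq h^s(\overline W,K)$.

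For (iii) we induct on $k$. The base case $k=0$ is (ii), after noting that $h^s(\overline W,V)$ is open in $h^s(\overline W,Y)$ (as $u_0(\overline W)$ is a compact subset of the open set $V$) and that near $u_0$ the segments $v_t$ stay in $V\subseteq K$, so (ii)'s argument applies locally. For the step, assume (iii) for $k-1$. Given $f\in C^{k+m+2}_b(K)$, the candidate Fréchet derivative is $DF(u)[\phi]\defr(Df)(u)[\phi]$, i.e.\ $DF(u)=M\big(\widehat F(u)\big)$ where $\widehat F(u)\defr(Df)(u)$ is the composition operator attached to $Df\in C^{(k-1)+m+2}_b(K,\mathcal L(Y,Z))$ and $M\colon h^s(\overline W,\mathcal L(Y,Z))\to\mathcal L\big(h^s(\overline W,Y),h^s(\overline W,Z)\big)$, $M(g)\defr\big(v\mapsto g[\,\cdot\,]v\big)$, is bounded and linear by Lemma~\ref{compositionOP_lin}. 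By the inductive hypothesis $\widehat F\in C^{k-1}\big(h^s(\overline W,V),h^s(\overline W,\mathcal L(Y,Z))\big)$, hence $DF=M\circ\widehat F\in C^{k-1}$. That $F$ is genuinely differentiable with this $DF$ follows from the integral remainder $F(u_0+\phi)-F(u_0)-(Df)(u_0)[\phi]=\int_0^1\big[(Df)(u_0+t\phi)-(Df)(u_0)\big][\phi]\,dt$, Proposition~\ref{HS_Produkt}, and the $t$‑uniform continuity of $\widehat F$ at $u_0$, which makes the remainder $o(\|\phi\|_{h^s})$; therefore $F\in C^k$, and the same bookkeeping yields $F\in C^k_b(\mathcal B,h^s(\overline W,Z))$.

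The main obstacle will be the little‑Hölder bookkeeping in (i): one must check not only finiteness but the vanishing as $\rho\to 0$ of the seminorms $[\partial^\gamma(f\circ u)]^\rho$, and with enough uniformity — in particular uniformly in the parameter $t$ — that the integral arguments in (ii) and (iii) go through. This is made routine by two observations: differences of derivatives of $f$ composed with $u$ are Lipschitz in $x$, and a Lipschitz function lies in every little Hölder space $h^\sigma$ with $\sigma<1$; and the reduction expressing $F$ through $Df$ in (iii) costs exactly one order of differentiability of $f$, which is why (iii) requires $C^{k+\lfloor s\rfloor+2}$.
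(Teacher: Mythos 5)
Your proposal is correct, but for parts (i) and (ii) it takes a genuinely different route from the paper. The paper proves (i) and (ii) by induction on $\lfloor s\rfloor$: the base case $s\in[0,1)$ is handled with the mean value theorem and the convexity of $K$, and the inductive step uses the identity $\partial_{x_i}\big(F(u)\big)=A(u)\big[\partial_{x_i}u\big]$ with $\big(A(v)\big)(x)=Df\big(v(x)\big)$, applying the induction hypothesis to the operator attached to $Df$ (this is exactly where one order of regularity of $f$ is spent) together with Lemma \ref{compositionOP_lin}. You instead expand $\partial^\gamma(f\circ u)$ directly by the Fa\`a di Bruno formula and estimate each multilinear term via Proposition \ref{HS_Produkt}, Lemma \ref{HS_Einbettung} and the observation that Lipschitz maps lie in every $h^\sigma$ with $\sigma<1$, and you get (ii) from the fundamental-theorem-of-calculus representation combined with (i) applied to $Df$ rather than repeating the induction. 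Your route is more explicit -- it exhibits the derivatives of $F(u)$ and makes the regularity count ($\lfloor s\rfloor+1$, $\lfloor s\rfloor+2$) transparent -- at the price of the combinatorial chain-rule bookkeeping and of justifying that the $t$-parameter integral respects the $h^s$-norm and the vanishing-seminorm property; this concern, which you flag, is harmless, since membership of $F(u_1)-F(u_2)$ in $h^s$ already follows from (i), so only the norm bound needs the integral, and that is obtained pointwise plus dominated convergence for the seminorms. The paper's induction avoids both issues but hides the structure of the derivatives. For (iii) the two arguments essentially coincide: both induct on $k$, identify $DF(u)$ with composition by $Df$ through Lemma \ref{compositionOP_lin}, and use openness of $h^s(\overline{W},V)$ implicitly or explicitly; you differ only in the remainder estimate, using the first-order integral remainder together with the local Lipschitz estimate of (ii) applied to $Df$, whereas the paper uses a second-order Taylor expansion and bounds $D^2f(u_0+\theta h)$ via (i); both consume the same regularity of $f$ and yield the quadratic bound needed for Fr\'echet differentiability.
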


\begin{proof}
First, we prove the statements (i) and (ii) for $s \in [0,1)$, i.e. $\lfloor s \rfloor=0$:  Due to the mean value theorem and the convexity of $K$, we have 
\begin{align*}
\|F(u)\|_{h^s(\overline{W},Z)} &\leq \|f\|_{C^1(K,Z)}\big(1+\|u\|_{h^s(\overline{W},Y)}\big) \text{ and } \\
\|F(u_1)-F(u_2)\|_{h^s(\overline{W},Z)} &\leq \|f\|_{C^2(K,Z)}(1+R) \|u_1-u_2\|_{h^s(\overline{W},Y)} 
\end{align*}
for all $u \in h^s(\overline{W},K)$ and $u_1,u_2 \in h^s(\overline{W},K)$ with $\|u_j\|_{h^s(\overline{W},Y)} \leq R$. 
Boundedness of the function $F: \mathcal{B} \rightarrow h^s(\overline{W},Z)$ for a bounded subset $\mathcal{B} \subset h^s(\overline{W},K)$ follows directly from the estimate in (i). \\
The general statements (i) and (ii) for arbitrary $s \in \R_{\geq 0}$ follow by mathematical induction on $\lfloor s \rfloor$, using Lemma \ref{HS_Einbettung} and the fact that differentiability of $f$ and $u$ implies differentiability of $F(u)$ and we have $\partial_{x_i} \big(F(u)\big) = A(u)\big( \partial_{x_i}u \big)$ with $A(v): \overline{W} \rightarrow \mathcal{L}(Y,Z)$, $\big(A(v)\big)(x) \defr Df\big(v(x)\big)$ for any function $v: \overline{W} \rightarrow U$. Applying the induction hypothesis and Lemma \ref{compositionOP_lin} on $A$ conclude the inductive step. \\
We show the statement (iii) using mathematical induction: Assume that the claim is satisfied for a fixed $k \in \N_0$ and choose a function $f \in C^{k+\lfloor s \rfloor+3}(U,Z) \cap C^{k+\lfloor s \rfloor+3}_b(K,Z)$ as well as an open subset $V \subset K$ and a bounded subset $\mathcal{B} \subset h^s(\overline{W},V)$. Define $A(u): \overline{W} \rightarrow \mathcal{L}(Y,Z)$, $\big(A(u)\big)(x) \defr Df\big(u(x)\big)$. The induction hypothesis together with Lemma \ref{compositionOP_lin} yields 
\begin{align*}
F &\in C^k\big( h^s(\overline{W},V), h^s(\overline{W},Z)\big) \cap C^k_b\big( \mathcal{B}, h^s(\overline{W},Z)\big) \text{ and } \\
A &\in C^k\big( h^s(\overline{W},V), \mathcal{L}\big(h^s(\overline{W},Y), h^s(\overline{W},Z) \big)\big) \cap C^k_b\big( \mathcal{B}, \mathcal{L}\big( h^s(\overline{W},Y), h^s(\overline{W},Z) \big)\big).
\end{align*}
It remains to show that $F$ is Fréchet-differentiable with $DF=A$. \\
Fix $u_0 \in h^s(\overline{W},V)$. Due to $f \in C^{\lfloor s \rfloor+3}_b(V,Z)$,  the statement of (i) yields $D^2f(u_0+\theta h) \in h^s\big( \overline{W}, \mathcal{L}(Y,\mathcal{L}(Y,Z)) \big)$ with $\|D^2f(u_0+\theta h)\|_{h^s(\overline{W},\mathcal{L}(Y,\mathcal{L}(Y,Z)))} \leq C(u_0)$ for all $\theta \in [0,1]$ and $h \in h^s(\overline{W},Y)$ with $\|h\|_{h^s(\overline{W},Y)}$ sufficiently small. A Taylor expansion, the triangle inequality for integrals and Lemma \ref{compositionOP_lin} imply
\begin{align*}
&\big\| F(u_0+h) - F(u_0) - A(u_0)h \big\|_{h^s(\overline{W},Z)} \\
&\leq \int_0^1 (1-\theta) \| D^2f(u_0+\theta h) \|_{h^s(\overline{W},\mathcal{L}(Y,\mathcal{L}(Y,Z)))} \|h\|_{h^s(\overline{W},Y)}^2 \, \mathrm{d}\theta 
\leq C(u_0) \|h\|_{h^s(\overline{W},Y)}^2. \qedhere
\end{align*}
\end{proof}

We derive two corollaries from this main result: The first one reduces to the case of a compact subset $K \subset U$ and the second one deals with a finite dimensional setting. 

\begin{corollary}\label{compositionOP_compact}
Let $U \subset Y$ be an open subset and $K \subset U$ a compact and convex subset. Furthermore, let $f: U \rightarrow Z$ and define $F(u): \overline{W} \rightarrow Z, \, \big(F(u)\big)(x) \defr f\big(u(x)\big)$ for any function $u: \overline{W} \rightarrow U$. Then the following hold: 
\begin{enumerate}
\item[(i)]
If $f \in C^{\lfloor s \rfloor+1}(U,Z)$, then $F(u) \in h^s(\overline{W},Z)$ for all $u \in h^s(\overline{W},K)$. Moreover, for any $R>0$ there exists a constant $C(K,R)>0$ such that 
\begin{align*}
\|F(u)\|_{h^s(\overline{W},Z)} \leq C(K,R)
\end{align*}
holds for all $u \in h^s(\overline{W},K)$ with $\|u\|_{h^s(\overline{W},Y)} \leq R$.
\item[(ii)]
If $f \in C^{\lfloor s \rfloor+2}(U,Z)$, then $F \in C^0\big( h^s(\overline{W},K), h^s(\overline{W},Z) \big) \cap C^0_b\big( \mathcal{B},h^s(\overline{W},Z)\big)$ holds for all bounded subsets $\mathcal{B} \subset h^s(\overline{W},K)$. Moreover, for any $R>0$ there exists a constant $C(K,R)>0$ such that 
\begin{align*}
\|F(u_1)-F(u_2)\|_{h^s(\overline{W},Z)} \leq C(K,R) \|u_1-u_2\|_{h^s(\overline{W},Y)}
\end{align*}
holds for all $u_1, u_2 \in h^s(\overline{W},K)$ with $\|u_j\|_{h^s(\overline{W},Y)} \leq R$.
\item[(iii)]
If $f \in C^{k+\lfloor s \rfloor+2}(U,Z)$, then $F \in C^k\big( h^s(\overline{W},V), h^s(\overline{W},Z)\big) \cap C^k_b\big( \mathcal{B},h^s(\overline{W},Z)$ holds for any $k \in \N_{\geq 0}$, any open subset $V \subset K$ and any bounded subset $\mathcal{B} \subset h^s(\overline{W},V)$.
\end{enumerate}
\end{corollary}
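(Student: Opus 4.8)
The plan is to deduce all three parts directly from Proposition \ref{compositionOP_C0}, whose hypotheses ask for a \emph{convex} set $K \subset U$ together with the extra requirement that $f$ and its relevant derivatives be bounded \emph{on} $K$. Here $K$ is moreover compact, and the only substantive thing to check is that this boundedness requirement is then automatic: if $f \in C^m(U,Z)$ for some $m \in \N_{\geq 0}$ and $K \subset U$ is compact, then $f \in C^m_b(K,Z)$. Once this is established, parts (i), (ii), (iii) follow by invoking Proposition \ref{compositionOP_C0} with $m = \lfloor s \rfloor + 1$, $m = \lfloor s \rfloor + 2$ and $m = k + \lfloor s \rfloor + 2$, respectively, using that $K$ is also convex and, for (iii), keeping the open set $V \subset K$ and the bounded set $\mathcal{B} \subset h^s(\overline{W},V)$ exactly as in that proposition.

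To prove $f \in C^m_b(K,Z)$ I would fix a derivative order $j \in \{0,\dots,m\}$ and run a finite-cover argument. Since $f \in C^m(U,Z)$, the Fréchet derivative $D^j f$ is continuous on $U$; hence for every $p \in K$ continuity at $p$ furnishes a radius $r_p>0$ with $B(p,r_p) \subset U$ and $\|D^j f(y)\| \leq \|D^j f(p)\| + 1$ for all $y \in B(p,r_p)$. Compactness of $K$ gives finitely many points $p_1,\dots,p_N$ with $K \subset \bigcup_{i=1}^N B(p_i,r_{p_i})$, so $\sup_{y \in K} \|D^j f(y)\| \leq \max_{1 \leq i \leq N}(\|D^j f(p_i)\| + 1) < \infty$; taking the maximum over $j = 0,\dots,m$ shows $f \in C^m_b(K,Z)$. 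The bound obtained this way depends on $K$ (through the chosen cover and the values of the derivatives at its centres), which is why the constants in (i) and (ii) of the corollary must be allowed to depend on $K$ in addition to $R$; the $C^0_b$-statements over bounded subsets $\mathcal{B}$ and the Lipschitz estimates are then precisely those already supplied by Proposition \ref{compositionOP_C0}(ii), (iii).

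The only point that genuinely needs care — and hence the main obstacle, modest as it is — is this boundedness step in an infinite-dimensional $Y$: one cannot just appeal to ``a continuous function on a compact set is bounded'' for the higher derivatives $D^j f$ without first noting that their targets are Banach spaces (of bounded multilinear maps) and that their continuity on $U$ is part of the $C^m$-hypothesis; the finite subcover of $K$ by balls on which each $D^j f$ is individually bounded is what turns this into a rigorous argument. Everything else in the proof is a verbatim appeal to Proposition \ref{compositionOP_C0}.
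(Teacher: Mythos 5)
Your proposal is correct, and for parts (i) and (ii) it is exactly the paper's argument: the paper simply says these follow from Proposition \ref{compositionOP_C0} ``using the compactness of $K$'', and your finite-cover argument showing $f\in C^m_b(K,Z)$ whenever $f\in C^m(U,Z)$ and $K\subset U$ is compact is precisely the step being left implicit there. The only divergence is in (iii): you apply Proposition \ref{compositionOP_C0}(iii) directly with the pair $(K,V)$, which the stated proposition permits since $K$ is convex, boundedness of $f$ and its derivatives on $K$ comes from compactness, and $V\subset K$ is open; the paper instead first passes to $\tilde V\defr(\mathrm{conv}\,V)^\circ$, an \emph{open and convex} subset of $K$ (using convexity of $K$), applies the proposition with $\tilde V$ playing both roles, and then restricts to $V\subset\tilde V$. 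The paper's detour buys nothing beyond putting the data into the open-convex configuration that the proposition's proof actually exercises, so your shorter, verbatim appeal to the stated result is equally valid.
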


\begin{proof} The statements (i) and (ii) follow directly from Proposition \ref{compositionOP_C0} using the compactness of $K$. Therefore, we only prove the statement (iii). \\
As $K$ is convex with $V \subset K$, also the convex hull $\text{conv } V \subset K$ of $V$ is a subset of $K$. Its interior $\tilde{V} \defr (\text{conv } V)^\circ$ therefore is an open and convex set with $\tilde{V} \subset K$. We then have $f \in C^{k+\lfloor s \rfloor+2}(U,Z)$ and $K \subset U$ compact, $\tilde{V} \subset K$. Thus, $f \in C^{k+\lfloor s \rfloor+2}_b(\tilde{V},Z)$ holds with the open and convex subset $\tilde{V} \subset Y$. Proposition \ref{compositionOP_C0}(iii) yields $F \in C^k\big( h^s(\overline{W},\tilde{V}), h^s(\overline{W},Z) \big)$ and $F \in C^k_b\big( \mathcal{B},h^s(\overline{W},Z)\big)$ for all bounded subsets $\mathcal{B} \subset h^s(\overline{W},\tilde{V})$. As $V \subset Y$ is open, $V \subset \tilde{V}$ holds and therefore $F \in C^k\big( h^s(\overline{W},V), h^s(\overline{W},Z) \big)$ and $F \in C^k_b\big(\mathcal{B},h^s(\overline{W},Z) \big)$ for all bounded subsets $\mathcal{B} \subset h^s(\overline{W},V)$ follows. \qedhere
\end{proof}

\begin{corollary}\label{compositionOP_finitedim}
Let $f: U \rightarrow \R^N$ for an open subset $U \subset \R^M$ and define $F(u): \overline{W} \rightarrow \R^N$, $\big(F(u)\big)(x) \defr f\big(u(x)\big)$ for any function $u: \overline{W} \rightarrow U$. Then the following hold:
\begin{enumerate}
\item[(i)]
If $f \in C^{\lfloor s \rfloor+1}(U,\R^N)$, then $F(u) \in h^s(\overline{W},\R^N)$ for all $u \in h^s(\overline{W},U)$. 
\item[(ii)]
If $f \in C^{k+\lfloor s \rfloor+2}(U,\R^N)$, then $F \in C^k\big( h^s(\overline{W},U), h^s(\overline{W},\R^N)\big) \cap C^k_b\big( \mathcal{B},h^s(\overline{W},\R^N)\big)$ holds for any $k \in \N_{\geq 0}$ and any bounded subset $\mathcal{B} \subset h^s(\overline{W},\mathcal{A})$ with $\mathcal{A} \subset \R^M$ closed and $\mathcal{A} \subset U$.
\end{enumerate}
\end{corollary}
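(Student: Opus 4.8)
The plan is to deduce both parts from Proposition \ref{compositionOP_C0} in the finite-dimensional setting $Y\defr\R^M$, $Z\defr\R^N$. The one hypothesis of that proposition which is not automatically available here is the convexity of the set $K\subseteq Y$ on which $f$ is assumed to be $C^m_b$: the open set $U$ need not be convex, and the image $u(\overline{W})$ of a given $u\in h^s(\overline{W},U)$ — which is a compact subset of $U$ since $\overline{W}$ is compact and $u$ continuous — need not be contained in any compact convex subset of $U$. This will be circumvented by a cutoff. Given a compact set $L\subseteq U$, choose $\chi\in C^\infty(\R^M,[0,1])$ with $\chi\equiv 1$ on a neighbourhood of $L$ and with $\supp\chi$ a compact subset of $U$; then $f\chi$, extended by $0$ outside $\supp\chi$, belongs to $C^r_b(\R^M,\R^N)$ whenever $f\in C^r(U,\R^N)$, and $(f\chi)\circ u=f\circ u=F(u)$ for every $u$ with $u(\overline{W})\subseteq\{\chi=1\}$. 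We also note that $h^s(\overline{W},U)=\{u\in h^s(\overline{W},\R^M):u(\overline{W})\subseteq U\}$ is open in $h^s(\overline{W},\R^M)$, since $u(\overline{W})$ has positive distance to $\R^M\setminus U$ and $\|v-u\|_{C^0}\le\|v-u\|_{h^s}$.

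For (i), fix $u\in h^s(\overline{W},U)$ and take a cutoff $\chi$ adapted to $L\defr u(\overline{W})$; then $\tilde f\defr f\chi\in C^{\lfloor s\rfloor+1}_b(\R^M,\R^N)$, and Proposition \ref{compositionOP_C0}(i) applied with $U=K=\R^M$ and $\tilde f$ in place of $f$ yields $F(u)=\tilde f\circ u\in h^s(\overline{W},\R^N)$.

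For (ii) we treat smoothness and the boundedness claim separately. Since Fréchet differentiability is local, it suffices to show that $F$ is $C^k$ near each $u_0\in h^s(\overline{W},U)$: picking $\chi$ with $\chi\equiv1$ on a neighbourhood of $u_0(\overline{W})$, there is $\varepsilon>0$ such that $v(\overline{W})\subseteq\{\chi=1\}$, hence $F(v)=(f\chi)\circ v$, for all $v$ with $\|v-u_0\|_{h^s}<\varepsilon$; by Proposition \ref{compositionOP_C0}(iii) (with $\R^M$ for $U$, $K$ and $V$, and $f\chi\in C^{k+\lfloor s\rfloor+2}_b(\R^M,\R^N)$) the map $v\mapsto(f\chi)\circ v$ is $C^k$ on $h^s(\overline{W},\R^M)$, so $F$ is $C^k$ on the ball of radius $\varepsilon$ around $u_0$. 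Hence $F\in C^k\big(h^s(\overline{W},U),h^s(\overline{W},\R^N)\big)$. Now let $\mathcal B\subseteq h^s(\overline{W},\mathcal A)$ be bounded with $\mathcal A\subseteq U$ closed. Then $\bigcup_{u\in\mathcal B}u(\overline{W})$ is bounded (as $\sup_{u\in\mathcal B}\|u\|_{C^0}<\infty$), so its closure $\mathcal A_0$ is compact, and — here the hypothesis that $\mathcal A$ is closed enters — $\mathcal A_0\subseteq\mathcal A\subseteq U$. Choosing $\chi$ with $\chi\equiv1$ on a neighbourhood of $\mathcal A_0$, we obtain, as above, an $\varepsilon>0$ and agreement $F=(f\chi)\circ(\cdot)$ on the $\varepsilon$-neighbourhood of $\mathcal B$ in $h^s(\overline{W},\R^M)$; consequently $D^jF=D^j\!\big[(f\chi)\circ(\cdot)\big]$ on $\mathcal B$ for $0\le j\le k$, and since $(f\chi)\circ(\cdot)\in C^k_b\big(\mathcal B,h^s(\overline{W},\R^N)\big)$ by Proposition \ref{compositionOP_C0}(iii) ($\mathcal B$ being bounded in $h^s(\overline{W},\R^M)$), the claim $F\in C^k_b\big(\mathcal B,h^s(\overline{W},\R^N)\big)$ follows.

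The only real content beyond bookkeeping is the cutoff device, which replaces $f$ by a globally $C^r_b$ function on the convex space $\R^M$ without altering $F$, so that Proposition \ref{compositionOP_C0} becomes applicable; a direct appeal to Corollary \ref{compositionOP_compact} would not suffice, precisely because a compact subset of $U$ need not lie inside a compact convex subset of $U$. The remaining ingredients — smooth bump functions separating a compact set from the complement of an open subset of $\R^M$, openness of $h^s(\overline{W},U)$, and locality of derivatives — are standard.
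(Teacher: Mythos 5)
Your proof is correct and follows essentially the same route as the paper's: both reduce to the earlier composition results via a smooth cutoff $\chi$ that equals $1$ on the relevant compact subset of $U$ and is supported in $U$, so that $f\chi$ becomes globally $C^r_b$ on the convex space $\R^M$ without changing $F$ on the functions of interest. The only (harmless) difference is that you invoke Proposition \ref{compositionOP_C0} directly with $K=\R^M$, while the paper first passes through Corollary \ref{compositionOP_compact} for the case $U=\R^M$; your treatment of the locality of the Fréchet derivatives is in fact somewhat more careful than the paper's terse "can be transferred."
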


\begin{proof}
For $U=\R^M$, the statements follow directly from Corollary \ref{compositionOP_compact}, as any bounded set in $\R^M$ can easily be enclosed in a compact set, that still remains a subset of $U$.
Now, assume $U \subsetneq \R^M$. For any $f \in C^l(U,\R^N)$ and any compact subset $K \subset U$, choose a cut-off function $\xi \in C^\infty(\R^m,\R)$ with $\xi \equiv 1$ on $K$, $\xi \equiv 0$ on $\R^M \setminus U$ and $0 \leq \xi \leq 1$. The results for $\tilde{f} \defr \xi f \in C^l(\R^M,\R^N)$ from the first part of the proof then can be transferred to $f$.
\end{proof}

\begin{remark}[Hölder Regularity for the Inverse of a Matrix]\label{h_Matrixinverse} $\phantom{x}$ \\
Let $W \subset \R^d$ be an open, bounded and convex subset and let $s \in \R_{\geq 0}$. The set of invertible matrices 
\begin{align*}
U \defr \{ A \in \R^{n \times n} \, | \, \det A \neq 0 \}
\end{align*}
is an open subset of $\R^{n \times n}$. For the matrix inversion mapping $f: U \rightarrow \R^{n \times n}, \, f(A) \defr A^{-1}$, we have $f \in C^1(U,\R^{n \times n})$ with 
\begin{align*}
Df(A)[H] = -f(A) \cdot H \cdot f(A)
\end{align*}
for all $A \in U$ and $H \in \R^{n \times n}$. Thus, we have $Df \in C^1(U,\mathcal{L}(\R^{n \times n},\R^{n \times n}))$ (see e.g. \cite[§2 Satz 2.7(ii)]{Ruz}) and then recursively, $f \in C^\infty(U,\R^{n \times n})$ follows. Corollary \ref{compositionOP_finitedim}(ii) thus implies 
\begin{align*}
(\cdot)^{-1} \in C^\infty \big( h^s(\overline{W},U), h^s(\overline{W},\R^{n \times n}) \big) \cap C^\infty_b \big( \mathcal{B}, h^s(\overline{W},\R^{n \times n}) \big)
\end{align*}
for the inversion $(\cdot)^{-1}$ of matrices with $\mathcal{B} \subset h^s(\overline{W},\mathcal{A})$ an arbitrary bounded subset and $\mathcal{A} \subset \R^{n \times n}$ closed with $\mathcal{A} \subset U$. In particular, for any $A \in h^s(\overline{W},\R^{n \times n})$ with $\det A \neq 0$ on $\overline{W}$, also $A^{-1} \in h^s(\overline{W},\R^{n \times n})$ holds. 
\end{remark}

\subsection{Results using Generators of Semigroups}

In this section, we state some results using the theory of semigroups. Again, we refer to \cite[Section 2.3.4]{Buerger} for the proofs. 

\begin{proposition}[Maximal Regularity]\label{L_bij_allg} $\phantom{x}$ \\
Let $A: \mathcal{D}(A) \subset X \rightarrow X$ generate an analytic $C^0$-semigroup in a Banach space $X$. Furthermore, let $\beta \in (0,1)$ and $T \in (0,1]$. We have 
\begin{enumerate}
\item[(i)]
$h^{1+\beta}([0,T],X) \cap h^\beta\big([0,T],\mathcal{D}(A)\big) \hookrightarrow C^1\big([0,T],\mathcal{D}_A(\beta)\big)$ and
\item[(ii)]
$L_T: h^{1+\beta}([0,T],X) \cap h^\beta\big([0,T],\mathcal{D}(A)\big) \rightarrow \big(h^\beta([0,T],X) \times \mathcal{D}(A)\big)_+, \, L_T[\rho] \defr \binom{\partial_t\rho - A\rho}{\rho(0)}$ is bijective with $\sup_{0<T\leq 1} \|L_T^{-1}\|_{\mathcal{L}} < \infty$. 
\end{enumerate}
\end{proposition}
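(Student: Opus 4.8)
The plan is to recognise this as the classical maximal-regularity theorem for abstract parabolic problems in little-Hölder spaces and to follow the approach of \cite[Chapter 4]{LunardiASG} (see also \cite[Section 2.3.4]{Buerger}), the only extra care being the $T$-independence of the constants. Throughout I would use the analyticity bounds $\|T(t)\|_{\mathcal L(X)}\le M$ and $\|AT(t)\|_{\mathcal L(X)}\le M/t$ for $t\in(0,1]$, the interpolation identity $\mathcal D_A(\beta)=(X,\mathcal D(A))_\beta$ recalled above, and the equivalent description that $w\in\mathcal D_A(\beta)$ if and only if $\sup_{0<t\le1}t^{-\beta}\|T(t)w-w\|_X<\infty$ together with $t^{-\beta}\|T(t)w-w\|_X\to0$ as $t\searrow0$; in other words, $w\in\mathcal D_A(\beta)$ precisely when $t\mapsto T(t)w$ lies in $h^\beta([0,1],X)$. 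Write $\mathcal E\defr h^{1+\beta}([0,T],X)\cap h^\beta([0,T],\mathcal D(A))$ for the domain of $L_T$.

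For (i), fix $u\in\mathcal E$ and $t_0\in[0,T]$. For $0<h\le1$ I would split the derivative as $u'(t_0)=a_h+b_h$ with $b_h\defr\frac1h\bigl(u(t_0+h)-u(t_0)\bigr)\in\mathcal D(A)$ (using a left difference quotient when $t_0+h>T$) and $a_h\defr-\frac1h\int_0^h\bigl(u'(t_0+\sigma)-u'(t_0)\bigr)\,d\sigma$. Then $\|a_h\|_X\le\frac{1}{\beta+1}[u']_{h^\beta([0,T],X)}^{h}\,h^\beta$, while $\|b_h\|_X\le\|u'\|_{C([0,T],X)}$ and $\|Ab_h\|_X\le[Au]_{h^\beta([0,T],X)}^{h}\,h^{\beta-1}$, so the $K$-functional satisfies $K(h,u'(t_0))\le\|a_h\|_X+h\|b_h\|_{\mathcal D(A)}\le Ch^\beta$ and $h^{-\beta}K(h,u'(t_0))\to0$ as $h\searrow0$ (using $\beta<1$ and the vanishing of the seminorms $[u']^{h},[Au]^{h}$); hence $u'(t_0)\in\mathcal D_A(\beta)$. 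Applying the same splitting to $u'(t_1)-u'(t_2)$ and bounding the resulting second differences of $u$ and of $Au$ by $\min(h,|t_1-t_2|)^\beta$, again invoking the vanishing seminorms in the regime $h\le|t_1-t_2|$, yields continuity of $t\mapsto u'(t)$ with values in $\mathcal D_A(\beta)$. In particular $L_T$ maps $\mathcal E$ into $\bigl(h^\beta([0,T],X)\times\mathcal D(A)\bigr)_+$, since $u'(0)=Au(0)+\bigl(u'-Au\bigr)(0)$.

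For (ii), injectivity is the uniqueness for the homogeneous problem: $L_T\rho=0$ forces $s\mapsto T(t-s)\rho(s)$ to be constant on $[0,t]$ for every $t$, hence $\rho\equiv0$. For surjectivity, given $(f,u_0)$ in the target space I would set $w\defr Au_0+f(0)\in\mathcal D_A(\beta)$ and define
\[
u(t)\defr T(t)u_0+\int_0^tT(\sigma)f(0)\,d\sigma+\int_0^tT(t-s)\bigl(f(s)-f(0)\bigr)\,ds\;=:\;u_1(t)+u_2(t)+u_3(t).
\]
A direct computation gives $(u_1+u_2)'(t)=T(t)Au_0+T(t)f(0)=T(t)w$ and $A(u_1+u_2)(t)=T(t)w-f(0)$, both in $h^\beta([0,T],X)$ by the description of $\mathcal D_A(\beta)$ above, with $(u_1+u_2)(0)=u_0$; for the convolution term one has $Au_3(t)=\int_0^tAT(t-s)\bigl(f(s)-f(0)\bigr)\,ds$ and $u_3'(t)=Au_3(t)+\bigl(f(t)-f(0)\bigr)$ with $u_3(0)=u_3'(0)=0$, and the classical parabolic Hölder estimate shows $Au_3\in h^\beta([0,T],X)$. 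Summing up, $u\in\mathcal E$ and $L_Tu=(f,u_0)$.

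For the uniform bound $\sup_{0<T\le1}\|L_T^{-1}\|_{\mathcal L}<\infty$ I would revisit the above estimates and observe that every constant originates either from the semigroup bounds on $(0,1]$ or from integrals of the type $\int_0^t(t-s)^{\beta-1}\,ds$ and $\int_0^ts^{\beta-1}(t-s)^{-1}\,ds$ with $0<t\le T\le1$, all of which are bounded independently of $T$; this yields $\|u\|_{\mathcal E}\le C\bigl(\|f\|_{h^\beta([0,T],X)}+\|u_0\|_{\mathcal D(A)}+\|w\|_{\mathcal D_A(\beta)}\bigr)$ with $C$ independent of $T$. I expect the main obstacle to be precisely the sharp Hölder estimate for the convolution term $u_3$: controlling $[Au_3]_{h^\beta([0,T],X)}$ by $[f]_{h^\beta([0,T],X)}$ with a $T$-independent constant requires the standard device of inserting $f(s)-f(0)=\bigl(f(s)-f(t)\bigr)+\bigl(f(t)-f(0)\bigr)$ to tame singular integrals such as $\int_0^t(t-s)^{\beta-1}\,ds$ and $\int_t^{t'}(t'-s)^{-1}\,ds$, together with a somewhat delicate bookkeeping; once it is in place, the vanishing of the little-Hölder seminorms only improves the bounds, so that the $o(\cdot)$-conditions propagate and the uniformity in $T$ follows by inspection.
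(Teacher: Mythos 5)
The paper gives no proof of Proposition \ref{L_bij_allg} itself but refers to \cite[Section 2.3.4]{Buerger}, which follows the classical Da Prato--Grisvard/Lunardi maximal-regularity theory in little-H\"older spaces (cf. \cite[Chapter 4]{LunardiASG}); your sketch is exactly that argument, with the correct $K$-functional characterization of $\mathcal{D}_A(\beta)$ for part (i), the standard splitting $u=T(\cdot)u_0+\int_0^\cdot T(\sigma)f(0)\,\mathrm{d}\sigma+\int_0^\cdot T(\cdot-s)(f(s)-f(0))\,\mathrm{d}s$ for surjectivity, and the right device ($f(s)-f(0)=(f(s)-f(t))+(f(t)-f(0))$, $\int_0^tAT(t-s)\,\mathrm{d}s=T(t)-I$) for the convolution estimate and the $T$-uniform constants. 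So the proposal is correct and takes essentially the same route as the proof the paper relies on.
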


\begin{lemma}[Improved Regularity for Preimages]\label{Neuanfang_Lemma} $\phantom{x}$ \\
Let $s_1,s_2 \in (0,2) \setminus \{1\}$ with $s_1 < s_2$. Let $M \subset \R^{d+1}$ be an $h^{2+s_2}$-embedded closed hypersurface and let $A: h^{2+s_i}(M) \rightarrow h^{s_i}(M)$ generate an analytic $C^0$-semigroup for both $i \in \{1,2\}$. Then, any $v \in h^{2+s_1}(M)$ with $Av \in h^{s_2}(M)$ already fulfills $v \in h^{2+s_2}(M)$.
\end{lemma}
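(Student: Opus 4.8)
The statement is a classical "bootstrap" regularity result for generators of analytic semigroups, and the natural approach is to exploit the characterization of the interpolation spaces $\mathcal D_A(\beta)$ together with the maximal regularity machinery already set up in this excerpt. The plan is to write $v$ as the solution of a stationary (resolvent) problem $\lambda v - Av = f$ with $\lambda$ in the resolvent set of $A$ acting on $h^{s_1}(M)$ and on $h^{s_2}(M)$, and then to upgrade the regularity of $v$ by observing that the right-hand side $f = \lambda v - Av$ is simultaneously in $h^{s_1}(M)$ and, by hypothesis, $Av \in h^{s_2}(M)$, hence $f \in h^{s_2}(M)$ once we know $v \in h^{s_2}(M)$ — but of course that last inclusion is exactly what we want, so a little care is needed. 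The clean way around the circularity is to iterate along the real-interpolation scale: since $A$ generates an analytic $C^0$-semigroup on $h^{s_i}(M)$ with domain $h^{2+s_i}(M)$, Proposition \ref{L_bij_allg} (or rather the interpolation identity $\mathcal D_A(\beta)=(X,\mathcal D(A))_\beta$ recorded just after the definition of $\mathcal D_A(\beta)$) gives
\[
\big(h^{s_i}(M),h^{2+s_i}(M)\big)_\theta = h^{2\theta+s_i}(M)
\]
for $\theta\in(0,1)$ with $2\theta+s_i\notin\N$, using Lemma \ref{Interpol_ComO} and the reiteration theorem exactly as in the proof of Proposition \ref{P_erzeugtHG_h}.

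First I would fix $\lambda$ large enough to lie in $\rho(A)$ on both spaces (possible since $A$ is sectorial on each, and the resolvents are consistent on the intersection), and set $f\defr \lambda v - Av$. By assumption $Av\in h^{s_2}(M)$ and $v\in h^{2+s_1}(M)\hookrightarrow h^{s_2}(M)$ because $2+s_1>s_2$ (Lemma \ref{HS_Einbettung}); hence $f\in h^{s_2}(M)$. Since $v=(\lambda-A)^{-1}f$ with the resolvent taken on $h^{s_2}(M)$, and $(\lambda-A)^{-1}$ maps $h^{s_2}(M)$ into $\mathcal D(A)=h^{2+s_2}(M)$, we conclude $v\in h^{2+s_2}(M)$. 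The only subtlety is to justify that the element $(\lambda-A)^{-1}f$ computed in the $h^{s_2}$-realization coincides with $v$: this follows because both solve $\lambda w - Aw=f$ in $h^{s_1}(M)$ (the $h^{s_2}$-solution lies in $h^{2+s_2}(M)\hookrightarrow h^{2+s_1}(M)$ and $A$ acts the same way, being a differential operator independent of the underlying space), and the $h^{s_1}$-realization of $\lambda-A$ is injective.

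I expect the main obstacle to be purely bookkeeping rather than conceptual: one must make sure that the operator $A$ in the two realizations is literally the "same" differential operator restricted to different domains — i.e. that $A|_{h^{2+s_2}(M)}$ is the part of $A|_{h^{2+s_1}(M)}$ in $h^{s_2}(M)$ — and that a single $\lambda$ can be chosen in both resolvent sets with consistent resolvents on $h^{s_1}(M)\cap h^{s_2}(M)$. This consistency is automatic for closed densely-defined operators that agree as unbounded operators, but it should be stated explicitly. Once that is in place, the argument is a one-line resolvent computation; alternatively, one can phrase the same reasoning via the interpolation identity above, writing $v\in\mathcal D(A)\subset h^{2+s_1}(M)$ with $Av\in h^{s_2}(M)=\big(h^{s_1}(M),h^{2+s_1}(M)\big)_{(s_2-s_1)/2}$ and invoking that $(\lambda-A)^{-1}$ is bounded $h^{s_2}(M)\to h^{2+s_2}(M)$, which is exactly the maximal-elliptic-regularity statement underlying Proposition \ref{HG_elliptic_semigroup_M}. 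Either route closes the proof; I would present the resolvent version as it is the shortest and avoids re-deriving interpolation identities.
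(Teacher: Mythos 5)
Your resolvent argument is correct and complete for the intended reading of the statement: picking a real $\lambda$ large enough to lie in both resolvent sets, $f\defr\lambda v-Av\in h^{s_2}(M)$ because $h^{2+s_1}(M)\hookrightarrow h^{s_2}(M)$ and $Av\in h^{s_2}(M)$, the $h^{s_2}$-realization gives $w\defr(\lambda-A)^{-1}f\in h^{2+s_2}(M)$, and injectivity of $\lambda-A$ on $h^{2+s_1}(M)$ forces $v=w$; the consistency of the two realizations that you flag is implicit in the lemma's use of a single operator $A$ mapping $h^{2+s_i}(M)$ into $h^{s_i}(M)$, and is satisfied by the differential operators $A^h$, $A^c$ to which the lemma is applied. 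The paper itself does not print a proof but defers to \cite[Section 2.3.4]{Buerger}; the argument there is of exactly this standard bootstrap type, so your route is essentially the intended one.
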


\begin{definition}[(Uniform) Ellipticity]\label{def_elliptic} $\phantom{x}$ \\
Let $\Omega \subset \R^d$ be an arbitrary subset. A matrix valued function $A: \Omega \rightarrow \R^{n \times n}$ is called
\begin{enumerate}
\item[(i)] elliptic (or positive definite on $\Omega$), if 
\begin{align*}
\xi^\top A(x) \xi > 0
\end{align*}
holds for every $x \in \Omega$ and $\xi \in \R^n \setminus \{0\}$ and
\item[(ii)] uniformly elliptic, if there exists $C>0$ so that 
\begin{align*}
\xi^\top A(x) \xi \geq C |\xi|^2
\end{align*}
holds for every $x \in \Omega$ and $\xi \in \R^n$.
\end{enumerate}
If $\Omega$ is compact and $A$ is continuous on $\Omega$, the two properties coincide. 
\end{definition}

\begin{proposition}[Differential Operators as Generators]\label{HG_elliptic_semigroup_M} $\phantom{x}$ \\
Let $s \in (0,2) \setminus \{1\}$ and let ${M \subset \R^{d+1}}$ be an $h^{2+s}$-embedded closed hypersurface. Moreover, let $A \in \mathcal{L}\big( h^{2+s}(M), h^s(M) \big)$ be a symmetric, elliptic differential operator of second order, i.e. given a local parameterization $(\gamma,W)$ of $M$, 
\begin{align*}
Au \circ \gamma = a:D^2(u \circ \gamma) + b \cdot \nabla (u \circ \gamma) + c(u \circ \gamma)
\end{align*}
holds for every $u \in h^{2+s}(M)$, with $a \in h^s(\overline{W},\R^{d \times d})$, $b \in h^s(\overline{W},\R^d)$ and $c \in h^s(\overline{W},\R)$ such that the matrix $a$ is symmetric and positive definite on $\overline{W}$. Then, 
\begin{align*}
A: \mathcal{D}(A) \defr h^{2+s}(M) \subset h^s(M) \rightarrow h^s(M)
\end{align*}
generates an analytic $C^0$-semigroup. 
\end{proposition}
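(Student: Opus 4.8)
The plan is to show that $A$, with domain $\mathcal{D}(A)=h^{2+s}(M)$, is \emph{sectorial} on $X:=h^s(M)$ in the sense of \cite[Definition 2.0.1]{LunardiASG} — i.e.\ after a translation $\lambda\mapsto\lambda-\omega$ the resolvent set of $A$ contains a sector $\{\lambda\in\mathbb{C}\setminus\{0\}:|\arg\lambda|<\vartheta\}$ with some $\vartheta>\pi/2$ and one has $\|(\lambda-A)^{-1}\|_{\mathcal{L}(X)}\le C|\lambda|^{-1}$ there — and then to invoke the classical fact that a sectorial operator with dense domain generates an analytic $C^0$-semigroup. Density of $h^{2+s}(M)$ in $h^s(M)$ is immediate since $C^\infty(M)\subset h^{2+s}(M)$ and $C^\infty(M)$ is dense in $h^s(M)$ by the very definition of the little Hölder spaces; so the entire content is the resolvent estimate, which I would obtain by localizing $A$ to a perturbation of constant-coefficient elliptic operators on $\mathbb{R}^d$.

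First I would set up the localization. By Remark \ref{hypersurface_remark} there are finitely many local parameterizations $(\gamma_l,W_l)_{l=1,\dots,L}$ with $W_l\subset\mathbb{R}^d$ bounded and convex, and open sets $U_l\subset M$ with $\overline{U_l}\subset\gamma_l(W_l)$ and $M\subset\bigcup_l U_l$; pick a partition of unity $(\varphi_l)_l$ with $\varphi_l\in C^\infty(M)$, $\operatorname{supp}\varphi_l\subset U_l$, $\sum_l\varphi_l\equiv 1$. In the chart $(\gamma_l,W_l)$ one has, by hypothesis, $(Au)\circ\gamma_l = a_l : D^2(u\circ\gamma_l) + b_l\cdot\nabla(u\circ\gamma_l) + c_l(u\circ\gamma_l)$ with $a_l\in h^s(\overline{W_l},\mathbb{R}^{d\times d})$ symmetric and positive definite on $\overline{W_l}$, $b_l\in h^s(\overline{W_l},\mathbb{R}^d)$, $c_l\in h^s(\overline{W_l})$; since $\overline{W_l}$ is compact and $a_l$ is continuous, Definition \ref{def_elliptic} shows $a_l$ is in fact \emph{uniformly} elliptic. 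Shrinking the charts slightly (allowed, cf.\ Definition \ref{Def_parameterization}) and using a cut-off, I would extend $a_l,b_l,c_l$ to coefficients in $h^s_b(\mathbb{R}^d)$ with $a_l$ still uniformly elliptic, producing honest elliptic operators $\mathcal{A}_l$ on $\mathbb{R}^d$ that coincide with the pullback of $A$ on a neighbourhood of $\operatorname{supp}(\varphi_l\circ\gamma_l)$.

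The analytic heart is then the model case on $\mathbb{R}^d$: a second-order operator with coefficients in $h^s_b(\mathbb{R}^d)$ and uniformly elliptic principal part, taken with domain $h^{2+s}(\mathbb{R}^d)$, is sectorial on $h^s(\mathbb{R}^d)$ and generates an analytic semigroup — this is \cite[Chapter~3]{LunardiASG} (see also \cite{ES},\cite{PS}). The usual route, which I would merely cite, is: (i) for a homogeneous constant-coefficient model $\mathcal{A}_0=\sum a_0^{ij}\partial_i\partial_j$ with $a_0$ symmetric positive definite, the symbol $\lambda+\langle a_0\xi,\xi\rangle$ is non-vanishing off $(-\infty,0]$, and the Gaussian semigroup $e^{t\mathcal{A}_0}$ is bounded on $C^0_b$ and on $C^2_b$, hence — using the identification of $h^s(\mathbb{R}^d)$ as a continuous interpolation space between $C^0_b$ and $C^2_b$ (Lemma \ref{Interpol_ComO}) and interpolating — on $h^s(\mathbb{R}^d)$, giving the sector resolvent bound $\|(\lambda-\mathcal{A}_0)^{-1}\|_{\mathcal{L}(h^s(\mathbb{R}^d))}\le C|\lambda|^{-1}$; (ii) a freezing-of-coefficients argument, combined with the interior Schauder estimate $\|v\|_{h^{2+s}}\lesssim\|\mathcal{A}_l v\|_{h^s}+\|v\|_{h^s}$ (which lives in the little Hölder scale, being a closed-subspace statement) and the uniform continuity of $a_l$, upgrades (i) to the variable-coefficient $\mathcal{A}_l$.

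Finally I would reassemble. Writing $u=\sum_l\varphi_l u$ and transporting $\lambda u-Au=f$ to the charts, each piece satisfies $\lambda(\varphi_l u)\circ\gamma_l - \mathcal{A}_l\big((\varphi_l u)\circ\gamma_l\big) = (\varphi_l f)\circ\gamma_l - [\mathcal{A}_l,\varphi_l\circ\gamma_l](u\circ\gamma_l)$. Each commutator $[\mathcal{A}_l,\varphi_l\circ\gamma_l]$ is a \emph{first}-order operator, so $\|[\mathcal{A}_l,\varphi_l\circ\gamma_l]v\|_{h^s}\lesssim\|v\|_{h^{1+s}}$, and the interpolation inequality $\|v\|_{h^{1+s}}\le\varepsilon\|v\|_{h^{2+s}}+C_\varepsilon\|v\|_{h^s}$ makes these terms a small $A$-bounded perturbation after shifting $\lambda$; a Neumann-series / parametrix argument against the model resolvents of step (ii) then yields, for $\lambda$ large in a sector, a bounded inverse $(\lambda-A)^{-1}\colon h^s(M)\to h^{2+s}(M)$ with $\|(\lambda-A)^{-1}\|_{\mathcal{L}(h^s(M))}\le C|\lambda|^{-1}$. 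Hence $A$ is sectorial on $h^s(M)$, and with the density noted above it generates an analytic $C^0$-semigroup. I expect the genuinely delicate point to be step (ii): making the frozen-coefficient perturbation uniformly small in the \emph{little} Hölder norm and ensuring the resulting estimate holds on $h^s(M)$ rather than merely on the full Hölder space $C^s(M)$ — precisely the issue that forces working with $h^s$ throughout.
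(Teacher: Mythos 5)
Your proof is correct in outline and follows the standard route --- localization to uniformly elliptic operators on $\R^d$ with $h^s_b$ coefficients, Lunardi's generation theorem there, and a partition-of-unity/commutator perturbation argument with interpolation to reassemble on $M$ --- which is precisely the argument the paper itself defers to \cite[Section 2.3.4]{Buerger} for. The one genuinely delicate point, that the whole scheme must live in the little H\"older scale (density of $h^{2+s}$ in $h^s$, invariance of $h^s$ under the model resolvents), is exactly the issue you flag and handle correctly.
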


\subsection{Results for Hypersurfaces}

We show well-definedness of the parameterization of evolving immersed hypersurfaces used in Definition \ref{evolvHF_rho}. 

\begin{lemma}\label{I>0}
Let $\Sigma = \theta(M)$ be a $C^1$-immersed closed hypersurface. Then, we have
\begin{align*}
\inf_{p \in M} \inf_{\substack{v \in T_pM, \\|v|=1}} \big|\mathrm{d}_p\theta[v]\big| > 0.
\end{align*}
\end{lemma}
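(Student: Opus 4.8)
The plan is to reduce the global statement to a local, compactness-based argument. The key point is that $M$ is compact and $\theta$ is a $C^1$-immersion, so the map
\[
(p,v) \mapsto \big| \mathrm{d}_p\theta[v] \big|
\]
is continuous on the unit sphere bundle $S M \defr \{ (p,v) \mid p \in M, \, v \in T_pM, \, |v| = 1 \}$, which is itself compact. Since $\mathrm{d}_p\theta$ is injective for every $p$ by the immersion property, the function is strictly positive on $SM$, and a continuous strictly positive function on a compact set attains a positive minimum. This gives the infimum is $>0$ directly.

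**Key steps.** First I would fix a finite atlas $(\gamma_l, W_l)_{l=1,\dots,L}$ of $M$ together with the open sets $U_l \subset M$ with $\overline{U_l} \subset \gamma_l(W_l)$ and $M = \bigcup_l U_l$, as provided by Remark \ref{hypersurface_remark}. It suffices to bound the infimum on each $\overline{U_l}$ from below by a positive constant and then take the minimum over the finitely many $l$. Second, working in the chart $(\gamma_l, W_l)$: for $p = \gamma_l(x)$ with $x \in \gamma_l^{-1}(\overline{U_l}) =: K_l \subset W_l$ (a compact set), a unit tangent vector $v \in T_pM$ is of the form $v = \sum_i \xi^i \, \partial_i \gamma_{l\,|x}$, and the condition $|v|=1$ means $\sum_{i,j} g^l_{ij}(x) \xi^i \xi^j = 1$ with $g^l_{ij} = \partial_i\gamma_l \cdot \partial_j\gamma_l$ the first fundamental form of $M$ in this chart. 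Then $\mathrm{d}_p\theta[v] = \sum_i \xi^i \, \partial_i(\theta \circ \gamma_l)_{|x}$, and since $\theta \in C^1(M,\R^{d+1})$ (indeed $h^{1+s}$), the coefficients $\partial_i(\theta\circ\gamma_l)$ lie in $C^0(\overline{W_l},\R^{d+1})$, hence are bounded, and likewise $g^l_{ij} \in C^0(\overline{W_l})$ with $[g^l_{ij}]$ positive definite; in particular its smallest eigenvalue is bounded below by some $\lambda_l > 0$ on $K_l$, so the constraint forces $|\xi|^2 \le 1/\lambda_l$. Third, the function
\[
(x,\xi) \longmapsto \Big| \sum_i \xi^i \, \partial_i(\theta\circ\gamma_l)_{|x} \Big|
\]
is continuous on the compact set $\{ (x,\xi) \mid x \in K_l, \ \sum_{i,j} g^l_{ij}(x)\xi^i\xi^j = 1 \}$; by the injectivity of $\mathrm{d}_p\theta$ (equivalently, the linear independence of $\partial_1(\theta\circ\gamma_l)_{|x},\dots,\partial_d(\theta\circ\gamma_l)_{|x}$, which follows from $\mathrm{d}_p\theta$ being injective on $T_pM$) it never vanishes there, so it attains a minimum $m_l > 0$. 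Finally, set $m \defr \min_{l=1,\dots,L} m_l > 0$; every $p \in M$ lies in some $U_l$, giving the claimed uniform lower bound.

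**Main obstacle.** The argument is essentially routine; the only mildly delicate point is the bookkeeping that translates "unit tangent vector at $p$" into a statement about coordinate vectors $\xi$ in a chart in a way that keeps the relevant set compact — one must use that the first fundamental form $[g^l_{ij}]$ of the (embedded, hence genuine) reference surface $M$ is continuous and positive definite on the compact piece $K_l$, so that the "unit length" constraint defines a compact fiber in $\xi$. Once that is in place, continuity plus compactness plus nonvanishing (from injectivity of the differential) immediately yields the positive infimum, and since only finitely many charts are needed, no issue arises in passing from the local bounds to the global one.
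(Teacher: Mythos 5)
Your proposal is correct and follows essentially the same route as the paper: reduce to finitely many charts, parameterize unit tangent vectors by their coordinate coefficients, and use continuity plus compactness plus the nonvanishing coming from injectivity of $\mathrm{d}_p\theta$. The only (cosmetic) difference is the bookkeeping of the compact parameter set — you use the fiber $\{\xi : \sum_{i,j} g^l_{ij}(x)\xi^i\xi^j=1\}$ over the compact $K_l$, while the paper normalizes coefficient vectors to the Euclidean sphere $\mathcal{S}$ and minimizes the quotient $\bigl|\sum_i\alpha^i\partial_i(\theta\circ\gamma)\bigr| / \bigl|\sum_i\alpha^i\partial_i\gamma\bigr|$ over $\mathcal{S}\times\overline{W}$.
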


\begin{proof}
Let $d \defr \text{dim } M$ and choose a local parameterization $(\gamma,W)$ of $M$. In particular, $\gamma \in C^1(\overline{W},\R^{d+1})$ is an embedding with $\gamma(\overline{W}) \subset M$. Set 
\begin{align*}
v(\alpha,x) \defr \frac{\sum_i \alpha^i \partial_i \gamma(x)}{\left| \sum_i \alpha^i \partial_i \gamma(x) \right|} 
\end{align*}
for $\alpha \in \R^d \setminus \{0\}$ and $x \in \overline{W}$. Then $v(\alpha,x) \in T_{\gamma(x)}M$ holds with $|v(\alpha,x)|=1$ for every $\alpha \in \R^d \setminus \{0\}$ and $x \in \overline{W}$. Moreover, for $\beta \defr \frac{\alpha}{|\alpha|}$, we have 
\begin{align*}
v(\beta,x) 
= \frac{\sum_i \beta^i \partial_i \gamma(x)}{\left| \sum_i \beta^i \partial_i \gamma(x) \right|} 
= \frac{\frac{1}{|\alpha|}\sum_i \alpha^i \partial_i \gamma(x)}{\left| \frac{1}{|\alpha|} \sum_i \alpha^i \partial_i \gamma(x) \right|} 
= \frac{\sum_i \alpha^i \partial_i \gamma(x)}{\left| \sum_i \alpha^i \partial_i \gamma(x) \right|} 
= v(\alpha,x)
\end{align*}
for every $x \in \overline{W}$. So, with $\mathcal{S} \defr \{ \alpha \in \R^d, |\alpha|=1\}$,
\begin{align*}
\big\{ v \in T_{\gamma(x)}M, \, |v|=1 \big\}
= \big\{ v(\alpha,x) \, \big| \, \alpha \in \mathcal{S} \big\}
\end{align*}
and in particular 
\begin{align*}
\inf_{\substack{v \in T_{\gamma(x)}M, \\|v|=1}} \big|\mathrm{d}_{\gamma(x)}\theta[v]\big|
= \inf_{\alpha \in \mathcal{S}} \big| \mathrm{d}_{\gamma(x)}\theta\big[ v(\alpha,x) \big] \big|
\end{align*}
follows for every $x \in \overline{W}$. We have 
\begin{align*}
\big| \mathrm{d}_{\gamma(x)}\theta\big[v(\alpha,x)\big] \big|
= \left| \mathrm{d}_{\gamma(x)}\theta\left[\frac{\sum_i \alpha^i \partial_i \gamma(x)}{\left| \sum_i \alpha^i \partial_i \gamma(x) \right|}\right] \right|
= \left| \frac{\sum_i \alpha^i \mathrm{d}_{\gamma(x)}\theta[\partial_i \gamma(x)]}{\left| \sum_i \alpha^i \partial_i \gamma(x) \right|} \right|
= \frac{\left| \sum_i \alpha^i \partial_i (\theta \circ \gamma)(x)\right|}{\left| \sum_i \alpha^i \partial_i \gamma(x) \right|}
\end{align*}
for all $\alpha \in \mathcal{S}$ and $x \in \overline{W}$. Due to $\theta \in C^1(M,\R^{d+1})$, $\gamma \in C^1(\overline{W},\R^{d+1})$ with $\gamma(\overline{W}) \subset M$ and $\partial_i \gamma \neq 0$ on $\overline{W}$ for all $i=1,...,d$ by the immersion property of $\gamma$, thus
\begin{align*}
(\alpha,x) \mapsto \big| \mathrm{d}_{\gamma(x)}\theta\big[v(\alpha,x)\big] \big| \in C^0(\mathcal{S} \times \overline{W})
\end{align*}
follows. Because $\theta$ is an immersion, $\big| \mathrm{d}_{\gamma(x)}\theta\big[v(\alpha,x)\big] \big| > 0$ holds for all $(\alpha,x) \in \mathcal{S} \times \overline{W}$ and then compactness of $\mathcal{S} \times \overline{W}$ implies
\begin{align*}
\inf_{x \in \overline{W}} \inf_{\substack{v \in T_{\gamma(x)}M, \\|v|=1}} \big|\mathrm{d}_{\gamma(x)}\theta[v]\big| = \inf_{x \in \overline{W}} \inf_{\alpha \in \mathcal{S}} \big| \mathrm{d}_{\gamma(x)}\theta\big[ v(\alpha,x) \big] \big| > 0. 
\end{align*}
Finally, as $M$ is compact, it can be covered by finitely many local parameterizations $(\gamma,W)$ and therefore the claim follows. 
\end{proof}

\begin{lemma}\label{theta_imm}
Let $\Sigma = \bar{\theta}(M) \subset \R^{d+1}$ be a $C^2$-immersed closed hypersurface with unit normal $\nu_\Sigma$. Furthermore, let $\rho \in C^1(M,\R)$ with $\|\rho\|_{C^0(M,\R)}$ sufficiently small. Then, 
\begin{align*}
\theta_\rho: M \rightarrow \R^{d+1}, \phantom{x} \theta_\rho(p) \defr \bar{\theta}(p) + \rho(p)\nu_\Sigma(p)
\end{align*}
is an immersion. 
\end{lemma}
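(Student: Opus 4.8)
The plan is to show that the differential $\mathrm{d}_p \theta_\rho: T_pM \to \R^{d+1}$ is injective for every $p \in M$, which is exactly the immersion property. Since injectivity is a local condition and can be checked in coordinates, I would fix a point $p \in M$, pick a local parameterization $(\gamma, W)$ of $M$ with $p = \gamma(x)$ for some $x \in \overline W$, and compute $\partial_i(\theta_\rho \circ \gamma)$. Differentiating $\theta_\rho \circ \gamma = \bar\theta \circ \gamma + (\rho \circ \gamma)\,(\nu_\Sigma \circ \gamma)$ gives
\begin{align*}
\partial_i(\theta_\rho \circ \gamma) = \partial_i(\bar\theta \circ \gamma) + \partial_i(\rho \circ \gamma)\,(\nu_\Sigma \circ \gamma) + (\rho \circ \gamma)\,\partial_i(\nu_\Sigma \circ \gamma),
\end{align*}
so injectivity of $\mathrm{d}_p\theta_\rho$ amounts to the linear independence of the vectors $\big(\partial_1(\theta_\rho\circ\gamma)_{|x}, \dots, \partial_d(\theta_\rho\circ\gamma)_{|x}\big)$ in $\R^{d+1}$.

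The key idea is to split the contributions according to their behavior as $\|\rho\|_{C^0}$ becomes small. First I would handle the unperturbed part: the vectors $\partial_i(\bar\theta \circ \gamma)_{|x} = \mathrm{d}_{\gamma(x)}\bar\theta[\partial_i\gamma_{|x}]$ span the tangent space $T_{\bar\theta(p)}\Sigma$ (on the relevant embedded patch) and hence are linearly independent by the immersion property of $\bar\theta$; moreover they are all orthogonal to $\nu_\Sigma \circ \gamma_{|x}$. The perturbation terms $\partial_i(\rho\circ\gamma)(\nu_\Sigma\circ\gamma) + (\rho\circ\gamma)\partial_i(\nu_\Sigma\circ\gamma)$ are small in $C^0$, and the Weingarten-type term $\partial_i(\nu_\Sigma\circ\gamma)$ is continuous (since $\Sigma$ is $C^2$) and hence bounded on the compact set $\overline W$. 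I would argue that a sufficiently small lower bound on the Gram determinant of the unperturbed frame — which exists by Lemma \ref{I>0} applied on embedded patches, or directly by compactness — survives a small perturbation; this is a standard continuity/perturbation argument for the nonvanishing of a determinant. Because $M$ is compact, it is covered by finitely many such local parameterizations, so one uniform smallness threshold on $\|\rho\|_{C^0(M)}$ works for all of them simultaneously.

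Concretely, I would consider the $(d+1)\times(d+1)$ matrix whose first $d$ columns are $\partial_i(\theta_\rho\circ\gamma)_{|x}$ and whose last column is $\nu_\Sigma\circ\gamma_{|x}$; its determinant depends continuously on $(\rho\circ\gamma, \nabla(\rho\circ\gamma))$ and at $\rho = 0$ it equals $\det$ of the frame $\big(\partial_1(\bar\theta\circ\gamma), \dots, \partial_d(\bar\theta\circ\gamma), \nu_\Sigma\circ\gamma\big)$, which is nonzero and bounded away from $0$ uniformly in $x$ and in the finitely many charts. Hence for $\|\rho\|_{C^0(M)}$ below a threshold this determinant stays nonzero, forcing linear independence of the first $d$ columns alone, which is the desired immersion property. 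The main obstacle — really the only subtle point — is organizing the uniformity: ensuring that the smallness threshold for $\|\rho\|_{C^0(M)}$ can be chosen independently of the point $p$ and the chart, which is handled by the compactness of $M$ and of each $\overline W$ together with the continuity of the Weingarten map (guaranteed by the $C^2$-regularity of $\Sigma$, which is exactly why that hypothesis appears). The rest is routine linear algebra and a standard perturbation estimate.
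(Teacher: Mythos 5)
There is a genuine gap. Your hypothesis controls only $\|\rho\|_{C^0(M)}$, not the gradient of $\rho$, yet your perturbation argument treats the full difference $\partial_i(\theta_\rho\circ\gamma)-\partial_i(\bar\theta\circ\gamma)=\partial_i(\rho\circ\gamma)\,(\nu_\Sigma\circ\gamma)+(\rho\circ\gamma)\,\partial_i(\nu_\Sigma\circ\gamma)$ as ``small in $C^0$''. The first summand is not small: $\partial_i(\rho\circ\gamma)$ can be arbitrarily large while $\|\rho\|_{C^0(M)}$ is tiny, and the lemma is stated (and later used, e.g. in Definition \ref{evolvHF_rho}) precisely under $C^0$-smallness of $\rho$. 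For the same reason the key step ``the determinant depends continuously on $(\rho\circ\gamma,\nabla(\rho\circ\gamma))$ and is nonzero at $\rho=0$, hence stays nonzero for $\|\rho\|_{C^0(M)}$ below a threshold'' is not a valid inference: continuity in both arguments combined with smallness of only one of them yields nothing, so the perturbation estimate you invoke does not go through as written.

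The paper's proof circumvents exactly this point by orthogonality: $\mathrm{d}_p\bar{\theta}[v]$ and $\mathrm{d}_p\nu_\Sigma[v]$ lie in $T_p\Sigma$ while $\nu_\Sigma(p)\perp T_p\Sigma$, so by Pythagoras $|\mathrm{d}_p\theta_\rho[v]|^2=|\mathrm{d}_p\bar{\theta}[v]+\rho(p)\mathrm{d}_p\nu_\Sigma[v]|^2+|\mathrm{d}_p\rho[v]|^2$; the uncontrolled gradient term can only increase the length and is simply dropped, and the tangential part is bounded below by $I-\|\rho\|_{C^0(M)}S\geq I/2>0$, where $I>0$ comes from Lemma \ref{I>0} and $S$ is a compactness bound on $\mathrm{d}\nu_\Sigma$ (this is where the $C^2$-regularity enters), giving a chart-free, uniform argument. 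Your determinant route can in fact be repaired in the same spirit: the problematic term is a multiple of the last column $\nu_\Sigma\circ\gamma$, so subtracting $\partial_i(\rho\circ\gamma)$ times that column from the $i$-th column leaves the determinant unchanged and eliminates the gradient of $\rho$ entirely, leaving only the perturbation $(\rho\circ\gamma)\,\partial_i(\nu_\Sigma\circ\gamma)$, which really is of order $\|\rho\|_{C^0(M)}$; with that cancellation made explicit (and the uniformity over finitely many charts you already note), your argument closes. Without it, the proposal contains a false smallness claim at its central step.
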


\begin{proof}
We have $\nu_\Sigma \in C^1(M,\R^{d+1})$ and thus $\theta_\rho = \bar{\theta}+\rho\nu_\Sigma \in C^1(M,\R^{d+1})$. For any local parameterization $(\gamma,W)$ of $M$, the domain $\overline{W} \subset \R^d$ is compact and hence
\begin{align*}
S_{(\gamma,W)} 
&\defr \sup_{x \in \overline{W}} \big\| \mathrm{d}_{\gamma(x)} \nu_\Sigma \big\|_{\mathcal{L}(T_{\gamma(x)}M,\R^{d+1})} \\
&\lesssim \sup_{x \in \overline{W}} \max_{i=1,...,d} \frac{\big| \mathrm{d}_{\gamma(x)} \nu_\Sigma \big( \partial_i \gamma(x) \big) \big|}{|\partial_i \gamma(x)|}
= \sup_{x \in \overline{W}} \max_{i=1,...,d} \frac{|\partial_i (\nu_\Sigma \circ \gamma)(x)|}{|\partial_i \gamma(x)|} < \infty
\end{align*}
holds. Because $M$ is compact, it can be covered by finitely many local parameterizations $(\gamma_l,W_l)_{l=1,...,L}$ and therefore
\begin{align*}
S \defr \sup_{p \in M} \big\| \mathrm{d}_p \nu_\Sigma \big\|_{\mathcal{L}(T_pM,\R^{d+1})} \leq \max_{l=1,...,L} S_{(\gamma_l,W_l)} < \infty
\end{align*}
follows. As the mean curvature $H=-\Div_\Sigma \nu_\Sigma$ is not the zero function on closed hypersurfaces, $S \neq 0$ holds. Further, Lemma \ref{I>0} implies
\begin{align*}
I \defr \inf_{p \in M} \inf_{\substack{v \in T_pM, \\|v|=1}} \big|\mathrm{d}_p\bar{\theta}[v]\big| > 0.
\end{align*}
Hence,
\begin{align*}
R \defr \frac{I}{2S} > 0
\end{align*}
is well-defined. Assume $\|\rho\|_{C^0(M)} \leq R$. 
For all $p \in M$ and $v \in T_pM$
\begin{align}\label{eq_theta_imm_perp}
\mathrm{d}_p\bar{\theta}[v], \mathrm{d}_p\nu_\Sigma[v] \in T_p\Sigma
\phantom{xx} \text{ and } \phantom{xx}
\nu_\Sigma(p) \perp T_p\Sigma
\end{align}
hold. Due to $\rho(p), \mathrm{d}_p\rho[v] \in \R$, we thus have
\begin{align*}
\big| \mathrm{d}_p \theta_\rho[v] \big|^2
&= \big| \mathrm{d}_p\bar{\theta}[v] + \mathrm{d}_p\rho[v] \nu_\Sigma(p) + \rho(p) \mathrm{d}_p\nu_\Sigma[v] \big|^2 \\
&= \big| \mathrm{d}_p\bar{\theta}[v] + \rho(p) \mathrm{d}_p\nu_\Sigma[v] \big|^2 + \big| \mathrm{d}_p\rho[v] \big|^2 \\
&\geq \big| \mathrm{d}_p\bar{\theta}[v] + \rho(p) \mathrm{d}_p\nu_\Sigma[v] \big|^2
\end{align*}
and then
\begin{align*}
\big| \mathrm{d}_p \theta_\rho[v] \big|
\geq \big| \mathrm{d}_p\bar{\theta}[v] + \rho(p) \mathrm{d}_p\nu_\Sigma[v] \big|
\geq \big| \mathrm{d}_p\bar{\theta}[v] \big| - R \big| \mathrm{d}_p\nu_\Sigma[v] \big|
\geq I - RS
= \frac{I}{2} 
> 0
\end{align*}
follows for $|v|=1$.
In particular, $\mathrm{d}_p\theta_\rho: T_pM \rightarrow \R^{d+1}$ is injective and therefore $\theta_\rho: M \rightarrow \R^{d+1}$ is an immersion.
\end{proof}

\providecommand{\bysame}{\leavevmode\hbox to3em{\hrulefill}\thinspace}
\providecommand{\MR}{\relax\ifhmode\unskip\space\fi MR }
\providecommand{\MRhref}[2]{%
  \href{http://www.ams.org/mathscinet-getitem?mr=#1}{#2}
}
\providecommand{\href}[2]{#2}


\end{document}